\newcommand{\R}{\mathbb{R}}
\newcommand{\N}{\mathbb{N}}
\newcommand{\X}{\mathbb{X}}
\newcommand{\bX}{\mathbf{X}}
\newcommand{\tX}{\widetilde{X}}
\newcommand{\bx}{\mathbf{x}}
\newcommand{\by}{\mathbf{y}}
\newcommand{\bY}{\mathbf{Y}}
\newcommand{\cY}{\mathcal{Y}}
\newcommand{\tY}{\widetilde{Y}}
\newcommand{\dtY}{\dot{\widetilde{Y}}}
\newcommand{\dY}{\dot{Y}}
\newcommand{\dbY}{\dot{\mathbf{Y}}}
\newcommand{\tbY}{\widetilde{\mathbf{Y}}}
\newcommand{\hx}{\widehat{x}}
\newcommand{\hY}{\widehat{Y}}
\newcommand{\Y}{\mathbb{Y}}
\newcommand{\dZ}{\dot{Z}}
\newcommand{\tZ}{\widetilde{Z}}
\newcommand{\bz}{\mathbf{z}}
\newcommand{\W}{\mathbb{W}}
\newcommand{\bW}{\mathbf{W}}
\newcommand{\cW}{\mathcal{W}}
\newcommand{\cC}{\mathcal{C}}
\newcommand{\cL}{\mathcal{L}}
\newcommand{\kL}{\mathfrak{L}}
\newcommand{\kB}{\mathfrak{B}}
\newcommand{\C}{\mathscr{C}}
\newcommand{\tC}{\widetilde{\mathscr{C}}}
\newcommand{\cJ}{\mathcal{J}}
\newcommand{\tW}{\widetilde{W}}
\newcommand{\tbW}{\widetilde{\bW}}
\newcommand{\tdW}{\widetilde{\W}}
\newcommand{\tXi}{\widetilde{\Xi}}
\newcommand{\bxi}{\pmb{\xi}}
\newcommand{\cD}{\mathcal{D}}
\newcommand{\sD}{\mathscr{D}}
\newcommand{\sE}{\mathscr{E}}
\newcommand{\sG}{\mathscr{G}}
\newcommand{\tsD}{\widetilde{\mathscr{D}}}
\newcommand{\tsG}{\widetilde{\mathscr{G}}}
\newcommand{\tsE}{\widetilde{\mathscr{E}}}
\newcommand{\sR}{\mathscr{R}}
\newcommand{\bbeta}{\pmb{\beta}}
\newcommand{\tbbeta}{\widetilde{\pmb{\beta}}}
\newcommand{\tbeta}{\widetilde{\beta}}
\newcommand{\txi}{\widetilde{\xi}}
\newcommand{\vrho}{\varrho}
\newcommand{\bV}{\overline{V}}
\newcommand{\tV}{\widetilde{V}}
\newcommand{\bU}{\overline{U}}
\newcommand{\lgl}{\langle}
\newcommand{\rgl}{\rangle}
\newcommand{\llg}{\llangle}
\newcommand{\rrg}{\rrangle}
\newcommand{\cI}{\mathcal{I}}
\newcommand{\tc}{\widetilde{c}}
\newtheorem{theorem}{Theorem}[section]
\newtheorem{proposition}[theorem]{Proposition}
\newtheorem{lemma}[theorem]{Lemma}
\newtheorem{corollary}[theorem]{Corollary}
\newtheorem{definition}[theorem]{Definition}
\newtheorem{remark}[theorem]{Remark}
\newenvironment{hyp}[1]
 {\innerhyp}
 {\endinnerhyp}
\begin{document}
\numberwithin{equation}{section}

\title{On nonlinear rough paths}

\author{David Nualart\thanks{David Nualart was supported by the NSF grant DMS 1811181} \footnote{Email: nualart@ku.edu}}
\author{Panqiu Xia \footnote{Email: pqxia@ku.edu}}

\affil{\small Department of Mathematics, University of Kansas, Lawrence, KS, 66044, USA.}
\date{}
\maketitle

\begin{abstract}	
In this paper, we establish the theory of nonlinear rough paths. We give the definition of nonlinear rough paths, and develop the integrals. Then, we study differential equations driven by nonlinear rough paths. Afterwards, we compare the nonlinear rough paths and classic theory of linear rough paths. Finally, we apply this theory to rough partial differential equations.
\end{abstract}
\smallskip 
\noindent \textbf{Keywords.} Nonlinear rough paths, controlled rough paths, nonlinear rough integrals, rough differential equations, It\^{o}'s formula, rough partial differential equations.

\section{Introduction}\label{s.1}
Nonlinear integrals in the sense of Young were introduced by
 Hu and L\^{e} in \cite{tams-17-hu-le}. In this paper, the authors consider the following nonlinear integral
\begin{equation} \label{a1}
\cI_{s,t}=\int_s^t W(dr, Y_r),
\end{equation}
where $W$ is a function on $[0,T]\times \R^d$ with values in $\R^d$, that is $\tau$-H\"{o}lder continuous in time and $\lambda$-H\"{o}lder continuous in space, and $Y: [0,T]\to \R^d$ is $\gamma$-H\"{o}lder continuous. Under the assumption $\tau+\lambda\gamma>1$, the nonlinear integral (\ref{a1}) is well-defined in the sense of Young (see Young \cite{am-36-young}). That is, $\cI_{s,t}$ is the limit of the following linear approximations as $|\pi|\to 0$
\[
\sum_{k=1}^nW_{t_{k-1},t_k}( Y_{k-1}):=\sum_{k=1}^n\big[W(t_k, Y_{k-1})-W(t_{k-1}, Y_{k-1})\big],
\]
where $\pi=(s=t_0<t_1<\dots<t_n=t)$ is a partition of the interval $[s,t]$. As an example, one can define a pathwise nonlinear integral of the form (\ref{a1}), where $W$ is a fractional Brownian sheet with Hurst parameters $H_0\in (\frac{1}{2}, 1)$ in time and $H_1=\cdots =H_d=1$ in space, and $Y$ is a $d$-dimensional standard Brownian motion that is independent of $W$. By applying this theory of nonlinear Young's integrals, Hu and L\^{e} studied the following transport equation with distributional vector field:
\begin{align}\label{tpe0}
\frac{\partial}{\partial t}u(t,x)+D u(t,x)\frac{\partial}{\partial t}W(t,x)=0,
\end{align}
where $D$ denotes the spatial derivative operator. The existence and uniqueness of the solution to (\ref{tpe0}) with $\cC^{1+\lambda_0}_{loc}(\R^d;\R^d)$-valued initial condition were proved in this paper assuming that $(1+\lambda_0)\tau>1$. They also provided a formula for the solution:
\begin{align}\label{tpf}
u(t,x)=h(Z_t(x)),
\end{align}
 where $h$ is the initial condition, and $Z_t$ is the inverse of $Y_t$, where $Y$ is the solution to the following nonlinear differential equation:
\begin{align}\label{nde0}
Y_t(x)=x+\int_0^t W(ds, Y_s(x)).
\end{align}
On the other hand, applying the theory of nonlinear integrals to the stochastic heat equation, Hu and L\^{e} also gave a pathwise proof of the Feynman-Kac formula,
which provides an alternative method to study this topic (see e.g. \cite{ap-12-hu-lu-nualart,ap-11-hu-nualart-song} for a probabilistic approach).

The purpose of this paper is to extend the theory of nonlinear integrals to the case when the functions $W$ and $Y$ are rougher, that is $\tau+\lambda\gamma<1$.
In this situation, Young's approach fails.
 The following example, inspired by the lecture notes from Zanco (see Example 3.6 of \cite{ln-16-zanco}), provides a non-standard nonlinear rough path behavior in $\R$. For any $n\in \N$, $t\in[0,T]$ and $x,y\in\R^d$, we define
\begin{align*}
F(x,y)=e^{xy},\ X^{(n)}_t=\frac{1}{n}\cos(2\pi n^2 t)\ \mathrm{and}\ Y^{(n)}_t=\frac{1}{n}\sin(2\pi n^2 t).
\end{align*}
Then $F(X^{(n)}_t, y)$ converges to $1$ and $Y^{(n)}_t$ converges to $0$ uniformly on compact sets as $n\to \infty$. On the other hand, however, the following integral
\begin{align*}
\int_0^1 F(dX^{(n)}_t, Y^{(n)}_t)=&-\frac{1}{4}\int_0^{4\pi}\exp\Big(\frac{1}{2n^2}\sin(s)\Big)ds\to -\pi,
\end{align*}
by dominated convergence theorem, as $n\to\infty$. 

In the linear situation, a useful tool to deal with the integration of rough functions is the theory of rough paths. This theory has been developed by the pioneering work of Lyons since the early nineties (see e.g. Lyons \cite{mrl-94-lyons,rmi-98-lyons}) to study $d$-dimensional dynamic systems of the form 
\[
d Y_t= f(Y_t)dX_t,\ t\in [0,T],
\]
 where the driven signal $X_t$ is $\alpha$-H\"{o}lder continuous and $\alpha\in (0,\frac{1}{2}]$. The main idea of the rough path analysis is as follows. Let $p=\lfloor \frac{1}{\alpha}\rfloor$, and let $T^{(p)}$ be a $p$-step truncated tensor algebra given by the expression
 \[
 T^{(p)}:=\R\oplus (\R^d)\oplus (\R^d)^{\otimes 2}\oplus \cdots \oplus (\R^d)^{\otimes p}.
 \]
 The rough path associated to $X$ is a lifting of $X$ to a $T^{(p)}$-valued function on $[0,T]^2$, denoted by $S^{(p)}(X)$, in such a way that  when $X$ is piecewise differentiable, $S^{(p)}_{s,t}=(1, X^1_{s,t}, X^2_{s,t},\dots, X^p_{s,t})$, and each component $X_{s,t}^i$ is the $i$th iterated integral of $X$ on the time interval $[s,t]\subset [0,T]$. Suppose that $f$ is a smooth function, then the integral of $f(X)$ against $X$ on $[s,t]$ can be approximated by
\begin{align}\label{1fm}
\int_s^tf(X_r)dX_r\approx f(X_s)X^1_{s,t}+f'(X_s)X^2_{s,t}+\cdots +f^{(p-1)}(Z_s)X^{p}_{s,t},
\end{align}
with an error of order $O(|t-s|^{(p+1)\alpha})$. This allows us to define the integral by passing the limit as $|\pi|\to 0$ of the following expression
\[
\sum_{k=1}^n\sum_{i=0}^pf^{(i-1)}(X_{t_{k-1}})X^i_{t_{k-1},t_k},
\]
where $\pi=(s=t_1<\dots<t_n=t)$.

Suppose that $\alpha\in (\frac{1}{3},\frac{1}{2}]$. Gubinelli (see \cite{jfa-04-gubinelli}) generalized the integration of ``1-forms'', which means the integrand is a function
$f(X_t)$ of the driving signal, to a class of rough functions called ``controlled rough paths''. A controlled rough path (by $X$), is a function $Y: [0,T]\to \R^d$ whose increment on an interval $[s,t]$ can be written in the following way: $Y_{s,t}=Y'_s X_{s,t}+R^Y_{s,t}$, for some $\R^d\otimes \R^d$-valued $\alpha$-H\"{o}dler continuous function $Y'$ and some $\R^d$-valued $2\alpha$-H\"{o}lder continuous function $R^Y$. In this case, the approximation of the integral is the following
\[
\int_s^t Y_r dX_r\approx Y_sX^1_{s,t}+ Y_s'X^2_{s,t}.
\]
For a more detailed account of this topic, we refer the readers to the books of Friz and Hairer \cite{springer-14-friz-hairer} and Lyons and Qian \cite{oxford-02-lyons-qian}.
An alternative approach to deal with the integration of ``non-1-forms" based on fractional calculus was developed in \cite{sd-11-besalu-nualart,tams-09-hu-nualart}.

In the present paper, we will extend the nonlinear Young's integral to the rough case by using Gubinelli's approach, and assuming a H\"older regularity of order $\alpha\in (\frac{1}{3},\frac{1}{2}]$. The paper is organized in the following way.
 In Section \ref{s.2} we give brief review of the preliminaries about (linear) rough paths.
In Section \ref{s.3} we introduce a nonlinear variant of rough paths. By definition a nonlinear rough path is a pair $(W ,\W)$ such that $W(t,x)$ is a function of two variables, $(t,x) \in [0,T] \times V$, where $V$ is Banach space and $\W_{s,t}(x,y)$ should be interpreted as the double integral 
\[
\int_s^t DW(dr,y)(W(t,x)-W(r,x)),
\]
for any $0\le s\le t\le T$ and $(W ,\W)$ 
 satisfies certain properties, including $\alpha$-H\"older continuity and a version of Chen's relation. Then, a nonlinear rough integral can be approximated in the following way:
\[
\int_s^t W(dr, Y_r)\approx W_{s,t}(Y)+\W_{s,t}(\dY_s,Y_s),
\]
where $\dY$ is the Gubinelli derivative of $Y$ in the context of nonlinear rough paths. We prove that the nonlinear rough integral is a nonlinear controlled rough path and
we establish some properties of nonlinear rough integrals.

In Section \ref{s.4}, we consider the following rough differential equation (RDE):
\begin{align}\label{rde0}
Y_t=\xi+\int_0^t W(dr, Y_r),
\end{align}
where $(W,\W)$ is an $\alpha$-H\"{o}lder nonlinear rough path. Local and global existence and uniqueness of the solution to the RDE (\ref{rde0}) is proved in this section. We also obtain some estimates of the solution to this equation.

In Section \ref{s.5}, we compare the linear and nonlinear rough paths from two points of view. In Section \ref{s.5.1}, we consider a special class of nonlinear rough paths, that is a composition of a ``nice'' function and a linear rough path. In Section \ref{s.5.2}, a nonlinear rough path is treated as a function space-valued linear rough path. In Section \ref{s.5.3}, we provide a generalized It\^{o} type formula for (nonlinear) controlled rough paths.

As an application to the theory of nonlinear rough paths, in Section \ref{s.6} we analyze the gradient flow of the following equation with spatial parameter,
\[
Y_t(x)=x+\int_0^tW(dr,Y_r(x)),
\]
where $x\in \R^d$ and $W:[0,T]\times \R^d\to \R^d$ is a nonlinear rough path. We will prove that under some assumptions, $Y_t(x)$ is differentiable in $x$. In addition for every $(t,x)\in [0,T]\times \R^d$, the gradient $D Y_t(x)$ is an invertible matrix. Thus, there exists $Z:[0,T]\times \R^d\to \R^d$ such that $Z_t(Y_t(x))=Y_t(Z_t(x))=x$ for all $(t,x)\in [0,T]\times \R^d$. Assume that $h\in \cC^4_{loc}(\R^d;\R^d)$. Because of the rougher structure of $W$ than Young's case, it turns out that $h(Z_t(x))$ doesn't satisfies the transport equation (\ref{tpe0}). We will prove that $h(Z_t(x))$ is indeed the solution to the following rough partial differential equation (RPDE):
\begin{align}\label{rpde}
\frac{\partial}{\partial t}u(t,x)&+D u(t,x)\frac{\partial W(t,x)}{\partial t}=\frac{1}{2} D u(t,x)\frac{\partial \llg DW(x), W(x) \rrg_{0,t}}{\partial t}\nonumber\\
&+\frac{1}{2}D u(t,x)\frac{\partial \llg W(x), DW(x) \rrg_{0,t}}{\partial t}+\frac{1}{2} D^2 u(t,x)\frac{\partial \lgl W(x)\rgl_{0,t}}{\partial t}.
\end{align}
Furthermore, the solution is unique in the space $\cC^{\alpha,3}_{loc}([0,T]\times \R^d;\R)$.

\section{Preliminaries}\label{s.2}
 Fix a time interval $[0,T]$. Assume that $\alpha\in (\frac{1}{3}, \frac{1}{2}]$. Let $V$ and $K$ be separable Banach spaces. We follow the construction of Friz and Hairer \cite{springer-14-friz-hairer} to introduce the basic framework of the theory of (linear) rough paths.
\begin{definition}
\begin{enumerate}[(i)]
		\item $\cC^{\alpha}([0,T];V)$ is the space of functions on $[0,T]$ taking values in $V$ such that the following $\alpha$-H\"{o}lder seminorm is finite
		\begin{align}
	\|\Phi\|_{\alpha}:=\sup_{s\neq t\in [0,T]}\frac{\|\Phi_{s,t}\|_V}{|t-s|^{\alpha}},
		\end{align}
		where $\Phi_{s,t}:=\Phi_t-\Phi_s$.

	\item $\cC^{\alpha}_2([0,T]^2; V\otimes V)$ is the space of functions on $[0,T]^2$ taking values in the tensor product space $V\otimes V$ and such that the following $\alpha$-H\"{o}lder seminorm is finite
	 \begin{align}
	\|\Psi\|_{\alpha}:=\sup_{s\neq t\in [0,T]}\frac{\|\Psi_{s,t}\|_{V\otimes V}}{|t-s|^{\alpha}},
	\end{align} 
	where the norm $\|\cdot\|_{V\otimes V}$ is the projective norm on tensor product spaces.
	\end{enumerate}
\end{definition}

A $V$-valued rough path, introduced below, is defined as a pair of a rough function and a double integral term. 

\begin{definition}\label{dlrp}
The space of rough paths $\C^{\alpha}([0,T];V)$ is the collection of pairs ${\bf X}=(X,\X)$ satisfying the following properties:
\begin{enumerate}[(i)]
	\item $X\in \cC^{\alpha}([0,T];V)$.
	\item $\X\in \cC^{2\alpha}_2([0,T]^2; V\otimes V)$.
\item $(X,\X)$ satisfies Chen's relation: for all $(s,u,t)\in[0,T]^3$,
\begin{align}\label{lchen}
\X_{s,t}-\X_{s,u}-\X_{u,t}=X_{s,u}\otimes X_{u,t}.
\end{align}
\end{enumerate}
\end{definition}
Here $\X$ has to be interpreted as a version of the following double integral:
\[
\int_s^tX_{s,r} \otimes dX_r:=\X_{s,t}.
\]

Let $X\in \cC^{\alpha}([0,T];V)$. We define controlled rough paths by $X$ as follows:
\begin{definition}\label{lcrp}
Let $X\in \cC^{\alpha}([0,T];V)$.
An element $Y\in \cC^{\alpha}([0,T]; K)$ is said to be controlled by $X$, if there exist functions $Y'\in \cC^{\alpha}([0,T];\cL(V;K))$ and $R^Y\in \cC^{2\alpha}_2([0,T]^2; K)$, such that
\begin{align*}
Y_{s,t}=Y'_s(X_{s,t})+R^Y_{s,t}
\end{align*}
for any $s,t\in[0,T]$. Here $\cL(V;K)$ denotes the space of continuous linear operators from $V$ to $K$ equipped with the operator norm. The function $Y'$ is called the Gubinelli derivative of $Y$.
\end{definition}
Denote by $\sD_{X}^{2\alpha}(K)$ the space of such pairs $(Y,Y')$. With an abuse of notations, we sometimes write $Y\in \sD_X^{\alpha}(K)$ instead of $(Y,Y')\in \sD_X^{\alpha}(K)$. 

Let $K_1$, $K_2$ be separable Banach spaces. For any positive integer $k$, denote by $\cC^k_{loc}(K_1;K_2)$ the space of continuous functions on $K_1$ with values in $K_2$ that are locally bounded and have locally bounded Fr\'{e}chet derivatives up to order $k$. The next lemma shows that composition of a function in $\cC^2_{loc}(K_1;K_2)$ and a $K_1$-valued controlled rough path is still a controlled rough path.
\begin{lemma}[Lemma 7.3 of Friz and Hairer \cite{springer-14-friz-hairer}]\label{cmpcr}
Let $X\in \cC^{\alpha}([0,T]; V)$, $(Y,Y')\in \sD_X^{2\alpha}(K_1)$, and $\phi\in \cC^2_{loc}(K_1;K_2)$. Then, $\phi(Y)$ is controlled by $X$. More precisely, we have $(\phi(Y), D\phi(Y)Y')\in \sD_X^{2\alpha}(K_2)$.
\end{lemma}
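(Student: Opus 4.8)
The plan is to exhibit $(\phi(Y),D\phi(Y)Y')$ as an element of $\sD_X^{2\alpha}(K_2)$ by checking the requirements of Definition \ref{lcrp}: that $D\phi(Y)Y'\in\cC^\alpha([0,T];\cL(V;K_2))$ and that the remainder
\[
R^{\phi(Y)}_{s,t}:=\phi(Y)_{s,t}-D\phi(Y_s)\bigl(Y'_s(X_{s,t})\bigr)
\]
belongs to $\cC^{2\alpha}_2([0,T]^2;K_2)$; the property $\phi(Y)\in\cC^\alpha([0,T];K_2)$ is then automatic (and in any case follows directly from the mean value inequality). The first step is a localization: since $Y$ is continuous on the compact interval $[0,T]$, its range is contained in a closed ball $\bar B\subseteq K_1$, which is convex; because $\phi\in\cC^2_{loc}(K_1;K_2)$, the maps $\phi$, $D\phi$, $D^2\phi$ are all bounded on $\bar B$ by some constant $M$, and in particular $D\phi$ is $M$-Lipschitz on $\bar B$. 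Since any segment joining two points of $\bar B$ stays in $\bar B$, all the estimates below will be uniform.

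For the H\"older properties I would first note $\|\phi(Y)_{s,t}\|_{K_2}\le M\|Y_{s,t}\|_{K_1}\le M\|Y\|_\alpha|t-s|^\alpha$, so $\phi(Y)\in\cC^\alpha$, where $\|Y\|_\alpha<\infty$ since $(Y,Y')\in\sD_X^{2\alpha}$. Next, decomposing
\[
(D\phi(Y)Y')_{s,t}=D\phi(Y_t)\,Y'_{s,t}+\bigl(D\phi(Y_t)-D\phi(Y_s)\bigr)Y'_s
\]
and using $\|D\phi(Y_t)-D\phi(Y_s)\|\le M\|Y_{s,t}\|_{K_1}$, the bound $\|D\phi\|\le M$ on $\bar B$, the boundedness of $Y'$ (continuous on $[0,T]$), the submultiplicativity of operator norms, and $Y,Y'\in\cC^\alpha$, each summand is $O(|t-s|^\alpha)$; hence $D\phi(Y)Y'\in\cC^\alpha([0,T];\cL(V;K_2))$.

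The main point is the remainder. Taylor's formula with integral remainder gives
\[
\phi(Y_t)-\phi(Y_s)=D\phi(Y_s)(Y_{s,t})+\int_0^1\bigl[D\phi(Y_s+\theta Y_{s,t})-D\phi(Y_s)\bigr](Y_{s,t})\,d\theta,
\]
where, since $Y_s+\theta Y_{s,t}\in\bar B$, the integral term is bounded in $K_2$-norm by $\tfrac12 M\|Y_{s,t}\|_{K_1}^2\le\tfrac12 M\|Y\|_\alpha^2|t-s|^{2\alpha}$. Substituting $Y_{s,t}=Y'_s(X_{s,t})+R^Y_{s,t}$ into $D\phi(Y_s)(Y_{s,t})$ then yields
\[
R^{\phi(Y)}_{s,t}=D\phi(Y_s)\bigl(R^Y_{s,t}\bigr)+\int_0^1\bigl[D\phi(Y_s+\theta Y_{s,t})-D\phi(Y_s)\bigr](Y_{s,t})\,d\theta,
\]
and $\|D\phi(Y_s)\bigl(R^Y_{s,t}\bigr)\|_{K_2}\le M\|R^Y\|_{2\alpha}|t-s|^{2\alpha}$. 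Therefore $R^{\phi(Y)}\in\cC^{2\alpha}_2([0,T]^2;K_2)$, and all three conditions are verified.

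I do not anticipate a genuine obstacle here; the lemma is essentially an application of the chain rule together with a first-order Taylor expansion. The only points needing care are the passage from the local hypothesis $\phi\in\cC^2_{loc}$ to the uniform bound $M$ — handled above via compactness of $[0,T]$ and continuity of $Y$ — and the observation that the first-order Taylor remainder is quadratic in $Y_{s,t}$, which is precisely what upgrades the regularity of $R^{\phi(Y)}$ from $\alpha$ to $2\alpha$. It is also worth recalling that the Gubinelli derivative is in general not unique, so the assertion is that $D\phi(Y)Y'$ is one admissible choice, which is exactly what the computation produces.
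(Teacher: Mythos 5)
The paper does not prove this lemma---it is quoted verbatim as Lemma 7.3 of Friz and Hairer---so there is no internal proof to compare against; your argument is the standard one (second-order Taylor expansion of $\phi$ along $Y$) and it is correct in substance: the decomposition $R^{\phi(Y)}_{s,t}=D\phi(Y_s)(R^Y_{s,t})+\int_0^1[D\phi(Y_s+\theta Y_{s,t})-D\phi(Y_s)](Y_{s,t})\,d\theta$ gives exactly the required $2\alpha$-bound, and the $\alpha$-H\"older estimate for $D\phi(Y)Y'$ via the product decomposition is fine.

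The one point to tighten is the localization. You take $\bar B$ to be a closed ball containing the range of $Y$ and assert that $\phi$, $D\phi$, $D^2\phi$ are bounded there because $\phi\in\cC^2_{loc}$. In an infinite-dimensional Banach space $K_1$ a closed ball is not compact, and a locally bounded function need not be bounded on it, so this step does not follow from the paper's definition of $\cC^2_{loc}$ (locally bounded derivatives). The fix is the one the paper itself uses in the proof of Proposition \ref{bwbx}: take $K$ to be the closed convex hull of the compact set $\{Y_t:t\in[0,T]\}$; this is compact (Mazur) and convex, every segment $Y_s+\theta Y_{s,t}$ lies in it, and a locally bounded function is bounded on compact sets by a covering argument. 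With $\bar B$ replaced by this $K$, all your estimates hold verbatim and the proof is complete.
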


 Suppose that $X\in \cC^{\alpha}([0,T];V)$ and $(Y,Y')\in \sD_X^{2\alpha}(\cL(V;K))$. Then, $Y'$ takes values in $\mathcal{L}(V; \mathcal{L}(V;K))$, which can be identified with $\mathcal{L} (V\otimes V;K)$. The next theorem defines a version of  the (linear) rough integral.

\begin{theorem}[Theorem 4.10 (a) of Friz and Hairer \cite{springer-14-friz-hairer}]\label{tlri}
Let $\bX=(X,\X)\in \C^{\alpha}([0,T];V)$. Suppose that $(Y,Y')\in \sD_X^{2\alpha}(\cL(V;K))$. Then the following ``compensated Riemann-Stieltjes sum''
\begin{align}\label{lri}
\sum_{k=1}^n\Xi_{t_k,t_{k-1}}:=\sum_{k=1}^n\big[Y_{t_{k-1}}(X_{t_{k-1},t_k})+Y_{t_{k-1}}'(\X_{t_{k-1},t_k})\big],
\end{align} 
converges as $|\pi|\to 0$, where $\pi=(s=t_1<t_2<\dots t_n=t)$.  Denote by $\cJ_{s,t}(\Xi)$ the limit of \eqref{lri}. Then, $\cJ_{s,t}(\Xi)$ is additive, that is $\cJ_{s,t}(\Xi)=\cJ_{s,u}(\Xi)+\cJ_{u,t}(\Xi)$ for any $(s,u,t)\in [0,T]^3$.  Moreover, the following estimate is satisfied for all $0\leq s\leq t\leq T$:
\begin{align}\label{blri}
\big\|\cJ_{s,t}(\Xi)-\Xi_{s,t}\big\|_K\leq k_{\alpha}(\|X\|_{\alpha}\|R^{Y}\|_{2\alpha}+\|\X\|_{2\alpha}\|Y'\|_{\alpha})|t-s|^{3\alpha},
\end{align}
where 
\begin{align}\label{kalpha}
k_{\alpha}=(1-2^{1-3\alpha})^{-1}.
\end{align}
  By definition, the rough integral of $Y$ against $\bX=(X,\X)$ is defined as follows,
\begin{align}
\int_s^tY_rd\bX_r:=\cJ_{s,t}(\Xi),
\end{align}
for all $(s,t)\in [0,T]^2$.
\end{theorem}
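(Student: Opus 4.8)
The plan is to obtain the integral by the sewing procedure applied to the two-parameter quantity appearing in the compensated Riemann--Stieltjes sum. Recall that here $Y$ is $\cL(V;K)$-valued and, under the identification $\cL(V;\cL(V;K))\cong\cL(V\otimes V;K)$, $Y'$ is $\cL(V\otimes V;K)$-valued while $R^Y$ is $\cL(V;K)$-valued. Set $\Xi_{s,t} := Y_s(X_{s,t}) + Y'_s(\X_{s,t}) \in K$ for $0\le s\le t\le T$, so that the sum in \eqref{lri} is $\sum_{k=1}^n\Xi_{t_{k-1},t_k}$. The heart of the proof is an estimate for the defect $\delta\Xi_{s,u,t} := \Xi_{s,t} - \Xi_{s,u} - \Xi_{u,t}$, $s\le u\le t$. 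I would compute it using additivity of $X$ in the first term, Chen's relation \eqref{lchen} in the second, and then the controlled expansion $Y_{s,u} = Y'_s(X_{s,u}) + R^Y_{s,u}$; the cubic-order term $Y'_s(X_{s,u}\otimes X_{u,t})$ produced by Chen's relation is exactly cancelled, under the above identification, by the contribution $Y'_s(X_{s,u})(X_{u,t})$ coming from the controlled expansion, leaving
\[
\delta\Xi_{s,u,t} = -R^Y_{s,u}(X_{u,t}) - Y'_{s,u}(\X_{u,t}).
\]
Using $R^Y\in\cC^{2\alpha}_2$, $X\in\cC^\alpha$, $Y'\in\cC^\alpha$, $\X\in\cC^{2\alpha}_2$ and the elementary bounds $|u-s|^{2\alpha}|t-u|^{\alpha}\le|t-s|^{3\alpha}$ and $|u-s|^{\alpha}|t-u|^{2\alpha}\le|t-s|^{3\alpha}$, this gives
\[
\|\delta\Xi_{s,u,t}\|_K \le \big(\|X\|_\alpha\|R^Y\|_{2\alpha} + \|\X\|_{2\alpha}\|Y'\|_\alpha\big)|t-s|^{3\alpha},
\]
and $3\alpha>1$ since $\alpha>\tfrac{1}{3}$.

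With this super-additivity bound in hand, I would run the sewing argument explicitly, so as to keep track of the constant. Fix $[s,t]\subset[0,T]$, let $D_n$ be its $n$-th dyadic partition, and let $\Xi^{D_n}$ be the sum of $\Xi$ over consecutive pairs of $D_n$, so $\Xi^{D_0} = \Xi_{s,t}$. Bisecting each interval of $D_n$ gives $\Xi^{D_{n+1}}-\Xi^{D_n} = \sum_{[a,b]\in D_n}\delta\Xi_{a,(a+b)/2,b}$, and since $D_n$ has $2^n$ intervals of length $2^{-n}|t-s|$, writing $M := \|X\|_\alpha\|R^Y\|_{2\alpha}+\|\X\|_{2\alpha}\|Y'\|_\alpha$ one obtains $\|\Xi^{D_{n+1}}-\Xi^{D_n}\|_K \le M|t-s|^{3\alpha}\,2^{(1-3\alpha)n}$. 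Since $1-3\alpha<0$, summing the geometric series shows that $(\Xi^{D_n})_n$ is Cauchy in $K$; call its limit $\cJ_{s,t}(\Xi)$. Then
\[
\|\cJ_{s,t}(\Xi)-\Xi_{s,t}\|_K \le M|t-s|^{3\alpha}\sum_{n\ge0}2^{(1-3\alpha)n} = k_\alpha\big(\|X\|_\alpha\|R^Y\|_{2\alpha}+\|\X\|_{2\alpha}\|Y'\|_\alpha\big)|t-s|^{3\alpha},
\]
with $k_\alpha = (1-2^{1-3\alpha})^{-1}$ as in \eqref{kalpha}, which is \eqref{blri}.

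It then remains to pass from dyadic partitions to arbitrary ones and to establish additivity. For a general partition $\pi$ of $[s,t]$ I would use the point-removal argument: repeatedly delete an interior point whose two adjacent intervals have total length at most $2|t-s|/(\#\pi-1)$; each deletion changes $\sum_k\Xi_{t_{k-1},t_k}$ by a single defect term controlled by the $\delta\Xi$ estimate, so that every Riemann sum lies within $k_\alpha M|t-s|^{3\alpha}$ of $\Xi_{s,t}$ and the sums form a Cauchy net as $|\pi|\to0$. Its limit coincides with $\cJ_{s,t}(\Xi)$ (compare with the dyadic sequence), and \eqref{blri} persists. Additivity $\cJ_{s,t}(\Xi)=\cJ_{s,u}(\Xi)+\cJ_{u,t}(\Xi)$ then follows by concatenating partitions of $[s,u]$ and $[u,t]$ into a partition of $[s,t]$ and letting the mesh tend to $0$. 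One finally sets $\int_s^tY_rd\bX_r := \cJ_{s,t}(\Xi)$.

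I expect the only genuinely delicate point to be the computation of $\delta\Xi_{s,u,t}$: one must keep the operator/tensor identifications straight so that the cubic-order term coming from the controlled expansion of $Y_{s,u}$ matches the one produced by Chen's relation and cancels it, making the defect of order $|t-s|^{3\alpha}$ rather than merely $|t-s|^{2\alpha}$ (which would be useless, since $2\alpha<1$ is allowed). Everything downstream is the routine sewing machinery.
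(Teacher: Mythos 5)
Your proposal is correct and follows the same route the paper indicates: apply the sewing machinery to $\Xi_{s,t}=Y_s(X_{s,t})+Y'_s(\X_{s,t})$, with the key step being the computation $\delta\Xi_{s,u,t}=-R^Y_{s,u}(X_{u,t})-Y'_{s,u}(\X_{u,t})$ via Chen's relation and the controlled expansion, giving a defect of order $|t-s|^{3\alpha}$ with $3\alpha>1$. The only difference is that you unroll the proof of the sewing lemma (dyadic bisection plus point removal) rather than citing Lemma \ref{sew} as the paper does, and your dyadic argument recovers exactly the constant $k_\alpha=(1-2^{1-3\alpha})^{-1}$ of \eqref{kalpha}.
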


Theorem \ref{tlri} can be proved by using the following sewing lemma. In this case, $\gamma=3\alpha>1$ and $k_{\alpha}$ comes from inequality (\ref{sewct}) below. The sewing lemma will also be used later in the theory of nonlinear rough paths.
\begin{lemma}[Lemma 2.1 of Feyel and De la Pradelle \cite{ejp-06-feyel-delapradelle}]\label{sew}
Let $\beta\in (0,1]$, and let $\Xi\in \cC^{\beta}_2([0,T]^2; K)$. Suppose there exist $C>0$ and $\gamma>1$ such that the following inequality holds:
\[
\|\delta\Xi(s,u,t)\|_K:=\|\Xi_{s,t}-\Xi_{s,u}-\Xi_{u,t}\|_K\leq C|t-s|^{\gamma},
\] 
for any $0\leq s\leq u\leq t\leq T$. Then there exists a unique (up to an additive constant) function $\mathcal{J}(\Xi)\in \cC^{\beta}([0,T];V)$,
 such that the following inequality holds
\begin{align}\label{sewct}
\|\mathcal{J}_{s,t}(\Xi)-\Xi_{s,t}\|_K=\|\mathcal{J}_t(\Xi)-\mathcal{J}_s(\Xi)-\Xi_{s,t}\|_K\leq (1-2^{1-\gamma})^{-1}C|t-s|^{\gamma}.
\end{align}
Moreover, $\mathcal{J}_{s,t}(\Xi)$ can be represented as follows,
\begin{align}
\mathcal{J}_{s,t}(\Xi)=\lim_{|\pi|\to 0}\sum_{k=1}^n\Xi_{t_{k-1},t_k},
\end{align}
where $\pi=(s=t_0<t_1<\cdots<t_n=t)$ and the limit is independent of the choice of $\pi$.
\end{lemma}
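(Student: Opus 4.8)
The plan is to construct $\cJ(\Xi)$ by a dyadic limiting procedure, which already yields the sharp constant, and then to identify this limit with the limit of Riemann sums over arbitrary partitions with vanishing mesh.

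First I would fix $0\le s<t\le T$ and, for $n\ge 0$, let $D^n$ denote the partition of $[s,t]$ into $2^n$ subintervals of equal length $2^{-n}(t-s)$ and set $\mu^n_{s,t}:=\sum_{[u,v]\in D^n}\Xi_{u,v}$. Passing from $D^n$ to $D^{n+1}$ inserts the midpoint $w$ of each of the $2^n$ subintervals $[u,v]$, so on each piece the contribution changes by $-\delta\Xi(u,w,v)$, of norm at most $C(2^{-n}(t-s))^\gamma$; summing, $\|\mu^{n+1}_{s,t}-\mu^n_{s,t}\|_K\le C(t-s)^\gamma 2^{n(1-\gamma)}$. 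Since $\gamma>1$ the series $\sum_{n\ge 0}2^{n(1-\gamma)}$ converges to $(1-2^{1-\gamma})^{-1}$, so $(\mu^n_{s,t})_n$ is Cauchy in the Banach space $K$; denote its limit by $\mathcal{A}_{s,t}$. Since $\mu^0_{s,t}=\Xi_{s,t}$, telescoping gives
\[ \|\mathcal{A}_{s,t}-\Xi_{s,t}\|_K\le\sum_{n\ge 0}\|\mu^{n+1}_{s,t}-\mu^n_{s,t}\|_K\le C(t-s)^\gamma\sum_{n\ge0}2^{n(1-\gamma)}=\frac{C(t-s)^\gamma}{1-2^{1-\gamma}}, \]
which is exactly the estimate in \eqref{sewct}.

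Next I would show that $\mathcal{A}$ is additive and equals $\lim_{|\pi|\to0}\Xi_\pi$. The key auxiliary bound is that for \emph{any} partition $\sigma$ of an interval $[a,b]$ one has $\|\Xi_\sigma-\Xi_{a,b}\|_K\le MC(b-a)^\gamma$ for some finite $M=M(\gamma)$, not necessarily the sharp constant: since the lengths $r_{i+1}-r_{i-1}$ over the interior mesh points $r_i$ of $\sigma$ sum to at most $2(b-a)$, one can delete an interior point whose neighbours are at distance $\le 2(b-a)/(\#\sigma-2)$, changing $\Xi_\sigma$ by a defect term controlled by the hypothesis; iterating down to the trivial partition and summing the resulting $\zeta$-type series gives the claim. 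Applying this on each subinterval of $\pi$ shows $\|\Xi_\rho-\Xi_\pi\|_K\le MC|\pi|^{\gamma-1}(t-s)$ whenever $\rho\supseteq\pi$, and comparing two partitions through their common refinement shows $(\Xi_\pi)$ is Cauchy as $|\pi|\to0$, with a limit independent of the chosen sequence; as the $D^n$ form one such sequence, the limit is $\mathcal{A}_{s,t}$. Taking partitions of $[s,t]$ that contain an intermediate point $u$ and splitting the sum then gives $\mathcal{A}_{s,t}=\mathcal{A}_{s,u}+\mathcal{A}_{u,t}$.

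Finally I would set $\cJ_t(\Xi):=\mathcal{A}_{0,t}$ with $\cJ_0(\Xi):=0$; additivity gives $\cJ_{s,t}(\Xi)=\cJ_t(\Xi)-\cJ_s(\Xi)=\mathcal{A}_{s,t}$, so \eqref{sewct} holds, and since $\Xi\in\cC^\beta_2$ and $|t-s|\le T$ it follows that $\|\mathcal{A}_{s,t}\|_K\le(\|\Xi\|_\beta+(1-2^{1-\gamma})^{-1}CT^{\gamma-\beta})|t-s|^\beta$, i.e. $\cJ(\Xi)\in\cC^\beta([0,T];K)$. For uniqueness, if $\widetilde{\cJ}\in\cC^\beta$ also satisfies \eqref{sewct}, then $g:=\cJ(\Xi)-\widetilde{\cJ}$ obeys $\|g_t-g_s\|_K\le 2(1-2^{1-\gamma})^{-1}C|t-s|^\gamma$; writing $g_t-g_s$ as a sum of increments over the uniform partition of $[s,t]$ into $n$ pieces yields $\|g_t-g_s\|_K\le 2(1-2^{1-\gamma})^{-1}C|t-s|^\gamma n^{1-\gamma}\to0$, so $g$ is constant. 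The one genuine subtlety is that the precise constant $(1-2^{1-\gamma})^{-1}$ comes out only of the dyadic construction, whereas a dyadic grid of $[s,t]$ never contains a prescribed interior point $u$, so additivity and the Riemann-sum representation require the separate, deliberately non-sharp comparison argument of the third paragraph; that bookkeeping (pigeonhole, peeling off one point at a time, the refinement estimate) is the fiddliest part, but it is conceptually routine.
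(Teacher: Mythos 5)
Your proof is correct. The paper does not actually prove this lemma --- it is quoted verbatim from Feyel and de la Pradelle and used as a black box --- and your argument is the standard one from that literature: the dyadic refinement yields the sharp constant $(1-2^{1-\gamma})^{-1}$, and the separate Young-type point-removal/common-refinement argument (with its non-sharp constant) is exactly what is needed to get additivity and convergence along arbitrary partitions, which, as you correctly note, the dyadic construction alone cannot provide.
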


The next proposition shows that the rough integral is controlled by $X$.
\begin{proposition}[Theorem 4.10 (b) of Friz and Hairer \cite{springer-14-friz-hairer}]\label{lricrp}
Suppose that $(X,\X)\in \C^{\alpha}([0,T]; V)$ and $(Y,Y')\in \sD_X^{2\alpha}(\cL(V;K))$. Let
\[
Z_t=\int_0^t Y_r d\bX_r.
\]
  Then, $Z$ is an $\alpha$-H\"{o}lder continuous function taking values in $K$. Moreover $Z$ is controlled by $X$ with $Y$ as a Gubinelli derivative.
\end{proposition}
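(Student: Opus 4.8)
The plan is to extract the controlled-rough-path structure of $Z$ directly from the sewing estimate \eqref{blri} supplied by Theorem \ref{tlri}, with essentially no further analysis needed. Recall that with $\Xi_{s,t} = Y_s(X_{s,t}) + Y_s'(\X_{s,t})$ one has $Z_t = \int_0^t Y_r\,d\bX_r = \cJ_{0,t}(\Xi)$, and since $\cJ(\Xi)$ is additive, $Z_{s,t} = \cJ_{s,t}(\Xi)$ for all $0 \le s \le t \le T$. I would take $Z' := Y$ as the candidate Gubinelli derivative — it takes values in $\cL(V;K)$, exactly the space a Gubinelli derivative of a $K$-valued path must live in — and $R^Z_{s,t} := Z_{s,t} - Y_s(X_{s,t})$ as the candidate remainder, so that by construction $Z_{s,t} = Z'_s(X_{s,t}) + R^Z_{s,t}$. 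What remains is to verify the two regularity claims: $R^Z \in \cC^{2\alpha}_2([0,T]^2;K)$ and $Z \in \cC^\alpha([0,T];K)$.

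For the remainder I would use the decomposition
\[
R^Z_{s,t} = \big(Z_{s,t} - \Xi_{s,t}\big) + Y_s'(\X_{s,t}).
\]
The first term is controlled by \eqref{blri}, which bounds it by $k_\alpha(\|X\|_\alpha\|R^Y\|_{2\alpha} + \|\X\|_{2\alpha}\|Y'\|_\alpha)|t-s|^{3\alpha}$; since $|t-s|^{3\alpha} \le T^\alpha |t-s|^{2\alpha}$ on $[0,T]$, this is $O(|t-s|^{2\alpha})$. For the second term, $Y' \in \cC^\alpha([0,T];\cL(V;\cL(V;K)))$ is in particular bounded on the compact interval $[0,T]$, so $\|Y_s'(\X_{s,t})\|_K \le \|Y'\|_\infty \|\X\|_{2\alpha}|t-s|^{2\alpha}$. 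Adding the two bounds gives $\|R^Z\|_{2\alpha} < \infty$.

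For the $\alpha$-H\"older continuity of $Z$ I would bound $Z_{s,t} = Y_s(X_{s,t}) + R^Z_{s,t}$ termwise: the first term by $\|Y\|_\infty\|X\|_\alpha|t-s|^\alpha$ (again $\|Y\|_\infty < \infty$ because $Y \in \cC^\alpha$ on a compact interval) and the second by $T^\alpha\|R^Z\|_{2\alpha}|t-s|^\alpha$, so $Z \in \cC^\alpha([0,T];K)$. Combining the above, $(Z,Y) \in \sD_X^{2\alpha}(K)$, which is precisely the assertion.

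I do not expect a genuine obstacle here: the argument is essentially bookkeeping. The only points requiring a little care are converting the H\"older seminorms of $Y$ and $Y'$ into sup-norm bounds on $[0,T]$ (harmless on a compact interval, using the values at $0$ together with the seminorms) and tracking the interval-length powers so that the $3\alpha$-H\"older bound coming from the sewing lemma is correctly downgraded to the $2\alpha$ bound needed for $R^Z$.
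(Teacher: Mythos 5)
Your proof is correct. The paper does not prove this proposition itself — it cites Theorem 4.10(b) of Friz and Hairer — and your argument (additivity of $\cJ(\Xi)$, the choice $Z'=Y$ with remainder $R^Z_{s,t}=\big(\cJ_{s,t}(\Xi)-\Xi_{s,t}\big)+Y'_s(\X_{s,t})$, the sewing estimate \eqref{blri} to downgrade $3\alpha$ to $2\alpha$ on a bounded interval, and the sup-norm bounds on $Y$ and $Y'$ obtained from their H\"older seminorms) is exactly the standard one used there.
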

\begin{remark}
Controlled rough paths play a role similar to that of adapted (to the natural filtration) semimartingles in the It\^{o} calculus. The corresponding Doob-Meyer's decomposition theorem still holds in the context of rough paths, if $X$ is ``truly'' rough. In this case, the Gubinelli derivative of a controlled rough path of $X$ is unique (see Chapter 6 of Friz and Hairer \cite{springer-14-friz-hairer}).
\end{remark}

In the next proposition, we define the integration of two controlled rough paths.
\begin{proposition}\label{lritcrp}
 Let $V$, $K_1$ and $K_2$ be separable Banach spaces. Suppose that $\bX=(X, \mathbb{X})\in \C^{\alpha}([0,T]; V)$ and $(Y, Y')\in \sD_X^{2\alpha}(K_1)$.
\begin{enumerate}[(i)]
\item {[Remark 4.11 of Friz and Hairer \cite{springer-14-friz-hairer}]} Suppose that $(Z,Z')\in \sD_X^{2\alpha}(K_2)$. The following limit exists
\begin{align}\label{itgcrp}
 \lim_{|\pi|\to 0}\sum_{k=1}^n \big[Z_{t_{k-1}}\otimes Y_{t_{k-1},t_k}+( Z'_{t_{k-1}}\otimes Y'_{t_{k-1}} )(\X_{t_{k-1},t_k})\big],
\end{align}
where $\pi=(s=t_0<t_1<\cdots<t_n=t)$ and defines the integral $\int_s^t Z_r\otimes dY_r$.
\item {[Proposition 7.1 of Friz and Hairer \cite{springer-14-friz-hairer}]} Let $\Y:[0,T]^2\to K_1\otimes K_1$ be given by
\begin{align}\label{dyst}
\Y_{s,t}=\int_s^tY_r\otimes dY_r-Y_s\otimes Y_{s,t},
\end{align}
and the integral in \eqref{dyst} is defined by \eqref{itgcrp}. Then, $\bY:=(Y,\Y)$ is a rough path. Suppose that $(Z,\tZ')\in \sD_Y^{2\alpha}(K_2)$. Let $Z'_t=\tZ'_tY'_t$ for all $t\in [0,T]$.
Then, $(Z,Z')\in \sD_X^{2\alpha}(K_2)$. In addition, the following equality holds
\begin{align}
\int_s^t Z_r\otimes d\bY_r=\int_s^t Z_r\otimes dY_r,
\end{align}
where the integral on the left-hand side is in the sense of Theorem \ref{tlri}, and the integral on the right -hand side is in the sense of \eqref{itgcrp}.
\end{enumerate}
\end{proposition}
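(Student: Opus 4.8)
The plan is to derive part (i) as an application of the sewing lemma (Lemma~\ref{sew}) and then deduce part (ii) from the resulting estimate, together with additivity of the integral and Chen's relation. For part (i), set
\[
\Xi_{s,t} := Z_s\otimes Y_{s,t} + (Z'_s\otimes Y'_s)(\X_{s,t}),
\]
a $(K_2\otimes K_1)$-valued two-parameter function, where $Z'_s\otimes Y'_s$ is regarded as an element of $\cL(V\otimes V;K_2\otimes K_1)$ acting on $\X_{s,t}\in V\otimes V$; one checks immediately that $\Xi\in\cC^{\alpha}_2([0,T]^2;K_2\otimes K_1)$, using that $Z,Y,Z',Y'$ are bounded on $[0,T]$. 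The heart of the matter is the bound $\|\delta\Xi(s,u,t)\|\le C|t-s|^{3\alpha}$. Substituting $Y_{s,t}=Y_{s,u}+Y_{u,t}$, Chen's relation in the form $\X_{s,t}-\X_{s,u}=\X_{u,t}+X_{s,u}\otimes X_{u,t}$, and then $Z_{s,u}=Z'_s(X_{s,u})+R^Z_{s,u}$ and $Y_{u,t}=Y'_s(X_{u,t})+Y'_{s,u}(X_{u,t})+R^Y_{u,t}$, the leading term $Z'_s(X_{s,u})\otimes Y'_s(X_{u,t})$ cancels against $(Z'_s\otimes Y'_s)(X_{s,u}\otimes X_{u,t})$, and every remaining term is a product of H\"older increments of total exponent at least $3\alpha$, with $C$ depending on $\|X\|_{\alpha}$, $\|Y\|_{\alpha}$, $\|Z'\|_{\alpha}$, $\|Y'\|_{\alpha}$, $\|R^Z\|_{2\alpha}$, $\|R^Y\|_{2\alpha}$ and $\|\X\|_{2\alpha}$. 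Since $3\alpha>1$, Lemma~\ref{sew} with $\gamma=3\alpha$ yields a unique additive $\cJ(\Xi)$, which we define to be $\int_s^t Z_r\otimes dY_r$, together with the estimate $\|\cJ_{s,t}(\Xi)-\Xi_{s,t}\|\le k_{\alpha}C|t-s|^{3\alpha}$; the representation as the limit of the sums in \eqref{itgcrp} is the last assertion of Lemma~\ref{sew}.

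For part (ii) I would proceed in three steps. First, $\bY=(Y,\Y)$ is a rough path: combining \eqref{dyst} with part (i) applied to $(Z,Z')=(Y,Y')$ gives $\Y_{s,t}=(Y'_s\otimes Y'_s)(\X_{s,t})+\big(\cJ_{s,t}(\Xi)-\Xi_{s,t}\big)$, so $\|\Y_{s,t}\|\le\|Y'\|_{\infty}^2\|\X\|_{2\alpha}|t-s|^{2\alpha}+k_{\alpha}C|t-s|^{3\alpha}$, whence $\Y\in\cC^{2\alpha}_2([0,T]^2;K_1\otimes K_1)$; and Chen's relation for $(Y,\Y)$ follows from additivity of the integral, since $\Y_{s,t}-\Y_{s,u}-\Y_{u,t}=\big(\int_s^t-\int_s^u-\int_u^t\big)Y\otimes dY-Y_s\otimes Y_{s,t}+Y_s\otimes Y_{s,u}+Y_u\otimes Y_{u,t}=Y_{s,u}\otimes Y_{u,t}$. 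Second, $(Z,Z')\in\sD_X^{2\alpha}(K_2)$: writing $Z_{s,t}=\tZ'_s(Y_{s,t})+R_{s,t}$ with $R\in\cC^{2\alpha}_2$ and $Y_{s,t}=Y'_s(X_{s,t})+R^Y_{s,t}$, one gets $Z_{s,t}=(\tZ'_sY'_s)(X_{s,t})+\big(\tZ'_s(R^Y_{s,t})+R_{s,t}\big)$ with remainder in $\cC^{2\alpha}_2$, and $Z'_t:=\tZ'_tY'_t$ is $\alpha$-H\"older because $Z'_{s,t}=\tZ'_{s,t}Y'_t+\tZ'_sY'_{s,t}$ is a sum of a bounded factor times an $\alpha$-H\"older factor.

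Third, the two integrals agree. The left-hand side, understood in the sense of Theorem~\ref{tlri} applied to the $\cL(K_1;K_2\otimes K_1)$-valued path $t\mapsto Z_t\otimes(\,\cdot\,)$ controlled by $\bY$ with Gubinelli derivative $\tZ'\otimes\mathrm{id}_{K_1}$, is the limit of $\sum_k\big[Z_{t_{k-1}}\otimes Y_{t_{k-1},t_k}+(\tZ'_{t_{k-1}}\otimes\mathrm{id}_{K_1})(\Y_{t_{k-1},t_k})\big]$. By part (i), $\Y_{t_{k-1},t_k}=(Y'_{t_{k-1}}\otimes Y'_{t_{k-1}})(\X_{t_{k-1},t_k})+O(|t_k-t_{k-1}|^{3\alpha})$, and $(\tZ'_{t_{k-1}}\otimes\mathrm{id}_{K_1})\circ(Y'_{t_{k-1}}\otimes Y'_{t_{k-1}})=(\tZ'_{t_{k-1}}Y'_{t_{k-1}})\otimes Y'_{t_{k-1}}=Z'_{t_{k-1}}\otimes Y'_{t_{k-1}}$, so the Riemann sum above equals $\sum_k\big[Z_{t_{k-1}}\otimes Y_{t_{k-1},t_k}+(Z'_{t_{k-1}}\otimes Y'_{t_{k-1}})(\X_{t_{k-1},t_k})\big]$ up to an error bounded by a constant times $\sum_k|t_k-t_{k-1}|^{3\alpha}\le|\pi|^{3\alpha-1}(t-s)\to0$; the latter sum converges to $\int_s^t Z_r\otimes dY_r$ by \eqref{itgcrp}, so the two integrals coincide.

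The main obstacle I anticipate is twofold. In part (i), one must carry out the $\delta\Xi$ computation using the correct left-endpoint substitutions — expanding $Y_{u,t}$ via $Y'_s$ rather than $Y'_u$ and absorbing the discrepancy into an extra $Y'_{s,u}(X_{u,t})$ term — so that only genuine order-$3\alpha$ remainders survive the cancellation. In part (ii), the delicate point is the third step: the two integrals are defined a priori by different approximation schemes (one compensator built from $\Y$, the other from $\X$), and reconciling them rests on the bookkeeping identity $(\tZ'\otimes\mathrm{id}_{K_1})\circ(Y'\otimes Y')=(\tZ'Y')\otimes Y'$ together with control of the accumulated $O(|t_k-t_{k-1}|^{3\alpha})$ errors, which vanish precisely because $3\alpha>1$.
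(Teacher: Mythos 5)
Your argument is correct, and it is essentially the standard proof of this result: the paper itself states Proposition \ref{lritcrp} without proof, citing Remark 4.11 and Proposition 7.1 of Friz--Hairer \cite{springer-14-friz-hairer}, whose proofs proceed exactly as you do — the sewing lemma applied to $\Xi_{s,t}=Z_s\otimes Y_{s,t}+(Z'_s\otimes Y'_s)(\X_{s,t})$ with the cancellation of $Z'_s(X_{s,u})\otimes Y'_s(X_{u,t})$ against $(Z'_s\otimes Y'_s)(X_{s,u}\otimes X_{u,t})$ via Chen's relation, then Chen's relation for $(Y,\Y)$ from additivity of $\cJ$, and the comparison of the two compensated Riemann sums through the identity $(\tZ'\otimes\mathrm{id}_{K_1})\circ(Y'\otimes Y')=Z'\otimes Y'$ with the $O(|t_k-t_{k-1}|^{3\alpha})$ errors vanishing because $3\alpha>1$. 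No gaps.
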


\begin{remark}
Assume the conditions of Proposition \ref{lritcrp} (i) where $K_2=\cL(K_1; K)$. Then,
\begin{align}\label{itgcrpl}
\int_s^t Z_r dY_r:= \lim_{|\pi|\to 0}\sum_{k=1}^n \big[Z_{t_{k-1}}(Y_{t_{k-1},t_k})+ (Z'_{t_{k-1}}Y'_{t_{k-1}}) \X_{t_{k-1},t_k}\big],
\end{align}
and
\begin{align}\label{itgcrpr}
\int_s^t dZ_r (Y_r):= \lim_{|\pi|\to 0}\sum_{k=1}^n \big[Z_{t_{k-1}, t_k}(Y_{t_{k-1}})+ (Z'_{t_{k-1}}Y'_{t_{k-1}}) \X_{t_{k-1},t_k}^*\big],
\end{align}
are well-defined, where $\pi=(s=t_0<t_1<\cdots<t_n=t)$, $(Z_t'Y_t'): V\otimes V\to K$ is given by
\[
(Z_t'Y_t')(x, y)=Z_t'(x)\big[Y_t'(y)\big].
\]
and $*$ denoted the transpose operator on the tensor product space $V\otimes V$.
\end{remark}

In order to deduce It\^{o}'s lemma for controlled rough paths of $X$, we need to introduce the following quadratic compensator. It plays a similar role as the quadratic variation in the It\^{o} calculus.
\begin{definition}\label{qdcp}
Let $\bX=(X,\X)\in\C^{\alpha}([0,T];V)$. Suppose that $(Y, Y')\in \sD_X^{2\alpha}(K_1)$ and $(Z, Z')\in \sD_X^{2\alpha}(K_2)$ respectively.
\begin{enumerate}[(i)]
\item The quadratic compensator $\lgl X\rgl$ is a function on $[0,T]^2$ with values in $V\otimes V$ given by
\begin{align}
\lgl X \rgl_{s,t}:=X_{s,t}\otimes X_{s,t}-2\X_{s,t}.
\end{align}
\item The quadratic compensator $\lgl Z, Y\rgl: [0,T]^2\to K_2\otimes K_1$ is given by
\begin{align}\label{qdcp1}
\lgl Z, Y \rgl_{s,t}:=Z_{s,t}\otimes Y_{s,t}-2\int_s^t Z_{s,r}\otimes dY_r.
\end{align}
\end{enumerate}
\end{definition}
\begin{remark} \label{rqdcp}
\begin{enumerate}[(i)]
 \item The name ``quadratic compensator'' comes from Keller and Zhang (see (2.7) of \cite{spa-16-keller-zhang}). Let $V=\R^d$, then,
\[
\frac{1}{2}\big(\lgl X\rgl_{s,t}+\lgl X \rgl^*_{s,t}\big)=\lgl \tX\rgl_{s,t},
\] 
where $\lgl \tX\rgl_{s,t}$ denotes the quadratic compensator of $X$ in the sense of Keller and Zhang. The transpose term in our setting is involved in the derivative when applying It\^{o}'s lemma. For example, let $f(x)=x\otimes x$ . Then $D^2f(x)(\bz)=\bz+\bz^*$ for all $\bz\in V\otimes V$.

\item Similar as the quadratic variation of It\^{o} processes, the following equality holds:
\begin{align}\label{itoiso}
\lgl Y,Z \rgl_{s,t}=\int_s^t Y'_r\otimes Z'_r d\lgl X\rgl_r.
\end{align}

\item Suppose that $K_2=\cL(K_1; K)$, we write
\begin{align}\label{qdcp2}
\llg Z, Y\rrg_{s,t}:=Z_{s,t}Y_{s,t}-2\int_s^t Z_{s,r} dY_r
\end{align}
and
\begin{align}\label{qdcp3}
\llg Y, Z\rrg_{s,t}:=Z_{s,t}Y_{s,t}-2\int_s^t dZ_r (Y_{s,r}).
\end{align}

\item It is easy to verify that $\lgl X\rgl\in \cC^{2\alpha}_2([0,T]; V\otimes V)$. Similarly, $\lgl Y, Z\rgl$, $\lgl Z, Y\rgl$, $\llg Y, Z\rrg$ and $\llg Z, Y\rrg$ are also $2\alpha$-H\"{o}lder continuous in corresponding spaces.
\end{enumerate}
\end{remark}
The next lemma is It\^{o}'s formula for (linear) rough paths. The proof is quite elementary (see e.g. Theorem 3.4 of Keller and Zhang \cite{spa-16-keller-zhang} for finite-dimensional cases), we omit it here.
\begin{lemma}\label{itolrp}
Let $\bX=(X,\X)\in\C^{\alpha}([0,T];V)$. Suppose that $(Y, Y')\in \sD_X^{2\alpha}(K_1)$ and $(Z, Z')\in \sD_X^{2\alpha}(K_2)$ respectively. Let $f\in \cC^3_{loc}(K_1\times K_2;K)$. Then, the following equality holds:
\begin{align}\label{ito}
f(Y_t, Z_t)-f(Y_s, Z_s)=&\int_s^tD_1f(Y_r, Z_r)d Y_r+\int_s^tD_2f(Y_r, Z_r)d Z_r\\
&+\frac{1}{2}\Big[\int_s^tD_{11}f(Y_r, Z_r) d\lgl Y\rgl_r+\int_s^tD_{12}f(Y_r, Z_r) d\lgl Y, Z\rgl_r\Big]\nonumber\\
&+\frac{1}{2}\Big[\int_s^tD_{21}f(Y_r, Z_r) d\lgl Z, Y\rgl_r+\int_s^tD_{22}f(Y_r, Z_r) d\lgl Z\rgl_r\Big]\nonumber,
\end{align}
where the first two integrals are defined in \eqref{itgcrpl} and last four integrals are Young's integrals.
\end{lemma}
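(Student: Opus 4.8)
The plan is to prove \eqref{ito} by the sewing lemma, in the spirit of the proof of Theorem \ref{tlri}: exhibit a local germ $\Xi_{s,t}$ whose compensated Riemann sums converge to the right-hand side of \eqref{ito}, show that the increment $f(Y_t,Z_t)-f(Y_s,Z_s)$ agrees with $\Xi_{s,t}$ up to an error of order $|t-s|^{3\alpha}$, and conclude by telescoping. First I would view $\Theta:=(Y,Z)$ as a single controlled rough path in $\sD_X^{2\alpha}(K_1\times K_2)$ with Gubinelli derivative $\Theta':=(Y',Z')$. Since $Y$ and $Z$ are continuous, $\{\Theta_r:r\in[0,T]\}$ is a bounded set, so every quantity below involves $f$ and its derivatives up to order three only on that fixed bounded set, on which they are bounded and Lipschitz; in particular, Lemma \ref{cmpcr}, applied with $K_1$ replaced by $K_1\times K_2$ and $\phi$ by $D_1f$ (resp.\ $D_2f$), shows that $D_1f(\Theta)$ and $D_2f(\Theta)$ are controlled rough paths, whose Gubinelli derivatives are, by the chain rule, $(D_1f(\Theta))'=D_{11}f(\Theta)\,Y'+D_{21}f(\Theta)\,Z'$ and $(D_2f(\Theta))'=D_{12}f(\Theta)\,Y'+D_{22}f(\Theta)\,Z'$ (interpreted on the appropriate tensor slots). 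Hence the first two integrals in \eqref{ito} exist by Theorem \ref{tlri}. Likewise $D_{ij}f(\Theta)$ is $\alpha$-H\"older, while the quadratic compensators $\lgl Y\rgl$ (defined by $\lgl Y\rgl_{s,t}=Y_{s,t}\otimes Y_{s,t}-2\Y_{s,t}$), $\lgl Y,Z\rgl$, $\lgl Z,Y\rgl$, $\lgl Z\rgl$ are $2\alpha$-H\"older (Remark \ref{rqdcp}(iv)), so the last four integrals in \eqref{ito} are classical Young integrals because $\alpha+2\alpha>1$.

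Next I would set $\Xi_{s,t}$ equal to the sum of the natural germs of these six integrals. For the two rough integrals, by \eqref{itgcrpl},
\begin{align*}
\Xi^{1}_{s,t}&:=D_1f(\Theta_s)(Y_{s,t})+\big((D_1f(\Theta))'_s\,Y'_s\big)(\X_{s,t}),\\
\Xi^{2}_{s,t}&:=D_2f(\Theta_s)(Z_{s,t})+\big((D_2f(\Theta))'_s\,Z'_s\big)(\X_{s,t});
\end{align*}
and for the four Young integrals, $\tfrac12 D_{11}f(\Theta_s)(\lgl Y\rgl_{s,t})$, $\tfrac12 D_{12}f(\Theta_s)(\lgl Y,Z\rgl_{s,t})$, $\tfrac12 D_{21}f(\Theta_s)(\lgl Z,Y\rgl_{s,t})$ and $\tfrac12 D_{22}f(\Theta_s)(\lgl Z\rgl_{s,t})$. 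Each of the six integrals is, by Theorem \ref{tlri} (for the two rough ones) and by Lemma \ref{sew} (for the four Young ones), the limit as $|\pi|\to 0$ of the left-point Riemann-type sum of its germ; summing, the right-hand side of \eqref{ito} equals $\lim_{|\pi|\to0}\sum_k\Xi_{t_{k-1},t_k}$ over partitions $\pi=(s=t_0<\dots<t_n=t)$. Since the left-hand side telescopes, $f(\Theta_t)-f(\Theta_s)=\sum_k\big(f(\Theta_{t_k})-f(\Theta_{t_{k-1}})\big)$, it then suffices to prove
\[
\big\|f(\Theta_t)-f(\Theta_s)-\Xi_{s,t}\big\|_K\lesssim|t-s|^{3\alpha}\qquad(0\le s\le t\le T),
\]
since this forces $\big\|\sum_k\big(f(\Theta_{t_k})-f(\Theta_{t_{k-1}})-\Xi_{t_{k-1},t_k}\big)\big\|_K\lesssim|\pi|^{3\alpha-1}(t-s)\to0$.

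To prove the displayed bound, I would Taylor-expand $f$ to second order: using the boundedness of $D^3f$ on the relevant set and the $\alpha$-H\"older continuity of $\Theta$,
\[
f(\Theta_t)-f(\Theta_s)=D_1f(\Theta_s)(Y_{s,t})+D_2f(\Theta_s)(Z_{s,t})+\tfrac12 D^2f(\Theta_s)\big(\Theta_{s,t}^{\otimes2}\big)+O(|t-s|^{3\alpha}),
\]
where $D^2f(\Theta_s)(\Theta_{s,t}^{\otimes2})$ expands into the four terms $D_{11}f(\Theta_s)(Y_{s,t}^{\otimes2})$, $D_{12}f(\Theta_s)(Y_{s,t}\otimes Z_{s,t})$, $D_{21}f(\Theta_s)(Z_{s,t}\otimes Y_{s,t})$, $D_{22}f(\Theta_s)(Z_{s,t}^{\otimes2})$. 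Then I would substitute the identities (which are exactly Definition \ref{qdcp}(ii) together with \eqref{dyst})
\[
Y_{s,t}^{\otimes2}=\lgl Y\rgl_{s,t}+2\Y_{s,t},\qquad Y_{s,t}\otimes Z_{s,t}=\lgl Y,Z\rgl_{s,t}+2\int_s^tY_{s,r}\otimes dZ_r,
\]
and the two analogous ones for $Z_{s,t}\otimes Y_{s,t}$ and $Z_{s,t}^{\otimes2}$. The ``$\lgl\cdot\rgl$''-halves reproduce the Young part of $\Xi_{s,t}$ verbatim; for the ``double-integral''-halves, the same sewing estimate as in \eqref{blri}, now applied to the integral \eqref{itgcrp}, gives $\Y_{s,t}=(Y'_s\otimes Y'_s)(\X_{s,t})+O(|t-s|^{3\alpha})$, $\int_s^tY_{s,r}\otimes dZ_r=(Y'_s\otimes Z'_s)(\X_{s,t})+O(|t-s|^{3\alpha})$, and likewise for the remaining two. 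Substituting these and comparing with $\Xi^{1}_{s,t}+\Xi^{2}_{s,t}$ — whose $\X$-corrections unfold, via the chain-rule form of $(D_1f(\Theta))'$ and $(D_2f(\Theta))'$, into exactly $D_{11}f(\Theta_s)((Y'_s\otimes Y'_s)\X_{s,t})+D_{21}f(\Theta_s)((Z'_s\otimes Y'_s)\X_{s,t})$ and $D_{12}f(\Theta_s)((Y'_s\otimes Z'_s)\X_{s,t})+D_{22}f(\Theta_s)((Z'_s\otimes Z'_s)\X_{s,t})$ — the two sides differ by $O(|t-s|^{3\alpha})$, which is the required bound and completes the proof.

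The hard part will be the tensor/operator bookkeeping rather than the analysis: one must keep track, in the Banach-space setting, of which slot of each second-order operator $D_{ij}f(\Theta_s)$ receives which tensor factor (and of the ordering conventions in Definition \ref{qdcp}), verify that the Gubinelli derivatives delivered by Lemma \ref{cmpcr} recombine exactly into the $\X$-corrections coming from the four ``double-integral'' terms, and set up both the Young-type sewing estimate (Lemma \ref{sew} with $\gamma=3\alpha$) and the sewing estimate for the tensor integral \eqref{itgcrp}; by contrast, the analytic ingredients — Taylor's theorem, the $\alpha$-H\"older bounds on $Y$ and $Z$, and $3\alpha>1$ — are entirely routine. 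As an alternative to the telescoping argument, one may simply observe that $f(\Theta_\cdot)$ and the right-hand side of \eqref{ito} are both sewings of the single germ $\Xi$ above, and invoke the uniqueness clause of Lemma \ref{sew}.
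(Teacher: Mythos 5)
The paper omits the proof of this lemma, referring to Keller and Zhang for the finite-dimensional case, so there is nothing to compare line by line; your argument is the standard one that reference (and the rough-path literature generally) uses. Your proposal is correct: the second-order Taylor expansion, the substitution $Y_{s,t}\otimes Z_{s,t}=\lgl Y,Z\rgl_{s,t}+2\int_s^tY_{s,r}\otimes dZ_r$ (and its three analogues), the identification of the $\X$-corrections via the chain rule for Gubinelli derivatives from Lemma \ref{cmpcr}, and the telescoping/sewing-uniqueness conclusion all fit together as you describe, with only the tensor-slot bookkeeping left to write out.
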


Let $\bX=(X,\X)\in \C^{\alpha}([0,T]; V)$, and let $f:K\to \cL(V;K)$. Consider the following RDE:
\begin{align}\label{lrde}
Y_t=y+\int_0^t f(Y_r)d\bX_r.
\end{align}

\begin{definition}
An $\alpha$-H\"{o}lder continuous function $Y$ is said to be a solution to (\ref{lrde}), if the following properties are satisfied:
\begin{enumerate}[(i)]
\item $(Y, f(Y))\in \sD_X^{2\alpha}(K)$ and $f(Y)\in \sD_X^{2\alpha}(\cL(V;K))$.
\item Equality (\ref{lrde}) holds for all $t\in [0,T]$, where the integral on the right-hand side is a rough integral in the sense of Theorem \ref{tlri}.
\end{enumerate}
\end{definition}
This equation has been intensively studied in the literatures (see e.g. \cite{amre-08-davie,jde-08-friz-victoir,ejp-09-lejay,jde-06-lejay-victoir,rmi-98-lyons}). Some local and global existence and uniqueness results are given in these papers under certain conditions. Unlike regular ordinary differential equations, the linear growth of the vector field $f$ is not enough to guarantee the global existence. Counterexamples can be seen in Section 1 of Lejay \cite{sdp-12-lejay}.

Assume that $f$ is a linear function. The next theorem provides the existence and uniqueness of the RDE (\ref{lrde}) and also gives an estimate of the solution.
\begin{theorem}[Theorem 2 of Lejay \cite{ejp-09-lejay}]\label{tlrde}
Suppose that $f(Y)=AY$ for some bounded linear operator $A\in \cL(K;\cL(V;K))$. Then, there exists a unique solution to (\ref{lrde}) on any time interval $[0,T]$. In addition, the following estimate holds:
\begin{align*}
\sup_{t\in[0,T]}|Y_t-y|\leq& |y|\exp\Big\{\big(CT\|A\|_{\cL(K;\cL(V;K))}^{\frac{1}{\alpha}}\big) \max\big\{1,(\|X\|_{\alpha}+\|\X\|_{2\alpha})^{\frac{1}{\alpha}}\big\}\Big\},
\end{align*}
for some universal constant $C>0$.
\end{theorem}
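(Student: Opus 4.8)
The plan is to prove global existence, global uniqueness and the quantitative bound at once, by the classical scheme of solving on a short interval and then concatenating, exploiting the feature --- special to linear $f$ --- that the length of the short interval may be chosen independently of the size of the initial value. Write $a:=\|A\|_{\cL(K;\cL(V;K))}$ and $\|\bX\|_\alpha:=\|X\|_\alpha+\|\X\|_{2\alpha}$. For the local step I would fix $s\in[0,T)$, $\tau\in(0,T-s]$, $\eta\in K$, and on the affine space $\{(Y,Y')\in\sD_X^{2\alpha}([s,s+\tau];K):Y_s=\eta,\ Y'_s=A\eta\}$ --- complete for the metric $\|\,\tilde Y'\|_\alpha+\|R^Y\|_{2\alpha}$ --- consider
\[
\mathcal{M}(Y,Y'):=\Big(\eta+\int_s^{\cdot}AY_r\,d\bX_r,\ AY\Big).
\]
Since $y\mapsto Ay$ is linear, Lemma \ref{cmpcr} gives $(AY,AY')\in\sD_X^{2\alpha}(\cL(V;K))$, so the rough integral makes sense by Theorem \ref{tlri} and $\mathcal{M}(Y,Y')$ is again controlled by $X$ with Gubinelli derivative $AY$ (Proposition \ref{lricrp}); moreover $\mathcal{M}$ is affine. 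Plugging \eqref{blri} (together with $\|R^{AY}\|_{2\alpha}\le a\|R^Y\|_{2\alpha}$, $\|(AY)'\|_\alpha\le a\|Y'\|_\alpha$, the higher $a$-powers absorbed via $a\tau^\alpha\le1$) into the standard fixed-point argument for rough differential equations (Friz and Hairer \cite{springer-14-friz-hairer}) shows $\mathcal{M}$ is a contraction provided $\tau\le\tau_0:=\big(c_\alpha/[(1+a)(1+\|\bX\|_\alpha)]\big)^{1/\alpha}$ with $c_\alpha\in(0,1)$ depending only on $\alpha$ (through $k_\alpha$ of \eqref{kalpha}); its unique fixed point solves $Y_t=\eta+\int_s^tAY_r\,d\bX_r$ on $[s,s+\tau_0]$, and $\tau_0$ is independent of $\eta$ --- the point that fails for nonlinear $f$.

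Next I would estimate the growth over one step. On $[s,s+\tau_0]$ the solution satisfies $Y_{s,t}=AY_sX_{s,t}+R^Y_{s,t}$, whence $\|Y\|_\alpha\le a|Y_s|+\tau_0^\alpha\|R^Y\|_{2\alpha}$; and, since $R^Y_{s,t}=A^2Y_s\X_{s,t}+\big(\cJ_{s,t}(\Xi)-\Xi_{s,t}\big)$ with $\Xi$ the compensated sum built from $AY$ and $A^2Y$, the bound \eqref{blri} together with $\|R^{AY}\|_{2\alpha}\le a\|R^Y\|_{2\alpha}$ controls $\|R^Y\|_{2\alpha}$ by a constant multiple of $a^2\|\bX\|_\alpha\big(|Y_s|+\tau_0^\alpha\|Y\|_\alpha\big)$, the self-referential term $a\tau_0^\alpha\|X\|_\alpha\|R^Y\|_{2\alpha}$ being moved to the left since $a\tau_0^\alpha\|\bX\|_\alpha\le c_\alpha$. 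Feeding one inequality into the other and using $a\tau_0^\alpha\le c_\alpha$ repeatedly yields $\|Y\|_{\alpha,[s,s+\tau_0]}\le C_\alpha'a|Y_s|$, hence
\[
\sup_{t\in[s,s+\tau_0]}|Y_t-Y_s|\le C_\alpha'a\tau_0^\alpha|Y_s|\le C_\alpha'c_\alpha\,|Y_s|,
\]
so that over one step the relative increment of $|Y|$ is at most a universal constant $\delta:=C_\alpha'c_\alpha$ (shrink $c_\alpha$ so that $\delta\le1$).

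Because $\tau_0$ does not depend on the initial value, I would then solve successively on $[0,\tau_0],[\tau_0,2\tau_0],\dots,[(N-1)\tau_0,T]$ with $N:=\lceil T/\tau_0\rceil$, starting each piece from the endpoint of the previous one; additivity of the rough integral (Theorem \ref{tlri}) and the matching of Gubinelli derivatives at the grid points make the concatenation a solution of \eqref{lrde} on $[0,T]$, and local uniqueness propagates across the grid to global uniqueness. Telescoping the one-step bounds of the previous step gives $\sup_{t\in[0,T]}|Y_t-y|\le|y|\big(\prod_{k=1}^N(1+\delta_k)-1\big)\le|y|\big(e^{N\delta}-1\big)$. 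If $T\le\tau_0$, then $N=1$ and $\sup_t|Y_t-y|\le\delta|y|\le|y|$, which is dominated by the (nonnegative-exponent) right-hand side of the claim; if $T>\tau_0$, then $N\le2T/\tau_0$, so $N\delta\le2\delta c_\alpha^{-1/\alpha}\,T\big((1+a)(1+\|\bX\|_\alpha)\big)^{1/\alpha}$, and bounding $\big((1+a)(1+\|\bX\|_\alpha)\big)^{1/\alpha}\le 4^{1/\alpha}a^{1/\alpha}\max\{1,\|\bX\|_\alpha^{1/\alpha}\}$ (distinguishing $a\le1$ or $a\ge1$ and likewise for $\|\bX\|_\alpha$, and absorbing the leftover bounded power of $a$ into the constant, as in Lejay's statement) gives
\[
\sup_{t\in[0,T]}|Y_t-y|\le|y|\exp\Big\{CT\,a^{1/\alpha}\max\big\{1,(\|X\|_\alpha+\|\X\|_{2\alpha})^{1/\alpha}\big\}\Big\}
\]
with $C$ depending only on $\alpha$; the case $a=0$ is trivial since then $Y\equiv y$.

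The real work is concentrated in the one-step estimate: solving the coupled inequalities for $\|Y\|_\alpha$ and $\|R^Y\|_{2\alpha}$ on the short interval and verifying that the resulting growth factor $\delta$ is \emph{universal}, i.e.\ depends on $\alpha$ alone and not on $a$ or $\bX$ --- this is exactly what dictates the stated form of $\tau_0$. A secondary, purely bookkeeping nuisance is that $\X$ naturally couples to $a^2$ (through $(AY)'=A^2Y$), so producing the clean exponent $\|A\|^{1/\alpha}$ forces one either to reduce the $a^2$-terms using $a\tau_0^\alpha\le1$ or, as here and in Lejay \cite{ejp-09-lejay}, to let the universal constant $C$ swallow a bounded power of $a$. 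Everything else --- the fixed point, and the concatenation and uniqueness --- is routine once Theorem \ref{tlri} and Proposition \ref{lricrp} are available.
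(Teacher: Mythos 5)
You should first note that the paper itself contains no proof of this theorem: it is quoted verbatim from Lejay \cite{ejp-09-lejay}, so the comparison here is against the standard argument rather than anything in the text. Your overall architecture --- an affine fixed point on a short interval whose length is independent of the initial value (the feature special to linear vector fields), concatenation across a uniform grid, and telescoping of the per-step relative growth --- is the right one and is essentially how Lejay and Friz--Hairer treat linear RDEs. Existence and uniqueness do come out of your scheme.

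The gap is in the final quantitative step. Writing $a=\|A\|_{\cL(K;\cL(V;K))}$ as you do, your calibration $\tau_0=\big(c_\alpha/[(1+a)(1+\|\bX\|_\alpha)]\big)^{1/\alpha}$ forces $N\asymp T\big((1+a)(1+\|\bX\|_\alpha)\big)^{1/\alpha}$ steps, and the inequality you then invoke, $\big((1+a)(1+\|\bX\|_\alpha)\big)^{1/\alpha}\le 4^{1/\alpha}a^{1/\alpha}\max\{1,\|\bX\|_\alpha^{1/\alpha}\}$, is false for small $a$: the left side is at least $1$ while the right side tends to $0$. The offending factor is $a^{-1/\alpha}$, which is unbounded, so it cannot be ``absorbed into the universal constant.'' Nor does sharper per-step accounting rescue it: even using $\delta_k\lesssim a\|\bX\|_\alpha\tau_0^\alpha$ per step, the total exponent your partition yields is of order $T\,a\max\{1,\|\bX\|_\alpha\}^{1/\alpha}$, which for $a<1$ exceeds the claimed $T\,a^{1/\alpha}\max\{1,\|\bX\|_\alpha^{1/\alpha}\}$ by the unbounded factor $a^{1-1/\alpha}$ (take $a=\varepsilon$, $\|\bX\|_\alpha=1$, $T=\varepsilon^{-1}$: your bound gives $e^{C}$, whereas the statement demands a bound tending to $|y|$). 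The repair is to calibrate the step length to $A$ as well: take $\tau_0^\alpha=c_\alpha/\big(a(1+\|X\|_\alpha+\|\X\|_{2\alpha})\big)$, capped at $T$. The fixed point still contracts, since every term in the contraction estimate carries at least one factor of $a\tau^\alpha(\|X\|_\alpha+\|\X\|_{2\alpha})$ or its square; the per-step relative growth is still a universal $\delta\le 1$; and now $N\le 1+c_\alpha^{-1/\alpha}T\,a^{1/\alpha}(1+\|\bX\|_\alpha)^{1/\alpha}$, so telescoping gives exactly the stated exponent. The leftover single step is harmless because one step contributes relative growth at most $1$ and the right-hand side of the claim is always at least $|y|$. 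This is precisely Lejay's device of subdividing $[0,T]$ so that $\|A\|$ times the local size of the rough path is a fixed constant, rather than so that the local size alone is small.
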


	\section{Nonlinear rough integrals}\label{s.3}
\subsection{Definitions}\label{s.3.1}
Fix a time inteval $[0,T]$. Suppose that $\alpha\in (\frac{1}{3},\frac{1}{2}]$. In this section, we aim to define the following nonlinear integral:
\[
\int_s^t W(dr, Y_r).
\]
Here $W$ is $\alpha$-H\"{o}lder continuous in time, and differentiable in space, and $Y$ is $\alpha$-H\"{o}lder continuous. The idea is as follows. Assume that $Y$ is controlled by $W$, that is $Y_{s,t}=W_{s,t}(\dY_s)+O(|t-s|^{2\alpha})$. Then, we approximate the nonlinear integral by the following expression: 
\begin{align*}
\int_s^t W(dr, Y_r)\approx &\int_s^t W(dr, Y_s)+\int_s^t DW(dr, Y_s)Y_{r,s}\\
\approx &\int_s^t W(dr, Y_s)+\int_s^t DW(dr, Y_s)W_{s,r}(\dY_s)\\
=&W_{s,t}(Y_s)+\int_s^t DW(dr, y)W_{s,r}(x)\Big|_{(x,y)=(\dY_s, Y_s)},
\end{align*}
with the error of order $O(|t-s|^{3\alpha})$. This allows us to pass to the limit as $|\pi|\to 0$ in the following expression
\begin{align*}
\sum_{k=1}^{n}\Big[W_{t_{k-1},t_k}(Y_{t_{k-1}})+\int_{t_{k-1}}^{t_k} DW(dr, y)W(r, x)\Big|_{(x,y)=(\dY_{t_{k-1}}, Y_{t_{k-1}})}\Big],
\end{align*}
where $\pi=(s=t_0<t_1<\cdots<t_n=t)$. The limit is a desired version of the nonlinear integral.

To this end, we need to introduce the following definitions. Let $n$ be any nonnegative integer. We denote by $\mathcal{I}_n$ the set of all multi-indexes $\bbeta_n$ of length $n+1$. That is, $\bbeta_n=(\beta_0,\dots,\beta_n)$, where $\beta_0,\dots,\beta_n$ are nonnegative real numbers. These multi-indexes will be used to characterize the growth of a function and its spatial derivatives.
\begin{definition}\label{dnrf}
	\begin{enumerate}[(i)]	
		\item $\cC^{\alpha,\bbeta_n}([0,T]\times V;K)$ is the space of functions such that the following seminorm is finite:
		\begin{align}\label{ccv}
		\|\Phi\|_{\alpha,\bbeta_n}:=\sum_{k=0}^n\sup_{\substack{s\neq t\in [0,T]\\x\in V}}\frac{\|D^k\Phi_{s,t}(x)\|_{\kL_k(V;K)}}{|t-s|^{\alpha}(1+\|x\|_V)^{\beta_k}},
		\end{align}
		where $D^k$ is the $k$-th Fr\'{e}chet derivative operator, and $\kL_k(V;K)$ is the corresponding linear space of derivatives. That is, $\kL_0(V;K)=K$ and $\kL_k(V;K)=\mathcal{L}(V;\kL_{k-1}(V;K))$ for all $k=1,2,\dots, n$.
		
	\item $\cC^{\alpha,\bbeta^1_n,\bbeta^2_n}_2([0,T]^2\times V^2;K)$ is the space of functions such that the following seminorm is finite:
	\begin{align}
	\|\Psi\|_{\alpha,\bbeta^1_n,\bbeta^2_n}:=\sum_{k=0}^n\sup_{\substack{s\neq t\in [0,T]\\ \bx=(x_1,x_2)\in V^2}}\frac{\|D^k\Psi_{s,t}(\bx)\|_{\kL_k(V^2;K)}}{|t-s|^{\alpha}(1+\|x_1\|_{V})^{\beta^1_k}(1+\|x_2\|_{V})^{\beta^2_k}},
	\end{align}
	where $\kL_k(V^2;K)$ are the corresponding linear spaces of derivatives and the product space $V^2$ is treated as a Banach space equipped with the norm $\|\bx\|_{V^2}=\|x_1\|_V+\|x_2\|_V$.
	\end{enumerate}
	\end{definition}
	For any positive integer $m\leq n$, we write $\bbeta_n-m=(\beta_0,\dots,\beta_{n-m})$. Then, by definition, it is easy to verify that $\cC^{\alpha,\bbeta_n}([0,T]\times V;K)\subset \cC^{\alpha,\bbeta_n-m}([0,T]\times V;K)$. Let $\bbeta_n, \tbbeta_n\in\mathcal{I}_n$, we write $\bbeta_n\leq \tbbeta_n$ if $\beta_k\leq \tbeta_k$ for all $k=0,\dots, n$. Then, $\cC^{\alpha,\bbeta_n}([0,T]\times V;K)\subset \cC^{\alpha,\tbbeta_n}([0,T]\times V;K)$ if $\bbeta_n\leq \tbbeta_n$. The space $\cC^{\alpha,\bbeta_n^1,\bbeta_n^2}_2([0,T]^2\times V^2;K)$ also has a similar property. Given a multi-index $\bbeta_n$ where $n\geq 1$, we make use of the following notations: 
	\begin{align}\label{beta12}
	\bbeta^*_{n-1}=(\beta^*_0,\dots,\beta^*_{n-1})\ \mathrm{and} \ \bbeta^{**}_{n-1}=(\beta^{**}_0,\dots,\beta^{**}_{n-1}),
	\end{align}
	where $\beta^*_k:=\max\{\beta_0,\dots, \beta_{k}\}$ and $\beta^{**}_k:=\max\{\beta_1,\dots,\beta_{k+1}\}$ for all $0\leq k\leq n-1$.

Given multi-indexes $\bbeta_2$, $\bbeta^*_1$ and $\bbeta^{**}_1$, let $\Phi\in \cC^{\alpha,\bbeta_2}([0,T]\times V;K)$ and $\Psi \in\cC^{\alpha,\bbeta_1^1,\bbeta_1^2}_2([0,T]^2\times V^2;K)$. We make use of the following notations: $\sR^{\Phi}:[0,T]\times V^2\to K$ and $\sD^{\Psi}:[0,T]^2\times V^4\to K$ given by
\begin{align}\label{crphi}
\sR^{\Phi}_t(x,y):=\Phi_t(y)-\Phi_t(x)-D\Phi_t(x)(y-x),\ x, y\in V
\end{align}
and
	\begin{align}\label{sdpsi} 
	 \sD^{\Psi}_{s,t}(\bx,\by)=\Psi_{s,t}(\by)-\Psi_{s,t}(\bx),\ \bx,\by\in V^2.
	 \end{align}
	The following lemma provides the estimates for $\sR^{\Phi}$, $\sD^{\Psi}$ and their derivatives. It will be used in the proof of the stability of nonlinear rough integrals.
	\begin{lemma}\label{crsd}
	Suppose that $\sR^{\Phi}$ and $\sD^{\Psi}$ be given in (\ref{crphi}) and (\ref{sdpsi}), respectively. Then the following inequalities are satisfied:
 \begin{align}
 \|\sR^{\Phi}_{s,t}(x, y)\|_K\leq &\frac{1}{2}\|\Phi\|_{\alpha,\bbeta_2}(1+\|x\|_V+\|y\|_V )^{\beta_2}\|y-x\|_V^2|t-s|^{\alpha},\label{crsd1}\\
 \|\sD^{\Psi}_{s,t}(\bx,\by)\|_K\leq &\|\Psi\|_{\alpha,\bbeta_1^1,\bbeta_1^2}(1+\|x_1\|_V+\|y_1\|_V)^{\beta^1_1}(1+\|x_2\|_V+\|y_2\|_V)^{\beta^2_1}\nonumber\\
 &\times\|\by-\bx\|_{V^2}|t-s|^{\alpha}.\label{crsd2}
 \end{align}
 If furthermore $\Phi\in \cC^{\alpha,\bbeta_3}([0,T]\times V;K)$ and $\Psi \in\cC^{\alpha,\bbeta^*_2,\bbeta^{**}_2}_2([0,T]^2\times V^2;K)$. Then, for all $\bz^1,\bz^2\in V^2$, the following inequalities are satisfied:
 \begin{align}
 \|D\sR^{\Phi}_{s,t}(x, y)(z_1, z_2)\|_K\leq& \|\Phi\|_{\alpha,\bbeta_3}(1+\|x\|_V+\|y\|_V)^{\beta_2\vee\beta_3}\nonumber\\
 &\times\big[\|y-x\|_V^2\|z_2\|_V+\|y-x\|_V\|z_1-z_2\|_V\big]|t-s|^{\alpha},\label{crsd3}\\
	 \|D\sD^{\Psi}_{s,t}(\bx,\by)(\bz_1,\bz_2)\|_K\leq & \|\Psi\|_{\alpha,\bbeta^*_2,\bbeta^{**}_2}(1+\|x_1\|_V+\|y_1\|_V)^{\beta^1_1\vee\beta^1_2}(1+\|x_2\|_V+\|y_2\|_V)^{\beta^2_1\vee\beta^2_2}\label{crsd4}\nonumber\\
	 &\times\big[\|\by-\bx\|_{V^2}\|\bz_2\|_{V^2}+\|\bz_1-\bz_2\|_{V^2}\big]|t-s|^{\alpha}.
	 \end{align}
	\end{lemma}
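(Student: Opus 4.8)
The plan is to prove each of the four inequalities by reducing the increment over $[s,t]$ to a single application of the relevant seminorm bound from Definition~\ref{dnrf}, using Taylor-type expansions with integral remainders to handle the ``second-order'' structure of $\sR^\Phi$ and the derivative terms. Throughout I will abbreviate $\Phi_{s,t}(\cdot):=\Phi_t(\cdot)-\Phi_s(\cdot)$ and note that $\Phi_{s,t}\in\cC^{\bbeta_n}(V;K)$ (as a function of the spatial variable alone) with derivative bounds $\|D^k\Phi_{s,t}(x)\|_{\kL_k(V;K)}\le\|\Phi\|_{\alpha,\bbeta_n}|t-s|^\alpha(1+\|x\|_V)^{\beta_k}$ for $k=0,\dots,n$; this is the only input from the ambient H\"older structure, and it converts everything into a purely static (Banach-space calculus) estimate on $\Phi_{s,t}$ and $\Psi_{s,t}$.

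For \eqref{crsd1}: writing $\sR^\Phi_{s,t}(x,y)=\Phi_{s,t}(y)-\Phi_{s,t}(x)-D\Phi_{s,t}(x)(y-x)$, I would apply the second-order Taylor formula with integral remainder,
\[
\sR^\Phi_{s,t}(x,y)=\int_0^1(1-\theta)\,D^2\Phi_{s,t}\big(x+\theta(y-x)\big)(y-x,y-x)\,d\theta,
\]
take norms inside, bound $\|D^2\Phi_{s,t}(x+\theta(y-x))\|\le\|\Phi\|_{\alpha,\bbeta_2}|t-s|^\alpha(1+\|x+\theta(y-x)\|_V)^{\beta_2}$, use $\|x+\theta(y-x)\|_V\le\|x\|_V+\|y\|_V$ to replace the base by $(1+\|x\|_V+\|y\|_V)^{\beta_2}$, and pull out $\int_0^1(1-\theta)\,d\theta=\tfrac12$. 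For \eqref{crsd2}: here $\sD^\Psi_{s,t}(\bx,\by)=\Psi_{s,t}(\by)-\Psi_{s,t}(\bx)$ is a plain first-order increment in the spatial variable, so I write it as $\int_0^1 D\Psi_{s,t}(\bx+\theta(\by-\bx))(\by-\bx)\,d\theta$, bound $\|D\Psi_{s,t}\|$ by $\|\Psi\|_{\alpha,\bbeta_1^1,\bbeta_1^2}|t-s|^\alpha\prod_i(1+\|x_i+\theta(y_i-x_i)\|_V)^{\beta^i_1}$, and again use $\|x_i+\theta(y_i-x_i)\|_V\le\|x_i\|_V+\|y_i\|_V$ together with $\|\by-\bx\|_{V^2}=\|y_1-x_1\|_V+\|y_2-x_2\|_V$.

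For \eqref{crsd3} and \eqref{crsd4}, which involve the Fr\'echet derivatives of $\sR^\Phi$ and $\sD^\Psi$ with respect to the pair of spatial points $(x,y)$ (resp. $(\bx,\by)$), I would first differentiate the defining formulas. For $\sR^\Phi$, $D\sR^\Phi_{s,t}(x,y)(z_1,z_2)$ (differentiating in $x$ along $z_1$ and in $y$ along $z_2$) equals
\[
D\Phi_{s,t}(y)(z_2)-D\Phi_{s,t}(x)(z_1)-D^2\Phi_{s,t}(x)(z_1,y-x)-D\Phi_{s,t}(x)(z_2-z_1);
\]
grouping $D\Phi_{s,t}(y)(z_2)-D\Phi_{s,t}(x)(z_2)$ and expanding it to first order yields a term $D^2\Phi_{s,t}(\cdot)(y-x,z_2)$, while $-D^2\Phi_{s,t}(x)(z_1,y-x)+D^2\Phi_{s,t}(\cdot)(y-x,z_2)$ can be rearranged, using another first-order expansion on the difference of the $D^2$ terms, into a piece bounded by $\|D^3\Phi_{s,t}\|\,\|y-x\|_V^2\,\|z_2\|_V$ plus a piece bounded by $\|D^2\Phi_{s,t}\|\,\|y-x\|_V\,\|z_1-z_2\|_V$; bounding $\|D^2\Phi_{s,t}\|$ and $\|D^3\Phi_{s,t}\|$ by $\|\Phi\|_{\alpha,\bbeta_3}|t-s|^\alpha(1+\cdots)^{\beta_2}$ resp. $(1+\cdots)^{\beta_3}$ and taking the common base $(1+\|x\|_V+\|y\|_V)^{\beta_2\vee\beta_3}$ gives \eqref{crsd3}. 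The estimate \eqref{crsd4} is the analogue with $\Psi$ and the product weights $(1+\|x_i\|_V+\|y_i\|_V)^{\beta^i_1\vee\beta^i_2}$, using the shifted multi-indexes $\bbeta^*_2,\bbeta^{**}_2$ precisely so that the $\beta_2$ (second-derivative) and $\beta_1$ (first-derivative) slots are both dominated by these maxima; I would handle it by the same ``add and subtract, expand to first order'' bookkeeping but with the simpler first-order structure of $\sD^\Psi$ (so no $D^3$ term appears, matching the absence of a cubic term in \eqref{crsd4}).

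I expect the main obstacle to be the algebra in \eqref{crsd3}–\eqref{crsd4}: one must choose the right intermediate points for the first-order expansions so that every leftover term is genuinely of the form $\|y-x\|^2\|z_2\|$, $\|y-x\|\|z_1-z_2\|$, or (for $\sD^\Psi$) $\|\by-\bx\|\|\bz_2\|$, $\|\bz_1-\bz_2\|$, and so that the weight that comes out is exactly $(1+\|x\|_V+\|y\|_V)^{\beta_2\vee\beta_3}$ and not something larger; getting the constant to be exactly $\|\Phi\|_{\alpha,\bbeta_3}$ (not a multiple of it) requires care in how many expansion terms one allows and whether one absorbs the $\int_0^1(1-\theta)d\theta$ factors. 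Everything else is routine: the integral remainder formulas, the triangle inequality $\|a+\theta(b-a)\|\le\|a\|+\|b\|$, and monotonicity of $r\mapsto r^\beta$ on $[1,\infty)$.
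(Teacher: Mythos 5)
Your proposal is correct and follows essentially the same route as the paper: the same Taylor expansion of $\sR^\Phi_{s,t}$ for \eqref{crsd1}, the same explicit differentiation and regrouping $D\Phi_{s,t}(y)(z_2)-D\Phi_{s,t}(x)(z_2)-D^2\Phi_{s,t}(x)(z_1,y-x)=D^2\Phi_{s,t}(x)(z_2-z_1,y-x)+\tfrac12 D^3\Phi_{s,t}(\cdot)(z_2,y-x,y-x)$ for \eqref{crsd3}, and the analogous first-order arguments for \eqref{crsd2} and \eqref{crsd4}. The only difference is your use of integral-remainder Taylor formulas in place of the paper's mean-value form, which is a cosmetic (if anything, slightly more careful) variant.
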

	\begin{proof}
The inequality (\ref{crsd1}) is a consequence of Taylor's theorem:
\begin{align*}
\sR^{\Phi}_{s,t}(x, y)=\frac{1}{2}D^2\Phi_{s,t}(\xi)(y-x,y-x),
\end{align*}
 where $\xi=cx+(1-c)y$ for some $c\in [0,1]$.

For the inequality (\ref{crsd3}), we assume that $\Phi\in \cC^{\alpha,\bbeta_3}([0,T]\times V;K)$. Then, by differentiating $\sR^{\Phi}_{s,t}$ on the spatial argument, for any $(x,y), (z_1,z_2)\in V^2$, we have
\begin{align*}
D\sR^{\Phi}_{s,t}(x,y)(z_1,z_2)=&D\Phi_{s,t}(y)(z_2)-D\Phi_{s,t}(x)(z_2)-D^2\Phi_{s,t}(x)(z_1, y-x)\\
=&D^2\Phi_{s,t}(x)(z_2-z_1, y-x)+\frac{1}{2}D^3\Phi_{s,t}(\widetilde{\xi})(z_2, y-x,y-x)
\end{align*}
where $\widehat{\xi}$ is between $x$ and $y$. This implies the inequality (\ref{crsd3}). The inequality (\ref{crsd2}) and (\ref{crsd4}) can be proved similarly.
\end{proof}

In the rest of this paper, we focus on the case when $K=V$. A nonlinear rough path is defined as follows.
\begin{definition}\label{nrp}
Assume that $n\geq 1$. The space $\C^{\alpha,\bbeta_n}([0,T]\times V;V)$ is defined as the collection of $\alpha$-H\"{o}lder nonlinear rough paths $\bW=(W,\W)$ that satisfies the following properties:
\begin{enumerate}[(i)]
\item $W\in \cC^{\alpha,\bbeta_n}([0,T]\times V;V)$.

\item $\W\in \cC^{2\alpha,\bbeta^*_{n-1},\bbeta^{**}_{n-1}}_2([0,T]^2\times V^2;V)$, where $\bbeta^{*}_{n-1}$ and $\bbeta^{**}_{n-1}$ are defined in (\ref{beta12}).

\item $(W,\W)$ satisfies Chen's relation: 
\begin{align}\label{chen}
\W_{s,t}(x,y)-\W_{s,u}(x,y)-\W_{u,t}(x,y)=DW_{u,t}(y)(W_{s,u}(x)),
\end{align}
for all $(x,y)\in V^2$ and $s,u,t \in [0,T]$.
\end{enumerate}
\end{definition}

\begin{remark}
\begin{enumerate}[(i)] 
\item   In the smooth case, $\W$ can be interpreted as a version of the following integral
\begin{align*}
\int_s^t DW(dr,y)(W_{s,r}(x)):=\W_{s,t}(x,y)
\end{align*}
and this explains the choice of the multi-indexes $\bbeta^*_{n-1}$ and $\bbeta^{**}_{n-1}$ in point (ii) of Definition  \ref{nrp}.
\item By definition, we can deduce that $\C^{\alpha,\bbeta^*_n}([0,T]\times V;V)\subset \C^{\alpha,\bbeta^{**}_n-m}([0,T]\times V;V)$ for all $m\in \{0,\dots,n\}$ and $\bbeta^*_n\leq \bbeta^{**}_n$. 
\item Assume that $W(t,x)=W_t(x)$ where $W_t\in \cL(V;V)$. Then the nonlinear rough path degenerate to the linear rough path. In this case,
\[
\W_{s,t}(x,y)=\int_s^tW_{s,r}dW_r(x).
\]
\end{enumerate}
\end{remark}

Let $\bW=(W,\W)\in \cC^{\alpha,\bbeta_n}([0,T]\times V;V)$. We make use of the notation
\[
\|\bW\|_{\C_n}:=\|W\|_{\alpha, \bbeta_n}+\|\W\|_{\alpha, \bbeta^*_{n-1},\bbeta^{**}_{n-1}}.
\]
Notice that $\C^{\alpha,\bbeta_n}([0,T]\times V;V)$ is not a linear space with the usual addition and scalar product. Thus $\|\cdot\|_{\C_n}$ is not a seminorm in the usual sense. We introduce the pseudometric on $\C^{\alpha,\bbeta_n}([0,T]\times V;V)$ given by
\begin{align}
\vrho_{\alpha, \bbeta_n}(\bW,\tbW)=\|W-\tW\|_{\alpha,\bbeta_n}+\|\W-\tdW\|_{2\alpha, \bbeta^*_{n-1},\bbeta^{**}_{n-1}}.
\end{align}
Consider the following equivalent relation: $\bW\sim\tbW$ if and only if   there exists $f\in \cC^{\bbeta_n}(V;V)$ such that $W(t,x)-\tW(t,x)= f(x)$ for all $(t,x)\in[0,T]\times V$.  Then, $\vrho_{\alpha, \bbeta_n}$ is really a metric on the quotient space $\cC^{\alpha, \bbeta_n}([0,T]\times V;V)/\sim$.

Let $W\in \cC^{\alpha, \bbeta_n}([0,T]\times V;V)$. Like in the linear case, we also define the space of nonlinear controlled rough paths by $W$.

\begin{definition}\label{crp}
 The space of basic nonlinear controlled rough paths by $W$, denoted by $\sE_W^{2\alpha}$, is the collection of pairs $(Y, \dY)\in \cC^{\alpha}([0,T];V)\times \cC^{\alpha}([0,T];V)$ such that, for all $s,t \in [0,T]$,
\begin{align}\label{crpd}
	Y_{s,t}=W_{s,t}(\dY)+R^Y_{s,t},
	\end{align}
	where $R^Y\in \cC^{2\alpha}_2([0,T]^2;V)$. The function $\dY$ above is called the Gubinelli derivative of $Y$ with respect to $W$.
\end{definition}
 \begin{remark}
 \begin{enumerate}[(i)]
\item Unlike the linear case, $\sE^{2\alpha}_W$ does not need to be a linear space with the usual addition and scalar product, because it may be not closed under these operations.

 \item Assume that $V=\R$ and $W(t,x)=xW_t$, then the controlled rough path satisfies the following equality
\[
Y_{s,t}=\dY_sW_{s,t}+R^Y_{s,t},
\]
which coincides with the classic definition in the linear case.

\item With an abuse of notatios, we sometimes write $Y\in \sE_W^{2\alpha}$ instead of $(Y,\dY)\in \sE_W^{2\alpha}$.
\end{enumerate}
\end{remark}

Suppose that $W,\tW\in \cC^{\alpha,\bbeta_n}([0,T]\times V;V)$. Let $(Y,\dY)\in \sE^{2\alpha}_W$ and $(\tY,\dtY)\in \sE^{2\alpha}_{\tW}$ respectively. A ``distance" between $(Y,\dY)$ and $(\tY,\dtY)$ is defined as follows:
\begin{align}
 d_{\alpha, W,\tW}\big((Y,\dY),(\tY,\dtY)\big)=\|\dY-\dtY\|_{\alpha}+\|R^Y-R^{\tY}\|_{2\alpha}.
\end{align}
Notice that the definition of $d_{\alpha, W,\tW}$ does not include the term $ \|Y-\tY\|_{\alpha}$. Indeed, this term can be estimated in terms of
 $d_{\alpha, W,\tW}\big((Y,\dY),(\tY,\dtY)\big)$ as it is shown in the next lemma.

\begin{lemma}\label{tdtvphin}
Let $W,\tW\in \cC^{\alpha,\bbeta_1}([0,T]\times V;V)$. Suppose that $(Y,\dY)\in \sE_{W}^{2\alpha}$ and $(\tY,\dtY)\in \sE_{\tW}^{2\alpha}$ respectively. Then the following estimate holds:
\begin{align}\label{dtvphin}
 \|Y-\tY\|_{\alpha}\leq &(1+\|\dY\|_{\infty})^{\beta_0}\|W-\tW\|_{\alpha,\bbeta_1}\\
 &+\|\tW\|_{\alpha,\bbeta_1}(1+\|\dY\|_{\infty}+\|\dtY\|_{\infty})^{\beta_1}\|\dY_0-\dtY_0\|_V\nonumber\\
 &+T^{\alpha}(1+\|\tW\|_{\alpha,\bbeta_1})(1+\|\dY\|_{\infty}+ \|\dtY\|_{\infty})^{\beta_1}d_{\alpha,W,\tW}\big((Y,\dY),(\tY ,\dtY)\big).\nonumber
\end{align}
\end{lemma}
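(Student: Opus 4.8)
The plan is to write $Y_{s,t}-\tY_{s,t}$ explicitly using the controlled-rough-path decompositions \eqref{crpd} for $(Y,\dY)\in\sE^{2\alpha}_W$ and $(\tY,\dtY)\in\sE^{2\alpha}_{\tW}$, and then bound each resulting piece. Concretely,
\[
Y_{s,t}-\tY_{s,t}=\big(W_{s,t}(\dY_s)-\tW_{s,t}(\dY_s)\big)+\big(\tW_{s,t}(\dY_s)-\tW_{s,t}(\dtY_s)\big)+\big(R^Y_{s,t}-R^{\tY}_{s,t}\big),
\]
so that $\|Y-\tY\|_\alpha$ is at most the sum of the $\alpha$-Hölder seminorms of these three terms. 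The first term is controlled directly by the definition \eqref{ccv} of $\|\cdot\|_{\alpha,\bbeta_1}$: it is at most $\|W-\tW\|_{\alpha,\bbeta_1}(1+\|\dY_s\|_V)^{\beta_0}|t-s|^\alpha\le \|W-\tW\|_{\alpha,\bbeta_1}(1+\|\dY\|_\infty)^{\beta_0}|t-s|^\alpha$, which gives the first line of \eqref{dtvphin}. The third term contributes $\|R^Y-R^{\tY}\|_{2\alpha}|t-s|^{2\alpha}\le T^\alpha\|R^Y-R^{\tY}\|_{2\alpha}|t-s|^\alpha$, and $\|R^Y-R^{\tY}\|_{2\alpha}\le d_{\alpha,W,\tW}\big((Y,\dY),(\tY,\dtY)\big)$, which is absorbed into the last line.

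The middle term $\tW_{s,t}(\dY_s)-\tW_{s,t}(\dtY_s)$ is where the work is. Here I would write $\dY_s-\dtY_s=(\dY_0-\dtY_0)+\big((\dY_s-\dY_0)-(\dtY_s-\dtY_0)\big)$, so that $\|\dY_s-\dtY_s\|_V\le\|\dY_0-\dtY_0\|_V+\|\dY-\dtY\|_\alpha s^\alpha\le\|\dY_0-\dtY_0\|_V+T^\alpha\|\dY-\dtY\|_\alpha$. Applying the mean value inequality to $x\mapsto\tW_{s,t}(x)$, whose Fréchet derivative is bounded by $\|\tW\|_{\alpha,\bbeta_1}(1+\|x\|_V)^{\beta_1}|t-s|^\alpha$ along the segment joining $\dY_s$ and $\dtY_s$ (so with the factor $(1+\|\dY\|_\infty+\|\dtY\|_\infty)^{\beta_1}$ as a uniform bound on that segment), gives
\[
\|\tW_{s,t}(\dY_s)-\tW_{s,t}(\dtY_s)\|_V\le\|\tW\|_{\alpha,\bbeta_1}(1+\|\dY\|_\infty+\|\dtY\|_\infty)^{\beta_1}\big(\|\dY_0-\dtY_0\|_V+T^\alpha\|\dY-\dtY\|_\alpha\big)|t-s|^\alpha.
\]
Dividing by $|t-s|^\alpha$ and taking the supremum yields the second line of \eqref{dtvphin} plus the $\|\tW\|_{\alpha,\bbeta_1}$-part of the last line; the bare $\|\dY-\dtY\|_\alpha$ appearing inside $d_{\alpha,W,\tW}$ in the last line (the ``$1+$'' in the factor $1+\|\tW\|_{\alpha,\bbeta_1}$) comes from the trivial estimate $\|\dY-\dtY\|_\alpha\le d_{\alpha,W,\tW}\big((Y,\dY),(\tY,\dtY)\big)$ if one wants a term without the $\|\tW\|$ prefactor, though in fact it can simply be merged. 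Collecting the three contributions and using $(1+\|\dY\|_\infty)^{\beta_0}\le(1+\|\dY\|_\infty+\|\dtY\|_\infty)^{\beta_1}$-type monotonicity only where needed gives \eqref{dtvphin}.

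The only mildly delicate point is the mean value step: $\tW_{s,t}$ is $V$-valued rather than scalar, so one should invoke the vector-valued mean value inequality $\|f(b)-f(a)\|\le\sup_{\theta\in[0,1]}\|Df(a+\theta(b-a))\|\,\|b-a\|$ rather than an exact mean value equality, and one must check that the $\beta_1$-growth weight evaluated along the segment is dominated by $(1+\|\dY\|_\infty+\|\dtY\|_\infty)^{\beta_1}$, which is immediate since $\|a+\theta(b-a)\|_V\le\|\dY\|_\infty+\|\dtY\|_\infty$ for $a=\dtY_s$, $b=\dY_s$. Everything else is bookkeeping with the seminorm \eqref{ccv} and the elementary bound $|t-s|^{2\alpha}\le T^\alpha|t-s|^\alpha$.
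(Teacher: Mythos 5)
Your proposal is correct and follows essentially the same route as the paper: the same three-term decomposition of $Y_{s,t}-\tY_{s,t}$, the same mean value bound on $\tW_{s,t}(\dY_s)-\tW_{s,t}(\dtY_s)$ with $\|\dY_s-\dtY_s\|_V\le\|\dY_0-\dtY_0\|_V+s^{\alpha}\|\dY-\dtY\|_{\alpha}$, and the same absorption of the remainder term via $|t-s|^{2\alpha}\le T^{\alpha}|t-s|^{\alpha}$. Your extra remark on the vector-valued mean value inequality is a point the paper leaves implicit, but it does not change the argument.
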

\begin{proof}
Since $Y$ and $\tY$ are controlled by $W$ and $\tW$ respectively, then we have
	 \begin{align*}
\|Y_{s,t}-\tY_{s,t}\|_V\leq &\big\|W_{s,t}(\dY_s)-\tW_{s,t}(\dY_s)\big\|_V+\big\|\tW_{s,t}(\dY_s)-\tW_{s,t}(\dtY_s)\big\|_V+\big\|R^{Y}_{s,t}-R^{\tY}_{s,t}\big\|_V\nonumber\\
	 \leq &\|W-\tW\|_{\alpha,\bbeta_1}(1+\|\dY\|_{\infty})^{\beta_0}|t-s|^{\alpha}\nonumber\\
	 &+\|\tW\|_{\alpha,\bbeta_1}(1+\|\dY\|_{\infty}+\|\dtY\|_{\infty})^{\beta_1}\big(\|\dY_0-\dtY_0\|_V+s^{\alpha}\|\dY-\dtY\|_{\alpha}\big)|t-s|^{\alpha}\nonumber\\
	 &+\big\|R^{Y}-R^{\tY}\big\|_{2\alpha}|t-s|^{2\alpha}.
	 \end{align*}
This proves the inequality (\ref{dtvphin}). 
\end{proof}

Applying Lemma \ref{tdtvphin}, the supremum norm of $Y-\tY$ can be estimated as follows:
	 \begin{align}\label{dtvphi1}
	 \|Y-\tY\|_{\infty}\leq &\|Y_0-\tY_0\|_V+T^{\alpha}\|Y-\tY\|_{\alpha}\\
	 \leq &T^{\alpha}(1+\|\dY\|_{\infty})^{\beta_0}\|W-\tW\|_{\alpha,\bbeta_3}\nonumber\\
	 &+(1+T^{\alpha})(1+\|\tW\|_{\alpha,\bbeta_3})(1+\|\dY\|_{\infty}+\|\dtY\|_{\infty})^{\beta_1}\big(\|Y_0-\tY_0\|_V+\|\dY_0-\dtY_0\|_V\big)\nonumber\\
	 &+T^{2\alpha}(1+\|\tW\|_{\alpha,\bbeta_3})(1+\|\dY\|_{\infty}+\|\dtY\|_{\infty})^{\beta_1}d_{\alpha, W,\tW}\big((Y,\dY), (\tY,\dtY)\big).\nonumber
	 \end{align}
Both inequalities (\ref{dtvphin}) and (\ref{dtvphi1}) will be used frequently throughout the rest of this paper.

\begin{remark}
$d_{\alpha,W,\tW}$ is not a metric, because $(Y,\dY)$ and $(\tY,\dtY)$ may belong to different spaces. For any $(y_1,y_2)\in V^2$, let 
\[
\sE_{W,y_1,y_2}^{2\alpha}=\{(Y,\dY)\in \sE^{2\alpha}_W, (Y_0,\dY_0)=(y_1,y_2)\}.
\]
 Then $d_{\alpha,W}=d_{\alpha,W,W}$ is really a metric on $\sE_{W,y_1,y_2}^{2\alpha}$. 
\end{remark}
 
 The next lemma shows that $\sE_{W,y_1,y_2}^{2\alpha}$ is complete under the metric $d_{\alpha, W}$.
\begin{lemma}\label{cmplt}
Suppose that $W\in \cC^{\alpha,\bbeta_1}([0,T]\times V;V)$. Let $(y_1,y_2)\in V^2$. Then $(\sE_{W,y_1,y_2}^{2\alpha}, d_{\alpha,W})$ is a complete metric space .
\end{lemma}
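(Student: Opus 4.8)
The plan is to verify sequential completeness directly. Take a Cauchy sequence $(Y^n,\dY^n)_{n\ge 1}$ in $(\sE_{W,y_1,y_2}^{2\alpha},d_{\alpha,W})$. Since $d_{\alpha,W}\big((Y^n,\dY^n),(Y^m,\dY^m)\big)=\|\dY^n-\dY^m\|_{\alpha}+\|R^{Y^n}-R^{Y^m}\|_{2\alpha}$, the sequences $(\dY^n)$ and $(R^{Y^n})$ are separately Cauchy in the seminorms $\|\cdot\|_{\alpha}$ and $\|\cdot\|_{2\alpha}$. All the $\dY^n$ share the value $\dY^n_0=y_2$, so from $\|f\|_{\infty}\le\|f_0\|_V+T^{\alpha}\|f\|_{\alpha}$ the sequence $(\dY^n)$ is Cauchy in the Banach space $\cC^{\alpha}([0,T];V)$; hence $\dY^n\to\dZ$ in $\cC^{\alpha}$ for some $\dZ$ with $\dZ_0=y_2$. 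Likewise $\cC^{2\alpha}_2([0,T]^2;V)$ (functions vanishing on the diagonal, with the weighted sup-norm) is complete, so $R^{Y^n}\to\widetilde{R}$ in $\cC^{2\alpha}_2$ for some $\widetilde{R}$.

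Next I would build the candidate limit path and show it lies in the right space. Putting $s=0$ in the controlled expansion gives $Y^n_t=y_1+W_{0,t}(y_2)+R^{Y^n}_{0,t}$, so $Y^n$ converges uniformly on $[0,T]$ to $Z_t:=y_1+W_{0,t}(y_2)+\widetilde{R}_{0,t}$, with $Z_0=y_1$. Now fix $s,t\in[0,T]$ and pass to the limit in the identity $R^{Y^n}_{s,t}=Y^n_{s,t}-W_{s,t}(\dY^n_s)$: the left-hand side converges to $\widetilde{R}_{s,t}$ (convergence in $\cC^{2\alpha}_2$ is in particular pointwise), $Y^n_{s,t}\to Z_{s,t}$ by the uniform convergence just obtained, and $W_{s,t}(\dY^n_s)\to W_{s,t}(\dZ_s)$ because $\dY^n_s\to\dZ_s$ in $V$ and $x\mapsto W_{s,t}(x)$ is $\cC^1$ — the mean value theorem together with the bound $\|DW_{s,t}(x)\|_{\cL(V;V)}\le\|W\|_{\alpha,\bbeta_1}(1+\|x\|_V)^{\beta_1}|t-s|^{\alpha}$ makes the difference tend to $0$, since the relevant arguments stay bounded. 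Therefore $Z_{s,t}=W_{s,t}(\dZ_s)+\widetilde{R}_{s,t}$. This single identity yields at once: $Z\in\cC^{\alpha}([0,T];V)$ (bound $W_{s,t}(\dZ_s)$ by the $\bbeta_0$-part of the seminorm and use that $\dZ$ is bounded and $\widetilde{R}\in\cC^{2\alpha}_2$); $(Z,\dZ)\in\sE^{2\alpha}_W$ with remainder $R^Z=\widetilde{R}$; and $(Z_0,\dZ_0)=(y_1,y_2)$, so $(Z,\dZ)\in\sE_{W,y_1,y_2}^{2\alpha}$.

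It then remains to check convergence in the metric: by definition $d_{\alpha,W}\big((Y^n,\dY^n),(Z,\dZ)\big)=\|\dY^n-\dZ\|_{\alpha}+\|R^{Y^n}-R^Z\|_{2\alpha}=\|\dY^n-\dZ\|_{\alpha}+\|R^{Y^n}-\widetilde{R}\|_{2\alpha}$, and both terms tend to $0$ by construction, so $(Y^n,\dY^n)\to(Z,\dZ)$ and the space is complete. I expect the only genuinely delicate point to be the consistency step in the second paragraph: the two limit objects $\dZ$ and $\widetilde{R}$ are extracted independently, and the content of the argument is that $\widetilde{R}$ really is the Gubinelli remainder of the path $Z$ reconstructed from $\dZ$. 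This is exactly where continuity (equivalently, the $\cC^1$ regularity built into $\cC^{\alpha,\bbeta_1}$) of $W$ in the spatial variable is needed; the remaining steps are routine uses of the completeness of weighted Hölder spaces.
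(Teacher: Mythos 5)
Your proof is correct, and its overall skeleton (extract limits of $\dY^n$ and $R^{Y^n}$ from completeness of the weighted H\"older spaces, then verify the controlled decomposition in the limit) matches the paper's. The one step where you genuinely diverge is how the limit path is produced: the paper invokes Lemma \ref{tdtvphin} to get the estimate $\|Y^n-Y^m\|_{\alpha}\leq T^{\alpha}(1+\|W\|_{\alpha,\bbeta_1})(1+\|\dY^n\|_{\infty}+\|\dY^m\|_{\infty})^{\beta_1}\,d_{\alpha,W}\big((Y^n,\dY^n),(Y^m,\dY^m)\big)$, shows $\{Y^n\}$ is Cauchy in $\cC^{\alpha}([0,T];V)$ (which requires the uniform bound $\sup_n\|\dY^n\|_{\infty}<\infty$), and takes $Y$ as that limit; you instead read the limit path off the $s=0$ slice of the controlled decomposition, $Y^n_t=y_1+W_{0,t}(y_2)+R^{Y^n}_{0,t}$, so that uniform convergence of $Y^n$ to $Z_t:=y_1+W_{0,t}(y_2)+\widetilde{R}_{0,t}$ is immediate from the convergence of the remainders. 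Your route is slightly shorter and more self-contained (no appeal to Lemma \ref{tdtvphin}, no uniform bound on $\|\dY^n\|_{\infty}$ needed at that stage), and it isolates cleanly the one genuinely nontrivial point, namely that the independently extracted $\dZ$ and $\widetilde{R}$ are consistent, i.e.\ $Z_{s,t}=W_{s,t}(\dZ_s)+\widetilde{R}_{s,t}$, for which your use of the $\cC^1$ spatial regularity of $W$ (via the $\beta_1$ part of the seminorm) is exactly right. The paper's route gives the marginally stronger byproduct that $Y^n\to Y$ in the $\alpha$-H\"older seminorm, but since $d_{\alpha,W}$ does not involve $\|Y-\tY\|_{\alpha}$, this is not needed for completeness.
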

\begin{proof}
Suppose that $\{(Y^n,\dY^n)\}_{n\geq 1}\subset \sE_{W,y_1,y_2}^{2\alpha}$ is a Cauchy sequence under the metric $d_{\alpha,W}$.
We first show that $\{(Y^n,\dY^n, R^{Y^n})\}_{n\geq 1}$ converges to $(Y,\dY, R^Y)$ in the product space $\cC^{\alpha}([0,T];V)\times \cC^{\alpha}([0,T];V)\times \cC^{2\alpha}_2([0,T]^2;V)$ with the H\"{o}lder seminorms. 

Notice that the space $\cC^{\alpha}([0,T]; V)$ is complete with the norm
\begin{align*}
\|Y\|_{\cC^{\alpha}([0,T]; V)}:=\|Y_0\|_V+\|Y\|_{\alpha}.
\end{align*}
Thus there exists $\dY\in \cC^{\alpha}([0,T]; V)$, such that $\dY^n\to \dY$ as $n\to\infty$ pointwise and in $\cC^{\alpha}([0,T];V)$. In order to show the convergence of $\{R^{Y^n}\}_{n\geq 1}$, we fix $s\in [0,T]$ and consider the sequence of functions $\{R^{Y^n}_{s,\cdot}\}_{n\geq 1}$ on $[0,T]$. Then, for any $t\in[0,T]$ and $n,m\geq 1$, we have
\[
\|R^{Y^n}_{s,t}-R^{Y^m}_{s,t}\|_V\leq |t-s|^{2\alpha}\|R^{Y^n}-R^{Y^m}\|_{2\alpha}.
\]
Therefore, $\{R^{Y^n}_{s,t}\}_{n\geq 1}$ is a Cauchy sequence in $V$, and thus has a limit denoted by $R^Y_{s,t}$. On the other hand, we can show that
\[
\lim_{n\to\infty}\sup_{s\neq t\in[0,T]}\frac{\|R^Y_{s,t}-R^{Y^n}_{s,t}\|_V}{|t-s|^{2\alpha}}\leq\lim_{n\to\infty}\lim_{m \to\infty}\sup_{s\neq t\in [0,T]}\frac{\|R^{Y^m}_{s,t}-R^{Y^n}_{s,t}\|_V}{|t-s|^{2\alpha}}=0.
\]
This implies that the convergence is also in $\cC^{2\alpha}_2([0,T]^2;V)$. To prove the convergence of $\{Y^n\}_{n\geq 1}$, it suffices to show that $\{Y^n\}_{n\geq 1}$ is Cauchy in $\cC^{\alpha}([0,T]; V)$ with the $\alpha$-H\"{o}lder seminorm. Notice that for any $n,m\geq 1$, $Y^n$ and $Y^m$ are both controlled by $W$, then as a consequence of Lemma \ref{tdtvphin}, we have 
\begin{align}\label{dphi}
 \|Y^n-Y^m\|_{\alpha}\leq &T^{\alpha}(1+\|W\|_{\alpha,\bbeta_1})(1+\|\dY^n\|_{\infty}+ \|\dY^m\|_{\infty})^{\beta_1}\nonumber\\
 &\times d_{\alpha,W,\tW}\big((Y^n,\dY^n),(Y^m ,\dY^m)\big).
\end{align}
Observe that
\[
\sup_{n\geq 1}\|\dY^n\|_{\infty}\leq y_2+T^{\alpha}\sup_{n\geq 1}\|\dY^n\|_{\alpha}=C<\infty.
\]
Therefore, $\{Y^n\}_{n\geq 1}$ converges to a function $Y$ in $\cC^{\alpha}([0,T]; V)$.

Finally, notice that for any $s,t \in[0,T]$,
\begin{align}
Y_{s,t}=\lim_{n\to\infty}Y^n_{s,t}=\lim_{n\to\infty}\big[W_{s,t}(\dY^n_s)+R^{Y^n}_{s,t}\big]=W_{s,t}(\dY_s)+R^Y_{s,t}.
\end{align}
Thus $(Y, \dY)\in \sE_{W,\by}^{2\alpha}$ with the remainder $R^Y$. 
\end{proof}

In the next theorem, we define the nonlinear rough integral of a basic controlled rough paths against a nonlinear rough path.
	\begin{theorem}\label{nri}
	Suppose that $\bW=(W,\W)\in \C^{\alpha,\bbeta_2}([0,T]\times V;V)$. Let $(Y,\dY)\in \sE^{2\alpha}_W$. We define $\Xi\in \cC^{\alpha}_2([0,T] ;V)$ as follows:
	\[
	\Xi_{s,t}=W_{s,t}(Y_s)+\W_{s,t}(\dY_s,Y_s).
	\]
Then the following limit exists 
		\begin{align}\label{nrid}
	\cJ_{s,t}(\Xi):=\lim_{|\pi|\to 0}\sum_{k=1}^n\Xi_{t_{k-1},t_k}
	\end{align}
where $\pi=(s=t_0<t_1<\dots<t_n=t)$. Moreover,
	\begin{align}\label{nrisd}
	\big\|\cJ_{s,t}(\Xi)-\Xi_{s,t}\big\|_V\leq C_1|t-s|^{3\alpha},
	\end{align}
	where
	\[
	C_1=k_{\alpha}\|\bW\|_{\C_2}(1+2\|\dY\|_{\infty})^{\beta_0\vee\beta_1}(1+2\|Y\|_{\infty})^{\beta_1\vee\beta_2}\big(\|Y\|_{\alpha}+\|\dY\|_{\alpha}+\|R^Y\|_{2\alpha}\big),
	\]
	and $k_{\alpha}$ is defined in (\ref{kalpha}).
	\end{theorem}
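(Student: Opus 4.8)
The plan is to apply the sewing lemma (Lemma \ref{sew}) with $\gamma = 3\alpha > 1$ and $\beta = \alpha$. For this I must verify two things: first, that $\Xi \in \cC^\alpha_2([0,T]^2;V)$, and second, that the defect $\delta\Xi(s,u,t) = \Xi_{s,t} - \Xi_{s,u} - \Xi_{u,t}$ satisfies $\|\delta\Xi(s,u,t)\|_V \le C |t-s|^{3\alpha}$ with the constant $C$ equal to the $C_1$ appearing in \eqref{nrisd} divided by $k_\alpha$ (so that inequality \eqref{sewct} of the sewing lemma gives exactly \eqref{nrisd} with the stated $C_1$ and $k_\alpha = (1-2^{1-3\alpha})^{-1}$). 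The $\cC^\alpha_2$ bound on $\Xi$ is routine: $\|W_{s,t}(Y_s)\|_V \le \|W\|_{\alpha,\bbeta_2}(1+\|Y\|_\infty)^{\beta_0}|t-s|^\alpha$ and $\|\W_{s,t}(\dY_s,Y_s)\|_V \le \|\W\|_{2\alpha,\bbeta^*_1,\bbeta^{**}_1}(1+\|\dY\|_\infty)^{\beta^*_0}(1+\|Y\|_\infty)^{\beta^{**}_0}|t-s|^{2\alpha}$, and $2\alpha \ge \alpha$.

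The heart of the matter is estimating $\delta\Xi(s,u,t)$. Writing out the definition,
\begin{align*}
\delta\Xi(s,u,t) = &\ W_{s,t}(Y_s) - W_{s,u}(Y_s) - W_{u,t}(Y_u) \\
&+ \W_{s,t}(\dY_s,Y_s) - \W_{s,u}(\dY_s,Y_s) - \W_{u,t}(\dY_u,Y_u).
\end{align*}
In the first line, since $W_{s,t} = W_{s,u} + W_{u,t}$ as increments in time (because $W_{s,t}(x) = W(t,x)-W(s,x)$ for each fixed $x$), the first line collapses to $W_{u,t}(Y_s) - W_{u,t}(Y_u) = -W_{u,t}(Y_u) + W_{u,t}(Y_s)$. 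Using the controlled-path structure $Y_{s,u} = W_{s,u}(\dY_s) + R^Y_{s,u}$ and a first-order Taylor expansion of $W_{u,t}(\cdot)$ around $Y_s$, this is
\[
-DW_{u,t}(Y_s)(Y_{s,u}) - \sR^{W_{u,t}}_{\,\cdot}(Y_s,Y_u) = -DW_{u,t}(Y_s)(W_{s,u}(\dY_s)) - DW_{u,t}(Y_s)(R^Y_{s,u}) - \sR^{W}_{u,t}(Y_s,Y_u),
\]
with the notation $\sR^\Phi$ from \eqref{crphi}. For the second line, I apply Chen's relation \eqref{chen}: $\W_{s,t}(\dY_s,Y_s) - \W_{s,u}(\dY_s,Y_s) - \W_{u,t}(\dY_s,Y_s) = DW_{u,t}(Y_s)(W_{s,u}(\dY_s))$. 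So the second line equals
\[
DW_{u,t}(Y_s)(W_{s,u}(\dY_s)) + \W_{u,t}(\dY_s,Y_s) - \W_{u,t}(\dY_u,Y_u) = DW_{u,t}(Y_s)(W_{s,u}(\dY_s)) - \sD^{\W_{u,t}}(\,(\dY_s,Y_s),(\dY_u,Y_u)\,),
\]
with $\sD^\Psi$ as in \eqref{sdpsi}. The leading terms $DW_{u,t}(Y_s)(W_{s,u}(\dY_s))$ cancel between the two lines, leaving
\[
\delta\Xi(s,u,t) = -DW_{u,t}(Y_s)(R^Y_{s,u}) - \sR^{W}_{u,t}(Y_s,Y_u) - \sD^{\W}_{u,t}\big((\dY_s,Y_s),(\dY_u,Y_u)\big).
\]

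It then remains to bound the three surviving terms, each of order $|t-s|^{3\alpha}$. The first is at most $\|W\|_{\alpha,\bbeta_2}(1+\|Y\|_\infty)^{\beta_1}\|R^Y\|_{2\alpha}|t-u|^\alpha|u-s|^{2\alpha}$. For the second I use \eqref{crsd1} from Lemma \ref{crsd}: it is at most $\tfrac12\|W\|_{\alpha,\bbeta_2}(1+\|Y\|_\infty+\|Y\|_\infty)^{\beta_2}\|Y_{s,u}\|_V^2|t-u|^\alpha \le \tfrac12\|W\|_{\alpha,\bbeta_2}(1+2\|Y\|_\infty)^{\beta_2}\|Y\|_\alpha^2|t-s|^{3\alpha}$. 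For the third I use \eqref{crsd2}: it is bounded by $\|\W\|_{2\alpha,\bbeta^*_1,\bbeta^{**}_1}(1+2\|\dY\|_\infty)^{\beta^*_1}(1+2\|Y\|_\infty)^{\beta^{**}_1}(\|\dY\|_\alpha + \|Y\|_\alpha)|u-s|^\alpha|t-u|^{2\alpha}$, recalling that the $V^2$-increment $(\dY_{s,u},Y_{s,u})$ has norm $\|\dY_{s,u}\|_V + \|Y_{s,u}\|_V \le (\|\dY\|_\alpha+\|Y\|_\alpha)|u-s|^\alpha$. Since $|t-u|, |u-s| \le |t-s|$, each term is $\le (\text{const})|t-s|^{3\alpha}$; collecting the constants, bounding $\beta_0 \vee \beta_1$ and $\beta^*_1 = \beta_0\vee\beta_1$, $\beta^{**}_1 = \beta_1\vee\beta_2$, and absorbing everything into $\|\bW\|_{\C_2} = \|W\|_{\alpha,\bbeta_2} + \|\W\|_{2\alpha,\bbeta^*_1,\bbeta^{**}_1}$ and $(1+2\|\dY\|_\infty)^{\beta_0\vee\beta_1}(1+2\|Y\|_\infty)^{\beta_1\vee\beta_2}$, together with $\|Y\|_\alpha + \|\dY\|_\alpha + \|R^Y\|_{2\alpha}$, yields $C = C_1/k_\alpha$. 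Feeding this into Lemma \ref{sew} gives the existence of the limit \eqref{nrid} (the sewing lemma's representation as the limit of Riemann sums) and the bound \eqref{nrisd}. The main obstacle is purely bookkeeping: keeping the telescoping of the time-increments and Chen's relation aligned so that the $DW_{u,t}(Y_s)(W_{s,u}(\dY_s))$ terms cancel exactly, and then tracking the multi-index exponents through Lemma \ref{crsd} to land on precisely the stated constant $C_1$.
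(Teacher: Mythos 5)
Your proposal is correct and follows essentially the same route as the paper: the identical decomposition of $\delta\Xi_{s,u,t}$ into $-DW_{u,t}(Y_s)(R^Y_{s,u})-\sR^{W}_{u,t}(Y_s,Y_u)-\sD^{\W}_{u,t}\big((\dY_s,Y_s),(\dY_u,Y_u)\big)$ via the controlled-path expansion and Chen's relation, the same estimates from Lemma \ref{crsd}, and the same appeal to the sewing lemma with $\gamma=3\alpha$. The only (cosmetic) divergence is that your Taylor-remainder bound correctly carries $\|Y\|_{\alpha}^{2}$, whereas the paper's display \eqref{dtdt} and the stated $C_1$ record only a single power of $\|Y\|_{\alpha}$; this does not affect the validity of the argument.
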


	\begin{proof}
 For any $0\leq s\leq u\leq t\leq T$, we write
	\begin{align}\label{edxi}
	\delta\Xi_{s,u,t}=&\Xi_{s,t}-\Xi_{s,u}-\Xi_{u,t}\\
	=&-[W_{u,t}(Y_u)-W_{u,t}(Y_s)]+[\W_{s,t}(\dY_s,Y_s)-\W_{s,u}(\dY_s,Y_s)-\W_{u,t}(\dY_u,Y_u)].\nonumber
	\end{align}
According to Lemma \ref{sew}, it suffices to estimate $\|\delta \Xi_{s,u,t}\|_V$.  Recall the notations \eqref{crphi} and \eqref{sdpsi}. Since $Y$ is controlled by $W$, we can write 
\begin{align}\label{wytl}
	W_{u,t}(Y_u)&-W_{u,t}(Y_s)=DW_{u,t}(Y_s)(Y_{s,u})+\sR^{W}_{u,t}(Y_s,Y_u)\nonumber\\
	=&DW_{u,t}(Y_s)(W_{s,u}(\dY_s))+DW_{u,t}(Y_s)(R_{s,u}^{Y})+\sR^{W}_{u,t}(Y_s,Y_u).
\end{align}
On the other hand, by Chen's relation (\ref{chen}), we have
	\begin{align}\label{edxi2}
\W_{s,t}(\dY_s,Y_s)-&\W_{s,u}(\dY_s,Y_s)-\W_{u,t}(\dY_u,Y_u)\nonumber\\
&=DW_{u,t}(Y_s)(W_{s,u}(\dY_s))-\sD^{\W}_{u,t}\big((\dY_s, Y_s),(\dY_u, Y_u)\big).
	\end{align}
 Notice that, by definition, $\W\in \cC^{2\alpha,\bbeta^*_1,\bbeta^{**}_1}_2([0,T]^2\times V^2; V)$ where $\bbeta^*_1=(\beta_0,\beta_0\vee\beta_1)$ and $\bbeta^{**}_1=(\beta_1,\beta_1\vee\beta_2)$. Combining \eqref{edxi} - \eqref{edxi2}, with \eqref{crsd1} and \eqref{crsd2}, we obtain the following inequality
\begin{align}\label{dtdt}
\|\delta\Xi_{s,u,t}\|_V\leq& \|DW_{u,t}(Y_s)\|_{\kL_1(V;V)}\|R_{s,u}^{Y}\|_V+\frac{1}{2}\|W\|_{\alpha,\bbeta_2}(1+2\|Y\|_{\infty})^{\beta_2}\|Y\|_{\alpha}|t-s|^{3\alpha}\\
&+\|\W\|_{2\alpha,\bbeta^*_1,\bbeta^{**}_1}(1+2\|\dY\|_{\infty})^{\beta_1^*}(1+2\|Y\|_{\infty})^{\beta_1^{**}}(\|Y\|_{\alpha}+\|\dY\|_{\alpha})|t-s|^{3\alpha}\nonumber\\
\leq&\|\bW\|_{\C_2}(1+2\|\dY\|_{\infty})^{\beta_0\vee\beta_1}(1+2\|Y\|_{\infty})^{\beta_1\vee\beta_2}\big(\|Y\|_{\alpha}+\|\dY\|_{\alpha}+\|R^Y\|_{2\alpha}\big)|t-s|^{3\alpha}.\nonumber
	\end{align}
Thus we complete the proof by applying Lemma \ref{sew}.
	\end{proof}
	
 Due to the sewing lemma the functional $\cJ_{s,t}(\Xi)$ defined in Theorem \ref{nri} is additive. Therefore, we can define the nonlinear integral of $Y$ against $W$ on any time interval $[s,t]\subset [0,T]$ by $\cJ_{s,t}(\Xi)$, that is
\begin{align}\label{dnri}
\int_s^t W(dr,Y_r):=\cJ_{s,t}(\Xi).
\end{align}

By definition, we can easily verify that $\Xi$ in Theorem \ref{nri} is also $\alpha$-H\"{o}lder continuous. Recall that $\bbeta^*_1=(\beta_0, \beta_0 \vee \beta_1)$ and
$ \bbeta^{**}_1= (\beta_1, \beta_1 \vee \beta_2)$. Thus we have the following estimate,
	\begin{align}\label{icmxi}
	\|\Xi_{s,t}\|_V\leq& \|W_{s,t}(Y_s)\|_V+\|\W_{s,t}(\dY_s,Y_s)\|_V\nonumber\\
	\leq& \|W\|_{\alpha,\bbeta_2}(1+\|Y\|_{\infty})^{\beta_0}|t-s|^{\alpha}\nonumber\\
	&+\|\W\|_{2\alpha,\bbeta^*_1,\bbeta^{**}_1}(1+\|\dY\|_{\infty})^{\beta_0}(1+\|Y\|_{\infty})^{\beta_1}|t-s|^{2\alpha}.
	\end{align}
The following estimates follows from (\ref{nrisd}) and (\ref{icmxi}):
\begin{align}\label{hnnri}
\left\|\int_s^tW(dr,Y_r)\right\|_V\leq &\|\Xi_{s,t}\|_V+\|\mathcal{J}_{s,t}(\Xi)-\Xi_{s,t}\|_V\nonumber\\
\leq &C_2|t-s|^{\alpha},
\end{align}
where
\[
C_2=C_1T^{2\alpha}+\|W\|_{\alpha,\bbeta_2}(1+\|Y\|_{\infty})^{\beta_0}+T^{\alpha}\|\W\|_{2\alpha,\bbeta^*_1,\bbeta^{**}_1}(1+\|\dY\|_{\infty})^{\beta_0}(1+\|Y\|_{\infty})^{\beta_1}.
\]
\begin{remark}
To define a nonlinear rough integral, the growth condition of $(W,\W)$ is not necessary. In fact, let $\C^{\alpha,2}_{loc}([0,T]\times V; V)$ be the collection of pairs $(W,\W)$ such that $W:[0,T]\times V\to V$ is $\alpha$-H\"{o}lder in time, and twice differentiable in space with locally bounded spatial derivatives, $\W:[0,T]^2\times V\to V$ is $2\alpha$-H\"{o}lder continuous in time, and differentiable in space with locally bounded spatial derivative, and Chen's relation (\ref{chen}) holds. For any $\bW=(W,\W)\in\C^{\alpha,2}_{loc}([0,T]\times V; V)$, and $(Y, \dY)\in \sE_W^{2\alpha}$, the expression (\ref{nrid}) is still a well-defined nonlinear rough integral. However, the growth condition is really needed to consider the global existence of RDEs (see Section \ref{s.4.2}).
\end{remark}

\subsection{Properties of nonlinear rough integrals}\label{s.3.2}
In this section, we present some properties of nonlinear rough integrals. The next proposition shows that the nonlinear rough integral is a basic nonlinear controlled rough path (see Proposition \ref{lricrp} for the linear result).
	\begin{proposition}\label{ricrp}
		 Let $\bW=(W,\W)\in \C^{\alpha,\bbeta_2}([0,T]\times V;V)$. Suppose that $(Y,\dY)\in \sE^{2\alpha}_W$. Let $Z:[0,T]\to V$ be the nonlinear rough integral of $Y$ against $W$ in the sense of \eqref{dnri}:
		\begin{align}
		Z_t=\int_0^t W(dr,Y_r).
		\end{align}
	 Then, $Z$ is controlled by $W$: $(Z,\dZ)=(Z,Y)\in \sE^{2\alpha}_W$.
		\end{proposition}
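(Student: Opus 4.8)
The plan is to show directly from the definition of the nonlinear rough integral that the increment $Z_{s,t}$ decomposes as $W_{s,t}(Y_s)$ plus a remainder of order $|t-s|^{2\alpha}$, so that $Y$ serves as the Gubinelli derivative $\dZ$ of $Z$. First I would recall that, by \eqref{dnri} and the sewing estimate \eqref{nrisd} in Theorem \ref{nri}, we have
\[
Z_{s,t}=\cJ_{s,t}(\Xi)=\Xi_{s,t}+\big(\cJ_{s,t}(\Xi)-\Xi_{s,t}\big)=W_{s,t}(Y_s)+\W_{s,t}(\dY_s,Y_s)+\big(\cJ_{s,t}(\Xi)-\Xi_{s,t}\big),
\]
where $\|\cJ_{s,t}(\Xi)-\Xi_{s,t}\|_V\le C_1|t-s|^{3\alpha}$. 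Hence, setting $\dZ:=Y$, the candidate remainder is
\[
R^Z_{s,t}:=Z_{s,t}-W_{s,t}(\dZ_s)=\W_{s,t}(\dY_s,Y_s)+\big(\cJ_{s,t}(\Xi)-\Xi_{s,t}\big).
\]

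Next I would verify the regularity claims one at a time. That $Z\in\cC^\alpha([0,T];V)$ follows from \eqref{hnnri}, which already gives $\|Z_{s,t}\|_V\le C_2|t-s|^\alpha$; and $\dZ=Y\in\cC^\alpha([0,T];V)$ is part of the hypothesis $(Y,\dY)\in\sE^{2\alpha}_W$. For the remainder, I would bound the two pieces of $R^Z_{s,t}$ separately: the term $\cJ_{s,t}(\Xi)-\Xi_{s,t}$ is $O(|t-s|^{3\alpha})\subset O(|t-s|^{2\alpha})$ by \eqref{nrisd}; and the term $\W_{s,t}(\dY_s,Y_s)$ is $O(|t-s|^{2\alpha})$ because $\W\in\cC^{2\alpha,\bbeta^*_1,\bbeta^{**}_1}_2([0,T]^2\times V^2;V)$, so that
\[
\|\W_{s,t}(\dY_s,Y_s)\|_V\le \|\W\|_{2\alpha,\bbeta^*_1,\bbeta^{**}_1}(1+\|\dY\|_\infty)^{\beta_0}(1+\|Y\|_\infty)^{\beta_1}|t-s|^{2\alpha},
\]
exactly as in \eqref{icmxi}. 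Combining the two bounds shows $R^Z\in\cC^{2\alpha}_2([0,T]^2;V)$, and together with $Z,\dZ\in\cC^\alpha$ and the identity $Z_{s,t}=W_{s,t}(\dZ_s)+R^Z_{s,t}$ this is precisely the statement that $(Z,\dZ)=(Z,Y)\in\sE^{2\alpha}_W$.

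I do not expect any serious obstacle here: this proposition is the nonlinear analogue of Proposition \ref{lricrp}, and all the needed estimates have already been assembled in Theorem \ref{nri} and the displays \eqref{icmxi}--\eqref{hnnri}. The only mild point of care is bookkeeping — making sure the multi-index conventions $\bbeta^*_1=(\beta_0,\beta_0\vee\beta_1)$ and $\bbeta^{**}_1=(\beta_1,\beta_1\vee\beta_2)$ are used consistently when invoking the $2\alpha$-H\"older bound on $\W$, and noting that $T<\infty$ lets us absorb the extra powers $|t-s|^{3\alpha}=|t-s|^\alpha\cdot|t-s|^{2\alpha}\le T^\alpha|t-s|^{2\alpha}$ when downgrading the $3\alpha$ term to a $2\alpha$ remainder. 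Optionally I would also record an explicit bound on $\|R^Z\|_{2\alpha}$ in terms of $\|\bW\|_{\C_2}$, $\|\dY\|_\infty$, $\|Y\|_\infty$ and $\|Y\|_\alpha+\|\dY\|_\alpha+\|R^Y\|_{2\alpha}$, since such an estimate will be convenient for the stability and RDE arguments in later sections.
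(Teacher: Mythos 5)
Your proposal is correct and follows exactly the paper's argument: the paper likewise sets $R^Z_{s,t}=Z_{s,t}-W_{s,t}(Y_s)$, splits it into $\cJ_{s,t}(\Xi)-\Xi_{s,t}$ plus $\W_{s,t}(\dY_s,Y_s)$, and bounds the two pieces by \eqref{nrisd} and the $2\alpha$-H\"older property of $\W$ respectively, recording the resulting explicit bound on $\|R^Z\|_{2\alpha}$ just as you suggest.
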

	\begin{proof}
Let $R^Z_{s,t}:=Z_{s,t}-W_{s,t}(Y_s)$. Then by (\ref{nrisd}), we can write
\begin{align*}
\big\|R^Z_{s,t}&\big\|_V=\Big\|\int_s^tW(dr,Y_r)-W_{s,t}(Y_s)\Big\|_V\\
\leq &\|\cJ_{s,t}(\Xi)-\Xi_{s,t}\|_V+\|\W_{s,t}(\dY_s,Y_s)\|_V\\
\leq &k_{\alpha}\|\bW\|_{\C_2}(1+2\|\dY\|_{\infty})^{\beta_0\vee\beta_1}(1+2\|Y\|_{\infty})^{\beta_1\vee\beta_2}\big(\|Y\|_{\alpha}+\|\dY\|_{\alpha}+\|R^Y\|_{2\alpha}\big)|t-s|^{3\alpha}\\
&+\|\bW\|_{\C_2}(1+\|\dY\|_{\infty})^{\beta_0}(1+\|Y\|_{\infty})^{\beta_1}|t-s|^{2\alpha}.
\end{align*}
It follows that
\begin{align}\label{ermd}
\|R^Z\|_{2\alpha}\leq &k_{\alpha}\|\bW\|_{\C}(1+2\|\dY\|_{\infty})^{\beta_0\vee\beta_1}(1+2\|Y\|_{\infty})^{\beta_1\vee\beta_2}\nonumber\\
	&\times \big[1+T^{\alpha}\big(\|Y\|_{\alpha}+\|\dY\|_{\alpha}+\|R^Y\|_{2\alpha}\big)\big].
	\end{align}
	As a consequence, $Z$ is controlled by $W$ with the Gubinelli derivative $\dZ=Y$.
	\end{proof}

In the next proposition, we consider the stability of nonlinear rough integrals.
	\begin{proposition}\label{nic1}
		Let $\bW, \tbW\in \C^{\alpha,\bbeta_3}([0,T]\times V;V)$. Suppose that $(Y,\dY)\in \sE^{2\alpha}_{W}$ and $(\tY,\dtY)\in \sE^{2\alpha}_{\tW}$ respectively. Define
		\[
		Z_t=\int_0^tW(dr, Y_r)\ \mathrm{and}\ 	\tZ_t=\int_0^t\tW(dr, \tY_r).
		\]
		Then $(Z,Y)\in \sE_{W}^{2\alpha}$ and $(\tZ,\tY)\in\sE_{\tW}^{2\alpha}$ by Proposition \ref{ricrp}. In addition, the following inequality holds:
\begin{align}\label{dwyz}
d_{\alpha, W,\tW}\big((Z,Y),(\tZ,\tY)\big)\leq& C_3\vrho_{\alpha,\bbeta_3}(\bW, \tbW)+C_4\big(\|Y_0-\tY_0\|_V+\|\dY_0-\dtY_0\|_V\big)\nonumber\\
&+C_5d_{\alpha, W,\tW}((Y,\dY),(\tY,\dtY))\big].
\end{align}
where 
\begin{align*}
 C_3=&2k_{\alpha}(1+T^{\alpha})^2(1+\|\tbW\|_{\C_3})(1+2\|Y\|_{\infty}+2\|\tY\|_{\infty})^{\beta^{**}_2}(1+2\|\dY\|_{\infty}+2\|\dtY\|_{\infty})^{\beta^*_2+\beta_0\vee \beta_1}\nonumber\\
 &\quad \times \big[1+\|Y\|_{\alpha}+\|\tY\|_{\alpha}+(\|Y\|_{\alpha}+\|\tY\|_{\alpha})^2+\|R^{Y}\|_{2\alpha}\big],\\
C_4=&5k_{\alpha}(1+T^{\alpha})^2(\|\tbW\|_{\C_3}+\|\tbW\|_{\C_3}^2) (1+2\|Y\|_{\infty}+2\|\tY\|_{\infty})^{\beta^{**}_2}(1+2\|\dY\|_{\infty}+2\|\dtY\|_{\infty})^{\beta^*_2+\beta_1}\nonumber\\
&\quad \times \big[1+\|Y\|_{\alpha}+\|\dY\|_{\alpha}+\|\tY\|_{\alpha}+\|\dtY\|_{\alpha}+ (\|Y\|_{\alpha}+\|\tY\|_{\alpha})^2+\|R^{Y}\|_{2\alpha}\big]\nonumber\\
C_5=&6k_{\alpha}T^{\alpha}(1+T^{\alpha})(1+\|\tbW\|_{\C_3})^2 (1+2\|Y\|_{\infty}+2\|\tY\|_{\infty})^{\beta^{**}_2}(1+2\|\dY\|_{\infty}+2\|\dtY\|_{\infty})^{\beta^*_2+\beta_1}\\
&\quad \times \big[1+ T^{\alpha}(\|Y\|_{\alpha}+\|\dY\|_{\alpha}+\|\tY\|_{\alpha}+\|\dtY\|_{\alpha})+T^{2\alpha}(\|Y\|_{\alpha}+\|\tY\|_{\alpha})^2+T^{2\alpha}\|R^{Y}\|_{2\alpha}\big],
\end{align*}
$\beta^*_2=\max\{\beta_0,\beta_1,\beta_2\}$ and $\beta^{**}_2=\max\{\beta_1,\beta_2,\beta_3\}$.
\end{proposition}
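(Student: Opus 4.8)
The plan is to unwind $d_{\alpha,W,\tW}\big((Z,Y),(\tZ,\tY)\big)=\|Y-\tY\|_{\alpha}+\|R^{Z}-R^{\tZ}\|_{2\alpha}$, where I have used that the Gubinelli derivatives of $Z$ and $\tZ$ are $Y$ and $\tY$ by Proposition \ref{ricrp}. The term $\|Y-\tY\|_{\alpha}$ is already controlled by Lemma \ref{tdtvphin} (inequality \eqref{dtvphin}), and its supremum consequence \eqref{dtvphi1} will be used to eliminate the auxiliary quantities $\|Y-\tY\|_{\infty}$ and $\|\dY-\dtY\|_{\infty}\le\|\dY_0-\dtY_0\|_V+T^{\alpha}\|\dY-\dtY\|_{\alpha}$ that arise below. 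So the real task is to estimate $\|R^{Z}-R^{\tZ}\|_{2\alpha}$. Following the proof of Proposition \ref{ricrp}, set $\Xi_{s,t}=W_{s,t}(Y_s)+\W_{s,t}(\dY_s,Y_s)$ and $\tXi_{s,t}=\tW_{s,t}(\tY_s)+\tdW_{s,t}(\dtY_s,\tY_s)$, so that $R^{Z}_{s,t}=(\cJ_{s,t}(\Xi)-\Xi_{s,t})+\W_{s,t}(\dY_s,Y_s)$ and likewise for $R^{\tZ}$. Since the compensated Riemann sums defining $\cJ$ are linear, $\cJ(\Xi)-\cJ(\tXi)=\cJ(\Xi-\tXi)$, whence
\[
R^{Z}_{s,t}-R^{\tZ}_{s,t}=\big[\cJ_{s,t}(\Xi-\tXi)-(\Xi_{s,t}-\tXi_{s,t})\big]+\big[\W_{s,t}(\dY_s,Y_s)-\tdW_{s,t}(\dtY_s,\tY_s)\big].
\]

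For the second bracket I would split $\W_{s,t}(\dY_s,Y_s)-\tdW_{s,t}(\dtY_s,\tY_s)=(\W-\tdW)_{s,t}(\dY_s,Y_s)+\sD^{\tdW}_{s,t}\big((\dtY_s,\tY_s),(\dY_s,Y_s)\big)$, bounding the first summand by $\|\W-\tdW\|_{2\alpha,\bbeta^*_1,\bbeta^{**}_1}$ times growth factors and $|t-s|^{2\alpha}$, and the second by \eqref{crsd2}; after dividing by $|t-s|^{2\alpha}$ this contributes a bounded quantity to $\|R^{Z}-R^{\tZ}\|_{2\alpha}$.

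For the first bracket I would apply the sewing lemma (Lemma \ref{sew}) to the $\alpha$-H\"older two-parameter map $\Xi-\tXi$ (recall $\Xi,\tXi$ are $\alpha$-H\"older, as noted after Theorem \ref{nri}), so that it suffices to bound $\|\delta(\Xi-\tXi)_{s,u,t}\|_V\le C|t-s|^{3\alpha}$; this yields $\|\cJ_{s,t}(\Xi-\tXi)-(\Xi-\tXi)_{s,t}\|_V\le k_{\alpha}C|t-s|^{3\alpha}$ and hence a contribution $k_{\alpha}CT^{\alpha}$ to $\|R^{Z}-R^{\tZ}\|_{2\alpha}$. To obtain $C$, recall from the proof of Theorem \ref{nri} that, after Chen's relation \eqref{chen},
\[
\delta\Xi_{s,u,t}=-DW_{u,t}(Y_s)(R^{Y}_{s,u})-\sR^{W}_{u,t}(Y_s,Y_u)-\sD^{\W}_{u,t}\big((\dY_s,Y_s),(\dY_u,Y_u)\big),
\]
with the analogous identity for $\delta\tXi$. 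Then $\delta(\Xi-\tXi)_{s,u,t}$ is a sum of three differences, each treated by telescoping: for instance $DW_{u,t}(Y_s)(R^{Y}_{s,u})-D\tW_{u,t}(\tY_s)(R^{\tY}_{s,u})$ splits into $(DW-D\tW)_{u,t}(Y_s)(R^{Y}_{s,u})$, a term $\big(D\tW_{u,t}(Y_s)-D\tW_{u,t}(\tY_s)\big)(R^{Y}_{s,u})$ bounded through $D^2\tW$ and $\|Y-\tY\|_{\infty}$, and $D\tW_{u,t}(\tY_s)(R^{Y}_{s,u}-R^{\tY}_{s,u})$ bounded by $\|R^{Y}-R^{\tY}\|_{2\alpha}$; similarly $\sR^{W}-\sR^{\tW}$ is handled via \eqref{crsd1} applied to $W-\tW$ and via \eqref{crsd3} for $\sR^{\tW}$ along the segment joining $(\tY_s,\tY_u)$ to $(Y_s,Y_u)$, and $\sD^{\W}-\sD^{\tdW}$ via the $2\alpha$-bound on $\W-\tdW$ and via \eqref{crsd4} for $\sD^{\tdW}$. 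It is precisely these ``derivative of a derivative'' terms that force the hypothesis $\bW,\tbW\in\C^{\alpha,\bbeta_3}$ rather than merely $\bbeta_2$, and that make $\bbeta^*_2,\bbeta^{**}_2$ the natural growth indices.

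Finally I would collect all resulting bounds, substitute the estimates of Lemma \ref{tdtvphin} and \eqref{dtvphi1} for $\|Y-\tY\|_{\alpha}$, $\|Y-\tY\|_{\infty}$ and $\|\dY-\dtY\|_{\infty}$, use $\|W-\tW\|_{\alpha,\bbeta_3}\le\vrho_{\alpha,\bbeta_3}(\bW,\tbW)$ and $\|\W-\tdW\|_{2\alpha,\bbeta^*_2,\bbeta^{**}_2}\le\vrho_{\alpha,\bbeta_3}(\bW,\tbW)$, and group the terms according to whether they carry a factor $\vrho_{\alpha,\bbeta_3}(\bW,\tbW)$, a factor $\|Y_0-\tY_0\|_V+\|\dY_0-\dtY_0\|_V$, or a factor $d_{\alpha,W,\tW}\big((Y,\dY),(\tY,\dtY)\big)$. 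This produces the constants $C_3,C_4,C_5$ in the stated form — the quadratic terms $(\|Y\|_{\alpha}+\|\tY\|_{\alpha})^2$ and the extra powers of $1+T^{\alpha}$ and of $\|\tbW\|_{\C_3}$ being exactly what appears when the $\|Y-\tY\|_{\infty}$ bound \eqref{dtvphi1} is inserted into the $D^2\tW$, $\sR^{\tW}$ and $\sD^{\tdW}$ estimates. The analytic content is routine: linearity of the sewing map, one telescoping identity per difference, and the elementary bounds of Lemma \ref{crsd}. The main obstacle is purely the bookkeeping — uniformly estimating the large number of cross terms and absorbing every H\"older and growth factor into precisely the three constants $C_3,C_4,C_5$.
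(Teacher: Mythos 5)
Your proposal is correct and follows essentially the same route as the paper's proof: the same reduction via Proposition \ref{ricrp} and Lemma \ref{tdtvphin} to estimating $\|R^{Z}-R^{\tZ}\|_{2\alpha}$, the same decomposition of $R^{Z}_{s,t}-R^{\tZ}_{s,t}$ into a sewing-lemma term for $\Delta=\Xi-\tXi$ plus the difference of the $\W$-terms, and the same three-way splitting of $\delta\Delta_{s,u,t}$ (the $DW(R^{Y})$ difference by telescoping, the $\sR$ and $\sD$ differences by the mean value theorem together with the bounds of Lemma \ref{crsd}), finishing by substituting \eqref{dtvphin} and \eqref{dtvphi1}. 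No gaps; this matches the paper's argument, including the observation that the $\bbeta_3$ regularity is what the derivative bounds \eqref{crsd3}--\eqref{crsd4} require.
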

	
	\begin{proof}
Due to Lemma \ref{tdtvphin} and Proposition \ref{ricrp}, it suffices to estimate $\|R^Z-R^{\tZ}\|_{2\alpha}$.

 Let $\Xi$ and $\tXi$ be the approximation of $Z$ and $\tZ$ respectively. That is,
\[
\Xi_{s,t}=W_{s,t}(Y_s)+\W_{s,t}(\dY_s,Y_s)\quad \mathrm{and}\quad \tXi_{s,t}=\tW_{s,t}(\tY_s)+\tW_{s,t}(\dtY_s,\tY_s).
\]
Set $\Delta=\Xi-\tXi$.  Taking into account   formulas \eqref{edxi} - \eqref{edxi2}, we can write
\begin{align}\label{b0}
\delta\Delta_{s,u,t} =&\big[DW_{u,t}(Y_s)(R^Y_{s,u})-D\tW_{u,t}(\tY_s)(R^{\tY}_{s,u})\big]+\big[\sR^{W}_{u,t}(Y_s,Y_u)-\sR^{\tW}_{u,t}(\tY_s,\tY_u)\big]\nonumber\\
&+\big[\sD^{\W}_{u,t}\big((\dY_s,Y_s),(\dY_u,Y_u)\big)-\sD^{\tdW}_{u,t}\big((\dtY_s,\tY_s),(\dtY_u,\tY_u)\big)\big]\nonumber\\
:=&J_1+J_2+J_3.
\end{align}
 For $J_1$, we have
\begin{align}\label{b1}
\|J_1\|_V\leq &\big\|DW_{u,t}(Y_s)(R_{s,u}^{Y})-D\tW_{u,t}(Y_s)(R_{s,u}^{Y})\big\|_V\nonumber\\
&+\big\|D\tW_{u,t}(Y_s)(R_{s,u}^{Y})-D\tW_{u,t}(\tY_s)(R_{s,u}^{Y})\big\|_V\nonumber\\
&+\big\|D\tW_{u,t}(\tY_s)(R_{s,u}^{Y})-D\tW_{u,t}(\tY_s)(R_{s,u}^{\tY})\big\|_V\nonumber\\
\leq &\|W-\tW\|_{\alpha,\bbeta_3}(1+\|Y\|_{\infty})^{\beta_1}\|R^{Y}\|_{2\alpha}|t-s|^{3\alpha}\nonumber\\
&+\|\tW\|_{\alpha,\bbeta_3}(1+\|Y\|_{\infty}+\|\tY\|_{\infty})^{\beta_2}\|Y_s-\tY_s\|_V\|R^{Y}\|_{2\alpha}|t-s|^{3\alpha}\nonumber\\
&+\|\tW\|_{\alpha,\bbeta_3}(1+\|\tY\|_{\infty})^{\beta_1}\|R^{Y}-R^{\tY}\|_{2\alpha}|t-s|^{3\alpha}.
\end{align}
Plugging (\ref{dtvphi1}) into (\ref{b1}), we have the following estimate for $J_1$:
\begin{align}\label{b11}
\|J_1\|_V	 \leq 	 &\Big\{(1+T^{\alpha})(1+\|\tW\|_{\alpha,\bbeta_3})(1+\|Y\|_{\infty}+\|\tY\|_{\infty})^{\beta_1\vee\beta_2}(1+\|\dY\|_{\infty})^{\beta_0}\nonumber\\
&\quad \times\|R^{Y}\|_{2\alpha}\|W-\tW\|_{\alpha,\bbeta_3}\nonumber\\
&+(1+T^{\alpha})(\|\tW\|_{\alpha,\bbeta_3}+\|\tW\|_{\alpha,\bbeta_3}^2)(1+\|Y\|_{\infty}+\|\tY\|_{\infty})^{\beta_1\vee\beta_2}\nonumber\\
&\quad \times(1+\|\dY\|_{\infty}+\|\dtY\|_{\infty})^{\beta_1}\|R^{Y}\|_{2\alpha}\big(\|Y_0-\tY_0\|_V+\|\dY_0-\dtY_0\|_V\big)\nonumber\\
 &+(\|\tW\|_{\alpha,\bbeta_3}+\|\tW\|_{\alpha,\bbeta_3}^2)(1+\|Y\|_{\infty}+\|\tY\|_{\infty})^{\beta_1\vee\beta_2}(1+\|\dY\|_{\infty}+\|\dtY\|_{\infty})^{\beta_1}\nonumber\\
 &\quad \times (1+T^{2\alpha}\|R^{Y}\|_{2\alpha})d_{\alpha, W,\tW}\big((Y,\dY), (\tY,\dtY)\big)\Big\}|t-s|^{3\alpha}.
\end{align}

 We apply Lemma \ref{crsd} to estimate $J_2$ and $J_3$ as follows. For $J_2$, by the mean value theorem, there exits $c\in[0,1]$, and $(\xi_1,\xi_2)=c(Y_s,Y_u)+(1-c)(\tY_s,\tY_u)$, such that
	 \begin{align*}
	 J_2= &\sR^{W-\tW}_{u,t}(Y_s,Y_u)+\big[D\sR^{\tW}_{u,t}(\xi_1,\xi_2)(Y_s-\tY_s,Y_u-\tY_u)\big]\\
:=&J_2^1+J_2^2.
	 \end{align*}
	 By (\ref{crsd1}), we can write
	\begin{align}\label{b21}
\|J_2^1\|_V\leq &\frac{1}{2}(1+2\|Y\|_{\infty})^{\beta_2}\|Y\|_{\alpha}^2\|W-\tW\|_{\alpha,\bbeta_3}|t-s|^{3\alpha}.
\end{align} 
On the other hand, using (\ref{crsd3}), \eqref{dtvphin} and (\ref{dtvphi1}), we have 
\begin{align}\label{b22}
\|J_2^2\|_V\leq &\|\tW\|_{\alpha,\bbeta_3}(1+2\|Y\|_{\infty}+2\|\tY\|_{\infty})^{\beta_2\vee\beta_3}\\
&\times \big[(\|Y\|_{\alpha}+\|\tY\|_{\alpha})^2\|Y_u-\tY_u\|_V+(\|Y\|_{\alpha}+\|\tY\|_{\alpha})\|Y-\tY\|_{\alpha}\big]|t-s|^{3\alpha}\nonumber\\
\leq &\Big[\|\tW\|_{\alpha,\bbeta_3}(1+2\|Y\|_{\infty}+2\|\tY\|_{\infty})^{\beta_2\vee\beta_3}(1+\|\dY\|_{\infty})^{\beta_0}\nonumber\\
&\times\big((\|Y\|_{\alpha}+\|\tY\|_{\alpha})+T^{\alpha}(\|Y\|_{\alpha}+\|\tY\|_{\alpha})^2\big)\|W-\tW\|_{\alpha,\bbeta_3}\nonumber\\
&+(1+T^{\alpha})(\|\tW\|_{\alpha,\bbeta_3}+\|\tW\|_{\alpha,\bbeta_3}^2)(1+2\|Y\|_{\infty}+2\|\tY\|_{\infty})^{\beta_2\vee\beta_3}(1+\|\dY\|_{\infty}+\|\dtY\|_{\infty})^{\beta_1}\nonumber\\
&\times\big((\|Y\|_{\alpha}+\|\tY\|_{\alpha})+(\|Y\|_{\alpha}+\|\tY\|_{\alpha})^2\big)\big(\|Y_0-\tY_0\|_V+\|\dY_0-\dtY_0\|_V\big)\nonumber\\
&+(\|\tW\|_{\alpha,\bbeta_3}+\|\tW\|_{\alpha,\bbeta_3}^2)(1+2\|Y\|_{\infty}+2\|\tY\|_{\infty})^{\beta_2\vee\beta_3}(1+\|\dY\|_{\infty}+\|\dtY\|_{\infty})^{\beta_1}\nonumber\\
&\times \big(T^{\alpha}(\|Y\|_{\alpha}+\|\tY\|_{\alpha})+T^{2\alpha}(\|Y\|_{\alpha}+\|\tY\|_{\alpha})^2\big)d_{\alpha, W,\tW}\big((Y,\dY), (\tY,\dtY)\big)\Big]|t-s|^{3\alpha}\nonumber.
\end{align}
For $J_3$, by using the mean value theorem again, we have
\begin{align*}
J_3=&\sD^{\W-\tdW}_{u,t}\big((\dY_s,Y_s), (\dY_u,Y_u)\big)+D\sD^{\tdW}_{u,t}(\bxi^1,\bxi^2)\big((\dY_s-\dtY_s,Y_s-\tY_s), (\dY_u-\dtY_u,Y_u-\tY_u)\big)\\
:=&J_3^1+J_3^2,
	 \end{align*}
	 where $\bxi^1=c'(\dY_s,Y_s)+(1-c')(\dtY_s,\tY_s)$ and $\bxi^2= c'(\dY_u,Y_u)+(1-c')(\dtY_u,\tY_u)$ for some $c'\in[0,1]$. Due to the inequalities (\ref{crsd2}), (\ref{crsd4}), (\ref{dtvphin}) and (\ref{dtvphi1}), we can show that
\begin{align}\label{b31}
\|J_3^1\|_V\leq &(1+2\|\dY\|_{\infty})^{\beta_0\vee\beta_1}(1+2\|Y\|_{\infty})^{\beta_1\vee\beta_2} (\|\dY\|_{\alpha}+\|Y\|_{\alpha})\nonumber\\
&\times \|\W-\tdW\|_{2\alpha,\bbeta^*_2, \bbeta^{**}_2}|t-s|^{3\alpha},
\end{align}
and
\begin{align}\label{b32}
\|J_3^2\|_V\leq &\Big[(1+T^{\alpha})\|\tdW\|_{2\alpha,\bbeta^*_2,\bbeta^{**}_2} (1+2\|Y\|_{\infty}+2\|\tY\|_{\infty})^{\beta^{**}_2}(1+2\|\dY\|_{\infty}+2\|\dtY\|_{\infty})^{\beta^*_2+\beta_0}\nonumber\\
&\quad \times (\|Y\|_{\alpha}+\|\dY\|_{\alpha}+\|\tY\|_{\alpha}+\|\dtY\|_{\alpha})\|W-\tW\|_{\alpha,\bbeta_3}\nonumber\\
&+3(1+T^{\alpha})(\|\tbW\|_{\C_3}+\|\tbW\|_{\C_3}^2)(\|Y\|_{\alpha}+\|\dY\|_{\alpha}+\|\tY\|_{\alpha}+\|\dtY\|_{\alpha})\nonumber\\
&\quad \times (1+2\|Y\|_{\infty}+2\|\tY\|_{\infty})^{\beta^{**}_2}(1+2\|\dY\|_{\infty}+2\|\dtY\|_{\infty})^{\beta^*_2+\beta_1}\nonumber\\
&\quad \times \big(\|Y_0-\tY_0\|_V+\|\dY_0-\dtY_0\|_V\big)\nonumber\\
	 &+2(1+T^{\alpha})(\|\tbW\|_{\C_3}+\|\tbW\|_{\C_3}^2) \big(1+ T^{\alpha}(\|Y\|_{\alpha}+\|\dY\|_{\alpha}+\|\tY\|_{\alpha}+\|\dtY\|_{\alpha})\big)\nonumber\\
&\quad \times (1+2\|Y\|_{\infty}+2\|\tY\|_{\infty})^{\beta^{**}_2}(1+2\|\dY\|_{\infty}+2\|\dtY\|_{\infty})^{\beta^*_2+\beta_1}\nonumber\\
&\quad \times d_{\alpha, W,\tW}\big((Y,\dY), (\tY,\dtY)\big)\Big]|t-s|^{3\alpha}.
\end{align}
Therefore, combining (\ref{b0}) and (\ref{b11}) - (\ref{b32}), we have

\begin{align}\label{dtdlt}
&\|\delta \Delta_{s,u,t}\|_V\nonumber\\
\leq	 &	 \Big\{(1+T^{\alpha})(1+\|\tbW\|_{\C_3})(1+2\|Y\|_{\infty}+2\|\tY\|_{\infty})^{\beta^{**}_2}(1+2\|\dY\|_{\infty}+2\|\dtY\|_{\infty})^{\beta^*_2+\beta_0}\nonumber\\
	 &\quad \times \big[\|Y\|_{\alpha}+\|\tY\|_{\alpha}+(\|Y\|_{\alpha}+\|\tY\|_{\alpha})^2+\|R^{Y}\|_{2\alpha}\big]\vrho_{\alpha,\bbeta_3}(\bW,\tbW)\nonumber\\
	 &+4(1+T^{\alpha})(\|\tbW\|_{\C_3}+\|\tbW\|_{\C_3}^2) (1+2\|Y\|_{\infty}+2\|\tY\|_{\infty})^{\beta^{**}_2}(1+2\|\dY\|_{\infty}+2\|\dtY\|_{\infty})^{\beta^*_2+\beta_1}\nonumber\\
	 &\quad \times \big[\|Y\|_{\alpha}+\|\dY\|_{\alpha}+\|\tY\|_{\alpha}+\|\dtY\|_{\alpha}+ (\|Y\|_{\alpha}+\|\tY\|_{\alpha})^2+\|R^{Y}\|_{2\alpha}\big]\nonumber\\
	 &\quad \times \big(\|Y_0-\tY_0\|_V+\|\dY_0-\dtY_0\|_V\big)\nonumber\\
	 &+4(1+T^{\alpha})(\|\tbW\|_{\C_3}+\|\tbW\|_{\C_3}^2) (1+2\|Y\|_{\infty}+2\|\tY\|_{\infty})^{\beta^{**}_2}(1+2\|\dY\|_{\infty}+2\|\dtY\|_{\infty})^{\beta^*_2+\beta_1}\nonumber\\
	 &\quad \times \big[1+ T^{\alpha}(\|Y\|_{\alpha}+\|\dY\|_{\alpha}+\|\tY\|_{\alpha}+\|\dtY\|_{\alpha})+T^{2\alpha}(\|Y\|_{\alpha}+\|\tY\|_{\alpha})^2+T^{2\alpha}\|R^{Y}\|_{2\alpha}\big]\nonumber\\
&\quad \times d_{\alpha, W,\tW}\big((Y,\dY), (\tY,\dtY)\big)\Big\}|t-s|^{3\alpha}.
\end{align}
	 On the other hand, by (\ref{crsd2}) and (\ref{dtvphi1}), we can show that
	 \begin{align}\label{dbw}
	 &\|\W_{s,t}(\dY_s, Y_s)-\tdW_{s,t}(\dtY_s, \tY_s)\|_V=(\W-\tdW)_{s,t}(\dY_s,Y_s)-\sD^{\tdW}_{s,t}\big((\dY_s,Y_s),(\dtY_s,\tY_s)\big)\nonumber\\
	 \leq &\Big\{(1+T^{\alpha})(1+\|\tbW\|_{\C_3})(1+\|\dY\|_{\infty}+\|\dtY\|_{\infty})^{2\beta_0\vee\beta_1}(1+\|Y\|_{\infty}+\|\tY\|_{\infty})^{\beta_1\vee\beta_2}\nonumber\\
	 &\quad \times\vrho_{\alpha,\bbeta_3}(\bW,\tbW)\nonumber\\
	 &+2(1+T^{\alpha})(\|\tbW\|_{\C_3}+\|\tbW\|_{\C_3}^2)(1+\|\dY\|_{\infty}+\|\dtY\|_{\infty})^{\beta_0\vee\beta_1+\beta_1}(1+\|Y\|_{\infty}+\|\tY\|_{\infty})^{\beta_1\vee\beta_2}\nonumber\\
	 &\quad \times \big(\|Y_0-\tY_0\|_V+\|\dY_0-\dtY_0\|_V\big)\nonumber\\
	 &+2T^{\alpha}(1+T^{\alpha})(\|\tbW\|_{\C_3}+\|\tbW\|_{\C_3}^2)(1+\|\dY\|_{\infty}+\|\dtY\|_{\infty})^{\beta_0\vee\beta_1+\beta_1}(1+\|Y\|_{\infty}+\|\tY\|_{\infty})^{\beta_1\vee\beta_2}\nonumber\\
	 &\quad \times d_{\alpha, W,\tW}\big((Y,\dY), (\tY,\dtY)\big)\Big\}|t-s|^{2\alpha}.
	 \end{align}

Notice that by Proposition \ref{ricrp}, we know that
\begin{align}\label{drze}
\|R^Z_{s,t}-R^{\tZ}_{s,t}\|_V\leq \|Z_{s,t}-\tZ_{s,t}-\Delta_{s,t}\|_V+\|\W_{s,t}(\dY_s, Y_s)-\tdW_{s,t}(\dtY_s, \tY_s)\|_V.
\end{align}
The inequality (\ref{dwyz}) follows from (\ref{dtvphin}), (\ref{dtdlt}) - (\ref{drze}) and the sewing lemma.
 \end{proof}

\begin{comment}

\begin{align}
 \|Y-\tY\|_{\alpha}\leq &(1+\|\dY\|_{\infty})^{\beta_0}\|W-\tW\|_{\alpha,\bbeta_3}\\
 &+\|\tW\|_{\alpha,\bbeta_3}(1+\|\dY\|_{\infty}+\|\dtY\|_{\infty})^{\beta_1}\|\dY_0-\dtY_0\|_V\nonumber\\
 &+T^{\alpha}(1+\|\tW\|_{\alpha,\bbeta_3})(1+\|\dY\|_{\infty}+ \|\dtY\|_{\infty})^{\beta_1}d_{\alpha,W,\tW}\big((Y,\dY),(\tY ,\dtY)\big).\nonumber
\end{align}

	 \begin{align}
	 \|Y-\tY\|_{\infty}\leq &T^{\alpha}(1+\|\dY\|_{\infty})^{\beta_0}\|W-\tW\|_{\alpha,\bbeta_3}+\|Y_0-\tY_0\|_V\nonumber\\
	 &+(1+T^{\alpha})(1+\|\tW\|_{\alpha,\bbeta_3})(1+\|\dY\|_{\infty}+\|\dtY\|_{\infty})^{\beta_1}\big(\|Y_0-\tY_0\|_V+\|\dY_0-\dtY_0\|_V\big)\nonumber\\
	 &+T^{2\alpha}(1+\|\tW\|_{\alpha,\bbeta_3})(1+\|\dY\|_{\infty}+\|\dtY\|_{\infty})^{\beta_1}d_{\alpha, W,\tW}\big((Y,\dY), (\tY,\dtY)\big).\nonumber
	 \end{align}

\end{comment}

\section{Rough Differential Equations}\label{s.4}
  Let $\alpha\in (\frac{1}{3},\frac{1}{2}]$, $\bbeta_3=(\beta_0,\dots,\beta_3)$ where $\beta_k\geq 0$, $k=0,\dots,3$, and let $\bW=(W,\W)\in \C^{\alpha,\bbeta_3}([0,T]\times V; V)$. That is $W$ is $\alpha$-H\"{o}lder in time, and three times differentiable in space with growth multi-index $\bbeta_3$, $\W$ is $2\alpha$-H\"{o}lder in time and twice differentiable in space with growth multi-indexes $\bbeta_2^*=(\beta_0,\beta_0\vee \beta_1,\beta_0\vee\beta_1\vee\beta_2)$ and $\bbeta^{**}_2=(\beta_1,\beta_1\vee\beta_2,\beta_1\vee\beta_2\vee\beta_3)$, and $(W,\W)$ satisfies Chen's relation \eqref{chen}.
 	 Consider the following nonlinear RDE: 
	 \begin{align}\label{rde}
	 Y_t=\xi+\int_0^tW(dr,Y_r).
	 \end{align}
	 \begin{definition}\label{dsol}
An $\alpha$-H\"{o}lder continuous function $Y$ is said to be a solution to (\ref{rde}), if $(Y, Y)\in \sE_{W,(\xi,\xi)}^{2\alpha}$, and equality (\ref{rde}) holds for all $t\in [0,T]$ where the integral on the right-hand side is a nonlinear rough integral in the sense of Theorem \ref{nri}. 
\end{definition}
\subsection{Local existence}\label{s.4.1}
	 In this section, we establish the (local) existence of a solution for equation (\ref{rde}) using the Picard iteration method. To this end, we introduce the following notation. Let $\Phi\in \cC^{\alpha}([0,T];V)$, for any $0\leq s< t\leq T$, we make use of the notation
	 \[
	 \|\Phi\|_{\alpha,[s,t]}:=\sup_{u\neq v\in [s,t]}\frac{\|\Phi_{u,v}\|_V}{|v-u|^{\alpha}}.
	 \]
	 We also define $d_{W,\alpha, [s,t]}$ in a similar way. 
	 \begin{theorem}\label{rdelc}
For any $\xi\in V$, there exist a positive number $h$, such that the RDE \eqref{rde} has a solution $Y$ on $[0,h]$ with initial condition $Y_0=\xi$. In addition, the following inequality holds on $[0,h]$:
\begin{align}\label{maxyr0}
\|Y\|_{\alpha, [0,h]}\leq \min\Big\{&(1+({\color{blue}6}\gamma_1)^{-1})^{1+\beta_0}(1+\|\bW\|_{\C_3})(1+\|\xi\|_V)^{\beta_0},\nonumber\\
 &2k_{\alpha}\Big(\frac{1+\gamma_1}{\gamma_1}\Big)^{1+\gamma_1}\|\bW\|_{\C_3}(1+{\color{blue}2}\|\xi\|_V)^{\gamma_1}\Big\},
\end{align}
where $\gamma_1=\beta_0\vee\beta_1+\beta_1\vee\beta_2$.
		 \end{theorem}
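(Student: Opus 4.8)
The plan is to construct a solution on a short interval $[0,h]$ by Picard iteration inside the complete metric space $(\sE^{2\alpha}_{W,(\xi,\xi)}([0,h]),d_{\alpha,W})$ furnished by Lemma \ref{cmplt}. I would initialize with $Y^{(0)}_t=\xi+W_{0,t}(\xi)$ and Gubinelli derivative $\dY^{(0)}\equiv\xi$: then $Y^{(0)}_{s,t}=W_{s,t}(\xi)=W_{s,t}(\dY^{(0)}_s)$, so $R^{Y^{(0)}}\equiv 0$ and $(Y^{(0)},\dY^{(0)})\in\sE^{2\alpha}_{W,(\xi,\xi)}([0,h])$. Then set $Y^{(n+1)}_t=\xi+\int_0^tW(dr,Y^{(n)}_r)$ in the sense of Theorem \ref{nri}; by Proposition \ref{ricrp} each $Y^{(n+1)}$ is again controlled by $W$, with Gubinelli derivative $Y^{(n)}$ and remainder $R^{Y^{(n+1)}}$ obeying the localized form of \eqref{ermd}, so the iteration stays in $\sE^{2\alpha}_{W,(\xi,\xi)}([0,h])$ and a fixed point has Gubinelli derivative equal to itself, matching Definition \ref{dsol}.

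The heart of the proof is a uniform a priori bound. I would show by induction on $n$ that, once $h$ is chosen small enough, $\|Y^{(n)}\|_{\alpha,[0,h]}\le M$ and $\|R^{Y^{(n)}}\|_{2\alpha,[0,h]}\le M'$ for all $n$, where $M$ is the right-hand side of \eqref{maxyr0}. The ingredients are: the splitting $\|Y^{(n+1)}\|_{\alpha,[0,h]}\le\|W\|_{\alpha,\bbeta_1}(1+\|Y^{(n)}\|_{\infty,[0,h]})^{\beta_0}+h^\alpha\|R^{Y^{(n+1)}}\|_{2\alpha,[0,h]}$; the localized estimate \eqref{ermd} for the pair $(Y^{(n)},Y^{(n-1)})$, which bounds $\|R^{Y^{(n+1)}}\|_{2\alpha,[0,h]}$ by $k_\alpha\|\bW\|_{\C_3}(1+2\|Y^{(n-1)}\|_{\infty})^{\beta_0\vee\beta_1}(1+2\|Y^{(n)}\|_{\infty})^{\beta_1\vee\beta_2}\bigl[1+h^\alpha(\|Y^{(n)}\|_{\alpha}+\|Y^{(n-1)}\|_{\alpha}+\|R^{Y^{(n)}}\|_{2\alpha})\bigr]$; and the elementary bound $\|Y^{(n)}\|_{\infty,[0,h]}\le\|\xi\|_V+h^\alpha\|Y^{(n)}\|_{\alpha,[0,h]}$. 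Feeding the inductive hypotheses into these turns the step into a scalar inequality of the schematic shape $M\ge\|\bW\|_{\C_3}(1+\|\xi\|_V+h^\alpha M)^{\beta_0}+h^\alpha k_\alpha\|\bW\|_{\C_3}(1+2\|\xi\|_V+2h^\alpha M)^{\gamma_1}(1+h^\alpha(\cdots))$ with $\gamma_1=\beta_0\vee\beta_1+\beta_1\vee\beta_2$; choosing $h$ so that $h^\alpha M$ and each of the finitely many other $h^\alpha$-weighted error contributions are dominated by a small multiple of $\gamma_1^{-1}$ lets one absorb all $M$-dependence from the right and collapse the powered growth factors via $(1+\gamma_1^{-1})^{\gamma_1}<e$, leaving the two explicit closed bounds whose minimum is exactly \eqref{maxyr0}. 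I expect this optimization — how the exponents $\beta_0,\beta_1,\beta_2$ combine into $\gamma_1$, and how the coupling $\|Y\|_\infty\leftrightarrow\|Y\|_\alpha$ propagates through the factors $(1+\|Y\|_\infty)^{\beta_k}$ while still producing the clean constants $(1+(6\gamma_1)^{-1})^{1+\beta_0}$ and $2k_\alpha((1+\gamma_1)/\gamma_1)^{1+\gamma_1}$ — to be the main technical obstacle; the remaining steps are routine.

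With the uniform bounds in hand, $\|Y^{(n)}\|_{\infty,[0,h]}$, $\|\dY^{(n)}\|_{\infty,[0,h]}$ and $\|Y^{(n)}\|_{\alpha,[0,h]}$ are bounded independently of $n$. Applying the localized Proposition \ref{nic1} with $\tbW=\bW$ to consecutive iterates — the $C_3$- and $C_4$-terms drop because $\tbW=\bW$ and $(Y^{(k)}_0,\dY^{(k)}_0)=(\xi,\xi)$ for every $k$ — gives
\[
d_{\alpha,W}\bigl((Y^{(n+1)},Y^{(n)}),(Y^{(n)},Y^{(n-1)})\bigr)\le C_5\,d_{\alpha,W}\bigl((Y^{(n)},Y^{(n-1)}),(Y^{(n-1)},Y^{(n-2)})\bigr),
\]
and since the localized $C_5$ carries an overall factor $h^\alpha$ while its other factors are controlled by the uniform bounds, $C_5<1$ after shrinking $h$ once more. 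Hence $\{(Y^{(n)},\dY^{(n)})\}$ is Cauchy and converges in $\sE^{2\alpha}_{W,(\xi,\xi)}([0,h])$ to some $(Y,\dY)$; because $\dY^{(n)}=Y^{(n-1)}\to Y$ in $\cC^\alpha$ (through Lemma \ref{tdtvphin}), the limiting derivative is $\dY=Y$. Passing to the limit in $Y^{(n+1)}=\xi+\int_0^\cdot W(dr,Y^{(n)}_r)$, legitimate by the same localized Proposition \ref{nic1} (continuity of the nonlinear rough integral in its controlled-rough-path argument), shows that $(Y,Y)\in\sE^{2\alpha}_{W,(\xi,\xi)}([0,h])$ solves \eqref{rde} on $[0,h]$, and \eqref{maxyr0} is precisely the a priori bound, inherited by $Y$.
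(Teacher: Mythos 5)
Your proposal follows essentially the same route as the paper: the same Picard iteration started from $(Y^0,\dY^0)=(\xi+W_{0,\cdot}(\xi),\xi)$, the same uniform a priori bound obtained by feeding the localized remainder estimate \eqref{ermd} and the splitting \eqref{dyrde0} into a scalar fixed-point inequality (the paper makes your ``absorb all $M$-dependence'' step precise by analyzing $f_h(x)=2k_\alpha\|\bW\|_{\C_3}(1+2\|\xi\|_V)^{\gamma_1}(1+6h^\alpha x)^{1+\gamma_1}$ and locating the zero of $f_h(x)-x$ at $x_1$, which is exactly the second term in \eqref{maxyr0}), and the same contraction step via Proposition \ref{nic1} with $\tbW=\bW$ and matching initial data so that only the $h^\alpha$-weighted $C_5$ term survives. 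The argument is correct and matches the paper's proof in structure and in all essential estimates.
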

\begin{proof}
Choose $h\in (0,1]$. Let 
\[
(Y^0_t,\dY^0_t):=(\xi+W_{0,t}(\xi), \xi), \quad t\in [0,h].
\]
Then $(Y^0,\dY^0)\in \sE_{W,(\xi,\xi)}^{2\alpha}$ with the remainder $R^{Y^0}_{s,t}\equiv 0$ for all $(s,t)\in [0,h]^2$. Due to Proposition \ref{ricrp}, for any $n\geq 1$, we can recursively define an element $(Y^n, \dot{Y}^n)\in \sE_{W,(\xi,\xi)}^{2\alpha}$ given by
\[
Y^{n+1}_t=\xi+\int_0^t W(dr, Y^n_t), \quad t\in [0,h].
\]
 By (\ref{ermd}), the following inequality holds for all $n\geq 1$
	\begin{align}\label{drrde}
	\|R^{Y^{n+1}}\|_{2\alpha, [0,h]}	\leq &k_{\alpha}\|\bW\|_{\C_3}(1+{\color{blue}2}\|\xi\|_V)^{\gamma_1}\\
	&\times \big[1+{\color{blue}2}h^{\alpha}\big(\|Y^n\|_{\alpha, [0,h]}+\|Y^{n-1}\|_{\alpha, [0,h]}+\|R^{Y^n}\|_{2\alpha, [0,h]}\big)\big]^{1+\gamma_1}.\nonumber
	\end{align}
 By iteration, we know that $(Y^{n+1},Y^n)\in \sE_W^{2\alpha,(\xi,\xi)}$, which implies that
\begin{align}\label{dyrde0}
\|Y^{n+1}\|_{\alpha, [0,h]}\leq \|W\|_{\alpha,\bbeta_3}(1+\|\xi\|_V+h^{\alpha}\|Y^n\|_{\alpha, [0,h]})^{\beta_0}+h^{\alpha}\|R^{Y^{n+1}}\|_{2\alpha, [0,h]}.
\end{align}
Substituting the inequality \eqref{drrde} into \eqref{dyrde0}, we have the following estimate
\begin{align}\label{dyrde}
\|Y^{n+1}&\|_{\alpha, [0,h]}\leq  2k_{\alpha}\|\bW\|_{\C_3}(1+{\color{blue}2}\|\xi\|_V)^{\gamma_1}\nonumber\\
&\times\big[1+{\color{blue}2}h^{\alpha}\big(\|Y^n\|_{\alpha, [0,h]}+\|Y^{n-1}\|_{\alpha, [0,h]}+\|R^{Y^n}\|_{2\alpha, [0,h]}\big)\big]^{1+\gamma_1}.
\end{align} 

Choose $h_1=\big[{\color{blue}12}k_{\alpha}\|\bW\|_{\C_3}(1+\gamma_1)^{1+\gamma_1}\gamma_1^{-\gamma_1}(1+{\color{blue}2}\|\xi\|_V)^{\gamma_1}\big]^{-\frac{1}{\alpha}}\wedge 1$ (by convention $0^0:=1$), and let $h\in (0,h_1]$. We claim that $\|Y^n\|_{\alpha, [o,h]}$ and $\|R^n\|_{2\alpha, [0,h]}$ are bounded uniformly in $n$ for $h\in (0,h_1]$. To this end, for any $h\in(0, h_1]$, let $f_h: \R_+\to\R_+ $ be given by
\[
f_h(x):=2k_{\alpha}\|\bW\|_{\C_3}(1+{\color{blue}2}\|\xi\|_V)^{\gamma_1}(1+{\color{blue}6}h^{\alpha}x)^{1+\gamma_1},
\]
and let $g_h(x)=f_h(x)-x$ for all $x\in\R_+$. It is easily to show that $g_{h_1}$ has a unique local minimum at
\begin{align*}
x_1=&({\color{blue}6}h_1^{\alpha})^{-1}\big[\big({\color{blue}12}k_{\alpha}\|\bW\|_{\C_3}(1+{\color{blue}2}\|\xi\|_V)^{\gamma_1}(1+\gamma_1)h_1^{\alpha}\big)^{-\frac{1}{\gamma_1}}-1\big]\\
=&2k_{\alpha}\|\bW\|_{\C_3}(1+{\color{blue}2}\|\xi\|_V)^{\gamma_1}\Big(\frac{1+\gamma_1}{\gamma_1}\Big)^{1+\gamma_1},
\end{align*}
and
\begin{align*}
g_{h_1}(x_1)=&2k_{\alpha}\|\bW\|_{\C_3}(1+{\color{blue}2}\|\xi\|_V)^{\gamma_1}\big({\color{blue}12}k_{\alpha}\|\bW\|_{\C_3}(1+{\color{blue}2}\|\xi\|_V)^{\gamma_1}(1+\gamma_1)h_1^{\alpha}\big)^{-\frac{1+\gamma_1}{\gamma_1}}\\
&-({\color{blue}6}h_1^{\alpha})^{-1}\big[\big({\color{blue}12}k_{\alpha}\|\bW\|_{\C_3}(1+{\color{blue}2}\|\xi\|_V)^{\gamma_1}(1+\gamma_1)h_1^{\alpha}\big)^{-\frac{1}{\gamma_1}}-1\big]= 0.
\end{align*}
Therefore, for any $x\in [0,x_1]$, the following inequality holds:
\[
f_h(x)\leq f_{h_1}(x)\leq f_{h_1}(x_1)=x_1.
\]
Notice that $\|\dY^0\|_{\alpha, [0,h]}=\|R^{Y^0}\|_{2\alpha, [0,h]}=0$. From the inequalities (\ref{drrde}) and (\ref{dyrde}) we can show by a recursive argument that
\begin{align}\label{maxyr}
\max_{n\geq 0}\{\|Y_n\|_{\alpha, [0,h]}, \|R^{Y_n}\|_{2\alpha, [0,h]}\}\leq x_1=2k_{\alpha}\|\bW\|_{\C_3}(1+{\color{blue}2}\|\xi\|_V)^{\gamma_1}\Big(\frac{1+\gamma_1}{\gamma_1}\Big)^{1+\gamma_1},
\end{align}
provided that $\|Y^0\|_{\alpha, [0,h]}\leq x_1$. Indeed, by definition, we know that $\|\dY^0\|_{\alpha}= \|R^{Y^0}\|_{2\alpha}=0$, and the following estimate holds:
\[
\|Y^0\|_{\alpha, [0,h]}\leq \|W\|_{\alpha,\bbeta_3}(1+\|\xi\|_V)^{\gamma_1}\leq 2k_{\alpha}\|\bW\|_{\C_3}(1+{\color{blue}2}\|\xi\|_V)^{\gamma_1}=f_{h_1}(0)\leq x_1.
\]
 As a consequence, we conclude that $\|Y^n\|_{\alpha, [0,h]}$ and $\|R^n\|_{2\alpha, [0,h]}$ are bounded uniformly in $n$ for $h\in (0,h_1]$. This  also yields that
\begin{align}\label{maxyr1}
\max_{n\geq 0}\{\|Y_n\|_{\infty,[0,h]}\}\leq \|\xi\|_V+(6\gamma_1)^{-1}.
\end{align}
By \eqref{maxyr}, \eqref{maxyr1}, Proposition \ref{nic1} and the fact that $0<h\leq h_1=\big[{\color{blue}12}k_{\alpha}\|\bW\|_{\C_3}(1+\gamma_1)^{1+\gamma_1}\gamma_1^{-\gamma_1}(1+{\color{blue}2}\|\xi\|_V)^{\gamma_1}\big]^{-\frac{1}{\alpha}}\wedge 1$, we get the follow estimate
\[
d_{\alpha, W, [0,h]}\big((Y^{n+1},Y^n), (Y^n,Y^{n-1})\big)\leq  C_5 d_{\alpha,W,[0,h]}\big((Y^n,Y^{n-1}), (Y^{n-1},Y^{n-2})\big)
\]
and
\begin{align*}
C_5=&6k_{\alpha}h^{\alpha}(1+h^{\alpha})(1+\|\bW\|_{\C_3})^2(1+2\|Y^{n}\|_{\infty}+2\|Y^{n-1}\|_{\infty})^{\beta^{**}_2}\\
&\times (1+2\|Y^{n-1}\|_{\infty}+2\|Y^{n-2}\|_{\infty})^{\beta_2^*+\beta_1}\\
&\times \Big[1+h^{\alpha}(2\|Y^{n-1}\|_{\alpha}+\|Y^n\|_{\alpha}+\|Y^{n-2}\|_{\alpha})+h^{2\alpha}(\|Y^{n-1}\|_{\alpha}+\|Y^n\|_{\alpha})^2+h^{2\alpha}\|R^{Y^n}\|_{2\alpha}\Big]\\
\leq& 2(18\gamma_1^2+15\gamma_1+2)(3\gamma_1^2)^{-1}k_{\alpha}(1+\|\bW\|_{\C_3})^2\big[(3\gamma_1+2)(3\gamma_1)^{-1}+4\|\xi\|_V\big]^{\beta^{**}_2+\beta^*_2+\beta_1}h^{\alpha}\\
\leq& 2(18\gamma_1^2+15\gamma_1+2)(3\gamma_1^2)^{-1}k_{\alpha}\big[(3\gamma_1+2)(3\gamma_1)^{-1}+4\big]^{\beta^{**}_2+\beta^*_2+\beta_1}(1+\|\bW\|_{\C_3})^2\\
&\times (1+\|\xi\|_V)^{\beta^{**}_2+\beta^*_2+\beta_1}h^{\alpha}.
\end{align*}
Let $\gamma_2={\beta^{**}_2+\beta^*_2+\beta_1}=\max\{\beta_0,\beta_1,\beta_2\}+\max\{\beta_1,\beta_2,\beta_3\}+\beta_1$, and let
\begin{align}\label{c6}
C_6=\max\big\{&2(18\gamma_1^2+15\gamma_1+2)(3\gamma_1^2)^{-1}k_{\alpha}\big[(3\gamma_1+2)(3\gamma_1)^{-1}+4\big]^{\gamma_2},\nonumber\\
&12k_{\alpha}(1+\gamma_1)^{1+\gamma_1}\gamma_1^{-\gamma_1}\big\}.
\end{align}
 Then, we have
\begin{align}\label{cbbeta}
&d_{\alpha,W,[0,h]}\big((Y^{n+1},Y^n),(Y^n,Y^{n-1})\big)\nonumber\\
\leq&C_6(1+\|\bW\|_{\C_3})^2(1+\|\xi\|_V)^{\gamma_2}h^{\alpha}d_{\alpha, W, [0,h]}\big((Y^n,Y^{n-1}),(Y^{n-1},Y^{n-2})\big).
\end{align} 
 Choose $h_2=[2C_6(1+\|\bW\|_{\C_3})^2(1+\|\xi\|_V)^{\gamma_2}]^{-\frac{1}{\alpha}}\leq h_1$, and let $h\in (0, h_2]$. Then by \eqref{cbbeta}, we have the following inequality
\[
d_{\alpha, W}\big((Y^{n+1},Y^n),(Y^n,Y^{n-1})\big)\leq \frac 12 d_{\alpha, W}\big((Y^n,Y^{n-1}),(Y^{n-1},Y^{n-2})\big).
\]
 This yields that
\[
\sum_{n=1}^{\infty}d_{\alpha, W, [0,h]}\big((Y^{n+1},Y^n),(Y^n,Y^{n-1})\big)<\infty.
\]
 Due to Lemma \ref{cmplt}, we can conclude that $(Y^n, Y^{n-1})\to (Y, Y)\in \sE_{W,(\xi,\xi)}^{2\alpha}$ as $n\to \infty$. By Proposition \ref{nic1} again, we have for any $0\leq s\leq t\leq h$,
 \begin{align*}
\Big\|Y^{n+1}_{s,t}-\int_s^tW(dr, Y_r)\Big\|_V=&\Big\|\int_s^tW(dr,Y^n_r)-\int_s^tW(dr, Y_r)\Big\|_V\\
\leq &C d_{\alpha, W, [0,h]}\big((Y^n,Y^{n-1}), (Y,Y)\big)|t-s|^{\alpha},
 \end{align*}
for some constant $C>0$ uniformly in $n$. This implies that equation (\ref{rde}) holds for all $t\in [0,h]$. Finally, the inequality (\ref{maxyr0}) follows from (\ref{dyrde0}) and (\ref{maxyr}) and the fact that $(Y,Y)$ is the limit of $(Y^n,Y^{n-1})$ in $\sE_{W,(\xi,\xi)}^{2\alpha}$.
\end{proof}

\subsection{Uniqueness and global existence}\label{s.4.2}
In this section, we prove the uniqueness of a solution for equation (\ref{rde}). We also present some hypotheses that imply the global existence of a solution for this equation.

\begin{theorem}\label{unique}
For any time interval $[0,T]$ and initial value $\xi\in V$. There exists at most one solution to equation (\ref{rde}).
\end{theorem}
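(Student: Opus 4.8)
The plan is to deduce global uniqueness from the local stability estimate of Proposition~\ref{nic1}, followed by a patching argument over finitely many subintervals. Let $Y$ and $\tY$ be two solutions of \eqref{rde} on $[0,T]$ with the common initial value $\xi$. By Definition~\ref{dsol}, $(Y,Y)$ and $(\tY,\tY)$ lie in $\sE_{W,(\xi,\xi)}^{2\alpha}$, so $Y,\tY\in\cC^\alpha([0,T];V)$ and $R^Y,R^{\tY}\in\cC^{2\alpha}_2([0,T]^2;V)$; hence
\[
K_0:=\max\big\{\|Y\|_\alpha,\ \|\tY\|_\alpha,\ \|R^Y\|_{2\alpha},\ \|R^{\tY}\|_{2\alpha},\ \|Y\|_\infty,\ \|\tY\|_\infty\big\}<\infty .
\]
Moreover, by the additivity of the nonlinear rough integral coming from the sewing lemma (Theorem~\ref{nri}), for every $0\le a\le b\le T$ we have $Y_t=Y_a+\int_a^t W(dr,Y_r)$ for $t\in[a,b]$, and likewise for $\tY$; thus $Y$ and $\tY$ are solutions of the RDE on $[a,b]$ with initial values $Y_a$ and $\tY_a$ respectively.

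Fix such a subinterval $[a,b]$ and apply Proposition~\ref{nic1} with $\tbW=\bW$, so that $\vrho_{\alpha,\bbeta_3}(\bW,\tbW)=0$ and the $C_3$-term vanishes, taking $Z_t=\int_a^t W(dr,Y_r)=Y_{a,t}$ and $\tZ_t=\int_a^t W(dr,\tY_r)=\tY_{a,t}$. A constant shift does not affect increments, so $R^Z=R^Y$ and $R^{\tZ}=R^{\tY}$ on $[a,b]$; since the corresponding Gubinelli derivatives are $Y$ and $\tY$, both sides of \eqref{dwyz} equal
\[
D_{[a,b]}:=\|Y-\tY\|_{\alpha,[a,b]}+\|R^Y-R^{\tY}\|_{2\alpha,[a,b]},
\]
and \eqref{dwyz} reduces to
\[
D_{[a,b]}\le 2C_4\,\|Y_a-\tY_a\|_V+C_5\,D_{[a,b]} .
\]
Here the seminorms of $Y,\tY,R^Y$ entering $C_4$ and $C_5$ (with $\dY=Y$, $\dtY=\tY$) are bounded by $K_0$, so $C_4$ is a finite constant depending only on $K_0,\|\bW\|_{\C_3},\alpha,\bbeta_3$, while $C_5$ carries the prefactor $(b-a)^\alpha$ inherited from the $T^\alpha$ in Proposition~\ref{nic1}. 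Hence there is $h\in(0,1]$, depending only on $K_0,\|\bW\|_{\C_3},\alpha,\bbeta_3$, such that $C_5\le\frac12$ whenever $b-a\le h$, and then $D_{[a,b]}\le 4C_4\,\|Y_a-\tY_a\|_V$.

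It remains to patch. On $[0,h]$ we have $Y_0=\tY_0=\xi$, hence $D_{[0,h]}=0$; together with $\|Y-\tY\|_{\infty,[0,h]}\le\|Y_0-\tY_0\|_V+h^\alpha\|Y-\tY\|_{\alpha,[0,h]}$ (cf. \eqref{dtvphi1}) this yields $Y\equiv\tY$ on $[0,h]$, and in particular $Y_h=\tY_h$. Running the same argument successively on $[h,2h],[2h,3h],\dots$, and noting that $[0,T]$ is covered by finitely many intervals of length at most $h$, we conclude $Y\equiv\tY$ on $[0,T]$.

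The one point that needs care — and where the work of the proof concentrates — is the localized application of Proposition~\ref{nic1}: one has to verify that the restriction of a solution to $[a,b]$ is again a solution started at $a$ (immediate from additivity of the integral), that the constants in \eqref{dwyz} are nondecreasing in the H\"older and supremum norms of $Y$ and $\tY$ so that they may be dominated by quantities built from $K_0$ alone, and that the $(b-a)^\alpha$ factor in $C_5$ makes it small uniformly in the location of $[a,b]$. All of this is routine manipulation of the explicit constants already exhibited in Proposition~\ref{nic1}, and involves no new analytic ingredient.
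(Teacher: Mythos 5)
Your proposal is correct and follows essentially the same route as the paper: both apply the stability estimate of Proposition~\ref{nic1} with $\tbW=\bW$ on a short interval, use the $h^{\alpha}$ factor in $C_5$ (uniformly controlled by the global norms of $Y,\tY$) to absorb the right-hand side, conclude $Y\equiv\tY$ there since the initial data coincide, and iterate over finitely many subintervals. Your treatment is in fact slightly more explicit than the paper's about why the restriction to $[a,b]$ is again a solution and why $h$ can be chosen uniformly, but no new idea is involved.
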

\begin{proof}
Suppose that $Y$ and $\tY$ are two solutions to (\ref{rde}) with initial condition $\xi$ on $[0,T]$. By Proposition \ref{nic1}, the following inequality holds on $[0,h]\subset [0,T]$, assuming $h\le 1$.
\begin{align}\label{dyty}
d_{\alpha,W,[0,h]}\big((Y,Y),(\tY,\tY)\big)\leq C_5d_{\alpha, W,[0,h]}\big((Y,Y),(\tY,\tY)\big),
\end{align}
where
\begin{align*}
C_5=&12k_{\alpha}h^{\alpha}(1+\|\bW\|_{\C_3})^2 (1+2\|Y\|_{\infty}+2\|\tY\|_{\infty})^{\beta^{**}_2}(1+2\|\dY\|_{\infty}+2\|\dtY\|_{\infty})^{\beta^*_2+\beta_1}\\
	 &\quad \times \big[1+ (\|Y\|_{\alpha}+\|\dY\|_{\alpha}+\|\tY\|_{\alpha}+\|\dtY\|_{\alpha})+(\|Y\|_{\alpha}+\|\tY\|_{\alpha})^2+\|R^{Y}\|_{2\alpha}\big].
\end{align*}
Choosing $h$ small enough, (\ref{dyty}) yields that $Y\equiv \tY$ on $[0,h]$. Notice that the choice of $h$ doesn't dependent on the initial value. Therefore, by iteration, we can extend the uniqueness to any time interval $[0,T]$.
\end{proof}

As stated in Section \ref{s.2}, the linear growth of the vector field cannot guarantee the global existence of a RDE driven by a linear rough path. This is also true in the case of nonlinear rough paths. In order to obtain the global existence, we introduce the following growth condition of $W$. Let $\bW=(W,\W)\in \C^{\alpha,\bbeta_3}([0,T]\times V;V)$ and let $\gamma_2=\max\{\beta_0,\beta_1,\beta_2\}+\max\{\beta_1,\beta_2,\beta_3\}+\beta_1$.

\begin{hyp}{(H)}\label{hyp1} $\frac{\gamma_2}{\alpha}-\gamma_2+\beta_0\leq 1$. 
\end{hyp}
A similar condition in the linear situation can be seen, e.g., in \cite{sd-11-besalu-nualart,ejp-09-lejay}.

\begin{theorem}\label{globn}
Under Hypothesis \ref{hyp1}, the RDE \eqref{rde} has a solution on any time interval $[0,T]$. By Theorem \ref{unique}, this solution is unique.
\end{theorem}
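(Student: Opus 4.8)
The plan is to reach time $T$ by concatenating finitely many local solutions, using Hypothesis \ref{hyp1} precisely to prevent the local existence time from collapsing too fast as the solution grows. Throughout I use the following consequence of Theorem \ref{rdelc} and its proof: the same Picard argument, carried out on $[t_0,t_0+h]$ instead of $[0,h]$ (none of the estimates there are tied to the left endpoint being $0$), shows that for any starting time $t_0\in[0,T)$ and any initial value $\eta\in V$ there is a solution of \eqref{rde} on $[t_0,t_0+h(\eta)]$ taking the value $\eta$ at $t_0$, where one may take $h(\eta)=\big(c_W(1+\|\eta\|_V)^{-\gamma_2/\alpha}\big)\wedge 1$ with $c_W>0$ depending only on $\|\bW\|_{\C_3},\alpha,\bbeta_3$ (this is the quantity $h_2$ in the proof of Theorem \ref{rdelc}), and such that, by the first term of the minimum in \eqref{maxyr0},
\begin{align*}
\|Y\|_{\alpha,[t_0,t_0+h(\eta)]}\le C_0(1+\|\eta\|_V)^{\beta_0},\qquad C_0:=(1+(6\gamma_1)^{-1})^{1+\beta_0}(1+\|\bW\|_{\C_3}).
\end{align*}
I use the first term of the minimum because $\beta_0\le\gamma_1$, so it is the smaller one for large $\|\eta\|_V$.

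First I would construct the solution inductively. Put $t_0=0$, $\xi_0=\xi$; given that $Y$ has been defined on $[0,t_k]$ with $t_k<T$, set $\xi_k:=Y_{t_k}$, apply the local result at time $t_k$ with initial value $\xi_k$, and extend $Y$ to $[t_k,t_{k+1}]$ with $t_{k+1}:=\min\{t_k+h(\xi_k),\,T\}$. By Theorem \ref{unique} the extension agrees with the previous pieces, so $Y$ is unambiguously defined on $[0,t_{k+1}]$; stop as soon as $t_{k+1}=T$. Everything reduces to showing that this happens after finitely many steps.

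The main obstacle is the uniform a priori bound on $\|\xi_k\|_V$, and this is exactly where Hypothesis \ref{hyp1} is used. Write $a_k:=1+\|\xi_k\|_V\ge1$. From $\|\xi_{k+1}-\xi_k\|_V=\|Y_{t_k,t_{k+1}}\|_V\le\|Y\|_{\alpha,[t_k,t_{k+1}]}\,|t_{k+1}-t_k|^{\alpha}\le C_0 a_k^{\beta_0}\,h(\xi_k)^{\alpha}$ and the elementary estimate $h(\xi_k)^{\alpha}\le c_W^{\alpha}a_k^{-\gamma_2}$ (valid also when $h(\xi_k)=1$, since then $c_W a_k^{-\gamma_2/\alpha}\ge1$, i.e. $c_W^{\alpha}a_k^{-\gamma_2}\ge1$), one gets
\begin{align*}
a_{k+1}\le a_k+C_0c_W^{\alpha}\,a_k^{\beta_0-\gamma_2}.
\end{align*}
Hypothesis \ref{hyp1} reads exactly $\beta_0-\gamma_2-1\le-\gamma_2/\alpha$, so, using $a_k\ge1$,
\begin{align*}
a_k^{\beta_0-\gamma_2}=a_k\cdot a_k^{\beta_0-\gamma_2-1}\le a_k\cdot a_k^{-\gamma_2/\alpha}\le c_W^{-1}a_k\,h(\xi_k),
\end{align*}
which gives the discrete Gronwall inequality $a_{k+1}\le a_k\big(1+C_1h(\xi_k)\big)$ with $C_1:=C_0c_W^{\alpha-1}$. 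Iterating, and using $h(\xi_j)\le1$ together with $\sum_{j<k}h(\xi_j)\le t_k+1\le T+1$, we obtain
\begin{align*}
a_k\le(1+\|\xi\|_V)\exp\big(C_1(T+1)\big)=:M\qquad\text{for every }k\text{ with }t_k\le T.
\end{align*}

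With this uniform bound the rest is routine. Since $a_k\le M$, every non-truncated step has length $h(\xi_k)\ge\big(c_W M^{-\gamma_2/\alpha}\big)\wedge1=:\delta>0$, so $t_k\ge k\delta$ until truncation occurs; hence $t_k=T$ after at most $\lceil T/\delta\rceil$ steps and $Y$ is defined on all of $[0,T]$. On each of these finitely many subintervals $(Y,Y)\in\sE_W^{2\alpha}$ with remainder $R^Y$, and the endpoint data match, so by the standard concatenation of controlled rough paths $(Y,Y)\in\sE_{W,(\xi,\xi)}^{2\alpha}$ on $[0,T]$ (the H\"older seminorms of $Y$, $\dY=Y$ and $R^Y$ over $[0,T]$ are bounded by a constant times their maxima over the subintervals). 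Finally, \eqref{rde} holds for all $t\in[0,T]$: writing $t\in[t_k,t_{k+1}]$ and using the additivity of the nonlinear rough integral furnished by the sewing lemma in Theorem \ref{nri},
\begin{align*}
Y_t=\xi+\sum_{j=0}^{k-1}\int_{t_j}^{t_{j+1}}W(dr,Y_r)+\int_{t_k}^{t}W(dr,Y_r)=\xi+\int_0^{t}W(dr,Y_r).
\end{align*}
Uniqueness on $[0,T]$ is precisely Theorem \ref{unique}.
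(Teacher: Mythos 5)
Your proof is correct, and while the overall scaffolding (concatenating the local solutions of Theorem \ref{rdelc} with step size $h(\xi_k)\sim c_W(1+\|\xi_k\|_V)^{-\gamma_2/\alpha}$) is the same as the paper's, the way you close the argument from the recursion $a_{k+1}\le a_k+Ca_k^{\beta_0-\gamma_2}$ is genuinely different. The paper raises this recursion to the power $\gamma_2/\alpha$ and uses Hypothesis \ref{hyp1} (via the mean value theorem) to show $(1+\|\xi_n\|_V)^{\gamma_2/\alpha}$ grows at most linearly in $n$; this gives $\epsilon_n\gtrsim (\epsilon_1^{-1}+nK_0)^{-1}$, and global existence follows from the divergence of the harmonic series, while $\|\xi_n\|_V$ is allowed to grow polynomially. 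You instead convert Hypothesis \ref{hyp1} into the discrete Gronwall bound $a_{k+1}\le a_k(1+C_1h(\xi_k))$ with $\sum_k h(\xi_k)\le T+1$, which yields the \emph{uniform} bound $a_k\le(1+\|\xi\|_V)e^{C_1(T+1)}$ and hence a positive lower bound on the step length; this is slicker, gives a stronger intermediate conclusion (an a priori $e^{CT}$ bound on the solution, which the paper only establishes separately in Proposition \ref{trdehn}), and you are also more explicit than the paper about the final assembly (concatenation of the controlled-path structure and additivity of the integral). One small point: the inequality $a_k^{-\gamma_2/\alpha}\le c_W^{-1}h(\xi_k)$ fails in the degenerate case $h(\xi_k)=1<c_Wa_k^{-\gamma_2/\alpha}$ (it reverses); this is harmless because the constant $c_W$ coming from Theorem \ref{rdelc} satisfies $c_W\le 1$ so the truncation at $1$ is never active, and in any case replacing $c_W^{-1}$ by $\max(1,c_W^{-1})$ in the definition of $C_1$ repairs it.
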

\begin{proof}
Let $\epsilon_1=[2C_6(1+\|\bW\|_{\C_3})^2(1+\|\xi\|_V)^{\gamma_2}]^{-\frac{1}{\alpha}}$ where $C_6$ is the constant appearing in \eqref{c6}. Then, by Theorem \ref{rdelc}, the RDE has a solution $ (Y^{(1)},Y^{(1)})$ on $[0,\epsilon_1]$ with initial condition $Y^{(1)}_0=\xi$. We denote by $\xi_1=Y^{(1)}_{\epsilon_1}$ the terminal value of $Y$. Consider the following RDE
\begin{align}\label{rdest}
Y_t=Y_s+\int_s^tW(dr, Y_r)+\int_s^t (dr, Y_r).
\end{align}
By Theorem \ref{rdelc} again, the equation (\ref{rdest}) has a solution $(Y^{(2)}, Y^{(2)})$ on $[\epsilon_1,\epsilon_1+\epsilon_2]$ with initial condition $\xi_1$, where $\epsilon_2= [2C_6(1+\|\bW\|_{\C_3})^2(1+\|\xi_1\|_V)^{\gamma_2}]^{-\frac{1}{\alpha}}$. By iteration, we have a sequence $\{\epsilon_n\}_{n\geq 1}$ with values in $(0,1)$, such that the equation (\ref{rdest}) has a solution $Y^{(n+1)}$ on $[\eta_n,\eta_{n+1}]:=[\sum_{k=1}^n \epsilon_k, \sum_{k=1}^{n+1} \epsilon_{n+1}]$ with initial condition $Y^{(n+1)}_{\eta_n}=\xi_n:=Y^{(n)}_{\eta_n}$ and $\epsilon_{n+1}= [2C_6(1+\|\bW\|_{\C_3})^2(1+\|\xi_n\|_V)^{\gamma_2}]^{-\frac{1}{\alpha}}$. By (\ref{maxyr0}) we have the following inequality
\begin{align*}
\|\xi_{n+1}\|_V\leq \|Y^{(n+1)}\|_{\infty}\leq &\|\xi_n\|_V+\epsilon_{n+1}^{\alpha}\|Y^{(n+1)}\|_{\alpha}\nonumber\\
\leq& \|\xi_n\|_V+C_7(1+\|\bW\|_{\C_3})^{-1}(1+\|\xi_n\|_V)^{\beta_0-\gamma_2},
\end{align*}
where 
 \[
 C_7=(2C_6)^{-1}\big(1+(6\gamma_1)^{-1}\big)^{1+\beta_0}
\]
depends only on $\alpha,\beta_0,\dots,\beta_3$. Recall the assumption $\frac{\gamma_2}{\alpha}-\gamma_2+\beta_0\leq 1$. By the mean value theorem, there exist $\tau \in [0,1]$, such that
\begin{align*}
(1+\|\xi_{n+1}&\|_V)^{\frac{\gamma_2}{\alpha}}\leq \big[1+ \|\xi_n\|_V+C_7(1+\|\bW\|_{\C_3})^{-1}(1+\|\xi_n\|_V)^{\beta_0-\gamma_2}\big]^{\frac{\gamma_2}{\alpha}}\nonumber\\
=&(1+\|\xi_n\|_V)^{\frac{\gamma_2}{\alpha}}+\big[C_7(1+\|\bW\|_{\C_3})^{-1}(1+\|\xi_n\|_V)^{\beta_0-\gamma_2}\big]\nonumber\\
&\times \frac{\gamma_2}{\alpha}\big[1+\|\xi_n\|_V+\tau C_7(1+\|\bW\|_{\C_3})^{-1}(1+\|\xi_n\|_V)^{\beta_0-\gamma_2}\big]^{\frac{\gamma_2}{\alpha}-1}
\end{align*}
By definition we know that $\beta_0\leq \gamma_2$. This implies 
\begin{align*}
\big[1+\|\xi_n\|_V&+\tau C_7(1+\|\bW\|_{\C_3})^{-1}(1+\|\xi_n\|_V)^{\beta_0-\gamma_2}\big]^{\frac{\gamma_2}{\alpha}-1}\\
\leq &(1+\|\xi_n\|_V)^{\frac{\gamma_2}{\alpha}-1}\times \max\{1,(1+C_7(1+\|\bW\|_{\C_3})^{-1})^{\frac{\gamma_2}{\alpha}-1}\}\\
\leq &(1+C_7)^{\frac{\gamma_2}{\alpha}}(1+\|\xi_n\|_V)^{\frac{\gamma_2}{\alpha}-1}.
\end{align*}
As a consequence of above two inequalities, under the assumption \ref{hyp1}, we can write
\begin{align}\label{xin1}
(1+\|\xi_{n+1}\|_V)^{\frac{\gamma_2}{\alpha}}\leq &(1+\|\xi_n\|_V)^{\frac{\gamma_2}{\alpha}}+ \frac{\gamma_2}{\alpha}C_7(1+C_7)^{\frac{\gamma_2}{\alpha}}.
\end{align}
It follows that
\begin{align*}
\epsilon_{n+1}\geq & [2C_6(1+\|\bW\|_{\C_3})^2]^{-\frac{1}{\alpha}}\Big[(1+\|\xi_n\|_V)^{\frac{\gamma_2}{\alpha}}+ \frac{\gamma_2}{\alpha}C_7(1+C_7)^{\frac{\gamma_2}{\alpha}}\Big]^{-1}\\
= &\Big[\epsilon_n^{-1}+\big(2C_6(1+\|\bW\|_{\C_3})^2\big)^{\frac{1}{\alpha}}\frac{\gamma_2}{\alpha}C_7(1+C_7)^{\frac{\gamma_2}{\alpha}}\Big]^{-1}:=(\epsilon_n^{-1}+K_0)^{-1}.
\end{align*}
Observe that the constant $K_0$ is independent of $n$. Thus by iteration, the following inequality holds
\begin{align}\label{sumep}
\sum_{n=1}^{\infty}\epsilon_n\geq \sum_{n=0}^{\infty}\frac{1}{\epsilon_1^{-1}+nK_0}=\infty.
\end{align}
In other words, we can extend the solution to any time interval $[0,T]$. 
\end{proof}

Assume that the derivatives of $W$ are all bounded, that is $\bbeta_3=(\beta_0,0,0,0)$. Then, Hypothesis \ref{hyp1} is equivalent to $\beta_0\leq \alpha$ and it coincides with Besal\'{u} and Nualart's condition for global existence (see Theorem 4.1 of \cite{sd-11-besalu-nualart}).

\subsection{Properties of the solutions}

Assume Hypothesis \ref{hyp1}. In this section, we prove some properties of the solution to the RDE (\ref{rde}). The first proposition below provides an estimate for the H\"{o}lder norm of the solution to (\ref{rde}).
\begin{proposition}\label{trdehn}
Assume that $\bW=(W,\W)$ satisfies the conditions in Theorem \ref{globn}. Let $Y$ be the solution to the RDE (\ref{rde}) with initial condition $\xi\in V$. Then the following estimate holds:
\begin{align}\label{rdehn}
\|Y\|_{\alpha}\leq c\|\bW\|_{\C_3}(1+\|\bW\|_{\C_3})^{\frac{2\gamma_1}{\gamma_2}}(1+\|\xi\|_V)^{\gamma_1}e^{\frac{\alpha\gamma_1 K_0T}{\gamma_2}}
\end{align}
for some $c$ depending on $\alpha$ and $\bbeta_3$.
\end{proposition}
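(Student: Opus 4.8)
The plan is to revisit the patching construction of the global solution carried out in the proof of Theorem~\ref{globn} and to propagate the local H\"older bound \eqref{maxyr0} through it. Concretely there are three tasks: (i) bound the sizes $\|\xi_n\|_V$ of the initial data of the successive blocks, and the number $N$ of blocks, in terms of $T$; (ii) apply \eqref{maxyr0} on each block; (iii) concatenate the block estimates into a bound on $\|Y\|_\alpha$. The last task is the crux.

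Recall that in the proof of Theorem~\ref{globn} the solution $Y$ is built by gluing local solutions on the intervals $[\eta_{n-1},\eta_n]$, $n=1,\dots,N$, where $\eta_0=0$, $\eta_n=\sum_{k=1}^n\epsilon_k$, $\epsilon_k=\big[2C_6(1+\|\bW\|_{\C_3})^2(1+\|\xi_{k-1}\|_V)^{\gamma_2}\big]^{-1/\alpha}$ is the local existence time of Theorem~\ref{rdelc} for the datum $\xi_{k-1}$, $\xi_0=\xi$, $\xi_k=Y_{\eta_k}$, and $N$ is the first index with $\eta_N\ge T$, so $\eta_n<T$ for $n\le N-1$. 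First I would control $\max_{0\le n\le N-1}\|\xi_n\|_V$. Under Hypothesis~\ref{hyp1}, iterating \eqref{xin1} gives $(1+\|\xi_n\|_V)^{\gamma_2/\alpha}\le(1+\|\xi\|_V)^{\gamma_2/\alpha}+nM$ with $M:=\frac{\gamma_2}{\alpha}C_7(1+C_7)^{\gamma_2/\alpha}$; hence $\epsilon_k\ge\big(2C_6(1+\|\bW\|_{\C_3})^2\big)^{-1/\alpha}\big[(1+\|\xi\|_V)^{\gamma_2/\alpha}+(k-1)M\big]^{-1}$, and comparing $\eta_n=\sum_{k\le n}\epsilon_k$ with an integral (the right-hand side being decreasing in $k$, and using the identity $K_0=\big(2C_6(1+\|\bW\|_{\C_3})^2\big)^{1/\alpha}M$ from \eqref{c6}) yields $\eta_n\ge K_0^{-1}\ln\!\big(1+nM(1+\|\xi\|_V)^{-\gamma_2/\alpha}\big)$. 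Since $\eta_n\le T$ this forces $(1+\|\xi_n\|_V)^{\gamma_2/\alpha}\le(1+\|\xi\|_V)^{\gamma_2/\alpha}e^{K_0T}$, and raising to the power $\alpha\gamma_1/\gamma_2\le\alpha\le\frac12$ (legitimate since $\gamma_1\le\gamma_2$) gives $(1+2\|\xi_n\|_V)^{\gamma_1}\le2^{\gamma_1}(1+\|\xi\|_V)^{\gamma_1}e^{\alpha\gamma_1K_0T/\gamma_2}$ for all $n\le N-1$; the same comparison bounds $N$ by $1+\big(2C_6(1+\|\bW\|_{\C_3})^2\big)^{1/\alpha}M^{-1}(1+\|\xi\|_V)^{\gamma_2/\alpha}e^{K_0T}$.

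On each block the restriction of $Y$ solves \eqref{rde} with datum $\xi_{n-1}$ over a time interval of length $\epsilon_n$, so the second alternative of \eqref{maxyr0} applies and gives $\|Y\|_{\alpha,[\eta_{n-1},\eta_n]}\le2k_\alpha\big(\tfrac{1+\gamma_1}{\gamma_1}\big)^{1+\gamma_1}\|\bW\|_{\C_3}(1+2\|\xi_{n-1}\|_V)^{\gamma_1}\le L$, where $L:=c_0\|\bW\|_{\C_3}(1+\|\xi\|_V)^{\gamma_1}e^{\alpha\gamma_1K_0T/\gamma_2}$, $c_0=c_0(\alpha,\bbeta_3)$, uniformly in $n$. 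To pass to $\|Y\|_\alpha=\|Y\|_{\alpha,[0,T]}$, for $0\le s<t\le T$ I would split $[s,t]$ at the partition points $\eta_k$ lying inside it; $Y$ is $\alpha$-H\"older with constant $\le L$ on each of the (at most $N$) pieces, and concavity of $x\mapsto x^\alpha$ gives $\|Y_{s,t}\|_V\le LN^{1-\alpha}|t-s|^\alpha$. Substituting the bounds for $L$ and $N$, absorbing $2C_6,C_7,M,2^{\gamma_1},c_0$ into one constant depending only on $\alpha,\bbeta_3$, and merging the powers of $1+\|\bW\|_{\C_3}$, of $1+\|\xi\|_V$ and the exponential factors $e^{(\cdot)K_0T}$ (via $(a+b)^p\le a^p+b^p$-type inequalities, all exponents being $\le1$) is then meant to yield \eqref{rdehn}.

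The delicate step is the last one: the concatenation factor $N^{1-\alpha}$ must not inflate the exponential rate beyond $\alpha\gamma_1K_0/\gamma_2$ nor the polynomial degrees beyond $1+2\gamma_1/\gamma_2$ in $\|\bW\|_{\C_3}$ and $\gamma_1$ in $\|\xi\|_V$. For this one should exploit, besides $\gamma_1\le\gamma_2$ and the identity $K_0=\big(2C_6(1+\|\bW\|_{\C_3})^2\big)^{1/\alpha}M$, the fact that the block increments are \emph{uniformly} small: using the first alternative of \eqref{maxyr0}, $\|Y_{\eta_{k-1},\eta_k}\|_V\le\|Y\|_{\alpha,[\eta_{k-1},\eta_k]}\,\epsilon_k^\alpha\le C_7(1+\|\bW\|_{\C_3})^{-1}(1+\|\xi_{k-1}\|_V)^{\beta_0-\gamma_2}\le C_7(1+\|\bW\|_{\C_3})^{-1}$ since $\beta_0\le\gamma_2$. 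Thus a long-range increment $\|Y_{s,t}\|_V$ can be split into a bounded number of genuinely $\alpha$-H\"older endpoint pieces plus a telescoping sum of uniformly small block increments, the latter being controlled by $|t-s|^\alpha$ through the uniform mesh lower bound $\epsilon_*=\big[2C_6(1+\|\bW\|_{\C_3})^2(1+\max_n\|\xi_n\|_V)^{\gamma_2}\big]^{-1/\alpha}$ and the estimate on $\max_n\|\xi_n\|_V$ obtained above. The remaining work is routine bookkeeping with the constants of Theorems~\ref{rdelc} and \ref{globn}.
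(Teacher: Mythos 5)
Your treatment of tasks (i) and (ii) reproduces the paper's proof exactly: the same block construction, the same iteration of \eqref{xin1} to control $(1+\|\xi_n\|_V)^{\gamma_2/\alpha}$, the same sum-versus-integral comparison $T\ge\sum_n\epsilon_n\ge K_0^{-1}\log(1+\epsilon_1K_0N)$ to bound $N$ and $\max_n\|\xi_n\|_V$, and the same per-block application of \eqref{maxyr0}. The gap is in task (iii), which you correctly single out as the crux and then do not close. As you observe, the naive concatenation bound $\|Y_{s,t}\|_V\le LN^{1-\alpha}|t-s|^{\alpha}$ is too weak: $N^{1-\alpha}$ carries a factor of order $\big((1+\|\bW\|_{\C_3})^{2/\alpha}(1+\|\xi\|_V)^{\gamma_2/\alpha}e^{K_0T}\big)^{1-\alpha}$, and since $1-\alpha\ge\tfrac12\ge\alpha\ge\alpha\gamma_1/\gamma_2$ and $(1-\alpha)\gamma_2/\alpha>\gamma_1$, every exponent in \eqref{rdehn} is exceeded. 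But the remedy you sketch does not repair this. After splitting $[s,t]$ at the interior partition points, the telescoping sum $\sum_i\|Y_{\eta_i,\eta_{i+1}}\|_V$ has up to $(t-s)/\epsilon_*$ terms; bounding each term by $L\epsilon_{i+1}^{\alpha}$ and using $\epsilon_{i+1}\ge\epsilon_*$ gives $\sum_i\epsilon_{i+1}^{\alpha}\le\epsilon_*^{\alpha-1}(t-s)$, while bounding each term by the uniform constant $C_7(1+\|\bW\|_{\C_3})^{-1}$ gives something proportional to $(t-s)/\epsilon_*$. Either way a power of $\epsilon_*^{-1}\sim(1+\|\bW\|_{\C_3})^{2/\alpha}(1+\|\xi\|_V)^{\gamma_2/\alpha}e^{K_0T}$ reappears, which is again larger than what \eqref{rdehn} allows. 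So the ``routine bookkeeping'' you defer is exactly where the claimed exponents would be lost; the argument as proposed does not yield the stated estimate.

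For comparison, the paper disposes of this step in one line by asserting $\|Y\|_{\alpha}\le\max_{1\le n\le N+1}\|Y^{(n)}\|_{\alpha}$, i.e.\ it takes the global H\"older seminorm of the concatenation to equal the maximum of the block seminorms. Your more careful accounting shows that this inequality is precisely what requires justification: for a general concatenation of $N$ H\"older pieces it fails, the sharp loss being of order $N^{1-\alpha}$ (consider a monotone Lipschitz function cut into $N$ equal pieces). To complete a proof along these lines you would need either to establish that maximum inequality for this particular $Y$ by some additional structural argument, or to accept an $N$-dependent factor and correspondingly weaken the exponents in the conclusion.
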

\begin{proof}
Let $\epsilon_1=[2C_6(1+\|\bW\|_{\C_3})^2(1+\|\xi\|_V)^{\gamma_2}]^{-\frac{1}{\alpha}}$ where $C_6$ is the same as in \eqref{cbbeta}. Theorems \ref{rdelc} and \ref{unique} implies that there exists a unique solution to (\ref{rde}) with initial condition $Y_0=\xi$ on $[0,\epsilon_1]$. Denote the solution by $Y^{(1)}$. Then, by proceeding a similar argument in Theorem \ref{globn}, we obtain $\{Y^{(n+1)}\}_{n\geq 1}$, where $Y^{(n+1)}$ is the unique solution to RDE (\ref{rdest}) on $[\eta_n,\eta_{n+1}]=[\sum_{k=1}^n\epsilon_k,\sum_{k=1}^{n+1}\epsilon_k]$ with initial condition $Y^{(n+1)}_{\eta_n}:=\xi_n=Y^{(n)}_{\eta_n}$ and $\epsilon_{n+1}= [2C_6(1+\|\bW\|_{\C_3})^2(1+\|\xi_n\|_V)^{\gamma_2}]^{-\frac{1}{\alpha}}$. By (\ref{maxyr0}) and (\ref{xin1}), we have the following estimate:
\begin{align}\label{yn+1}
\|Y^{(n+1)}\|_{\alpha}\leq &2^{1+\gamma_1}\Big(\frac{1+\gamma_1}{\gamma_1}\Big)^{1+\gamma_1}k_{\alpha}\|\bW\|_{\C_3} \Big[(1+\|\xi\|_V)^{\frac{\gamma_2}{\alpha}}+ n\frac{\gamma_2}{\alpha}C_7(1+C_7)^{\frac{\gamma_2}{\alpha}}\Big]^{{\color{blue}\frac{\alpha\gamma_1}{\gamma_2}}}.
\end{align}
On the other hand, for any $T>0$, there exists $N\in\N$, such that $\eta_N\leq T\leq \eta_{N+1}$. Notice that by (\ref{sumep}), we have
\begin{align*}
T\geq &\sum_{n=1}^N\epsilon_n\geq \sum_{n=1}^N(\epsilon_1^{-1}+K_0n)^{-1}\geq \frac{1}{K_0}\big(\log(\epsilon_1^{-1}+K_0N)-\log(\epsilon_1^{-1})\big).
\end{align*}
In other words, 
\begin{align}\label{N}
N\leq \frac{1}{K_0}\big(e^{K_0T+\log(\epsilon_1^{-1})}-\epsilon_1^{-1}\big)=K_0^{-1}(2C_6(1+\|\bW\|_{\C_3})^2)^{\frac{1}{\alpha}}(1+\|\xi\|_V)^{\frac{\gamma_2}{\alpha}}(e^{K_0T}-1).
\end{align}
Let $Y$ be the solution to (\ref{rde}) on $[0,T]$ with initial condition $\xi$. Then, combining (\ref{yn+1}) and (\ref{N}), we have
\begin{align*}
\|Y\|_{\alpha}\leq \max_{1\leq n\leq N+1}\|Y^{(n)}\|_{\alpha}\leq c\|\bW\|_{\C_3}(1+\|\bW\|_{\C_3})^{\frac{2\gamma_1}{\gamma_2}}(1+\|\xi\|_V)^{\gamma_1}e^{\frac{\alpha\gamma_1 K_0T}{\gamma_2}}
\end{align*}
for some $c$ depending on $\alpha$ and $\bbeta_3$.
\end{proof}

The next proposition provides the dependency of the solution to (\ref{rde}) on the initial condition under Hypothesis \ref{hyp1}.
\begin{proposition}\label{dpic}
Assume that $\bW=(W,\W)$ satisfies the conditions in Theorem \ref{globn}. Let $Y$ and $\tY$ be the solutions to the RDE (\ref{rde}) with initial conditions $\xi$ and $\txi$, respectively. Then the following estimate holds
\begin{align}\label{dic}
d_{\alpha,W}\big((Y,Y),(\tY,\tY)\big)\leq c^T\|\xi-\txi\|_V, 
\end{align}
where $c$ is a constant depending on $\alpha$, $\bbeta_3$, $T$, $\|\bW\|_{\C}$, $\xi$, and $\txi$.
\end{proposition}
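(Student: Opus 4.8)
The plan is to iterate over $[0,T]$ a local Lipschitz estimate coming from the stability result of Proposition \ref{nic1}. Since $Y$ and $\tY$ both solve \eqref{rde} with the \emph{same} $\bW$, on any subinterval $[s,s']\subseteq[0,T]$ they satisfy $Y_t=Y_s+\int_s^tW(dr,Y_r)$ and $\tY_t=\tY_s+\int_s^tW(dr,\tY_r)$. Writing $Z_t=\int_s^tW(dr,Y_r)$ and $\tZ_t=\int_s^tW(dr,\tY_r)$, Proposition \ref{ricrp} gives $(Z,Y),(\tZ,\tY)\in\sE^{2\alpha}_W$ with $R^Z=R^Y$ and $R^{\tZ}=R^{\tY}$, hence $d_{\alpha,W,[s,s']}\big((Z,Y),(\tZ,\tY)\big)=d_{\alpha,W,[s,s']}\big((Y,Y),(\tY,\tY)\big)$. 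Applying \eqref{dwyz} with $\tbW=\bW$ (so the $\vrho_{\alpha,\bbeta_3}$-term drops), localized to $[s,s']$ — the proof of Proposition \ref{nic1} uses only seminorms over the ambient interval and the sewing lemma, so it carries over verbatim with $[0,T]$, $T^\alpha$ replaced by $[s,s']$, $(s'-s)^\alpha$ — we get
\begin{align*}
d_{\alpha,W,[s,s']}\big((Y,Y),(\tY,\tY)\big)\leq 2C_4\,\|Y_s-\tY_s\|_V+C_5\,d_{\alpha,W,[s,s']}\big((Y,Y),(\tY,\tY)\big),
\end{align*}
where the first term comes from $\|Y_s-\tY_s\|_V+\|\dY_s-\dtY_s\|_V=2\|Y_s-\tY_s\|_V$, and $C_4,C_5$ depend on $\|\bW\|_{\C_3}$ and on $\|Y\|_{\infty}$, $\|\tY\|_{\infty}$, $\|Y\|_{\alpha}$, $\|\tY\|_{\alpha}$, $\|R^Y\|_{2\alpha}$ over $[s,s']$, with $C_5$ carrying a factor $(s'-s)^{\alpha}$.

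Next I would fix the length scale. By Theorem \ref{globn} the solutions exist on all of $[0,T]$, and by Proposition \ref{trdehn} (or simply because a solution is $\alpha$-H\"older on $[0,T]$ by definition) the quantities $\|Y\|_{\alpha,[0,T]}$, $\|\tY\|_{\alpha,[0,T]}$ are bounded by a constant $M$ depending on $\alpha,\bbeta_3,T,\|\bW\|_{\C_3},\xi,\txi$; together with $\|Y\|_{\infty}\leq\|\xi\|_V+T^\alpha M$ and the remainder bound \eqref{ermd} (using $R^Y=R^Z$ for the solution, valid once $s'-s$ is small), this makes $C_4$ and the prefactor of $C_5$ bounded uniformly over all subintervals. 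Choose $h>0$, depending only on these a priori bounds, so small that $C_5\leq\frac12$ whenever $s'-s\leq h$. Absorption then yields $d_{\alpha,W,[s,s+h]}\big((Y,Y),(\tY,\tY)\big)\leq 4C_4\|Y_s-\tY_s\|_V$; feeding this into Lemma \ref{tdtvphin} (with $\tW=W$, legitimate since $\cC^{\alpha,\bbeta_3}\subset\cC^{\alpha,\bbeta_1}$) on $[s,s+h]$ bounds $\|Y-\tY\|_{\alpha,[s,s+h]}$ by a constant multiple of $\|Y_s-\tY_s\|_V$, and finally $\|Y_{s+h}-\tY_{s+h}\|_V\leq\|Y_s-\tY_s\|_V+h^\alpha\|Y-\tY\|_{\alpha,[s,s+h]}$ gives $\|Y_{s+h}-\tY_{s+h}\|_V\leq K\|Y_s-\tY_s\|_V$ for a constant $K$ independent of $s$.

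Finally I would iterate. Partition $[0,T]$ as $0=\eta_0<\dots<\eta_N=T$ with $\eta_n-\eta_{n-1}\leq h$ and $N=\lceil T/h\rceil$. Induction gives $\|Y_{\eta_n}-\tY_{\eta_n}\|_V\leq K^n\|\xi-\txi\|_V$, so each local bound reads $d_{\alpha,W,[\eta_{n-1},\eta_n]}\big((Y,Y),(\tY,\tY)\big)\leq CK^{n-1}\|\xi-\txi\|_V$. A standard gluing argument then upgrades these to a global estimate: the $\alpha$-H\"older seminorm of $Y-\tY$ and the $2\alpha$-H\"older seminorm of $R^Y-R^{\tY}$ over $[0,T]$ are bounded by the maximum of the corresponding subinterval seminorms times a factor polynomial in $N$ — for $R^Y-R^{\tY}$ one uses that it obeys an approximate Chen relation, $\delta(R^Y-R^{\tY})$ being of order $|t-s|^{3\alpha}$ by \eqref{crsd1}, \eqref{crsd3} and the Chen relation for $W$. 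This gives $d_{\alpha,W}\big((Y,Y),(\tY,\tY)\big)\leq c\,N^{1-\alpha}K^N\|\xi-\txi\|_V$, which is of the claimed form $c^T\|\xi-\txi\|_V$ after renaming constants, with $c$ depending on $\alpha,\bbeta_3,T,\|\bW\|_{\C_3},\xi,\txi$.

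The main obstacle is this last step: converting the family of subinterval estimates into a bound on the \emph{global} seminorms that make up $d_{\alpha,W}$, in particular for the two-parameter remainder $R^Y-R^{\tY}$, which is not additive and must be reassembled via a sewing/gluing lemma. One also has to check throughout that every constant produced ($C_4$, $C_5$, $K$, $C$) depends only on the a priori bounds of Theorem \ref{globn}/Proposition \ref{trdehn} and on $\|\bW\|_{\C_3}$, never on the pair $(Z,\tZ)$ being estimated, so that the induction in $n$ closes cleanly.
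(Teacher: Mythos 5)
Your proof follows essentially the same route as the paper's: apply the stability estimate of Proposition \ref{nic1} with $\tbW=\bW$ on subintervals of a fixed small length determined by the a priori bounds of Proposition \ref{trdehn}, absorb the $d_{\alpha,W}$ term, propagate $\|Y_{\eta_n}-\tY_{\eta_n}\|_V$ geometrically, and reassemble into the global bound $c^T\|\xi-\txi\|_V$. You are in fact more explicit than the paper about the final gluing of the subinterval seminorms for the two-parameter remainder (the paper simply takes the maximum over subintervals); that extra care is welcome, and the resulting polynomial-in-$N$ factor is harmlessly absorbed into $c^T$.
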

\begin{proof}
By Propositions \ref{nic1}, \ref{trdehn}, and the fact that $Y$ and $\tY$ are solutions to \eqref{rde}, we can write
\begin{align}
d_{\alpha, W, [0,h]}\big((Y,Y),(\tY,\tY)\big)\leq& c_1\|\xi-\txi\|_V+c_2h^{\alpha}d_{\alpha, W, [0,h]}((Y,Y),(\tY,\tY))\big],
\end{align}
on $[0,h]\subset [0,T]$, where $c_1,c_2$ are constants depending on $\|\bW\|_{\C_3}$, $\alpha$, $\bbeta_3$, $T$, $\xi$ and $\txi$. Let $\epsilon =( 2c_2)^{-\frac{1}{\alpha}}\wedge 1$. It follows that 
\begin{align*}
d_{\alpha, W, [0,\epsilon]}\big((Y,Y),(\tY,\tY)\big) \leq 2c_1\|\xi-\txi\|_V
\end{align*}
on $[0,\epsilon]$. By iteration, we have that for any $n\geq 1$,
\begin{align*}
d_{\alpha, W, [n\epsilon,(n+1)\epsilon]}\big((Y,Y),(\tY,\tY)\big) \leq 2c_1\|Y_{n\epsilon}-\tY_{n\epsilon}\|_V,
\end{align*}
 and
\begin{align*}
\|Y_{n\epsilon}-\tY_{n\epsilon}\|_V\leq &\|Y_{(n-1)\epsilon}-\tY_{(n-1)\epsilon}\|_V+\epsilon^{\alpha}\|Y-\tY\|_{\alpha,[(n-1)\epsilon,n\epsilon]}\\
\leq &\|Y_{(n-1)\epsilon}-\tY_{(n-1)\epsilon}\|_V+\epsilon^{\alpha}d_{\alpha, W, [(n-1)\epsilon,n\epsilon]}\big((Y,Y),(\tY,\tY)\big)\\
\leq &2\|Y_{(n-1)\epsilon}-\tY_{(n-1)\epsilon}\|_V.
\end{align*}
Thus we can write
\begin{align*}
d_{\alpha, W, [n\epsilon,(n+1)\epsilon]}\big((Y,Y),(\tY,\tY)\big) \leq 2^{n+1}c_1\|\xi-\txi\|_V.
\end{align*}

Let $N$ be the integer such that $N\epsilon\leq T\leq (N+1)\epsilon$, then it follows that
\[
d_{\alpha, W}\big((Y,Y),(\tY,\tY)\big)\leq\max_{1\leq n\leq N}\big\{d_{\alpha, W, [n\epsilon,(n+1)\epsilon]}\big((Y,Y),(\tY,\tY)\big)\big\}\leq c^T\|\xi-\txi\|_V,
\]
for some $c>0$ depending on $\alpha$, $\bbeta_3$, $T$, $\|\bW\|_{\C}$, $\xi$, and $\txi$.
\end{proof}
Due to Propositions \ref{trdehn} and \ref{dpic}, we can deduce the following corollary.
 \begin{corollary}\label{corde}
Suppose that $\bW=(W,\W)$ satisfies the conditions in Theorem \ref{globn}. 
\begin{enumerate}[(i)]
\item Write $Y(\xi)$ for the solution to the RDE \eqref{rde} with initial condition $\xi\in V$. Then,  $\|Y(\xi)\|_{\alpha}$ is bounded uniformly in the space $\{\xi,\|\xi\|_V\leq K\}$ for any positive constant $K$.
\item The constant $c^T$ in \eqref{dic} is fixed in the space $\{(\xi,\txi),\|\xi\|_V+\|\txi\|_V\leq K\}$ for any positive constant $K$.
\end{enumerate}
\end{corollary}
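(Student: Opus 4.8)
The plan is to observe that Corollary \ref{corde} is a pure bookkeeping consequence of Propositions \ref{trdehn} and \ref{dpic}: one only needs to check that every constant produced in those two proofs (and in the proofs of Theorems \ref{rdelc} and \ref{globn} on which they rest) depends on the initial data $\xi$ --- and, for part (ii), also on $\txi$ --- exclusively through the norms $\|\xi\|_V$, $\|\txi\|_V$, and in a way that is nondecreasing in these norms. Once this is granted, restricting to a ball $\{\|\xi\|_V\le K\}$ (resp. $\{\|\xi\|_V+\|\txi\|_V\le K\}$) replaces every such quantity by its value at the boundary, which is the desired uniform constant.

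For (i) I would simply inspect the right-hand side of \eqref{rdehn},
\[
\|Y(\xi)\|_{\alpha}\le c\,\|\bW\|_{\C_3}(1+\|\bW\|_{\C_3})^{\frac{2\gamma_1}{\gamma_2}}(1+\|\xi\|_V)^{\gamma_1}e^{\frac{\alpha\gamma_1 K_0 T}{\gamma_2}}.
\]
Here $c$ depends only on $\alpha$ and $\bbeta_3$, and $K_0$ --- defined in the proof of Theorem \ref{globn} through the constants $C_6$ and $C_7$, which themselves depend only on $\alpha$ and $\bbeta_3$ --- depends only on $\alpha$, $\bbeta_3$ and $\|\bW\|_{\C_3}$. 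Hence the only $\xi$-dependent factor is $(1+\|\xi\|_V)^{\gamma_1}$ with $\gamma_1\ge 0$, so the bound is nondecreasing in $\|\xi\|_V$; evaluating at $\|\xi\|_V=K$ gives the claim.

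For (ii) I would first recall that, for a solution of \eqref{rde}, the Gubinelli derivative equals the solution itself, so $\|\dY\|_{\infty}=\|Y\|_{\infty}$, $\|\dtY\|_{\infty}=\|\tY\|_{\infty}$, $\|\dY\|_{\alpha}=\|Y\|_{\alpha}$, $\|\dtY\|_{\alpha}=\|\tY\|_{\alpha}$, while $\|R^Y\|_{2\alpha}$ is controlled by \eqref{ermd}. Using part (i) together with $\|Y\|_{\infty}\le\|\xi\|_V+T^{\alpha}\|Y\|_{\alpha}$ (and likewise for $\tY$), all of these are bounded by quantities nondecreasing in $\|\xi\|_V$, resp. $\|\txi\|_V$. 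Feeding these into the constants $C_3,C_4,C_5$ of Proposition \ref{nic1} shows that the constants $c_1,c_2$ appearing in the proof of Proposition \ref{dpic} --- and hence the step size $\epsilon=(2c_2)^{-1/\alpha}\wedge 1$ --- may be taken uniform over $\{\|\xi\|_V+\|\txi\|_V\le K\}$. The same argument applies verbatim on each subinterval $[n\epsilon,(n+1)\epsilon]$ used in that proof, since there the initial data $Y_{n\epsilon},\tY_{n\epsilon}$ have norms bounded by $\|Y\|_{\infty}\vee\|\tY\|_{\infty}$. With $\epsilon$ uniform the number of steps satisfies $N\le T/\epsilon$ uniformly, and therefore $c^T=2^{N+1}c_1$ in \eqref{dic} is bounded by a constant depending only on $\alpha,\bbeta_3,T,\|\bW\|_{\C}$ and $K$.

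I expect the only real work to be this tracing: one must genuinely re-read the proofs of Theorems \ref{rdelc}, \ref{globn} and of Propositions \ref{nic1}, \ref{trdehn}, \ref{dpic} to verify that no constant secretly depends on the initial data other than through a monotone power of its norm --- in particular that $C_6$, $C_7$, $K_0$ and the exponents $\gamma_1,\gamma_2$ are data-independent. There is no new estimate to prove; the difficulty is purely organizational.
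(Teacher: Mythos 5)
Your proposal is correct and matches the paper's intent exactly: the paper offers no separate proof of Corollary \ref{corde}, stating only that it follows from Propositions \ref{trdehn} and \ref{dpic}, and your tracing of the constants (the bound \eqref{rdehn} depends on $\xi$ only through the monotone factor $(1+\|\xi\|_V)^{\gamma_1}$, while the constants $c_1,c_2$, the step size $\epsilon$, and hence $N$ and $c^T$ in the proof of Proposition \ref{dpic} depend on $(\xi,\txi)$ only through quantities controlled monotonically by $\|\xi\|_V+\|\txi\|_V$ via part (i)) is precisely the bookkeeping the authors leave implicit. No gap.
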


\begin{remark}
As a consequence of Proposition \ref{dpic}, we have the following estimates
\[
\|Y-\tY\|_{\alpha}\leq c^T\|\xi-\txi\|_V,
\]
and
\[
\sup_{t\in[0,T]}\|Y_t-\tY_t\|_V\leq \big(1+c^TT^{\alpha}\big)\|\xi-\txi\|_V.
\]
\end{remark}

\section{Comparison of linear and nonlinear rough paths}\label{s.5}

\subsection{Nonlinear rough paths constructed by compositions}\label{s.5.1}
In this section, we consider a special class of nonlinear rough paths that are constructed by compositions of some nonlinear functions and linear rough paths.

\begin{definition}
Let $m$ be a positive integer. The space $\cC^{m,\bbeta_n}_{loc}(V^2;V)$ is the collection of function $f$ such that, for any compact set $K\subset V$
\begin{align}
\|f\|_{K,m,\bbeta_n}:=\sum_{j=0}^m\sum_{k=0}^n\sup_{\substack{x\in K\\y\in V}}\frac{\|D_2^{k}D_1^j f(x,y)\|_{\mathfrak{B}_{k+j}}}{(1+\|y\|_V)^{\beta_k}}<\infty
\end{align}
 where $D_1$ and $D_2$ are the partial derivatives of the first and second argument, respectively, and $\mathfrak{B}_{k+j}$ is the corresponding linear space of derivatives.
\end{definition}

Let $f\in \cC^{m,\bbeta_n}_{loc}(V^2;V)$, and let $\bX=(X,\X)\in \C^{\alpha}([0,T]; V)$ be a $V$-valued linear rough path. We aim to interpret $W(t,x)=f(X_t, x)$ as a nonlinear rough path with suitable parameters $m, n\in \N$. Due to Definition \ref{nrp}, an $\alpha$-H\"{o}lder nonlinear rough path contains a $\alpha$-H\"{o}lder continuous function $W$ and a $2\alpha$-H\"{o}lder continuous function $\W$ that defines a version of following double integral:
\[
\int_s^tDW(dr, y)W_{s,r}(x):=\W_{s,t}.
\]
As $W(t, x)=f(X_t, x)$, we expect that $\W$ is defined via the theory of linear rough paths by the following expression
\begin{align}\label{dw}
\W_{s,t}(x,y):=\int_s^t g(dr, y)(f(X_r, x))-g_{s,t}(y)(f(X_s, x)),
\end{align}
 where $g(t,y)=D_2f(X_t, y)$ and $g_{s,t}(y)=g(t,y)-g(s,y)$. Applying It\^{o}'s formula (Lemma \ref{itolrp}), the integral on the right-hand side of (\ref{dw}) can be defined as follows
 \begin{align}\label{dw1}
 \int_s^t g(dr, y)(f(X_r, x)):=&\int_s^t D_{21}f(X_r, y)f(X_r, x)d\bX_r\nonumber\\
 &+\frac{1}{2}\int_s^t D_{211}f(X_r,y)f(X_r,x)d\lgl X\rgl_r.
 \end{align}
In the next proposition, we will show that $(W,\W)$ is a nonlinear rough path where $W(t,x)=f(X_t,x)$ and $\W_{s,t}(x,y)$ is defined in \eqref{dw}.
\begin{proposition}\label{bwbx}
Assume that $n\geq 1$, and $f\in \cC^{3,\bbeta_n}_{loc}(V^2;V)$. Suppose that $\bX=(X,\X)\in \C^{\alpha}([0,T];V)$. Let $W(t,x)=f(X_t,x)$, and let $\W$ be defined by (\ref{dw}) and (\ref{dw1}). Then $\bW:=(W,\W)\in \C^{\alpha,\bbeta_n}([0,T]\times V;V)$.
\end{proposition}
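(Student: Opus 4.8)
The plan is to verify directly the three requirements of Definition \ref{nrp} for the pair $(W,\W)$, where $W(t,x)=f(X_t,x)$ and $\W$ is given by \eqref{dw}--\eqref{dw1}. A preliminary remark used throughout: since $t\mapsto X_t$ is continuous on $[0,T]$, the set $\{X_t:t\in[0,T]\}$ is compact, hence so is its closed convex hull $K:=\overline{\mathrm{conv}}\{X_t:t\in[0,T]\}$ (Mazur's theorem); thus every segment $[X_s,X_t]$ lies in $K$, and all the derivatives $D_1^iD_2^jf(X_r,\cdot)$ with $i\le 3$, $j\le n$ are bounded, uniformly in $r$, in terms of $\|f\|_{K,3,\bbeta_n}$. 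Property (i) is then immediate from the mean value theorem in the first variable: $D_2^kW_{s,t}(x)=D_2^kf(X_t,x)-D_2^kf(X_s,x)=D_1D_2^kf(\xi,x)(X_{s,t})$ for some $\xi\in[X_s,X_t]\subset K$, so $\|D_2^kW_{s,t}(x)\|\le\|f\|_{K,1,\bbeta_n}\|X\|_\alpha(1+\|x\|_V)^{\beta_k}|t-s|^\alpha$ for $k=0,\dots,n$, and summing over $k$ gives $W\in\cC^{\alpha,\bbeta_n}([0,T]\times V;V)$.

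For the well-posedness of $\W$ and property (iii): the integrand of the first integral in \eqref{dw1} is the composition with $X$ of the $\cC^2_{loc}$ map $v\mapsto D_{21}f(v,y)(\,\cdot\,,f(v,x))\in\cL(V;V)$ — here $f\in\cC^{3,\bbeta_n}_{loc}$ is exactly what is needed so that $D_{21}f(\cdot,y)$ is twice differentiable in the first variable — hence a $\cL(V;V)$-valued controlled rough path by Lemma \ref{cmpcr}, with Gubinelli derivative $D_{211}f(X_\cdot,y)(\,\cdot\,,\cdot\,,f(X_\cdot,x))+D_{21}f(X_\cdot,y)(\,\cdot\,,D_1f(X_\cdot,x)\,\cdot\,)$; so this integral is a well-defined linear rough integral (Theorem \ref{tlri}). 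The second integral in \eqref{dw1} is a Young integral of the $\alpha$-H\"older path $r\mapsto D_{211}f(X_r,y)f(X_r,x)$ against $\lgl X\rgl\in\cC^{2\alpha}_2$ (Remark \ref{rqdcp}(iv)), well-defined since $3\alpha>1$. Both integrals are additive in $(s,t)$, and, writing $\W_{s,t}(x,y)=I_{s,t}(x,y)-g_{s,t}(y)(f(X_s,x))$ with $I$ the sum \eqref{dw1} and $g(t,y)=D_2f(X_t,y)$, additivity of $I$ plus telescoping of the increments $g_{s,t}(y)$ yields
\[
\W_{s,t}(x,y)-\W_{s,u}(x,y)-\W_{u,t}(x,y)=g_{u,t}(y)\big(f(X_u,x)-f(X_s,x)\big)=DW_{u,t}(y)(W_{s,u}(x)),
\]
since $DW(t,\cdot)=D_2f(X_t,\cdot)$ and $W_{s,u}(x)=f(X_u,x)-f(X_s,x)$; this is Chen's relation \eqref{chen}.

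The main work is property (ii): $\W\in\cC^{2\alpha,\bbeta^*_{n-1},\bbeta^{**}_{n-1}}_2([0,T]^2\times V^2;V)$. For the time regularity ($k=0$), the Young term in $I$ is $O(|t-s|^{2\alpha})$ directly (bounded integrand against $\lgl X\rgl\in\cC^{2\alpha}_2$), and for the rough-integral part I would use the sewing estimate \eqref{blri}: since $g(\cdot,y)$ is controlled by $X$ with Gubinelli derivative $D_{21}f(X_\cdot,y)$ and remainder $R^g\in\cC^{2\alpha}_2$, the first-order term of $I_{s,t}(x,y)$ equals $D_{21}f(X_s,y)(X_{s,t},f(X_s,x))$, which cancels the first-order part of $g_{s,t}(y)(f(X_s,x))$; what is left is a combination of $\X_{s,t}$, $\lgl X\rgl_{s,t}$ and $R^g_{s,t}(f(X_s,x))$, all in $\cC^{2\alpha}_2$, plus the $O(|t-s|^{3\alpha})$ sewing error, so $\|\W_{s,t}(x,y)\|_V\le C|t-s|^{2\alpha}(1+\|x\|_V)^{\beta_0}(1+\|y\|_V)^{\beta_1}$ with $\beta_0=\beta^*_0$, $\beta_1=\beta^{**}_0$. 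For the spatial derivatives $1\le k\le n-1$ I would differentiate $\W_{s,t}(x,y)$ in $(x,y)$ under the integral signs — justified by re-running the sewing lemma on the parameter-differentiated germs, the integrands depending smoothly on $(x,y)$ uniformly on compacts and all needed derivatives $D_1^iD_2^jf$ ($i\le 3$, $j\le n$) being available because $f\in\cC^{3,\bbeta_n}_{loc}$ — obtaining for $D^k\W_{s,t}(x,y)$ a finite sum of terms of exactly the same structure (linear rough integral minus boundary increment plus Young integral) built from those partial derivatives, to each of which the $k=0$ estimate applies. The growth then closes because in every term the factor depending on $x$ carries at most $k$ derivatives in the second argument and the one depending on $y$ at most $k+1$ (one being intrinsic to $D_2f$); as $1+\|\cdot\|_V\ge 1$, each such term is dominated by $(1+\|x\|_V)^{\max\{\beta_0,\dots,\beta_k\}}(1+\|y\|_V)^{\max\{\beta_1,\dots,\beta_{k+1}\}}=(1+\|x\|_V)^{\beta^*_k}(1+\|y\|_V)^{\beta^{**}_k}$, which is precisely why the multi-indices in Definition \ref{nrp} carry those maxima.

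The step I expect to be the main obstacle is this last one: producing the cancellation that upgrades the a priori $O(|t-s|^\alpha)$ bound on $\W$ to $O(|t-s|^{2\alpha})$ — which forces careful tracking of the bilinear/tensor slot conventions in $D_{21}f$ and $D_{211}f$ and uses that the term $\tfrac12\int D_{211}f\cdot f\,d\lgl X\rgl$ in \eqref{dw1} is exactly the one Ito's formula prescribes — together with a clean justification of differentiation under the rough integral sign and the Leibniz-type bookkeeping of how $k$ spatial derivatives split between the two $f$-factors. Everything else reduces to routine applications of Lemma \ref{cmpcr}, Theorem \ref{tlri}, and Young's estimate.
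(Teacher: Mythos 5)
Your proposal is correct and follows essentially the same route as the paper's proof: mean value theorem for the regularity of $W$, Lemma \ref{cmpcr} plus the sewing estimate of Theorem \ref{tlri} to define the rough-integral part of $\W$, a Taylor/controlled-path cancellation of the first-order term against $g_{s,t}(y)(f(X_s,x))$ to upgrade to $O(|t-s|^{2\alpha})$, differentiation of the germs followed by re-application of the sewing lemma for the spatial derivatives of $\W$, and additivity of the integral for Chen's relation. The growth-index bookkeeping you describe for $\bbeta^*_{n-1},\bbeta^{**}_{n-1}$ is also the one the paper uses.
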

\begin{proof}

 We prove this proposition by checking the properties in Definition \ref{nrp}. Let $K$ be the closed convex hull of the set $\{X_t,t\in[0,T]\}$. Then $K$ is a compact subset in $V$. 

(i) For any $k\in\{0,\dots, n\}$ and $\bz_k=(z_1,\dots,z_k)\in V^k$, by the mean value theorem, there exists $\xi$ between $X_s$ and $X_t$, such that
\begin{align*}
\|D^kW_{s,t}(x)(\bz_k)\|_V=&\|D^k_2f(X_t,x)(\bz_k)-D^k_2f(X_s,x)(\bz_k)\|_V\\
\leq & \|D_1D^k_2f(\xi,x)(\bz_k)(X_{s,t})\|_V\\
\leq & \|f\|_{K,3,\bbeta_n} \Big(\prod_{i=1}^k\|z_i\|_V\Big)\|X\|_{\alpha}(1+\|x\|_V)^{\beta_k}|t-s|^{\alpha}.
\end{align*}
This implies that $W\in \cC^{\alpha,\bbeta_n}([0,T]\times V;V)$.

(ii) a) Fix $(x,y)\in V^2$. Set $h(z)=h^{x,y}(z):=D_{21}f(z,y)(f(z, x))$ for all $z\in V$. Then, $h$ is an $\cL(V;V)$-valued function on $V$. It is easy to verify that $h\in \cC^2_{loc}(V;V)$.  Let $Y_t=h(X_t)$, and let 
\[
Y'_t=Dh(X_t)=D_{211}f(X_t,y)(f(X_t,x))+D_{21}f(X_t,y)D_1f(X_t,y),
\]
for all $t\in [0,T]$, where $D_{21}f(X_t,y)D_1f(X_t,y)$ is considered as an operator on $V\times V$ with values in $V$, that is
\[
D_{21}f(X_t,y)D_1f(X_t,y)(x_1,x_2)=D_{21}f(X_t,y)(D_1f(X_t,y)(x_2),x_1).
\]
 By Lemma \ref{cmpcr}, $(Y,Y')\in \sD_X^{2\alpha}(\cL(V;V))$. In addition,  by the mean value theorem, we can easily show that
\begin{align}\label{bwbx1}
\|Y'\|_{\alpha}\leq 2\|f\|_{K,3,\bbeta_n}^2(1+\|x\|_V)^{\beta_0}(1+\|y\|_V)^{\beta_1}\|X\|_{\alpha}
\end{align}
and
\begin{align}\label{bwbx2}
\|R^Y\|_{2\alpha}\leq 2\|f\|_{K,3,\bbeta_n}^2(1+\|x\|_V)^{\beta_0}(1+\|y\|_V)^{\beta_1}\|X\|_{\alpha}^2.
\end{align}
Let $\Xi_{s,t}:=Y_sX_{s,t}+Y'_s\X_{s,t}$ for any $(s,t)\in[0,T]^2$. The following estimate follows from (\ref{bwbx1}), (\ref{bwbx2}) and Theorem \ref{tlri}:
\begin{align}\label{xzxi1}
\Big\|\int_s^tY_r d\bX_r-\Xi_{s,t}\Big\|_V\leq &k_{\alpha}(\|X\|_{\alpha}\|R^Y\|_{2\alpha}+\|\X\|_{2\alpha}\|Y'\|_{\alpha})|t-s|^{3\alpha}\nonumber\\
\leq &2k_{\alpha}\big[ \|f\|^2_{K,3,\bbeta_n}(1+\|x\|_V)^{\beta_0}(1+\|y\|_V)^{\beta_1}\|X\|_{\alpha}^3\nonumber\\
&+\|f\|^2_{K,3,\bbeta_n}(1+\|x\|_V)^{\beta_0}(1+\|y\|_V)^{\beta_1}\|X\|_{\alpha}\|\X\|_{2\alpha}\big]|t-s|^{3\alpha}.
\end{align}
On the other hand, by Taylor's theorem, there exists $\xi=cX_s+(1-c)X_t$ for some $c\in [0,1]$ such that 
\begin{align*}
&\Xi_{s,t}-g_{s,t}(y)(f(X_s, x))\\
=&D_{21}f(X_s,y)(f(X_s,y), X_{s,t})+D_{211}f(X_s,y)(f(X_s,x),\X_{s,t})\\
&+D_{21}f(X_s,y)D_1f(X_s,y)\X_{s,t}-\big(D_2f(X_t,y)(f(X_s,x))-D_2f(X_s,y)(f(X_s,x))\big)\\
=&D_{211}f(X_s,y)(f(X_s,x),\X_{s,t})+D_{21}f(X_s,y)D_1f(X_s,y)\X_{s,t}\\
&-\frac{1}{2}D_{211}f(\xi,y)(f(X_s,y),X_{s,t},X_{s,t}).
\end{align*}
It follows that
\begin{align}\label{xzxi2}
\|\Xi_{s,t}-g_{s,t}(y)(f(X_s, x))\|_V\leq &\Big[2\|f\|_{K,3,\bbeta_n}^2(1+\|x\|_V)^{\beta_0}(1+\|y\|_V)^{\beta_1}\|\X\|_{2\alpha}\\
&+\frac{1}{2}\|f\|_{K,3,\bbeta_n}^2(1+\|x\|_V)^{\beta_0}(1+\|y\|_V)^{\beta_1}\|X\|_{\alpha}^2\Big]|t-s|^{2\alpha}.\nonumber
\end{align}

  Finally, by definition
\[
\lgl X\rgl_{s,t}=X_{s,t}\otimes X_{s,t}-2\X_{s,t},
\] 
which implies that
\[
\|\lgl X\rgl\|_{2\alpha}\leq \|X\|_{\alpha}^2+2\|\X\|_{\alpha}.
\]
Therefore, Young's integral term can be estimated as follows
\begin{align}\label{xzxi3}
\Big\|\int_s^tD_{211}f(X_r,y)f(X_r,x)&d\lgl X\rgl_r\Big\|_V\leq \sup_{z\in K}\big\|D_{211}f(z,y)f(z,x)\lgl X\rgl_{s,t}\big\|_V\\
\leq &\|f\|_{K,3,\bbeta_n}^2(1+\|x\|_V)^{\beta_0}(1+\|y\|_V)^{\beta_1}(\|X\|_{\alpha}^2+2\|\X\|_{2\alpha})\nonumber.
\end{align}
 Recall that 
\begin{align*}
\W_{s,t}(x,y)=&\int_s^t g(dr, y)(f(X_r, x))-g_{s,t}(y)(f(X_s, x))\\
=&\int_s^tY_rd\bX_r-g_{s,t}(f(X_s,x))+\frac{1}{2}\int_s^tD_{211}f(X_r,y)f(X_r,x)d\lgl X\rgl_r.
\end{align*}
Thus by combining (\ref{xzxi1}) - (\ref{xzxi3}), we have
\begin{align*}
\|\W(x,y)\|_{2\alpha}\leq C (1+\|x\|_V)^{\beta_0}(1+\|y\|_V)^{\beta_1},
\end{align*}
where the constant $C$ depends on $\alpha$, $\|f\|_{K,3,\bbeta_n}$, $\|X\|_{\alpha}$ and $\|\X\|_{2\alpha}$.

(ii) b) The next step is to estimate the spatial derivatives of $\W$. Observe that $\W$ consists of three terms: the rough integral, Young's integral, and $g_{s,t}(y)(f(X_s,x))$. Consider $g_{s,t}(y)(f(X_s,x))$ as a function of $(x,y)\in V^2$. Then, for any $(z_1, z_2)\in V^2$,
\begin{align*}
Dg_{s,t}(y)(f(X_s,x))(z_1,z_2)=&[D_2f(X_t,y)-D_2f(X_s,y)](D_2f(X_s,y)(z_1))\\
&+[D_{22}f(X_t,y)-D_{22}f(X_s,y)](f(X_s,y),z_2).
\end{align*}
For the rough integral term, we compute the derivative of its approximation. That is, for all $(z_1,z_2)\in V^2$,
\begin{align*}
D\Xi_{s,t}(z_1, &z_2)=D_{21}f(X_s, y)(D_2f(X_s, x)(z_1),X_{s,t})+D_{212}f(X_s, y)(f(X_s, x),X_{s,t},z_2)\\
&+D_{211}f(X_s, y)(D_2f(X_s,x)(z_1),\X_{s,t})D_{2112}f(X_s, y)(f(X_s,x),\X_{s,t}, z_2)\\
&+D_{21}f(X_s, y)D_{12}f(X_s, x)(\X_{s,t},z_1)+D_{212}f(X_s, y)D_1f(X_s, x)(\X_{s,t},z_2),
\end{align*}
where 
\[
D_{21}f(X_s, y)D_{12}f(X_s, x)(z_1,z_2,z_3)=D_{21}f(X_s, y)(z_1, D_{12}f(X_s, x)(z_2,z_3))
\]
and
\[
D_{212}f(X_s, y)D_1f(X_s, x)(z_1,z_2,z_3)=D_{212}f(X_s, y)(D_1f(X_s, x)(z_2),z_1,z_3).
\]
By the sewing lemma, we can show that for all $0\leq s\leq t\leq T$,
\[
\sum_{|\pi|\to 0}D\Xi_{s,t}\to \cJ_{s,t}(D\Xi),
\]
in $\cL(V^2; V)$ uniformly on compact sets in $(x,y)\in V^2$. Therefore,
\[
\cJ_{s,t}(D\Xi)=D\cJ_{s,t}(\Xi)=D\Big[\int_s^tD_{21}f(X_r, y)f(X_r, x)d\bX_r\Big].
\]
By a similar argument in (ii) a), we can show that
\begin{align*}
D\Big[\int_s^tD_{21}f(X_r, y)&f(X_r, x)d\bX_r\Big]-Dg_{s,t}(y)(f(X_s,x))
\end{align*}
is $2\alpha$-H\"{o}lder continuous in time. Moreover, the growth is of order $\beta_0\vee\beta_1$ in $x$, and $\beta_1\vee\beta_2$ in $y$. Young's integral term can be also estimated by using the sewing lemma and get the same result. Finally, by iteration, we conclude that $\W\in \cC^{2\alpha, \bbeta^*_{n-1},\bbeta^{**}_{n-1}}_2([0,T]^2\times V^2;V)$.

(iii) Notice that the linear rough integral on the right hand side of (\ref{dw}) is additive, Chen's relation follows immediately.
\end{proof}
Let $W(t,x)=f(X_t,x)$ for all $(t,x)\in [0,T]\times V$. In the next lemma, we show that a rough function controlled by $W$ is also controlled by $X$.
\begin{lemma}\label{evcrp}
Suppose that $f\in \cC^{3,\bbeta_1}_{loc}(V^2;V)$ and $X\in \cC^{\alpha}([0,T]; V)$. Let $W(t,x)=f(X_t,x)$, and let $(Y,\dY)\in \sE^{2\alpha}_W$ in the sense of Definition \ref{crp}. Then $(Y,Y')\in \sD_W^{2\alpha}(V)$ in the sense of Definition \ref{lcrp} for some $Y'\in \cC^{\alpha}(V;\cL(V;V))$.
\end{lemma}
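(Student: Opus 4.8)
The plan is to exhibit an explicit Gubinelli derivative of $Y$ with respect to the \emph{linear} rough path $X$, namely the path
\[
Y'_t:=D_1f(X_t,\dY_t)\in\cL(V;V),\qquad t\in[0,T],
\]
and then to verify the two requirements of Definition \ref{lcrp}: that $Y'\in\cC^\alpha([0,T];\cL(V;V))$ and that the associated remainder is $2\alpha$-H\"older. The starting point is the defining identity $(Y,\dY)\in\sE^{2\alpha}_W$, i.e. $Y_{s,t}=W_{s,t}(\dY_s)+R^Y_{s,t}$ with $R^Y\in\cC^{2\alpha}_2([0,T]^2;V)$. Since $W(t,x)=f(X_t,x)$, we have $W_{s,t}(\dY_s)=f(X_t,\dY_s)-f(X_s,\dY_s)$; freezing the second argument at $\dY_s$ and applying Taylor's theorem in the first argument yields
\[
W_{s,t}(\dY_s)=D_1f(X_s,\dY_s)(X_{s,t})+\tfrac12 D_1^2f(\zeta_{s,t},\dY_s)(X_{s,t},X_{s,t}),
\]
where $\zeta_{s,t}$ lies on the segment joining $X_s$ and $X_t$. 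Hence $Y_{s,t}=Y'_s(X_{s,t})+\widetilde R^Y_{s,t}$ with $\widetilde R^Y_{s,t}:=\tfrac12 D_1^2f(\zeta_{s,t},\dY_s)(X_{s,t},X_{s,t})+R^Y_{s,t}$.

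All the quantitative estimates reduce to the defining seminorm of $\cC^{3,\bbeta_1}_{loc}(V^2;V)$ once the correct compact set is fixed. Since $X\in\cC^\alpha([0,T];V)$ is continuous, its range is compact; let $K$ be its closed convex hull, which is again compact (as in the proof of Proposition \ref{bwbx}) and contains every intermediate point $\zeta_{s,t}$. Moreover $\dY\in\cC^\alpha([0,T];V)$ is bounded, so that $(1+\|\dY_s\|_V)^{\beta_k}\le(1+\|\dY\|_\infty)^{\beta_k}$ for all $s$ and $k$. Writing $M:=\|f\|_{K,3,\bbeta_1}$, we therefore get $\|D_1^{j}D_2^{k}f(x,y)\|\le M(1+\|\dY\|_\infty)^{\beta_k}$ for all $x\in K$, $\|y\|_V\le\|\dY\|_\infty$, $0\le j\le 3$, $0\le k\le1$. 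In particular the first term of $\widetilde R^Y$ is bounded in $V$-norm by $\tfrac12 M(1+\|\dY\|_\infty)^{\beta_0}\|X\|_\alpha^2|t-s|^{2\alpha}$, and since $R^Y\in\cC^{2\alpha}_2([0,T]^2;V)$ this shows $\widetilde R^Y\in\cC^{2\alpha}_2([0,T]^2;V)$.

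It remains to check that $Y'\in\cC^\alpha([0,T];\cL(V;V))$. Splitting
\[
Y'_{s,t}=\big[D_1f(X_t,\dY_t)-D_1f(X_s,\dY_t)\big]+\big[D_1f(X_s,\dY_t)-D_1f(X_s,\dY_s)\big]
\]
and applying the mean value inequality to each bracket (the first as a function of the first argument, with the intermediate point in $K$; the second as a function of the second argument, with the intermediate point in the ball of radius $\|\dY\|_\infty$), one bounds $\|Y'_{s,t}\|_{\cL(V;V)}$ by $M(1+\|\dY\|_\infty)^{\beta_0}\|X\|_\alpha|t-s|^\alpha+M(1+\|\dY\|_\infty)^{\beta_1}\|\dY\|_\alpha|t-s|^\alpha$, which is of order $|t-s|^\alpha$. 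Together with $Y\in\cC^\alpha([0,T];V)$ (part of the definition of $\sE^{2\alpha}_W$), this gives $(Y,Y')\in\sD_X^{2\alpha}(V)$, i.e. $Y$ is controlled by $X$ in the sense of Definition \ref{lcrp}, as claimed. I expect no genuine obstacle here beyond careful bookkeeping: the only points that need attention are the choice of the compact set $K$ so that the $\cC^{3,\bbeta_1}_{loc}$ bounds apply uniformly, and the fact that the correct Gubinelli derivative of $Y$ along $X$ is built from $\dY$ rather than from $Y$ itself. One may also note that $\cC^{2,\bbeta_1}_{loc}$ regularity of $f$ already suffices for this lemma, the third spatial derivative playing no role.
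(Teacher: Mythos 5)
Your proof is correct and follows essentially the same route as the paper's: the same choice of Gubinelli derivative $Y'_t=D_1f(X_t,\dY_t)$, the same Taylor expansion of $W_{s,t}(\dY_s)=f(X_t,\dY_s)-f(X_s,\dY_s)$ in the first argument, the same compact convex hull $K$ of the range of $X$, and the same decomposition of the new remainder. Your closing observation that only second-order regularity of $f$ is actually used is also accurate (the argument involves nothing beyond $D_{11}f$ and $D_{12}f$), and your conclusion that $Y$ is controlled by $X$ matches the evident intent of the statement.
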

\begin{proof}
Let $R^Y:[0,T]^2\to V$ be given by 
\[
R^Y_{s,t}=Y_{s,t}-W_{s,t}(\dY_s)=Y_{s,t}-[f(X_t,\dY_s)-f(X_s,\dY_s)]
\]
for all $0\leq s\leq t\leq T$. Then, $R^Y\in \cC^{2\alpha}([0,T];V)$. By Taylor's theorem, there exists $\xi$ between $X_s$ and $X_t$ such that
\begin{align*}
Y_{s,t}=D_1f(X_s,\dY_s)X_{s,t}+\frac{1}{2}D_{11}f(\xi,\dY_s)X_{s,t}\otimes X_{s,t}+R^{Y}_{s,t}.
\end{align*}
Let $Y':[0,T]\to \cL(V;V)$ be given by $ Y'_t:=D_1f(X_t, \dY_t)$ for any $t\in [0,T]$. Then it follows that
\begin{align}\label{dxdw1}
Y_{s,t}-Y'_sX_{s,t}=\frac{1}{2}D_1f(\xi,\dY_s)X_{s,t}\otimes X_{s,t}+R^{Y}_{s,t}.
\end{align}
On the other hand, the mean value theorem implies that
\begin{align}\label{dxdw2}
Y'_{s,t}=&D_1f(X_t,\dY_t)-D_1f(X_s,\dY_s)\nonumber\\
=&D_{11}f(\xi^1,\dY_s)X_{s,t}+D_{12}f(X_s,\xi^2)\dY_{s,t}.
\end{align}
for some $(\xi^1,\xi^2)$ between $(X_s, \dY_s)$ and $(X_t,\dY_t)$. Similarly as in Proposition \ref{bwbx}, let $K$ be the closed convex hull of $\{X_t,0\leq t\leq T\}$. The equalities \eqref{dxdw1} and \eqref{dxdw2} yield that
\[
\|Y'\|_{\alpha}\leq \|f\|_{K,2,\bbeta_1}\big[(1+\|\dY\|_{\infty})^{\beta_1}\|\dY\|_{\alpha}+(1+\|\dY\|_{\infty})^{\beta_0}\|X\|_{\alpha}\big]
\]
and
\[
\|\widetilde{R}^{Y}\|_{2\alpha}\leq \frac{1}{2}\|f\|_{K,3,\bbeta_n}(1+\|\dY\|_{\infty})^{\beta_0}\|X\|_{\alpha}^2+\|R^{Y}\|_{2\alpha},
\]
where $\widetilde{R}^Y_{s,t}:=Y_{s,t}-Y'_sX_{s,t}$ for all $0\leq s\leq t\leq T$. This completes the proof.
\end{proof}
In the next theorem, we prove the equivalence of linear and nonlinear rough integrals, provided that $(W,\W)$ is given in Proposition \ref{bwbx}.
\begin{theorem}\label{eqlnri}
Suppose that $f\in \cC^{3,\bbeta_2}_{loc}(V^2;V)$ and $\bX=(X,\X)\in \C^{\alpha}([0,T]; V )$. Let $(W,\W)$ be defined in Proposition \ref{bwbx}, and let $(Y,\dY)\in \sE_{W}^{2\alpha}$. Then by Lemma \ref{evcrp}, there exits $Y'=D_1f(X,\dY)$, such that $(Y, Y')\in \sD_X^{2\alpha}(V)$. In addition, the following equality holds for all $0\leq s\leq t\leq T$,
\begin{align}\label{eqlni}
\int_s^tW(dr,Y_r)=\int_s^tD_1f(X_r,Y_r)d\bX_r+\frac{1}{2}\int_s^tD_{11}f(X_r,Y_r)d\lgl X\rgl_r,
\end{align}
where the integral on the left hand side is the nonlinear rough integral in the sense of Theorem \ref{nri}, the first integral on the right hand side is the linear rough integral in the sense of Theorem \ref{tlri}, and the last integral is Young's integral.
\end{theorem}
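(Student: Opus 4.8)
The plan is to compare the one-step ``germs'' of the two sides of \eqref{eqlni} and then conclude by the sewing lemma. Recall from Theorem \ref{nri} and \eqref{dnri} that the left-hand side of \eqref{eqlni} is $\cJ_{s,t}(\Xi)$, the sewing of
\[
\Xi_{s,t}=W_{s,t}(Y_s)+\W_{s,t}(\dY_s,Y_s)=\big[f(X_t,Y_s)-f(X_s,Y_s)\big]+\W_{s,t}(\dY_s,Y_s),
\]
with $\W$ given by \eqref{dw}--\eqref{dw1}. Likewise the right-hand side of \eqref{eqlni} is the sewing of
\[
\Theta_{s,t}=D_1f(X_s,Y_s)(X_{s,t})+P'_s(\X_{s,t})+\tfrac12 D_{11}f(X_s,Y_s)(\lgl X\rgl_{s,t}),
\]
where the first two terms form the compensated germ of the linear rough integral (Theorem \ref{tlri}) and the last is the Young germ of $\tfrac12\int D_{11}f(X_r,Y_r)d\lgl X\rgl_r$. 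Since $3\alpha>1$ and both $\delta\Xi$ and $\delta\Theta$ are $O(|t-s|^{3\alpha})$ (by the proof of Theorem \ref{nri}, resp.\ by Lemma \ref{sew} applied to each piece of $\Theta$), it suffices to prove the pointwise bound $\|\Xi_{s,t}-\Theta_{s,t}\|_V\leq C|t-s|^{3\alpha}$: then for every partition $\pi$ of $[s,t]$ one has $\|\sum_k(\Xi-\Theta)_{t_{k-1},t_k}\|_V\leq C|\pi|^{3\alpha-1}(t-s)\to 0$, and the two sewings, being limits of these Riemann sums, agree.

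To expand $\Xi_{s,t}$ I would first Taylor-expand $f(X_t,Y_s)-f(X_s,Y_s)$ to second order in $X_{s,t}$, with remainder $O(|t-s|^{3\alpha})$ since $f\in\cC^3_{loc}$, $X$ takes values in the compact hull $K=\overline{\mathrm{conv}}\{X_t:t\in[0,T]\}$, and $Y$ is bounded. For $\W_{s,t}(\dY_s,Y_s)$ I would reuse the computation in the proof of Proposition \ref{bwbx}: the linear rough integral in \eqref{dw1} equals its compensated germ $Y_s(X_{s,t})+Y'_s(\X_{s,t})$ up to $O(|t-s|^{3\alpha})$ (with $Y_r=D_{21}f(X_r,Y_s)(f(X_r,\dY_s))$ and $Y'_r$ its Gubinelli derivative), the Young integral contributes $\tfrac12 D_{211}f(X_s,Y_s)(f(X_s,\dY_s))(\lgl X\rgl_{s,t})$ up to $O(|t-s|^{3\alpha})$, and $g_{s,t}(Y_s)(f(X_s,\dY_s))$ is Taylor-expanded. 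Substituting $\lgl X\rgl_{s,t}=X_{s,t}\otimes X_{s,t}-2\X_{s,t}$, the terms carrying $D_{211}f$ and the second-order tensor $X_{s,t}\otimes X_{s,t}$ cancel, leaving
\[
\W_{s,t}(\dY_s,Y_s)=D_{21}f(X_s,Y_s)\,D_1f(X_s,\dY_s)(\X_{s,t})+O(|t-s|^{3\alpha}),
\]
so that $\Xi_{s,t}=D_1f(X_s,Y_s)(X_{s,t})+\tfrac12 D_{11}f(X_s,Y_s)(X_{s,t}\otimes X_{s,t})+D_{21}f(X_s,Y_s)D_1f(X_s,\dY_s)(\X_{s,t})+O(|t-s|^{3\alpha})$.

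For the right-hand side, Lemma \ref{evcrp} gives $(Y,Y')\in\sD_X^{2\alpha}(V)$ with $Y'=D_1f(X_\cdot,\dY_\cdot)$; since $X\in\sD_X^{2\alpha}(V)$ with derivative the identity, the pair $(X,Y)$ is controlled by $X$, and Lemma \ref{cmpcr} applied to $D_1f$ yields $(P,P')\in\sD_X^{2\alpha}(\cL(V;V))$ with $P_t=D_1f(X_t,Y_t)$ and $P'_t=D_{11}f(X_t,Y_t)+D_{12}f(X_t,Y_t)D_1f(X_t,\dY_t)$. Plugging this into $\Theta_{s,t}$ and again using $\lgl X\rgl_{s,t}=X_{s,t}\otimes X_{s,t}-2\X_{s,t}$, the $D_{11}f(X_s,Y_s)(\X_{s,t})$ contributions cancel and $\Theta_{s,t}=D_1f(X_s,Y_s)(X_{s,t})+\tfrac12 D_{11}f(X_s,Y_s)(X_{s,t}\otimes X_{s,t})+D_{12}f(X_s,Y_s)D_1f(X_s,\dY_s)(\X_{s,t})$. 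Comparing with the expansion of $\Xi_{s,t}$, the two germs agree once $D_{12}f$ and $D_{21}f$ are identified via the symmetry of the second Fréchet derivative, which is precisely the pointwise bound needed; by the first paragraph, the two sides of \eqref{eqlni} coincide.

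I expect the main obstacle to be entirely bookkeeping: tracking the numerous tensor slots of the third-order derivatives of $f$ through the cancellations inside $\W_{s,t}(\dY_s,Y_s)$ (in particular matching, slot by slot, the first-order part of the Taylor expansion of $g_{s,t}=D_2f(X_t,\cdot)-D_2f(X_s,\cdot)$ against the leading term $Y_s(X_{s,t})$ of the linear-rough germ, and confirming the symmetry identification $D_{21}f\leftrightarrow D_{12}f$), together with checking that each $O(|t-s|^{3\alpha})$ bound is uniform on $[0,T]$ using the compactness of $K$ and the boundedness of $Y,\dY$ (which in turn relies on \eqref{blri} and \eqref{sewct}).
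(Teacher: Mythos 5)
Your proposal is correct and follows essentially the same route as the paper: both arguments write down the compensated germs of the two sides, use $\lgl X\rgl_{s,t}=X_{s,t}\otimes X_{s,t}-2\X_{s,t}$ and Taylor expansion to reduce each germ to $D_1f(X_s,Y_s)(X_{s,t})+\tfrac12 D_{11}f(X_s,Y_s)(X_{s,t}\otimes X_{s,t})$ plus a cross-derivative term in $\X_{s,t}$, identify the two cross terms via the symmetry of the mixed second derivative, and conclude from the $O(|t-s|^{3\alpha})$ agreement of the germs that the two additive functionals coincide. No gaps; the bookkeeping you flag is exactly what the paper's proof carries out.
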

\begin{proof}
Let $\Xi$ and $\tXi$ be the approximation of left hand and right hand sides of (\ref{eqlnri}) respectively. That is
\begin{align}\label{xifw}
\Xi_{s,t}=&W_{s,t}(Y_s)+D_{21}f(X_s,Y_s)f(X_s,\dY_s)X_{s,t}+D_{211}f(X_s,Y_s)f(X_s,\dY_s)\X_{s,t}\nonumber\\
&+D_{21}f(X_s,Y_s)D_1f(X_s,\dY_s)\X_{s,t}+\frac{1}{2}D_{211}f(X_s,Y_s)f(X_s,\dY_s)\lgl X\rgl_{s,t}\nonumber\\
&-[D_2f(X_t,Y_s)f(X_s,\dY_s)-D_2f(X_s,Y_s)f(X_s,\dY_s)].
\end{align}
and
\begin{align*}
\tXi_{s,t}=&D_1f(X_s,Y_s)X_{s,t}+D_{11}f(X_s,Y_s)\X_{s,t}+D_{12}f(X_s,Y_s)D_1f(X_s,\dY_s)\X_{s,t}\nonumber\\
&+\frac{1}{2}D_{11}f(X_s,Y_s)\lgl X \rgl_{s,t},
\end{align*}
where
\[
D_{21}f(X_s,Y_s)D_1f(X_s,\dY_s)(z_1,z_2)=D_{21}f(X_s,Y_s)(D_1f(X_s,\dY_s)(z_2),z_1),
\]
and
\[
D_{12}f(X_s,\dY_s)D_1f(X_s,\dY_s)(z_1,z_2)=D_{12}f(X_s,Y_s)(z_1,D_1f(X_s,\dY_s)(z_2)).
\]
By Theorem \ref{tlri}, \ref{nri} and Proposition \ref{bwbx}, it is not hard to verify that 
\[
\|Z_{s,t}-\Xi_{s,t}\|_V+\|\tZ_{s,t}-\tXi_{s,t}\|_V=O(|t-s|^{3\alpha}),
\]
where $Z_{s,t}$ and $\tZ_{s,t}$ denotes the left and right hand side of (\ref{eqlnri}). On the other hand, note that by definition $\lgl X\rgl_{s,t}=X_{s,t}\otimes X_{s,t}-2\X_{s,t}$. Thus by Taylor's theorem, we can show that
\begin{align*}
\Xi_{s,t}=&D_1f(X_s,Y_s)X_{s,t}+\frac{1}{2}D_{11}f(X_s,Y_s)X_{s,t}\otimes X_{s,t}\\
&+D_{21}f(X_s,Y_s)D_1f(X_s,\dY_s)\X_{s,t}+O(|t-s|^{3\alpha}),
\end{align*}
and
\begin{align*}
\tXi_{s,t}=&D_1f(X_s,Y_s)X_{s,t}+\frac{1}{2}D_{11}f(X_s,Y_s)X_{s,t}\otimes X_{s,t}\\
&+D_{12}f(X_s,\dY_s)D_1f(X_s,\dY_s)\X_{s,t}+O(|t-s|^{3\alpha}).
\end{align*}
This yields that $Z_{s,t}=\tZ_{s,t}$ for all $0\leq s\leq t\leq T$.
\end{proof}

\subsection{Nonlinear rough paths as \texorpdfstring{$\cC^{{\bbeta}_n}(V;V)$-}\ valued (linear) rough paths}\label{s.5.2}
Let $V$ be a separable Banach space. In this section, we consider a nonlinear rough path defined in Section \ref{s.3} as a $\cC^{{\bbeta}_n}(V;V)$-valued rough path. Then, we reintroduce the controlled rough paths and nonlinear rough integral in the new sense. Finally, these two approaches to the nonlinear rough paths are proved to be equivalent.

We start this section by defining the space $\cC^{{\bbeta}_n}(V;V)$:
\begin{definition}
Let $\bbeta_n=(\beta_0,\dots,\beta_n)$ be a multi-index, where $\beta_k\geq 0$ for all $k\in\{0,1,\dots,n\}$. The space $\cC^{\bbeta_n}(V;V)$ is the collection of continuously differentiable functions on $V$ with values in $V$, equipped with the norm:
\[
\|\phi\|_{\bbeta_n}=\sum_{k=0}^n\sup_{x\in V}\frac{\|D^k\phi(x)\|_{\kL_k(V;V)}}{(1+\|x\|_V)^{\beta_k}}<\infty.
\]
Then $(\cC^{\bbeta_n}(V;V), \|\cdot\|_{\bbeta_n})$ is a separable Banach space.
\end{definition}

In the follow lemma, we show the equivalence of the space $\cC^{\alpha}([0,T]; \cC^{\bbeta_n}(V;V))$ and $\cC^{\alpha,\bbeta_n}([0,T]\times V;V)$ defined in Definition \ref{dnrf}.
\begin{lemma}\label{erfs}
		\begin{enumerate}[(i)]
			\item Let $\Phi\in \cC^{\alpha,\bbeta_n}([0,T]\times V;V)$ defined by (\ref{ccv}) with $\Phi_0\in \cC^{\bbeta_n}(V;V)$. Then, $\Phi\in \cC^{\alpha}([0,T];\cC^{\bbeta_n}(V;V))$.
			
			\item Conversely, if $\Phi\in \cC^{\alpha}([0,T];\cC^{\bbeta_n}(V;V))$, then $\Phi\in \cC^{\alpha,\bbeta_n}([0,T]\times V;V)$.
		\end{enumerate}
	\end{lemma}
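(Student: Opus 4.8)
The claim is that the two function spaces $\cC^{\alpha}([0,T];\cC^{\bbeta_n}(V;V))$ and $\cC^{\alpha,\bbeta_n}([0,T]\times V;V)$ coincide (as sets of functions, once one fixes a value at time $0$). The core of the matter is simply to unwind the two definitions and observe that the relevant seminorms are equal term-by-term. Recall that for $\Phi\in\cC^{\alpha,\bbeta_n}$ the seminorm is
\[
\|\Phi\|_{\alpha,\bbeta_n}=\sum_{k=0}^n\sup_{s\neq t,\,x}\frac{\|D^k\Phi_{s,t}(x)\|_{\kL_k(V;V)}}{|t-s|^{\alpha}(1+\|x\|_V)^{\beta_k}},
\]
whereas for $\Phi\in\cC^{\alpha}([0,T];\cC^{\bbeta_n}(V;V))$ the $\alpha$-H\"older seminorm is $\sup_{s\ne t}\|\Phi_{s,t}\|_{\cC^{\bbeta_n}(V;V)}/|t-s|^{\alpha}$ with $\|\Phi_{s,t}\|_{\cC^{\bbeta_n}(V;V)}=\sum_{k=0}^n\sup_x\|D^k\Phi_{s,t}(x)\|_{\kL_k(V;V)}/(1+\|x\|_V)^{\beta_k}$. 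The only genuine point to be careful about is the interchange of the two suprema (over $s\ne t$ and over $x$) together with the finite sum over $k$, and the fact that Fr\'echet differentiation in the space variable commutes with taking the time-increment $\Phi_{s,t}=\Phi_t-\Phi_s$, i.e. $D^k(\Phi_t-\Phi_s)=D^k\Phi_t-D^k\Phi_s$.

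\textbf{Step 1 (part (i)).} Assume $\Phi\in\cC^{\alpha,\bbeta_n}([0,T]\times V;V)$ with $\Phi_0\in\cC^{\bbeta_n}(V;V)$. First I would check that each $\Phi_t$ lies in $\cC^{\bbeta_n}(V;V)$: for any $t$, write $\Phi_t=\Phi_0+\Phi_{0,t}$, and estimate $\|D^k\Phi_t(x)\|\le\|D^k\Phi_0(x)\|+\|D^k\Phi_{0,t}(x)\|\le\|\Phi_0\|_{\bbeta_n}(1+\|x\|_V)^{\beta_k}+\|\Phi\|_{\alpha,\bbeta_n}|t|^{\alpha}(1+\|x\|_V)^{\beta_k}$, so dividing by $(1+\|x\|_V)^{\beta_k}$, taking $\sup_x$, summing over $k$ gives $\|\Phi_t\|_{\bbeta_n}\le\|\Phi_0\|_{\bbeta_n}+T^{\alpha}\|\Phi\|_{\alpha,\bbeta_n}<\infty$. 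Then for the H\"older bound: for $s\ne t$,
\[
\|\Phi_{s,t}\|_{\cC^{\bbeta_n}(V;V)}=\sum_{k=0}^n\sup_x\frac{\|D^k\Phi_{s,t}(x)\|_{\kL_k(V;V)}}{(1+\|x\|_V)^{\beta_k}}\le\sum_{k=0}^n\|\Phi\|_{\alpha,\bbeta_n}|t-s|^{\alpha}=(n+1)\text{-wait, no}
\]
— more precisely, each term is bounded by $\sup_{s'\ne t',x}\frac{\|D^k\Phi_{s',t'}(x)\|}{|t'-s'|^{\alpha}(1+\|x\|_V)^{\beta_k}}\,|t-s|^{\alpha}$, and summing these $k$-th suprema over $k$ gives exactly $\|\Phi\|_{\alpha,\bbeta_n}|t-s|^{\alpha}$. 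Dividing by $|t-s|^{\alpha}$ and taking $\sup_{s\ne t}$ shows $\|\Phi\|_{\cC^{\alpha}([0,T];\cC^{\bbeta_n}(V;V))}\le\|\Phi_0\|_{\bbeta_n}+\|\Phi\|_{\alpha,\bbeta_n}<\infty$, so $\Phi\in\cC^{\alpha}([0,T];\cC^{\bbeta_n}(V;V))$.

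\textbf{Step 2 (part (ii)).} Conversely, assume $\Phi\in\cC^{\alpha}([0,T];\cC^{\bbeta_n}(V;V))$. For each fixed $k$ and $x$, and any $s\ne t$,
\[
\frac{\|D^k\Phi_{s,t}(x)\|_{\kL_k(V;V)}}{|t-s|^{\alpha}(1+\|x\|_V)^{\beta_k}}\le\frac{\|\Phi_{s,t}\|_{\cC^{\bbeta_n}(V;V)}}{|t-s|^{\alpha}}\le\|\Phi\|_{\cC^{\alpha}([0,T];\cC^{\bbeta_n}(V;V))},
\]
since $\|D^k\Phi_{s,t}(x)\|/(1+\|x\|_V)^{\beta_k}$ is one of the (nonnegative) summands defining $\|\Phi_{s,t}\|_{\cC^{\bbeta_n}(V;V)}$. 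Taking $\sup_{s\ne t,x}$ for each $k$ and then summing over $k=0,\dots,n$, one gets $\|\Phi\|_{\alpha,\bbeta_n}\le(n+1)\|\Phi\|_{\cC^{\alpha}([0,T];\cC^{\bbeta_n}(V;V))}<\infty$; hence $\Phi\in\cC^{\alpha,\bbeta_n}([0,T]\times V;V)$. I would note in passing that the two seminorms are in fact comparable up to the factor $n+1$, and indeed one direction gives equality: $\|\Phi\|_{\cC^\alpha}\le\|\Phi_0\|_{\bbeta_n}+\|\Phi\|_{\alpha,\bbeta_n}$.

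\textbf{Main obstacle.} There is no serious analytic difficulty here; the statement is essentially a bookkeeping identity between sup-norms. The one subtlety worth a sentence in the write-up is justifying that $D^k$ (Fr\'echet derivative in $x$) commutes with the time-difference operator and, more importantly, that membership in $\cC^{\alpha}([0,T];\cC^{\bbeta_n}(V;V))$ — which a priori only asserts H\"older continuity of the \emph{Banach-space-valued} map $t\mapsto\Phi_t$ — really does encode the joint regularity claimed: this is immediate because $\cC^{\bbeta_n}(V;V)$ is by definition a space of functions with their derivatives, so an element of $\cC^\alpha([0,T];\cC^{\bbeta_n}(V;V))$ is literally a family $(\Phi_t)_t$ of such functions, and $D^k\Phi_{s,t}$ makes sense pointwise. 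Thus the proof reduces to the two displayed chains of inequalities above.
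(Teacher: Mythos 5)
Your proof is correct and follows essentially the same route as the paper: part (i) bounds $\|\Phi_t\|_{\bbeta_n}$ via $\Phi_0+\Phi_{0,t}$ and then shows $\|\Phi_{s,t}\|_{\bbeta_n}\leq\|\Phi\|_{\alpha,\bbeta_n}|t-s|^{\alpha}$, and part (ii) bounds each summand of $\|\Phi\|_{\alpha,\bbeta_n}$ by the Banach-space-valued H\"older seminorm. If anything you are slightly more careful than the paper about the interchange of the supremum over $(s,t)$ with the finite sum over $k$ (the paper writes an equality there where only the inequality needed for finiteness is literally justified), so no changes are needed.
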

	\begin{proof}
(i) Fixed $t\in[0,T]$, we can show that
\begin{align*}
\|\Phi_t\|_{\bbeta_n}\leq \|\Phi_0\|_{\bbeta_n}+\|\Phi_{0,t}\|_{\alpha,\bbeta_n}\leq \|\Phi_0\|_{\bbeta_n}+T^{\alpha}\|\Phi\|_{\alpha,\bbeta_n}<\infty.
\end{align*}
Similarly for any $0\leq s\leq t\leq T$, we have
\begin{align*}
\|\Phi_{s,t}\|_{\bbeta_n}\leq \|\Phi\|_{\alpha,\bbeta_n}|t-s|^{\alpha}.
\end{align*}
It follows that as a $\cC^{\bbeta_n}(V;V)$-valued function, $\|\Phi\|_{\alpha}\leq \|\Phi\|_{\alpha,\bbeta_n}<\infty$.

(ii) We estimate $\|\Phi\|_{\alpha,\bbeta_n}$ as follows:
\begin{align*}
\|\Phi\|_{\alpha,\bbeta_n}=\sum_{k=0}^n\sup_{\substack{s\neq t\in [0,T]\\x\in V}}\frac{\|D^k\Phi_{s,t}(x)\|_V}{|t-s|^{\alpha}(1+\|x\|_V)^{\beta_k}}= \sup_{s\neq t\in [0,T]}\frac{\|\Phi_{s,t}\|_{\bbeta_n}}{|t-s|^{\alpha}}\leq \|\Phi\|_{\alpha}.
\end{align*}
As a consequence, $\Phi\in \cC^{\alpha\bbeta_n}([0,T]\times V;V)$.
\end{proof}

Let $n\geq 1$, and let $(W,\cW)\in \C^{\alpha}([0,T]; \cC^{\bbeta_n}(V;V))$ be a $\cC^{\bbeta_n}(V;V)$ linear rough path in the sense of Definition \ref{dlrp}. We define $\W:[0,T]^2\times V^2\to V$ as follows:
\begin{align}\label{ddw}
\W_{s,t}(x,y):=\cD^{(2)}\cW_{s,t}(x,y),
\end{align}
 where $\cD^{(2)}:\cC^{\bbeta_n}(V;V)^{\otimes 2}\to \cC^{\bbeta_{n-1}^*,\bbeta_{n-1}^{**}}(V\times V;V)$ where $\bbeta_{n-1}^*$ and $\bbeta_{n-1}^{**}$ are defined in \eqref{beta12}, is given by
\[
\cD^{(2)}(\phi^1, \phi^2)(x,y):=D\phi^2(y)(\phi^1(x)),
\]
for all $(\phi^1,\phi^2)\in \cC^{\bbeta_n}(V;V)^2$ and $(x,y)\in V^2$. In the next proposition, we show that $(W,\W)\in \C^{\alpha,\bbeta_n}([0,T]\times V;V)$.

\begin{proposition}\label{eqb}
 	Let $\bW=(W,\cW)\in \C^{\alpha}([0,T];\cC^{\bbeta_n}(V;V))$, and let $\W:[0,T]^2\times V^2\to V$ by given by (\ref{ddw}). Then $(W,\W)\in \C^{\alpha,\bbeta_n}([0,T]\times V;V)$.
 \end{proposition}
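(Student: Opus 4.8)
The plan is to verify, one by one, the three defining properties of a nonlinear rough path in Definition \ref{nrp} for the pair $(W,\W)$, using the function-space equivalence of Lemma \ref{erfs} together with the universal property of the projective tensor norm. Property (i), namely $W\in\cC^{\alpha,\bbeta_n}([0,T]\times V;V)$, is immediate: by hypothesis $W\in\cC^\alpha([0,T];\cC^{\bbeta_n}(V;V))$, so Lemma \ref{erfs}(ii) applies directly.

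For property (ii) the central step is to show that $\cD^{(2)}$ is a \emph{bounded bilinear} map from $\cC^{\bbeta_n}(V;V)\times\cC^{\bbeta_n}(V;V)$ into the Banach space $\cC^{\bbeta^*_{n-1},\bbeta^{**}_{n-1}}(V\times V;V)$. Given $\phi^1,\phi^2\in\cC^{\bbeta_n}(V;V)$, one differentiates $(x,y)\mapsto D\phi^2(y)(\phi^1(x))$ up to order $k$ for $0\le k\le n-1$; viewing this as the evaluation pairing of $D\phi^2(y)\in\cL(V;V)$ against $\phi^1(x)\in V$, the Leibniz rule expresses each such derivative as a finite sum of terms of the form $D^{1+k-j}\phi^2(y)(\cdots)\,D^j\phi^1(x)(\cdots)$ with $0\le j\le k$. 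The factor $D^j\phi^1(x)$ contributes growth $(1+\|x\|_V)^{\beta_j}\le(1+\|x\|_V)^{\beta^*_k}$ since $j\le k$, while $D^{1+k-j}\phi^2(y)$ contributes $(1+\|y\|_V)^{\beta_{1+k-j}}\le(1+\|y\|_V)^{\beta^{**}_k}$ since $1\le 1+k-j\le k+1$; this is exactly why the indices $\bbeta^*_{n-1}$ and $\bbeta^{**}_{n-1}$ occur (and why $\phi^2\in\cC^{\bbeta_n}$, allowing $D^{k+1}\phi^2$ with $k+1\le n$, is the natural requirement). Summing over $k$ yields $\|\cD^{(2)}(\phi^1,\phi^2)\|_{\bbeta^*_{n-1},\bbeta^{**}_{n-1}}\le C\|\phi^1\|_{\bbeta_n}\|\phi^2\|_{\bbeta_n}$ for a constant $C=C(n)$. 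By the universal property of the projective tensor product, $\cD^{(2)}$ then extends to a bounded linear map on $\cC^{\bbeta_n}(V;V)^{\otimes2}$ of the same norm. Since $(W,\cW)$ is a linear rough path with values in $\cC^{\bbeta_n}(V;V)$, we have $\cW\in\cC^{2\alpha}_2([0,T]^2;\cC^{\bbeta_n}(V;V)^{\otimes2})$, hence
\[
\|\W_{s,t}\|_{\bbeta^*_{n-1},\bbeta^{**}_{n-1}}=\|\cD^{(2)}\cW_{s,t}\|_{\bbeta^*_{n-1},\bbeta^{**}_{n-1}}\le C\,\|\cW\|_{2\alpha}\,|t-s|^{2\alpha},
\]
so $\W\in\cC^{2\alpha,\bbeta^*_{n-1},\bbeta^{**}_{n-1}}_2([0,T]^2\times V^2;V)$ by the two-variable analogue of Lemma \ref{erfs}(ii).

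For property (iii), I would start from the linear Chen relation of $(W,\cW)$, namely $\cW_{s,t}-\cW_{s,u}-\cW_{u,t}=W_{s,u}\otimes W_{u,t}$ in $\cC^{\bbeta_n}(V;V)^{\otimes2}$, and apply the linear map $\cD^{(2)}$ to both sides, evaluating at an arbitrary $(x,y)\in V^2$. The left-hand side becomes $\W_{s,t}(x,y)-\W_{s,u}(x,y)-\W_{u,t}(x,y)$, while $\cD^{(2)}(W_{s,u}\otimes W_{u,t})(x,y)=DW_{u,t}(y)(W_{s,u}(x))$, which is precisely the nonlinear Chen relation \eqref{chen}.

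The only part that is not purely formal is the bookkeeping in the second step: one must check carefully, via Leibniz's rule, that the spatial growth of $\cD^{(2)}(\phi^1,\phi^2)$ and of each of its derivatives up to order $n-1$ is governed by exactly $\bbeta^*_{n-1}$ in the $x$-variable and $\bbeta^{**}_{n-1}$ in the $y$-variable, so that $\W$ lands in the space prescribed by Definition \ref{nrp}(ii). Everything else follows from Lemma \ref{erfs} and the standard functional-analytic facts invoked above.
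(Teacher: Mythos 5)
Your proposal is correct and follows essentially the same route as the paper: property (i) via Lemma \ref{erfs}, property (ii) via the mapping property of $\cD^{(2)}$, and Chen's relation by applying $\cD^{(2)}$ to the linear relation \eqref{lchen}. The only difference is that you spell out the boundedness of $\cD^{(2)}$ (Leibniz bookkeeping of the growth exponents $\bbeta^*_{n-1}$, $\bbeta^{**}_{n-1}$ and the extension through the projective tensor norm), which the paper simply asserts with ``and thus''; your bookkeeping is accurate.
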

\begin{proof}
According to Lemma \ref{erfs}, we know that $W\in \cC^{\alpha,\bbeta_n}([0,T]\times V;V)$ and thus $\W\in \cC^{\alpha,\bbeta^*_{n-1}, \bbeta^{**}_{n-1}}_2([0,T]^2\times V^2;V)$. It suffices to verify Chen's relation (\ref{chen}). Recall that $(W,\cW)\in \C^{\alpha}([0,T];\cC^{\bbeta_n}(V;V))$ satisfies Chen's relation (\ref{lchen}). It follows that
	\begin{align*}
	\W_{s,t}(x,y)-\W_{s,u}(x,y)&-\W_{u,t}(x,y)=\cD^{(2)}(\cW_{s,t}-\cW_{s,u}-\cW_{u,t})(x,y)\\
	=&\cD^{(2)}(W_{s,u}\otimes W_{u,t})(x,y)=DW_{u,t}(y)(W_{s,u}(x)).
	\end{align*}
As a consequence, $(W,\W)\in \C^{\alpha,\bbeta_n}([0,T]\times V;V)$.
\end{proof}
	
Let $\bW=(W,\cW)\in \C^{\alpha}([0,T];\cC^{\bbeta_n}(V;V))$. In the theory of linear rough paths, under the assumption that $Y\in \sD_W^{2\alpha}(\cL(\cC^{\bbeta_n}(V;V);V))$, the rough integral of $Y$ against $W$ is well-defined. The nonlinear rough integral defined in Section \ref{s.3} can be also interpreted as this type of linear rough integral. In this case, the controlled rough path $Y$ belongs to a proper subset of $\sD_W^{2\alpha}(\cL(\cC^{\bbeta_n}(V;V);V))$, that is equivalent to $\sE_W^{2\alpha}$ in the sense of Definition \ref{crpd}. To describe this subset, we introduce the following special class of operators in $\cL(\cC^{\bbeta_n}(V;V);V)$. For any $x\in V$, let $\hx:\cC^{\bbeta_n}(V;V)\to V$ given by 
	\begin{align}\label{hx}
	\hx(\phi):=\phi(x).
	\end{align}
Then $\hx\in \cL(\cC^{\bbeta_n}(V;V); V)$ with operator norm bounded by $(1+\|x\|_V)^{\beta_0}$. Let $n\geq 1$ and let $W\in \cC^{\alpha}([0,T];\cC^{\bbeta_n}(V;V))$. We introduce the following space of basic controlled rough paths of a $\cC^{\bbeta_n}(V;V)$-valued rough path.
	\begin{definition}\label{lbcrp}
 A pair of functions $(\cY,\cY')\in\sD_W^{2\alpha}(\cL(\cC^{\bbeta_n}(V;V);V))$ is called a basic controlled rough path of $W$, if there exists a pair of functions $(Y, \dY)\in \cC^{\alpha}(V;V)\times \cC^{\alpha}(V;V)$, such that for all $t\in[0,T]$, $\cY_t=\hY_t$ and 
\begin{align}\label{gnd}
\cY_t'(\phi_1,\phi_2)=\hY_t'(\phi_1,\phi_2):=D\phi_2(Y_s)(\phi_1(\dY_t)).
\end{align}
We write $\tsE_W^{2\alpha}$ for the collection of such pairs.
	\end{definition}
The next proposition provides the equivalence of the space $\tsE_W^{2\alpha}$ and $\sE_W^{2\alpha}$.
\begin{proposition}\label{eqvcrp}
Let $n\geq 1$ and $W\in \cC^{\alpha}([0,T];\cC^{\bbeta_n}(V;V))$. Then by Lemma \ref{erfs}, $W\in \cC^{\alpha,\bbeta_n}([0,T]\times V;V)$ as well. In addition, the following properties hold:
\begin{enumerate}[(i)]
\item Let $(Y, \dY)\in \sE_W^{2\alpha}$ in the sense of Definition \ref{crp}. Then, $(\hY,\hY')\in \tsE_W^{2\alpha}$ in the sense of Definition \ref{lbcrp}, where $\hY_t$ and $\hY'_t$ are given by \eqref{hx} and (\ref{gnd}) respectively for all $t\in [0,T]$.

\item Conversely, let $(\hY,\hY')\in \tsE_W^{2\alpha}$ with associated pair $(Y,\dY)\in \cC^{\alpha}(V;V)^2$. Then $(Y,\dY)\in \sE_W^{2\alpha}$.
	\end{enumerate}
\end{proposition}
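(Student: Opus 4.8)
The plan is to prove both inclusions by Taylor expansion, using repeatedly that the evaluation map $\hx(\phi)=\phi(x)$ and its increments are controlled \emph{quantitatively} by $\|\phi\|_{\bbeta_n}$. Throughout, write $\mathcal{K}=\cC^{\bbeta_n}(V;V)$, so that $W$ is a $\mathcal{K}$-valued rough path, $\hY_t\in\cL(\mathcal{K};V)$ is the evaluation $\phi\mapsto\phi(Y_t)$, and $\hY'_t\in\cL(\mathcal{K};\cL(\mathcal{K};V))\cong\cL(\mathcal{K}^{\otimes2};V)$ is paired as $\hY'_t(\phi_1,\phi_2)=D\phi_2(Y_t)(\phi_1(\dY_t))$, cf.\ \eqref{gnd}. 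I also use $\sR^\phi(x,y)=\phi(y)-\phi(x)-D\phi(x)(y-x)$, the time-independent version of \eqref{crphi}.

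\emph{Part (i).} Starting from $(Y,\dY)\in\sE_W^{2\alpha}$, I first note the operator-norm bounds $\|\hY_{s,t}\|_{\cL(\mathcal{K};V)}\le(1+\|Y\|_\infty)^{\beta_1}\|Y\|_\alpha|t-s|^\alpha$ and $\|\hY'_{s,t}\|\le C|t-s|^\alpha$, obtained by feeding $\phi(Y_t)-\phi(Y_s)$ (mean value theorem) and $D\phi_2(Y_t)(\phi_1(\dY_t))-D\phi_2(Y_s)(\phi_1(\dY_s))$ (split the increment in $Y$ from that in $\dY$ and bound $D\phi_2,D^2\phi_2,D\phi_1$ over the compact ranges of $Y,\dY$) into the defining seminorm $\|\cdot\|_{\bbeta_n}$; thus $\hY,\hY'\in\cC^\alpha$. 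The core computation is the remainder identity: writing $R^{\hY}_{s,t}:=\hY_{s,t}-\hY'_s(W_{s,t})$, evaluating at $\phi\in\mathcal{K}$, using $\hY'_s(W_{s,t})(\phi)=D\phi(Y_s)(W_{s,t}(\dY_s))$ and the controlled-path relation $W_{s,t}(\dY_s)=Y_{s,t}-R^Y_{s,t}$, we obtain
\[
R^{\hY}_{s,t}(\phi)=\big[\phi(Y_t)-\phi(Y_s)-D\phi(Y_s)(Y_{s,t})\big]+D\phi(Y_s)(R^Y_{s,t})=\sR^{\phi}(Y_s,Y_t)+D\phi(Y_s)(R^Y_{s,t}).
\]
Bounding the first bracket by $\tfrac12\|\phi\|_{\bbeta_n}(1+2\|Y\|_\infty)^{\beta_2}\|Y\|_\alpha^2|t-s|^{2\alpha}$ (Taylor's theorem, the analogue of \eqref{crsd1}) and the second by $\|\phi\|_{\bbeta_n}(1+\|Y\|_\infty)^{\beta_1}\|R^Y\|_{2\alpha}|t-s|^{2\alpha}$, and taking $\sup_{\|\phi\|_{\bbeta_n}\le1}$, gives $R^{\hY}\in\cC^{2\alpha}_2([0,T]^2;\cL(\mathcal{K};V))$. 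Hence $(\hY,\hY')\in\sD_W^{2\alpha}(\cL(\mathcal{K};V))$; since it is by construction of the form required by Definition \ref{lbcrp} with the pair $(Y,\dY)$, we conclude $(\hY,\hY')\in\tsE_W^{2\alpha}$.

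\emph{Part (ii).} Conversely, let $(\hY,\hY')\in\tsE_W^{2\alpha}$ with associated pair $(Y,\dY)$, which is $\alpha$-H\"older by hypothesis; the only thing to check is $R^Y_{s,t}:=Y_{s,t}-W_{s,t}(\dY_s)\in\cC^{2\alpha}_2([0,T]^2;V)$. Evaluated at $\phi\in\mathcal{K}$, the controlled-path relation for $\hY$ reads $\phi(Y_t)-\phi(Y_s)=D\phi(Y_s)(W_{s,t}(\dY_s))+R^{\hY}_{s,t}(\phi)$. I test it against the family $\phi^{\ell,v}(x)=\theta(\ell(x))\,v$, where $\ell\in V^*$, $\|\ell\|\le1$, $v\in V$, $\|v\|_V\le1$, and $\theta\in C^\infty(\R)$ is a fixed cutoff with $\theta(u)=u$ for $|u|\le\|Y\|_\infty+1$ and all of $\theta,\theta',\theta'',\dots$ bounded; such $\phi^{\ell,v}$ lies in $\mathcal{K}$ with $\|\phi^{\ell,v}\|_{\bbeta_n}\le C_\theta$ uniformly in $\ell,v$. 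Since $\ell(Y_s),\ell(Y_t)\in[-\|Y\|_\infty,\|Y\|_\infty]$ we get $\phi^{\ell,v}(Y_t)-\phi^{\ell,v}(Y_s)=\ell(Y_{s,t})\,v$ and $D\phi^{\ell,v}(Y_s)=\ell(\cdot)\,v$, so the relation collapses to $\ell(R^Y_{s,t})\,v=R^{\hY}_{s,t}(\phi^{\ell,v})$; hence $|\ell(R^Y_{s,t})|\le C_\theta\|R^{\hY}\|_{2\alpha}|t-s|^{2\alpha}$, and taking $\sup_{\|\ell\|\le1}$ together with Hahn--Banach yields $\|R^Y_{s,t}\|_V\le C_\theta\|R^{\hY}\|_{2\alpha}|t-s|^{2\alpha}$. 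Thus $R^Y\in\cC^{2\alpha}_2$ and $(Y,\dY)\in\sE_W^{2\alpha}$.

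The main obstacle is essentially bookkeeping: tracking which slot of $\hY'_s\in\cL(\mathcal{K};\cL(\mathcal{K};V))$ receives the increment $W_{s,t}$ and which receives the test function $\phi$, and checking that every constant surviving the supremum over $\|\phi\|_{\bbeta_n}\le1$ is finite --- which is exactly where the growth exponents $\beta_0,\beta_1,\beta_2$ and the compactness of the ranges of $Y$ and $\dY$ enter. The one genuinely non-mechanical point is the choice of test functions in Part (ii): since the identity map of $V$ need not belong to $\mathcal{K}$, one cannot simply ``plug in $\phi=\mathrm{id}$'', and the reduction to scalar estimates via $\phi^{\ell,v}$ followed by Hahn--Banach is what makes the converse work over an arbitrary separable Banach space.
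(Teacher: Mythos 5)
Your Part (i) is correct and follows essentially the same route as the paper: the operator-norm bounds on $\hY_{s,t}$ and $\hY'_{s,t}$, and the decomposition $R^{\hY}_{s,t}(\phi)=\sR^{\phi}(Y_s,Y_t)+D\phi(Y_s)(R^Y_{s,t})$ with the Taylor bound in $\beta_2$ and the growth bound in $\beta_1$, are exactly the estimates in the paper's proof. (Both arguments, yours and the paper's, implicitly use $D^2\phi$ and hence really need $n\geq 2$ for this step, even though the statement allows $n\geq 1$; this is inherited from the paper, not a defect of your write-up.) Part (ii) is where you genuinely diverge, and your route is the more robust one. The paper tests the controlled-path relation against a single $V$-valued function $\phi$ that equals the identity on the closed convex hull $K$ of the range of $Y$ and is constant outside a compact set $\widetilde K$ with $K\subset\mathrm{int}\,\widetilde K$; in an infinite-dimensional Banach space a compact set has empty interior, and smooth bump functions need not exist at all, so that construction only makes literal sense for $V=\R^d$ or under extra smoothness hypotheses on $V$. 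Your family $\phi^{\ell,v}(x)=\theta(\ell(x))\,v$ needs only a one-dimensional cutoff $\theta$, lies in $\cC^{\bbeta_n}(V;V)$ with norm bounded uniformly in $\ell,v$, collapses the relation to the scalar identity $\ell(R^Y_{s,t})\,v=R^{\hY}_{s,t}(\phi^{\ell,v})$, and recovers $\|R^Y_{s,t}\|_V$ by Hahn--Banach. What the paper's choice buys is brevity (one test function, one line); what yours buys is validity over an arbitrary separable Banach space, at the cost of the extra supremum over $\ell$ and the duality step. Both give the same conclusion $\|R^Y\|_{2\alpha}\lesssim\|R^{\hY}\|_{2\alpha}$, so the proposal stands.
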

\begin{proof}
(i) By assumption $Y\in \cC^{\alpha}([0,T];V)$. It follows that
	\begin{align*}
\|\hY\|_{\alpha}=&\sup_{s\neq t\in [0,T]}\frac{\|\hY_{s,t}\|_{\cL(\cC^{\bbeta_n}(V;V); V)}}{|t-s|^{\alpha}}=\sup_{s\neq t\in [0,T]}\sup_{0\neq \phi\in \cC^{\bbeta_n}(V;V)}\frac{\|\phi(Y_t)-\phi(Y_s)\|_V}{|t-s|^{\alpha}\|\phi\|_{\bbeta_n}}\\
\leq & (1+\|Y\|_{\infty})^{\beta_1}\|Y\|_{\alpha}.
	\end{align*}
This implies that $\hY\in \cC^{\alpha}([0,T]; \cL(\cC^{\bbeta_n}(V;V); V))$. Similarly, since $\dY\in \cC^{\alpha}([0,T];V)$, we can deduce the following inequality:
	\begin{align*}
\|\hY'\|_{\alpha}\leq (1+\|Y\|_{\infty})^{\beta_2}(1+\|\dY\|_{\infty})^{\beta_0}\|Y\|_{\alpha}+(1+\|Y\|_{\infty})^{\beta_1}(1+\|\dY\|_{\infty})^{\beta_1}\|\dY\|_{\alpha}.
	\end{align*}
It suffices to estimate the reminder term. For any $\phi\in \cC^{\bbeta_n}(V;V)$, the remainder $R^{\hY}_{s,t}(\phi)$ is given by
\[
R^{\hY}_{s,t}(\phi)=\phi(Y_t)-\phi(Y_s)-D\phi(Y_s)W_{s,t}(\dY_s).
\]
Due to the fact that $(Y, \dY)\in \sE_W^{2\alpha}$, we have
\begin{align*}
\|R^{\hY}_{s,t}(\phi)\|_{\alpha}\leq \|\phi\|_{\bbeta_n}\big[(1+\|Y\|_{\infty})^{\beta_2}\|Y\|_{\alpha}^2+(1+\|Y\|_{\infty})^{\beta_1}\|R^Y\|_{2\alpha}\big]|t-s|^{2\alpha}.
\end{align*}
This implies $R^{\hY}\in \cC^{2\alpha}_2([0,T];\cL(\cC^{\bbeta_n}(V;V);V))$. As a consequence, we conclude that $(\hY, \hY')\in \tsE_W^{2\alpha}$.

(ii) To prove the converse result, it suffice the show that $R^Y\in \cC^{2\alpha}_2([0,T];V)$, where
\[
R^Y_{s,t}:=Y_{s,t}-W_{s,t}(\dY_s).
\]
Let $K$ be the closed convex hull of the set $\{Y_t, t\in[0,T]\}$, and let $\widetilde{K}$ is a compact set in $V$ whose interior contains $K$. Choose a function $\phi: V\to V$ that is infinitely differentiable and satisfies the following properties:
\begin{enumerate}[a)]
\item $\phi(x)=x$ for all $x\in K$, that implies $D\phi(x)=I$ and $D^2\phi(x)=0$ for all $x\in K$, where $I$ denotes the identity operator in $\cL(V;V)$.
\item $\phi(x)\equiv x_0\in V$ for all $x\notin \widetilde{K}$.
\item $\phi$ itself and all the derivatives of $\phi$ are bounded.
\end{enumerate}
Then, it is easy to check that $\phi\in \cC^{\bbeta_n}(V;V)$ for any multi-index $\bbeta_n$. In addition, we can show that
\begin{align*}
\|R^Y_{s,t}\|_V=\|\phi(Y_t)-\phi(Y_s)-D\phi (Y_s)[W_{s,t}(\phi_s)]\|_V=\|R^{\hY}_{s,t}(\phi)\|_V\leq \|R^{\hY}\|_{2\alpha}\|\phi\|_{\beta_n}|t-s|^{2\alpha}.
\end{align*}
In other words, $R^Y\in \cC^{2\alpha}([0,T]; V)$, and thus $(Y,\dY)\in \sE_W^{2\alpha}$.
\end{proof}

In the next theorem, we will show the equivalence of two rough integrals. 
 \begin{theorem}
 	Let $\bW=(W,\cW)\in \C^{\alpha}([0,T]; \cC^{\bbeta_2}(V;V))$. Due to Proposition \ref{eqb}, we can construct $(W,\W)\in \C^{\alpha,\bbeta_2}([0,T]\times V;V)$. Assume that $(\hY,\hY')\in \tsE_W^{2\alpha}$ with associated pair $(Y,\dY)\in \sE_W^{2\alpha}$ by Proposition \ref{eqvcrp}. Then, the following two rough integrals coincide, 
 		\begin{align}\label{equ}
 		\int_s^tW(dr,Y_r)=\int_s^t \hY_rd\bW_r,
 		\end{align}
 		where the integral on the left hand side is in the sense of \eqref{dnri}, and the integral on the right side is in the sense of Theorem \ref{tlri}.
 \end{theorem}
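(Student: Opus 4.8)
The plan is to notice that both integrals appearing in \eqref{equ} are produced by the sewing lemma (Lemma~\ref{sew}) applied to the \emph{same} germ, so that they must coincide. By \eqref{dnri} the left-hand side equals $\cJ_{s,t}(\Xi)$ with
\[
\Xi_{s,t}=W_{s,t}(Y_s)+\W_{s,t}(\dY_s,Y_s),
\]
and $(W,\W)\in\C^{\alpha,\bbeta_2}([0,T]\times V;V)$ by Proposition~\ref{eqb}. On the other hand, since $(Y,\dY)\in\sE_W^{2\alpha}$ corresponds by Proposition~\ref{eqvcrp} to $(\hY,\hY')\in\tsE_W^{2\alpha}\subset\sD_W^{2\alpha}(\cL(\cC^{\bbeta_2}(V;V);V))$ and $(W,\cW)\in\C^{\alpha}([0,T];\cC^{\bbeta_2}(V;V))$, Theorem~\ref{tlri} applies and shows that the right-hand side equals $\cJ_{s,t}(\tXi)$ with
\[
\tXi_{s,t}=\hY_s(W_{s,t})+\hY'_s(\cW_{s,t}).
\]
Both $\cJ(\Xi)$ and $\cJ(\tXi)$ are obtained as the $|\pi|\to0$ limit of $\sum_k(\cdot)_{t_{k-1},t_k}$, so by the uniqueness part of Lemma~\ref{sew} it suffices to check that $\Xi_{s,t}=\tXi_{s,t}$ for all $0\le s\le t\le T$. (An $O(|t-s|^{3\alpha})$ discrepancy would already suffice, but in fact the two germs coincide identically.)

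The first summands agree directly from the definition \eqref{hx} of $\hx$: $\hY_s(W_{s,t})=W_{s,t}(Y_s)$. For the second summands one must unwind the tensor-product identifications. After the identification $\cL(\cC^{\bbeta_2}(V;V);\cL(\cC^{\bbeta_2}(V;V);V))\cong\cL(\cC^{\bbeta_2}(V;V)^{\otimes2};V)$ used in Theorem~\ref{tlri}, formula \eqref{gnd} gives, on simple tensors,
\[
\hY'_s(\phi_1\otimes\phi_2)=D\phi_2(Y_s)\big(\phi_1(\dY_s)\big).
\]
By \eqref{ddw} one has $\W_{s,t}(\dY_s,Y_s)=\big[\cD^{(2)}\cW_{s,t}\big](\dY_s,Y_s)$, and the map $\psi\mapsto\big[\cD^{(2)}\psi\big](\dY_s,Y_s)$ is a bounded linear functional on $\cC^{\bbeta_2}(V;V)^{\otimes2}$, being the composition of the bounded operator $\cD^{(2)}$ with evaluation at the fixed point $(\dY_s,Y_s)\in V^2$ (which is bounded on $\cC^{\bbeta^*_1,\bbeta^{**}_1}(V^2;V)$, since the $\cC^{\bbeta}$-norm dominates pointwise values up to the factor $(1+\|\cdot\|_V)^{\beta}$). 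By the defining formula for $\cD^{(2)}$ this functional sends $\phi_1\otimes\phi_2$ to $D\phi_2(Y_s)(\phi_1(\dY_s))$, i.e.\ it agrees with $\hY'_s$ on simple tensors; since finite sums of simple tensors are dense in the projective tensor product and both functionals are continuous, they coincide. Evaluating at $\psi=\cW_{s,t}$ yields $\hY'_s(\cW_{s,t})=\W_{s,t}(\dY_s,Y_s)$, hence $\Xi_{s,t}=\tXi_{s,t}$, and Lemma~\ref{sew} gives \eqref{equ}.

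I expect the only genuinely delicate point to be the bookkeeping in the middle step: one must verify that $\hY'_s$ really is determined by its action on elementary tensors under the chosen identification, and that $\cD^{(2)}$ extends continuously to the completed projective tensor product, so that the equality $\hY'_s(\psi)=[\cD^{(2)}\psi](\dY_s,Y_s)$ is legitimate at $\psi=\cW_{s,t}$ and not merely on the algebraic tensor product. Once that is in place, everything reduces to a direct substitution and an appeal to the uniqueness in the sewing lemma; no new estimates are needed beyond those already established for $\Xi$ in Theorem~\ref{nri} and for $\tXi$ in Theorem~\ref{tlri}.
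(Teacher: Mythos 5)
Your proof is correct and follows essentially the same route as the paper: both compare the germ $\Xi_{s,t}=W_{s,t}(Y_s)+\W_{s,t}(\dY_s,Y_s)$ with $\tXi_{s,t}=\hY_s(W_{s,t})+\hY_s'(\cW_{s,t})$, observe via \eqref{hx}, \eqref{gnd} and \eqref{ddw} that they coincide identically, and conclude from the (uniqueness in the) sewing lemma. Your extra care about extending $\hY'_s$ from simple tensors to the completed projective tensor product is a point the paper's one-line verification glosses over, but it is not a different argument.
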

\begin{proof}
Let $\Xi_{s,t}$, $\tXi_{s,t}$ be the approximation of the integral on the left and right hand side respectively. That is,
\begin{align*}
\Xi_{s,t}=W_{s,t}(Y_s)+\W_{s,t}(\dY_s,Y_s),\ \mathrm{and}\ \tXi_{s,t}=\hY_sW_{s,t}+\hY_s'\cW_{s,t}.
\end{align*}
By definition of $\W$ and $(\hY,\hY')$, we have
\[
\hY_sW_{s,t}+\hY_s'\cW_{s,t}=W_{s,t}(Y_s)-\cD^{(2)}\cW_{s,t}(\dY_s, Y_s)=W_{s,t}(Y_s)+\W_{s,t}(\dY_s,Y_s).
\]
This implies the equality (\ref{equ}).
\end{proof}

At the end of this section, we provide an alternative approach to study the nonlinear RDE introduced in Section \ref{s.4}. Let $\bW=(W, \cW)\in \C^{\alpha}([0,T];\cC^{\bbeta_3}(V;V))$. Then, the RDE (\ref{rde}) can be also understood as the following equation:
\begin{align}\label{rdealt}
Y_t=\int_0^t\delta(Y_r)d\bW_r,
\end{align}
where $\delta$ denotes the Dirac delta operator, that is $\delta: V\to \cL(\cC^{\bbeta_3}(V;V);V)$ be given by $\delta(x)=\hx$. A function $Y\in \cC^{\alpha}([0,T]; V)$ is said to be a solution to (\ref{rdealt}), if $(Y,\delta(Y))\in \sD_W^{2\alpha}(V)$ and the equality holds. On the other hand, suppose that $Y$ is a solution to (\ref{rdealt}). Then, $(\hY,\hY')\in \tsE_W^{2\alpha}$ with associated pair $(Y,Y)\in \sE_W^{2\alpha}$. Therefore, $Y$ is a solution to the equation (\ref{rde}) in the sense of Definition \ref{dsol}.

On the other hand, notice that as an $\cL(\cC^{\bbeta_3}(V;V);V)$-valued operator, $\delta$ is third times differentiable. More precisely, the derivatives of $\delta$ can be written as follows $D^k\delta(x)(\phi)=D^k\phi(x)$ for $k=1,2,3$. Thus $\|D^{k}\delta(x)\|\leq (1+\|x\|_V)^{\beta_k}$ for all $k=0,1,2,3$. Then Theorem \ref{globn} is a simple variant of Theorem 4.1 of Besal\'{u} and Nualart \cite{sd-11-besalu-nualart}. For other conditions for global existence, we refer the reader to the papers of Lejay \cite{ejp-09-lejay,sdp-12-lejay}.

\subsection{An It\^{o} type formula for controlled rough paths}\label{s.5.3}
In this section, we follow the idea of Section \ref{s.5.2} to consider the nonlinear rough path as a $\cC^{\bbeta_n}(V;V)$-valued rough path. Then, we aim to generalize an It\^{o} type formula (see (3.12) in Hu and L\^{e} \cite{tams-17-hu-le} for the nonlinear Young's case).
\begin{theorem}\label{tito}
	Let $\bW=(W,\cW)\in \cC^{\alpha}([0,T]; \cC^{\bbeta_3}(V;V))$. Assume that $(Y,Y')\in \sD_W^{2\alpha}(V)$ and $(Z,Z')\in\sD_W^{2\alpha}(\cL(V;K))$. Then, the following It\^{o} type formula holds:
	\begin{align}\label{nito}
	\int_s^t Z_r &dW(r, Y_r)=\int_s^t Z_rW(dr, Y_r)+\int_s^tZ_r DW(r,Y_r)dY_r\nonumber\\
	&+\frac{1}{2}\Big[\int_s^t Z_r D^2W(r,Y_r)d\lgl Y\rgl_r +\int_s^r Z_r d \llg X, Y\rrg_r+\int_s^t Z_rd \llg Y,X\rrg_r\Big],
	\end{align}
	where 
	\begin{align}\label{dwnri}
	X_t:=\int_0^tDW(dr, Y_r)=\lim_{|\pi|\to 0} \big[DW_{t_{k-1},t_k}(Y_{t_{k-1}})+(\cD^{(2)})^2Y_{t_{k-1}}'\cW_{t_{k-1},t_k}\big],
	\end{align}
	\[
	(\cD^{(2)})^2Y_t'(\phi_1,\phi_2):=D^2\phi_2(Y_t'\phi_1)\in \cL(V;V),
	\]
	$\lgl Y\rgl$, $\llg X, Y\rrg$ and $\llg Y,X \rrg$ are $2\alpha$-continuous functions defined in Definition \ref{qdcp} and Remark \ref{rqdcp}, the integrals on the first line are rough integrals in the sense of Proposition \ref{lritcrp} (ii), and the integrals on the second line are Young's integral.
	\end{theorem}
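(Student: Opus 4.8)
The plan is to regard every path occurring in \eqref{nito} as a controlled rough path of the $\cC^{\bbeta_3}(V;V)$-valued rough path $\bW=(W,\cW)$, to expand the compensated Riemann sum of the left-hand side, and then to recognise it as the sum of the compensated Riemann sums of the six integrals on the right, up to a germ that the sewing lemma (Lemma \ref{sew}) annihilates. First I would record the controlled structure of the composite path $\Phi_r:=W_r(Y_r)$. A Taylor expansion in the spatial variable around $Y_s$, together with $Y_{s,t}=Y'_s(W_{s,t})+R^Y_{s,t}$, gives
\[
\Phi_{s,t}=W_{s,t}(Y_s)+DW_s(Y_s)(Y_{s,t})+DW_{s,t}(Y_s)\big(Y'_s(W_{s,t})\big)+\tfrac{1}{2}D^2W_s(Y_s)(Y_{s,t},Y_{s,t})+O(|t-s|^{3\alpha}),
\]
so that $(\Phi,\Phi')\in\sD_W^{2\alpha}(V)$ with $\Phi'_s=\hY_s+DW_s(Y_s)\circ Y'_s$, where $\hY_s$ is the evaluation operator of \eqref{hx}. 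The same computation applied to $DW(\cdot,Y)$ shows that $DW(\cdot,Y)\in\sD_W^{2\alpha}(\cL(V;V))$, and Proposition \ref{ricrp} (applied to the $\cC^{\bbeta_2}(V;\cL(V;V))$-valued rough path $DW$) shows that $X=\int_0^\cdot DW(dr,Y_r)$ is also controlled by $W$; in particular all six integrals in \eqref{nito} are well defined, the left-hand one being $\int_s^tZ_r\,d\Phi_r$ in the sense of Proposition \ref{lritcrp}.

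Next I would expand, along a partition $\pi=(s=t_0<\dots<t_n=t)$, the left-hand summand $Z_{t_{k-1}}(\Phi_{t_{k-1},t_k})+(Z'_{t_{k-1}}\Phi'_{t_{k-1}})(\cW_{t_{k-1},t_k})$ using the expansion of $\Phi_{s,t}$ and the formula for $\Phi'_s$, and compare it with the summands of the six right-hand integrals. Modulo terms of order $|t_k-t_{k-1}|^{3\alpha}$ the left summand splits into: an ``evaluation'' block $Z_{t_{k-1}}\big(W_{t_{k-1},t_k}(Y_{t_{k-1}})\big)$ plus its $\cW$-compensator — exactly the summand of $\int_s^tZ_r\,W(dr,Y_r)$; a ``chain-rule'' block $Z_{t_{k-1}}\big(DW_{t_{k-1}}(Y_{t_{k-1}})(Y_{t_{k-1},t_k})\big)$ plus its $\cW$-compensator — the summand of $\int_s^tZ_r\,DW(r,Y_r)\,dY_r$; and three genuinely second-order blocks produced by $DW_{t_{k-1},t_k}(Y_{t_{k-1}})\big(Y'_{t_{k-1}}W_{t_{k-1},t_k}\big)$ and $\tfrac{1}{2}D^2W_{t_{k-1}}(Y_{t_{k-1}})(Y_{t_{k-1},t_k},Y_{t_{k-1},t_k})$. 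For the second-order blocks I would use that $\lgl Y\rgl$, $\llg X,Y\rrg$ and $\llg Y,X\rrg$ are $2\alpha$-H\"older and, by Remark \ref{rqdcp}(ii), equal to (additive) Young integrals against $\lgl W\rgl_{s,t}=W_{s,t}\otimes W_{s,t}-2\cW_{s,t}$; their Young Riemann summands $Z_{t_{k-1}}D^2W(t_{k-1},Y_{t_{k-1}})\lgl Y\rgl_{t_{k-1},t_k}$, $Z_{t_{k-1}}\llg X,Y\rrg_{t_{k-1},t_k}$ and $Z_{t_{k-1}}\llg Y,X\rrg_{t_{k-1},t_k}$ then reproduce the three second-order blocks, the transposes on $V\otimes V$ (cf. the identity $D^2f(x)(\bz)=\bz+\bz^{*}$ for $f(x)=x\otimes x$ in Remark \ref{rqdcp}(i)) being precisely what forces both $\llg X,Y\rrg$ and $\llg Y,X\rrg$ to occur.

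Having matched the two families of summands up to germs that are $O(|t_k-t_{k-1}|^{3\alpha})$ with $3\alpha>1$, I would conclude with the sewing lemma: the left-hand side of \eqref{nito} is the sewing $\cJ_{s,t}$ of the germ $Z_s(\Phi_{s,t})+(Z'_s\Phi'_s)(\cW_{s,t})$, the right-hand side is the sewing of the sum of the six individual germs, and since these two germs differ by an $O(|t-s|^{3\alpha})$ term while the sewing map is linear and annihilates $O(|t-s|^{\gamma})$ germs with $\gamma>1$, the two sewings coincide; as both sides of \eqref{nito} vanish at $t=s$, they agree for all $0\le s\le t\le T$.

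The main obstacle is the third step: carrying out the exact bookkeeping that identifies which linear combination of $DW_{s,t}(Y_s)(Y'_sW_{s,t})$ and $\tfrac{1}{2}D^2W_s(Y_s)(Y_{s,t}^{\otimes2})$ — once $Y_{s,t}$ is replaced by $Y'_sW_{s,t}+R^Y_{s,t}$ and the defining expansion $X_{s,t}\approx DW_{s,t}(Y_s)+(\cD^{(2)})^2Y'_s\cW_{s,t}$ from \eqref{dwnri} is inserted — reproduces each of the three quadratic compensators, keeping track of all transposes on $V\otimes V$ and of the interaction of $\cW$ with the iterated structure of $X$. The remaining ingredients (Taylor's theorem, the estimates of Lemma \ref{crsd}, and the sewing lemma) are routine, if lengthy.
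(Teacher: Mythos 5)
Your proposal is correct and follows essentially the same route as the paper: establish that $W(\cdot,Y)$, $DW(\cdot,Y)$ and $X$ are controlled by the $\cC^{\bbeta_3}(V;V)$-valued rough path $\bW$ (the paper's Lemma \ref{lito1} gives exactly your $\Phi'=\hY+DW(Y)Y'$), expand the local germ of each of the six integrals to order $|t-s|^{3\alpha}$ (the paper's (\ref{itotp1})--(\ref{itotp5})), and conclude from $3\alpha>1$ via the sewing lemma. The bookkeeping you defer as the ``main obstacle'' is precisely the content of those displayed approximations in the paper, and it closes as you predict.
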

The formula (\ref{nito}) provides the differential of $W(t,Y_t)$. Comparing with the classic It\^{o} lemma, the function $W$ is only $\alpha$-H\"{o}lder in the time argument. In this case, the assumption that $Y$ is controlled by $W$ makes sure that $W(dt, Y_t)$ is well-defined as the differential of a controlled rough path of $W$.

In order to prove Theorem \ref{tito}, we should make each integral in (\ref{nito}) to be well-defined. The first lemma below shows that $F_t=W(t, Y_t)$ is controlled by $W$.
\begin{lemma}\label{lito1}
	Let $W\in \cC^{\alpha}([0,T];\cC^{\bbeta_2}(V;V))$, and let $(Y,Y')\in \sD_W^{2\alpha}(V)$. Denote by $F_t=W(t,Y_t)$. Then, $F\in \sD_W^{2\alpha}(V)$.
	\end{lemma}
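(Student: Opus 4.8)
The plan is to write down the Gubinelli derivative $F'$ and the remainder $R^F$ explicitly by a Taylor expansion, and then verify the two H\"older bounds demanded by the definition of $\sD_W^{2\alpha}(V)$. Writing $W_t:=W(t,\cdot)\in\cC^{\bbeta_2}(V;V)$, so that $F_t=W_t(Y_t)$, I would decompose the increment as
\begin{align*}
F_{s,t}=W_t(Y_t)-W_s(Y_s)=W_{s,t}(Y_s)+\big[W_{s,t}(Y_t)-W_{s,t}(Y_s)\big]+\big[W_s(Y_t)-W_s(Y_s)\big],
\end{align*}
then expand the last bracket to first order, $W_s(Y_t)-W_s(Y_s)=DW_s(Y_s)(Y_{s,t})+\frac{1}{2}D^2W_s(\xi_{s,t})(Y_{s,t},Y_{s,t})$, and finally substitute $Y_{s,t}=Y'_s(W_{s,t})+R^Y_{s,t}$. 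Collecting the terms proportional to $W_{s,t}$ yields the candidate
\begin{align*}
F'_s(\phi):=\phi(Y_s)+DW_s(Y_s)\big(Y'_s\phi\big),\qquad\phi\in\cC^{\bbeta_2}(V;V),
\end{align*}
together with $R^F_{s,t}=\big[W_{s,t}(Y_t)-W_{s,t}(Y_s)\big]+\frac{1}{2}D^2W_s(\xi_{s,t})(Y_{s,t},Y_{s,t})+DW_s(Y_s)(R^Y_{s,t})$, so that $F_{s,t}=F'_s(W_{s,t})+R^F_{s,t}$ holds identically by construction.

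Next I would check that $F'_s\in\cL(\cC^{\bbeta_2}(V;V);V)$ for every $s$ and that $s\mapsto F'_s$ is $\alpha$-H\"older. Boundedness is immediate: the evaluation $\phi\mapsto\phi(Y_s)$ has operator norm at most $(1+\|Y_s\|_V)^{\beta_0}$, $Y'_s$ is bounded by hypothesis, and $DW_s(Y_s)\in\cL(V;V)$ has norm at most $\|W_s\|_{\bbeta_2}(1+\|Y_s\|_V)^{\beta_1}$, all finite because $\|Y\|_\infty\le\|Y_0\|_V+T^\alpha\|Y\|_\alpha<\infty$ and $\sup_{s}\|W_s\|_{\bbeta_2}<\infty$ (cf. Lemma \ref{erfs}). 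For the H\"older regularity, $s\mapsto\phi(Y_s)$ is $\alpha$-H\"older since $Y$ is, and $s\mapsto DW_s(Y_s)$ is $\alpha$-H\"older because $DW_s(Y_s)-DW_t(Y_t)=-DW_{s,t}(Y_s)-\big(DW_t(Y_t)-DW_t(Y_s)\big)$, where the first term is $O(|t-s|^\alpha)$ since $W$ is $\alpha$-H\"older in time with one spatial derivative, and the second is $O(|t-s|^\alpha)$ because $Y$ is $\alpha$-H\"older and $D^2W_t$ is bounded (this is where $W\in\cC^{\bbeta_2}$ is used). Combining these with the $\alpha$-H\"older map $Y'$ via the usual product rule for H\"older functions gives $F'\in\cC^\alpha([0,T];\cL(\cC^{\bbeta_2}(V;V);V))$.

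It remains to show $R^F\in\cC^{2\alpha}_2([0,T]^2;V)$. The mean value theorem gives $W_{s,t}(Y_t)-W_{s,t}(Y_s)=DW_{s,t}(\eta_{s,t})(Y_{s,t})$, and since $\|DW_{s,t}(\eta_{s,t})\|_{\cL(V;V)}\le\|W\|_{\alpha,\bbeta_2}(1+\|\eta_{s,t}\|_V)^{\beta_1}|t-s|^\alpha$ and $\|Y_{s,t}\|_V\le\|Y\|_\alpha|t-s|^\alpha$, this is $O(|t-s|^{2\alpha})$; the second-order Taylor remainder is $O(|t-s|^{2\alpha})$ because $D^2W_s$ is bounded on the (bounded) range of $Y$; and $DW_s(Y_s)(R^Y_{s,t})$ is $O(|t-s|^{2\alpha})$ because $R^Y\in\cC^{2\alpha}_2$ and $DW_s(Y_s)$ is bounded. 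Hence $R^F\in\cC^{2\alpha}_2([0,T]^2;V)$ and $(F,F')\in\sD_W^{2\alpha}(V)$. I do not expect a genuinely hard step here: the argument is a first/second order Taylor expansion together with bookkeeping. The one point that requires care is that the estimate of $W_{s,t}(Y_t)-W_{s,t}(Y_s)$ must be made through the $\cC^{\bbeta_2}(V;V)$-norm of the \emph{increment} $W_{s,t}$ (not of $W_s$ or $W_t$), which is precisely what the assumption $W\in\cC^\alpha([0,T];\cC^{\bbeta_2}(V;V))$ supplies; beyond that, one just has to track how the polynomial growth exponents $\beta_0,\beta_1,\beta_2$ propagate through the evaluations at $Y_s$, all of which remain finite since $Y$ is bounded on $[0,T]$.
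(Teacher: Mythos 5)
Your proof is correct and follows essentially the same route as the paper: the identical three-term decomposition of $F_{s,t}$, a second-order Taylor expansion of $W_s(Y_t)-W_s(Y_s)$, substitution of $Y_{s,t}=Y'_s(W_{s,t})+R^Y_{s,t}$, and the resulting Gubinelli derivative $F'_s=\hY_s+DW_s(Y_s)Y'_s$. The paper states this more tersely; your additional verification of the operator norms and the $\alpha$-H\"older continuity of $F'$ simply fills in details the paper leaves to the reader.
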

\begin{proof}
	By Taylor's theorem and the fact that $(Y,Y')\in \sD_W^{2\alpha}(V)$, there exist $\xi=cY_s+(1-c)Y_t$ and $\txi=\tc Y_s+(1-\tc)Y_t$ for some $c,\tc\in[0,1]$, such that
	\begin{align*}
	F_{s,t}=&F_t-F_s=W_{s,t}(Y_s)+[W_{s,t}(Y_t)-W_{s,t}(Y_s)]+W_s(Y_t)-W_s(Y_s)\\
	=&\hY_sW_{s,t}+DW_{s,t}(\xi)Y_{s,t}+DW_s(Y_s)[Y'_sW_{s,t}+R^Y_{s,t}]+\frac{1}{2}D^2W_s(\txi)Y_{s,t}^{\otimes 2}.
	\end{align*}
	This yields that $(F, F')\in \sD_W^{2\alpha}(V)$, where $F':=\hY+DW(Y)Y'\in \cL(\cC^{\bbeta_2};V)$.
	\end{proof}
Suppose that $\bW=(W,\cW)\in \C^{\alpha}([0,T]; \cC^{\bbeta_3}(V;V))$. As a consequence of Proposition \ref{lritcrp} (ii) and Lemma \ref{lito1}, the integral $\int_s^t Z_rd W(r,Y_r)=\int_s^t Z_rd F_r$ is well-defined as the integral of two controlled rough paths. Additionally, by Taylor's theorem, we can approximate this integral in the following way:
\begin{align}\label{itotp1}
\int_s^t Z_r&d W(r,Y_r)=Z_sF_{s,t}+Z_s'F_s'\cW_{s,t}+O(|t-s|^{3\alpha})\nonumber\\
=&Z_sW_{s,t}(Y_s)+Z_sDW_{s,t}(Y_s)Y_{s,t}+Z_sDW_s(Y_s)Y_{s,t}+\frac{1}{2}Z_sD^2W_s(Y_s)(Y_{s,t},Y_{s,t})\nonumber\\
&+Z_s'\hY_s\cW_{s,t}+Z_s'DW(s,Y_s)Y_s'\cW_{s,t}+O(|t-s|^{3\alpha}),
\end{align}
where
\[
Z_s'\hY_s(\phi_1,\phi_2)=Z_s'(\phi_1)[\phi_2(Y_s)],
\]
and
\[
Z_s'DW(s,Y_s)Y_s'(\phi_1,\phi_2)=Z_s'(\phi_1)[DW(s,Y_s)(Y_s'(\phi_2))],
\]
for all $(\phi_1,\phi_2)\in \cC^{\bbeta_3}(V;V)^2$.

The next lemma provides a generalized version of Theorem \ref{nri} and Proposition \ref{ricrp}. The proof is similar, we omit it here.
\begin{lemma}
Let $(W,\cW)\in \cC^{\alpha}([0,T];\cC^{\bbeta_2}(V;V))$, and let $(Y,Y')\in \sD_W^{2\alpha}(V)$. Then, the following limit exists and defines a version of integral:
\begin{align*}
\int_s^tW(dr, Y_r):=\lim_{|\pi|\to 0}\sum_{k=1}^n\Big[W_{t_{k-1},t_k}(Y_s)+Y_{t_{k-1}}'\hY_{t_{k-1}}\cD^{(2)}\cW_{t_{k-1},t_k}\Big],
\end{align*}
where $Y_t'\hY_t\cD^{(2)}(\phi_1,\phi_2):=D\phi_2(Y_t)[Y_t'(\phi_1)]$ for any $(\phi_1,\phi_2)\in\cC^{\bbeta_2}(V;V)$. Moreover, $(G, Y)\in \sE_W^{2\alpha}$, where $G_t:=\int_0^t W(dr, Y_r)$ for all $t\in[0,T]$.
\end{lemma}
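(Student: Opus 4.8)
The plan is to repeat, almost verbatim, the arguments in the proofs of Theorem \ref{nri} and Proposition \ref{ricrp}, the only change being that the Gubinelli derivative $Y'$ now takes values in $\cL(\cC^{\bbeta_2}(V;V);V)$ rather than in $V$. Write $\Lambda_t$ for the linear operator on the (completed, projective) tensor product $\cC^{\bbeta_2}(V;V)^{\otimes2}$ determined by $\Lambda_t(\phi_1\otimes\phi_2):=D\phi_2(Y_t)[Y_t'(\phi_1)]$, i.e.\ $\Lambda_t=Y'_t\hY_t\cD^{(2)}$; the summand in the statement is then $\Xi_{s,t}:=W_{s,t}(Y_s)+\Lambda_s(\cW_{s,t})$. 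Since $\|\Lambda_t\|\le(1+\|Y_t\|_V)^{\beta_1}\|Y_t'\|$, and since $W\in\cC^{\alpha,\bbeta_2}([0,T]\times V;V)$ by Lemma \ref{erfs} while $\cW\in\cC^{2\alpha}_2$, one checks at once that $\Xi\in\cC^{\alpha}_2([0,T]^2;V)$ with $\|\Xi_{s,t}\|_V\le C'|t-s|^{\alpha}$. So, as in Theorem \ref{nri}, everything reduces via the sewing lemma (Lemma \ref{sew}) to bounding $\delta\Xi_{s,u,t}:=\Xi_{s,t}-\Xi_{s,u}-\Xi_{u,t}$.

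The core step is the algebraic cancellation. Using additivity of the increments $W_{s,t}=W_{s,u}+W_{u,t}$, the Taylor expansion $W_{u,t}(Y_u)-W_{u,t}(Y_s)=DW_{u,t}(Y_s)(Y_{s,u})+\sR^{W}_{u,t}(Y_s,Y_u)$ (notation \eqref{crphi}), the controlled expansion $Y_{s,u}=Y_s'(W_{s,u})+R^Y_{s,u}$, Chen's relation $\cW_{s,t}-\cW_{s,u}-\cW_{u,t}=W_{s,u}\otimes W_{u,t}$ for the $\cC^{\bbeta_2}(V;V)$-valued rough path $(W,\cW)$, and the identity $DW_{u,t}(Y_s)(Y_s'(W_{s,u}))=\Lambda_s(W_{s,u}\otimes W_{u,t})$, I expect the leading term to cancel exactly, leaving
\[
\delta\Xi_{s,u,t}=-DW_{u,t}(Y_s)(R^Y_{s,u})-\sR^{W}_{u,t}(Y_s,Y_u)+(\Lambda_s-\Lambda_u)(\cW_{u,t}).
\]
Each piece is $O(|t-s|^{3\alpha})$: the first by $\|W\|_{\alpha,\bbeta_2}(1+\|Y\|_\infty)^{\beta_1}\|R^Y\|_{2\alpha}|t-s|^{3\alpha}$; the second by \eqref{crsd1} (applicable since $W\in\cC^{\alpha,\bbeta_2}([0,T]\times V;V)$); and the third by the split $(\Lambda_s-\Lambda_u)(\phi_1\otimes\phi_2)=D\phi_2(Y_s)[Y'_{s,u}\phi_1]+[D\phi_2(Y_s)-D\phi_2(Y_u)][Y'_u\phi_1]$, which gives $\|\Lambda_s-\Lambda_u\|\le C|u-s|^\alpha$, times $\|\cW_{u,t}\|\le\|\cW\|_{2\alpha}|t-u|^{2\alpha}$. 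The resulting constant depends only on $\alpha$, $\bbeta_2$, $\|W\|_{\alpha,\bbeta_2}$, $\|\cW\|_{2\alpha}$, $\|Y\|_\alpha$, $\|Y'\|_\alpha$ and $T$.

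Then I would invoke Lemma \ref{sew} with $\beta=\alpha$ and $\gamma=3\alpha>1$: the limit $\cJ_{s,t}(\Xi)=\lim_{|\pi|\to0}\sum_k\Xi_{t_{k-1},t_k}$ exists, is additive, lies in $\cC^{\alpha}([0,T];V)$, and satisfies $\|\cJ_{s,t}(\Xi)-\Xi_{s,t}\|_V\le k_\alpha C|t-s|^{3\alpha}$; this defines $\int_s^tW(dr,Y_r)$. Writing $G_t:=\cJ_{0,t}(\Xi)$, additivity gives $G_{s,t}=\cJ_{s,t}(\Xi)$, so $R^G_{s,t}:=G_{s,t}-W_{s,t}(Y_s)=\Lambda_s(\cW_{s,t})+(\cJ_{s,t}(\Xi)-\Xi_{s,t})$ is $O(|t-s|^{2\alpha})$; hence $R^G\in\cC^{2\alpha}_2([0,T]^2;V)$ and $(G,Y)\in\sE_W^{2\alpha}$ in the sense of Definition \ref{crp}, exactly as in Proposition \ref{ricrp}.

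The only delicate point — and it is bookkeeping rather than a genuine obstacle — is the handling of the composite operator $\Lambda_t=Y'_t\hY_t\cD^{(2)}$: one must check it extends to a bounded operator on the projective tensor product $\cC^{\bbeta_2}(V;V)^{\otimes2}$ so that it can be paired with $\cW_{s,t}$, confirm the identity $DW_{u,t}(Y_s)(Y_s'(W_{s,u}))=\Lambda_s(W_{s,u}\otimes W_{u,t})$ that drives the cancellation (this is where the definition of $\cD^{(2)}$ and the compatibility established in Proposition \ref{eqb} enter), and estimate $\|\Lambda_s-\Lambda_u\|$ via the two-term split above. Everything else is identical to the computations of Section \ref{s.3}.
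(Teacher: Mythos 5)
Your proposal is correct and follows exactly the route the paper intends: the paper omits this proof, stating it is "similar" to Theorem \ref{nri} and Proposition \ref{ricrp}, and your argument is precisely that adaptation — the germ $\Xi_{s,t}=W_{s,t}(Y_s)+\Lambda_s(\cW_{s,t})$, the cancellation of the leading term via the linear Chen's relation for the $\cC^{\bbeta_2}(V;V)$-valued rough path combined with the identity $DW_{u,t}(Y_s)(Y_s'(W_{s,u}))=\Lambda_s(W_{s,u}\otimes W_{u,t})$, the three $O(|t-s|^{3\alpha})$ remainders, the sewing lemma, and the $2\alpha$-bound on $R^G_{s,t}=\Lambda_s(\cW_{s,t})+(\cJ_{s,t}(\Xi)-\Xi_{s,t})$. (Your reading of the summand as $W_{t_{k-1},t_k}(Y_{t_{k-1}})$ rather than the paper's literal $W_{t_{k-1},t_k}(Y_s)$ is the correct one; the latter is a typo.)
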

Therefore, the integral $\int_s^t Z_rW(dr, Y_r)=\int_s^tZ_rdG_r$ can be approximated as follows:
\begin{align}\label{itotp2}
\int_s^tZ_rW(dr,Y_r)=&Z_sG_{s,t}+Z_s'\hY_s\cD^{(2)}\cW_{s,t}+O(|t-s|^{3\alpha})\nonumber\\
=&Z_sW_{s,t}(Y_s)+Z_sY_s'\hY_s\cD^{(2)}\cW_{s,t}+Z_s'\hY_s\cW_{s,t}+O(|t-s|^{3\alpha}).
\end{align}

Assume that $(W,\cW)\in \C^{\alpha}([0,T];\cC^{\bbeta_3}(V;V))$. Let $H_t=Z_tDW(t, Y_t)\in \cL(V;V)$ for all $t\in [0,T]$. By a similar argument as Lemma \ref{lito1}, we can show that
\begin{align*}
H_{s,t}=Z_{s,t}DW(s,Y_s)+Z_s\hY_sDW_{s,t}+Z_sD^2W(s,Y_s)Y_s'W_{s,t}+O(|t-s|^{2\alpha}).
\end{align*}
It follows that
\begin{align}\label{itotp3}
\int_s^tZ_rDW(r,Y_r)dY_r=&Z_sDW(s,Y_s)Y_{s,t}+Z_s'DW(s,Y_s)Y_s'\cW_{s,t}\\
&+Z_s\hY_s\cD^{(1)}Y_s'\cW_{s,t}+Z_sD^2W(s,Y_s)Y_s'Y_s'\cW_{s,t}+O(|t-s|^{3\alpha}),\nonumber
\end{align}
where
\[
Z_s'DW(s,Y_s)Y_s'(\phi_1,\phi_2)=Z_s'(\phi_1)[DW(s,Y_s)(Y_s'(\phi_2))],
\]
\[
Z_s\hY_s\cD^{(1)}Y_s'(\phi_1,\phi_2)=Z_s[D\phi_1(Y_s)Y_s'(\phi_2)],
\]
and
\[
Z_sD^2W(s,Y_s)Y_s'Y_s'(\phi_1,\phi_2)=Z_s[D^2W(s,Y_s)(Y_s'(\phi_1),Y_s'(\phi_2))],
\]
for all $(\phi_1,\phi_2)\in \cC^{\bbeta_3}(V;V)$.

By a similar argument in Theorem \ref{nri} and the sewing lemma, we can show that the limit in \ref{dwnri} uniquely exists. It allows us to define $X_t$ to be the limit. In addition, we can verify that $X\in \sD_W^{2\alpha}(\cL(V;V))$. Thus three quadratic compensator terms on the second line of (\ref{nito}) are all well-defined, and according to Remark \ref{rqdcp} (iii), $\lgl Y \rgl\in \cC^{2\alpha}_2([0,T];V\otimes V)$ and $\llg X,Y\rrg,\llg Y,X\rrg\in \cC^{2\alpha}_2([0,T];V)$. Therefore, the integrals on the second line of (\ref{nito}) can be interpreted as Young's integrals. We can approximate them as follows:
\begin{align}\label{itotp4}
\int_s^tZ_r&D^2W(r,Y_r)\lgl Y\rgl_r=Z_sD^2W(s,Y_s)\lgl Y \rgl_{s,t}+O(|t-s|^{3\alpha})\nonumber\\
=&Z_sD^2W(s,Y_s)\big[Y_{s,t}\otimes Y_{s,t}-2Y_s'Y_s'\cW_{s,t}\big]+O(|t-s|^{3\alpha}),
\end{align}
\begin{align}
\int_s^tZ_r&\llg X,Y\rrg_r=Z_sDW_{s,t}(Y_s)Y_{s,t}-2Z_s\hY_s\cD^{(1)}Y_s'\cW_{s,t}+O(|t-s|^{3\alpha}),
\end{align}
and
\begin{align}\label{itotp5}
\int_s^tZ_r&\llg Y,X\rrg_r=Z_sDW_{s,t}(Y_s)Y_{s,t}-2Z_sY_s'\hY_s\cD^{(2)}\cW_{s,t}+O(|t-s|^{3\alpha}).
\end{align}
As we approximated all the integrals in (\ref{nito}), the proof of Theorem \ref{tito} is straightforward.
\begin{proof}[Proof of Theorem \ref{tito}]
Denote by $LHS$ and $RHS$ the left and right hand side of equation (\ref{nito}) respectively. Recall the equality (\ref{itotp1}), that is,
\begin{align*}
LHS=&Z_sW_{s,t}(Y_s)+Z_sDW_{s,t}(Y_s)Y_{s,t}+Z_sDW_s(Y_s)Y_{s,t}+\frac{1}{2}Z_sD^2W_s(Y_s)(Y_{s,t},Y_{s,t})\nonumber\\
&+Z_s'\hY_s\cW_{s,t}+Z_sDW_s(Y_s)Y_s'\cW_{s,t}+O(|t-s|^{3\alpha}).
\end{align*}
On the other hand, combining (\ref{itotp2}) - (\ref{itotp5}), we have
\begin{align*}
RHS=&Z_sW_{s,t}(Y_s)+Z_sDW_{s,t}(Y_s)Y_{s,t}+Z_sDW_s(Y_s)Y_{s,t}+\frac{1}{2}Z_sD^2W_s(Y_s)(Y_{s,t},Y_{s,t})\nonumber\\
&+Z_s'\hY_s\cW_{s,t}+Z_sDW_s(Y_s)Y_s'\cW_{s,t}+O(|t-s|^{3\alpha}),
\end{align*}
as well. Since $\alpha\in (\frac{1}{3},\frac{1}{2}]$, it follows that equality (\ref{nito}) holds for all $0\leq s\leq t\leq T$.
\end{proof}

\begin{remark}
We present another approach to Theorem \ref{tito}. Notice that $W(t,Y_t)=\hY_tW_t$. Under the assumption that $W\in \cC^{\alpha}([0,T];\cC^{\bbeta_3}(V;V))$, we can show that $Y\in \sD_W^{2\alpha}(V)$ implies $\hY\in \sD_W^{2\alpha}(\cL(\cC^{\bbeta_2};V))$. It follows from Lemma \ref{itolrp} that
\[
d (\hY_t W_t)=(d\hY_t)W_t+\hY_t d\bW_t+\frac{1}{2}[d\llg \hY, W\rrg_t+d\llg W,\hY\rrg_t],
\]
which is equivalent to (\ref{nito}). In addition, from this point of view, $W$ can be replaced by any rough function $U\in \sD_W^{2\alpha}(\cC^{\bbeta_2}(V;V))$.
\end{remark}

\section{Rough partial differential equations}\label{s.6}
In this section, we apply the theory of nonlinear rough paths to a class of partial differential equations in a H\"{o}lder media. 
\subsection{RDEs with spatial parameters}\label{s.6.1}
Let $(W,\cW)\in \C^{\alpha}([0,T];\cC^{\bbeta_3}(\R^d;\R^d))$, and let $\W$ be given by \eqref{ddw}. Assume Hypothesis \ref{hyp1}. Then, due to Theorem \ref{globn}, for any fixed $x\in\R^d$, the following equation
\begin{align}\label{rdef}
Y_t(x)=x+\int_0^t W(dr, Y_r(x)),
\end{align}
has a unique solution $Y(x)$ on $[0,T]$. In this section, by studying the gradient in $x$ of $Y_t(x)$, we will show that $Y_t(x)$ is invertible in $x$, and the inverse is controlled by $W$ as well.

 In the next theorem, we follow the idea of Hu and L\^{e} \cite{tams-17-hu-le} to show that $Y_t(x)$ is differentiable in $x$.
\begin{theorem}\label{trdef}
Let $(W,\cW)\in \C^{\alpha}([0,T];\cC^{\bbeta_3}(\R^d;\R^d))$. Assume Hypothesis \ref{hyp1}. Let $Y=\{Y_t(x),t\in[0,T], x\in \R^d\}$ be the unique solution to (\ref{rdef}). Then for any $t\in [0,T]$, $Y_t$ is differentiable, and the gradient $DY_t$ satisfies the following equation:
\begin{align}\label{rdef1}
D Y_t(x)= I+\int_0^tdF_r(x) D Y_r(x),
\end{align}
where $I$ denotes the $d\times d$ identity matrix and $F(x)$ is a $d\times d$ matrix-valued function given by
\[
F_t(x):=\int_0^t D W(dr, Y_r(x))
\]
that is defined in the sense of (\ref{dwnri}). Moreover, for every $t\in [0,T]$ and $x\in \R^d$, $D Y_t(x)$ is invertible, and its inverse $(D Y_t(x))^{-1}=:M_t(x)$ satisfies the following equation:
\begin{align}\label{ivrdef}
M_t(x)=I-\int_0^tM_r(x)dF_r(x)+\int_0^t[M_r(x)]^L d\lgl F(x) \rgl_r.
\end{align}
where $\lgl F(x)\rgl_r$ is the quadratic compensator of $F(x)$, which is an $(\R^d\otimes \R^d)^{\otimes 2}$-valued $2\alpha$-H\"{o}lder continuous function on $[0,t]$, and $[M_r(x)]^L:(\R^d\otimes \R^d)^{\otimes 2} \to \R^d\otimes \R^d$ is given by 
\[
[M_r(x)]^L(A\otimes B)=M_r(x)\cdot A\cdot B,
\]
 for any $d\times d$ matrices $A$ and $B$.
\end{theorem}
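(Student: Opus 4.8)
The plan is to prove the three assertions---the linear equation \eqref{rdef1} for the Jacobian, the invertibility of $DY_t(x)$, and the equation \eqref{ivrdef} for $M_t(x)$---in this order, handling the linearized equations through the (linear) rough path machinery of Sections~\ref{s.2} and~\ref{s.5.3} and the inverse identity through the It\^o formula for controlled rough paths. \textbf{Step 1 (the linearized driver).} First I would check that $F_t(x)=\int_0^t DW(dr,Y_r(x))$ is well defined: this is exactly the integral introduced in \eqref{dwnri}, whose existence follows from the sewing lemma as in Theorem~\ref{nri}, and Proposition~\ref{ricrp} together with the composition rule for controlled rough paths identifies $F(x)$ as a controlled rough path of $W$ with an explicit Gubinelli derivative built from $DW(\cdot,Y(x))$ and $D^2W(\cdot,Y(x))$. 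Consequently $F(x)$ lifts to a genuine $\cL(\R^d;\R^d)$-valued linear rough path $\mathbf F(x)=(F(x),\mathbb F(x))$ in the sense of Proposition~\ref{lritcrp}(ii), its quadratic compensator $\lgl F(x)\rgl$ is $2\alpha$-H\"older, and any \emph{linear} RDE driven by $\mathbf F(x)$---possibly with an extra Young forcing against $\lgl F(x)\rgl$, which is legitimate because $3\alpha>1$---has a unique global solution by Theorem~\ref{tlrde}, itself controlled by $\mathbf F(x)$ (hence by $W$). In particular \eqref{rdef1} and \eqref{ivrdef}, read as linear equations, have unique global solutions, which I denote $J(x)$ and $M(x)$; it remains to identify $J(x)$ with $DY(x)$ and $M(x)$ with its inverse.

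\textbf{Step 2 (differentiability and \eqref{rdef1}).} The core of this step is a chain rule / differentiation-under-the-integral-sign statement for the nonlinear rough integral: if $x\mapsto(Y_\cdot(x),Y_\cdot(x))\in\sE^{2\alpha}_W$ is differentiable (in an appropriate controlled-rough-path sense) with derivative flow $J(x)$, then $\partial_{x}\!\int_0^t W(dr,Y_r(x))=\int_0^t DW(dr,Y_r(x))\,J_r(x)$, the right-hand side being understood as in \eqref{dwnri}. I would prove this together with the differentiability itself by the Picard scheme: for the iterates $Y^{(n)}(x)$ of \eqref{rdef} one shows by induction that $x\mapsto Y^{(n)}(x)$ is $\cC^1$ with Jacobian $J^{(n)}(x)$ the $n$-th iterate of \eqref{rdef1}, the inductive step being the chain rule applied to $Y^{(n+1)}(x)=x+\int_0^t W(dr,Y^{(n)}_r(x))$, with bounds uniform in $n$ supplied by Proposition~\ref{nic1} and the a priori estimates of Section~\ref{s.4}. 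Passing $n\to\infty$ using the contraction established in Theorem~\ref{rdelc} and Proposition~\ref{dpic} (which also gives continuity of $x\mapsto F(x)$ in the controlled-rough-path metric, hence of $x\mapsto J(x)$) yields that $Y_t$ is Fr\'echet differentiable with $DY_t(x)=J_t(x)$, which is precisely \eqref{rdef1}. An alternative route, viewing $\mathbf W$ as a $\cC^{\bbeta_3}(\R^d;\R^d)$-valued linear rough path as in Section~\ref{s.5.2} and appealing to differentiability of linear-RDE flows, would require adapting the classical flow theory to the polynomial (rather than bounded) growth of the coefficient $\delta$, so I would keep the direct argument.

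\textbf{Step 3 (invertibility and \eqref{ivrdef}).} Let $M(x)$ be the unique solution of \eqref{ivrdef} from Step~1 and set $P_t:=M_t(x)\,DY_t(x)$, so $P_0=I$. Both $M(x)$ and $DY_\cdot(x)=J(x)$ are controlled rough paths of $\mathbf F(x)$, so I would apply the It\^o formula (Lemma~\ref{itolrp}) to the bilinear map $f(a,b)=ab$, obtaining
\[
d(M_rJ_r)=(dM_r)J_r+M_r(dJ_r)+\text{(quadratic-compensator correction)}.
\]
Substituting $dJ_r=dF_r(x)\,J_r$ from \eqref{rdef1} and $dM_r=-M_r\,dF_r(x)+[M_r]^L\,d\lgl F(x)\rgl_r$ from \eqref{ivrdef}, the first-order contributions $-M_r\,dF_r(x)\,J_r$ and $M_r\,dF_r(x)\,J_r$ cancel by associativity of matrix multiplication. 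For the remaining terms I would use the It\^o-isometry identity \eqref{itoiso},
\[
\lgl M,J\rgl_{s,t}=\int_s^t M'_r\otimes J'_r\,d\lgl F(x)\rgl_r,
\]
where the Gubinelli derivatives with respect to $F(x)$ read $M'_r\colon A\mapsto-M_rA$ and $J'_r\colon B\mapsto BJ_r$; contracting through $f$ shows the compensator correction equals $-([M_r]^L\,d\lgl F(x)\rgl_r)J_r$, exactly cancelling $([M_r]^L\,d\lgl F(x)\rgl_r)J_r$. Hence $d(M_rJ_r)=0$, so $M_t(x)DY_t(x)=I$ for all $t$; since these are $d\times d$ matrices a left inverse is a two-sided inverse, giving $DY_t(x)M_t(x)=I$ as well, so $DY_t(x)$ is invertible and $M_t(x)=(DY_t(x))^{-1}$ satisfies \eqref{ivrdef}.

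\textbf{Main obstacle.} The genuinely new analytic input is the chain rule for the nonlinear rough integral in Step~2: one must differentiate $\int W(dr,Y_r(x))$ in $x$ uniformly in the partition, which amounts to re-running the sewing estimates of Theorem~\ref{nri} and the stability estimate of Proposition~\ref{nic1} with a parameter and comparing compensated Riemann sums after a spatial Taylor expansion of $W$ and $\W$; passing the partition limit and the $x$-increment limit together is the delicate point, and everything else---including the sign and index bookkeeping in the It\^o cancellation of Step~3, i.e.\ matching the operation $[\,\cdot\,]^L$ and the two tensor slots of $\lgl F(x)\rgl$ against $M'$ and $J'$---is more routine once the chain rule is in place.
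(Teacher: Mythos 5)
Your Steps 1 and 3 follow the paper's proof closely: the paper likewise lifts $F(x)$ to a linear rough path via Propositions \ref{ricrp} and \ref{lritcrp}(ii), invokes Theorem \ref{tlrde} for unique solvability of the linearized equations, and establishes invertibility by Stroock's device of applying the It\^{o} formula (Lemma \ref{itolrp} together with \eqref{itoiso}) to the product of $DY_t(x)$ and $M_t(x)$, checking that the first-order and quadratic-compensator contributions cancel so that the product is constantly $I$ (the paper works with $DY_t(x)M_t(x)$ rather than $M_t(x)DY_t(x)$, an immaterial difference for square matrices). Step 2 is where you genuinely diverge. The paper does \emph{not} differentiate the Picard iterates and never proves a chain rule for the nonlinear rough integral; instead it forms the difference quotient $\eta^h_t=\frac1h[Y_t(x+he)-Y_t(x)]$, observes that $\|\eta^h\|_{\alpha}$ is bounded uniformly in $h$ by the stability estimate of Corollary \ref{corde} (i.e.\ Proposition \ref{dpic} applied to the initial conditions $x$ and $x+he$), extracts a subsequential limit by Arzel\`{a}--Ascoli, and identifies every subsequential limit as the unique solution of \eqref{eeta} through a sewing-lemma expansion of $\eta^{h}_{s,t}$ with errors $O(|t-s|^{3\alpha})+O(h)$. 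This buys precisely what you flag as your main obstacle: the delicate interchange of the partition limit with the $x$-increment limit is replaced by an a priori compactness argument plus uniqueness of the limiting linear RDE, so no differentiation under the rough integral sign is required. Your route is viable in principle and would yield the chain rule as a by-product, but it demands re-running the sewing estimates of Theorem \ref{nri} and the stability bounds of Proposition \ref{nic1} with a spatial parameter, plus stability of linear RDEs under perturbation of the driving rough path (to pass $J^{(n)}(x)\to J(x)$ locally uniformly in $x$) --- considerably more scaffolding than the paper's argument. If you want to stay within the tools already established in Sections \ref{s.2}--\ref{s.4}, the difference-quotient argument is the more economical choice.
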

\begin{proof}
Fix $x\in \R^d$. Let $e$ be a unit vector in $\R^d$. For any $h\in(0,1)$, we write
\[
\eta^h_t:=\frac{1}h[Y_t(x+he)-Y_t(x)].
\]
We claim that as $h\downarrow 0$, $\eta^h_t$ converges to the solution to the following equation
\begin{align}\label{eeta}
\eta_t=e+\int_0^tdF_r(x)\eta_r=e+\int_0^t D W(dr, Y_r(x))\eta_r.
\end{align}
Firstly, we show that \eqref{eeta} has a unique solution. Notice that $F(x)$ is defined as a nonlinear rough integral. Then, by Proposition \ref{ricrp}, $F(x)$ is controlled by $D W$ and thus by $W$. That is, 
\[
F_{s,t}(x)=D W_{s,t}(Y_s(x))+O(|t-s|^{2\alpha}):=(\hY_s(x) D) W_{s,t}+O(|t-s|^{2\alpha}),
\]
where $\hY_{\cdot}(x) D$ is considered as an $\alpha$-H\"{o}lder continuous function on $[0,T]$ taking values in $\cL(\cC^{\bbeta_3}(\R^d;\R^d);\cL(\R^d;\R^d))$.  Here $\hY$ is defined in \eqref{hx}. We can also directly define the operator $Y_s(x)D$ by the former expression. $DW_{s,t}(Y_s(x))$ is just an approximation of the integral without the double integral term, thus the error is $O(|t-s|^{2\alpha})$.
By Proposition \ref{lritcrp} (ii), $F(x)$ can be interpreted as a linear rough path. Thus, equation (\ref{eeta}) is a linear RDE, and it follows from Theorem \ref{tlrde} that this equation has a unique solution. 

On the other hand, by Corollary \ref{corde}, $\|\eta^h\|_{\alpha}$ is uniformly bounded in $h\in (0,1)$. As a consequence of the Arzel\`{a}-Ascoli theorem, there exists a sequence $\{h_n\}_{n\geq 1}$, such that, as $n\to\infty$, $h_n\downarrow 0$, and $\eta^{h_n}_t$ converges to some function $\eta_t$ in $\cC^{\alpha'}([0,T];\R^d)$ for any $\alpha'\in(0,\alpha)$. In addition, by the sewing lemma, $\eta^{h_n}$ satisfies the following estimate
\begin{align}\label{eetah}
\eta^{h_n}_{s,t}=D W_{s,t}(Y_s(x))\eta^{h_n}_s+D \cW_{s,t}(Y_s(x),Y_s(x))(\eta^{h_n}_s, \eta^{h_n}_s)+O(|t-s|^{3\alpha})+O(h_n),
\end{align}
for all $0\leq s< t\leq T$. Let $n\to \infty$. The estimate (\ref{eetah}) implies that $\eta_t$ satisfies the RDE (\ref{eeta}). Therefore, $D Y_t(x)$ exists and is the unique solution to (\ref{rdef}).

To prove the invertibility of $D Y_t(x)$, we follow Stroock's idea (see Chapter 8 of \cite{lnm-83-stroock}). Let $M_t(x)$ be the unique solution to the linear RDE (\ref{ivrdef}). By (\ref{itoiso}) and (\ref{ito}), we can deduce the following equation:
\begin{align*}
D Y_t(x)M_t(x)=&I+\int_0^tdF_r(x)D Y_r(x)M_r(x)-\int_0^tD Y_r(x)M_r(x)dF_r(x)\\
&+\int_0^t [D Y_r(x)M_r(x)]^Ld\lgl F(x)\rgl_r-\int_0^t [D Y_r(x)M_r(x)]^Md\lgl F(x)\rgl_r,
\end{align*}
where $[D Y_r(x)M_r(x)]^M$ is considered as a linear operator on $(\R^d\otimes \R^d)^{\otimes 2}$ given by 
\[
[D Y_r(x)M_r(x)]^M(A\otimes B)=A \cdot D Y_r(x) M_r(x)\cdot B,
\]
for any $d\times d$ matrices $A$ and $B$. Notice that $D Y_t(x)M_t(x)\equiv I$ solves this equation. Thus the uniqueness of linear RDEs implies that $M_t= (D Y_t)^{-1}$.
\end{proof}
\begin{remark}
By taking further spatial derivatives on both sides of \eqref{rdef1} and \eqref{ivrdef}, we can show that $DY_t$ and $M_t$ are both twice spatial differentiable with locally bounded derivatives. On the other hand, since Theorem \ref{trdef} shows that $D Y_t(x)$ is invertible in $x$ for all $(t,x)\in [0,T]\times \R^d$, by the implicit function theorem, we deduce that for any fixed $t\in [0,T]$, $Y_t$ has an inverse $Z_t$ such that $Z_t(Y_t(x))=Y_t(Z_t(x))=x$.
\end{remark}
In the next lemma, we prove that fix $x\in \R^d$, $Z(x)$ is controlled by $W$. 
\begin{lemma}\label{zcr}
Let $Y(x)=\{Y_t(x),t\in [0,T]\}$ be the solution to the RDE (\ref{rdef}), and let $Z_t=Y^{-1}_t$ be the inverse of $Y_t$. Fix $x\in \R^d$. Then $Z(x)$ is controlled by $W$.
\end{lemma}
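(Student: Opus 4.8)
The plan is to realise $Z(x)$ via the two--parameter flow of the RDE, expand both factors to first order, and read off the controlled--rough--path structure. Here ``$Z(x)$ is controlled by $W$'' will mean that $\big(Z(x),Z'(x)\big)\in\sD_W^{2\alpha}(\R^d)$ in the sense of Definition~\ref{lcrp}, with $W$ regarded as the $\cC^{\bbeta_3}(\R^d;\R^d)$--valued rough path of Section~\ref{s.5.2} (cf.\ Lemma~\ref{erfs}); the candidate Gubinelli derivative is
\[
Z'_s(x):=-DZ_s(x)\circ\hx\ \in\ \cL\big(\cC^{\bbeta_3}(\R^d;\R^d);\R^d\big),\qquad \hx(\phi)=\phi(x)\ \text{as in }\eqref{hx},
\]
where $DZ_s$ exists and is locally bounded by Theorem~\ref{trdef} and the remark following it. The task then reduces to showing that $R^Z_{s,t}:=Z_{s,t}(x)+DZ_s(x)\big(W_{s,t}(x)\big)$ belongs to $\cC^{2\alpha}_2([0,T]^2;\R^d)$, since $Z'_s(x)(W_{s,t})=-DZ_s(x)\big(W_{s,t}(x)\big)$ under the identification $W_{s,t}(x)=\hx(W_{s,t})$.

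First I would record the flow decomposition. For $0\le s\le t\le T$ let $Y_{s\to t}$ be the time--$t$ value of the solution of $U_r=U_s+\int_s^rW(dv,U_v)$; by uniqueness (Theorem~\ref{unique}) and additivity of the nonlinear rough integral one has $Y_t=Y_{s\to t}\circ Y_s$, and $Y_{s\to t}$ is a diffeomorphism of $\R^d$ by Theorem~\ref{trdef} with initial time $s$ in place of $0$. Setting $Z_{s\to t}:=Y_{s\to t}^{-1}$ gives $Z_t=Z_s\circ Z_{s\to t}$, equivalently $Z_{s\to t}(x)=Y_s\big(Z_t(x)\big)$; since $\{Z_t(x):t\in[0,T]\}$ is compact and $(s,y)\mapsto Y_s(y)$ is continuous, the points $Z_{s\to t}(x)$ $(0\le s\le t\le T)$, together with $x$ and all $Z_s(x)$, lie in one compact convex set $B$.

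Next come two expansions, both uniform in $0\le s\le t\le T$. Because $Y_{s\to\cdot}(y)$ solves the RDE with initial value $y$, it is controlled by $W$ with Gubinelli derivative itself, so $Y_{s\to t}(y)-y=W_{s,t}(y)+\rho_{s,t}(y)$ with $\|\rho_{s,t}(y)\|\le C|t-s|^{2\alpha}$, and $C$ is uniform for $y\in B$ by the a~priori estimates \eqref{ermd}, Proposition~\ref{trdehn} and Corollary~\ref{corde} (and their analogues for initial time $s$). Taking $y=Z_{s\to t}(x)$ and using $Y_{s\to t}\big(Z_{s\to t}(x)\big)=x$, the resulting identity $Z_{s\to t}(x)-x=-W_{s,t}\big(Z_{s\to t}(x)\big)-\rho_{s,t}\big(Z_{s\to t}(x)\big)$ first gives $\|Z_{s\to t}(x)-x\|=O(|t-s|^\alpha)$ (since $W\in\cC^{\alpha,\bbeta_3}$ and $Z_{s\to t}(x)\in B$), and then, via the Lipschitz bound $\|W_{s,t}(y_1)-W_{s,t}(y_2)\|\le\|W\|_{\alpha,\bbeta_3}\big(1+\sup_{z\in B}\|z\|\big)^{\beta_1}|t-s|^\alpha\|y_1-y_2\|$, yields $Z_{s\to t}(x)-x=-W_{s,t}(x)+O(|t-s|^{2\alpha})$. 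On the other hand $Z_s$ is twice continuously differentiable on $B$ with derivatives bounded uniformly in $s$ (the remark after Theorem~\ref{trdef}, since $DY_s$ and $M_s=(DY_s)^{-1}$ are), so Taylor's theorem gives $Z_{s,t}(x)=Z_s\big(Z_{s\to t}(x)\big)-Z_s(x)=DZ_s(x)\big(Z_{s\to t}(x)-x\big)+O\big(\|Z_{s\to t}(x)-x\|^2\big)$. Combining the last two displays produces $Z_{s,t}(x)=-DZ_s(x)\big(W_{s,t}(x)\big)+R^Z_{s,t}$ with $\|R^Z_{s,t}\|\le C'|t-s|^{2\alpha}$; hence $R^Z\in\cC^{2\alpha}_2([0,T]^2;\R^d)$ and, in particular, $Z(x)\in\cC^\alpha([0,T];\R^d)$.

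It remains to note that $Z'(x)=-DZ_\cdot(x)\circ\hx$ lies in $\cC^\alpha\big([0,T];\cL(\cC^{\bbeta_3}(\R^d;\R^d);\R^d)\big)$: the operator norm of $\hx$ is at most $(1+\|x\|)^{\beta_0}$, while $DZ_t(x)=M_t\big(Z_t(x)\big)$ is $\alpha$--H\"older in $t$, being the composition of the $\alpha$--H\"older--in--time function $M$ (which has locally bounded spatial derivative) with the $\alpha$--H\"older curve $t\mapsto Z_t(x)$. Thus $\big(Z(x),Z'(x)\big)\in\sD_W^{2\alpha}(\R^d)$, which is the assertion. I expect the one genuinely delicate point to be the \emph{uniformity in $0\le s\le t\le T$} of every $O(|t-s|^{2\alpha})$ above, which is exactly what makes $R^Z$ a bona fide element of $\cC^{2\alpha}_2$: this needs the a~priori bounds for the RDE and for its spatial derivatives (Proposition~\ref{trdehn}, Corollary~\ref{corde}, and the local--uniform $C^2$ regularity in the remark after Theorem~\ref{trdef}) to be uniform over the compact $B$ of initial values and over the initial time $s\in[0,T]$; granting these, the remainder of the argument is routine bookkeeping.
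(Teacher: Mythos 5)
Your argument is correct and is essentially the paper's own proof in different clothing: the flow identity $Z_t=Z_s\circ Z_{s\to t}$ with $Z_{s\to t}(x)=Y_s(Z_t(x))$ is exactly the paper's substitution $y=Z_t(x)$, $Z_{s,t}(x)=Z_s(Y_s(y))-Z_s(Y_t(y))$, followed by the same Taylor expansion of $Z_s$ and the same first-order (controlled-path) expansion of the solution, yielding the same Gubinelli derivative $\Phi\mapsto -DZ_s(x)\Phi(x)=-M_s(Z_s(x))\Phi(x)$. Your treatment even fixes a sign slip in the paper's displayed formula for $Z_t'(x)$ and is more explicit about the uniformity of the $O(|t-s|^{2\alpha})$ remainders, which the paper leaves implicit.
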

\begin{proof}
For any $t\in[0,T]$, since $Z_t$ is the inverse of $Y_t$, and $DY_tM_t=I$ for all $t$, we deduce that $Z_t$ is differentiable and its derivative is given by the following formula
\begin{align}\label{dzt}
DZ_t(x)=M_t(Z_t(x)).
\end{align}
Fix $(t,x)\in (0,T]\times \R^d$. Let $y=Z_t(x)$. Then $x=Y_t(y)$. Notice that a similar argument as in Theorem \ref{trdef} implies that $M_t(x)$ is differentiable in $x$ and the derivative is locally bounded. Thus by Taylor's theorem, the following equality holds for all $s\in [0,t)$
\begin{align}\label{zst1}
Z_{s,t}(x)=&Z_s(Y_s(y))-Z_s(Y_t(y))\nonumber\\
=&-DZ_s(Y_s(y))Y_{s,t}(y)+O(|t-s|^{2\alpha})\nonumber\\
=&-M_s(Z_s(x))Y_{s,t}(Z_s(x))+O(|t-s|^{2\alpha}).
\end{align}
On the other hand, by Proposition \ref{ricrp}, we have
\[
Y_{s,t}(x)=W_{s,t}(Y_s(x))+O(|t-s|^2).
\]
Combining above two inequalities, we can write
\begin{align*}
Z_{s,t}(x)=-M_s(Z_s(x))W_{s,t}(x)+O(|t-s|^{2\alpha}).
\end{align*}
Let $Z'(x)=\{Z_t'(x),t\in[0,T]\}$ where $Z_t'(x):\cC^{\bbeta_3}(\R^d;\R^d)\to \R^d$ is given by
\[
Z_t'(x)\Phi:=M_t(Z_t(x))\Phi(x).
\]
Then it is easy to check that $Z_s'(x)\in\cL(\cC^{\bbeta_3}(\R^d;\R^d);\R^d)$, and thus $(Z(x),Z'(x))\in \sD_W^{2\alpha}(\R^d)$.
\end{proof}

\begin{remark}
By taking derivative on both sides of \eqref{dzt}, we have
\[
D^2Z_t(x)=DM_t(Z_t(x))M_t(Z_t(x)).
\]
Furthermore, we can deduce a more delicate estimate than \eqref{zst1} as follows,
\begin{align}\label{zst2}
Z_{s,t}&(x)=-M_s(Z_t(x))Y_{s,t}(Z_t(x))-\frac{1}{2}DM_s(y)M_s(y)Y_{s,t}(y)^{\otimes 2}+O(|t-s|^{3\alpha})\nonumber\\
=&-M_s(Z_s(x))W_{s,t}(x)-M_s(Z_s(x))\W(x,x)+\frac{1}{2}DM_s(Z_s(x))M_s(Z_s(x))W_{s,t}(x)^{\otimes 2}\nonumber\\
&+M_s(Z_s(x))DW_{s,t}(x)W_{s,t}(x)+O(|t-s|^{3\alpha}),
\end{align}
where for all $i=1,2,\dots,d$,
\begin{align*}
&\big[DM_s(Z_s(x))M_s(Z_s(x))W_{s,t}(x)^{\otimes 2}\big]^i\\
=&\sum_{k_1\dots k_3=1}^d\frac{\partial M^{ik_2}(x)}{\partial x_{k_1}}(Z_s(x))M_s^{k_1k_3}(Z_s(x))W_{s,t}^{k_2}(x)W^{k_3}_{s,t}
\end{align*}
and
\begin{align*}
\big[DM_s(y)W_{s,t}(x)^{\otimes 2}\big]^i=\frac{\partial M^{ik_2}}{\partial k_1}(Z_s(x)) W^{k_1}_{s,t}(x)W^{k_2}_{s,t}(x).
\end{align*}
This estimate will be used in Section \ref{s.6.2} below.
\end{remark}

\subsection{Rough partial differential equations}\label{s.6.2}
Let $h\in \cC_{loc}^3(\R^d;\R^d)$ the space of functions that are locally bounded and have locally bounded first, second and third derivatives. In this section, we will show that $u=\{u(t,x)=h(Z_t(x)), (t,x)\in[0,T]\times \R^d\}$, where $Z_t(x)$ is defined in Section \ref{s.6.1}, is the unique solution to equation \eqref{rpde}.

\begin{definition}\label{soltspt}
Let $(W,\cW)\in \C^{\alpha}([0,T];\cC^{\bbeta_3}(\R^d;\R^d))$, let $\W:\R^d\times \R^d\to \R^d$ be given by \eqref{ddw}, and let $h$ be a function on $\R^d$ with values in $\R^d$. A function $u=\{u(t,x),(t,x)\in [0,T]\times \R^d\}$ is called a solution to equation \eqref{rpde} with initial condition $h$, if the following properties are satisfied:
\begin{enumerate}[(i)]
\item $u(0,x)=h(x)$ for all $x\in \R^d$.

\item $u$ is twice spatial differential everywhere, and $D u(\cdot, x)$ is controlled by $W$ for all $x\in\R^d$.
\item The following equality is true for all $(t,x)\in [0,T]\times \R^d$
\begin{align}\label{solrpde}
u(t,x)=&h(x)-\int_0^tD u(r,x)W(dr,x)+\frac{1}{2}\int_0^tD u(r,x) d\llg DW(x), W(x) \rrg_r\nonumber\\
&+\frac{1}{2}\int_0^t Du(r,x)d\llg W(x), DW(x) \rrg_r+\frac{1}{2} \int_0^t D^2 u(r,x) d\lgl W(x)\rgl_r,
\end{align}
where the first integral is defined as follows,
\[
\int_0^tD u(r,x)W(dr,x):=\int_0^tD u(r,x)dW_r(\xi)\Big|_{\xi=x},
\]
the quadratic compensators 
\[
\llg DW(x), W(x) \rrg_{s,t}:=\llg DW, W\rrg_{s,t}(\xi_1,\xi_2)\big|_{(\xi_1,\xi_2)=(x,x)},
\] 
\[
\llg W(x), DW(x) \rrg_{s,t}:=\llg W, DW\rrg_{s,t}(\xi_1,\xi_2)\big|_{(\xi_1,\xi_2)=(x,x)},
\] 
and
 \[
\lgl W(x) \rgl_{s,t}:=\lgl W\rgl_{s,t}(\xi_1,\xi_2)\big|_{(\xi_1,\xi_2)=(x,x)}
\] 
are defined by \eqref{qdcp1}, \eqref{qdcp2} and \eqref{qdcp3}, $D^2u(r,x)$ is considered as a linear operator from $\R^d\otimes \R^d\to \R$, that is 
\[
D^2u(t,x)M=\sum_{i,j=1}^d\frac{\partial^2 u(t,x)}{\partial x_i\partial x_j}M^{ij},
\]
for any $d\times d$ matrix $M=(M^{ij})_{i,j=1}^d$, and the last three integrals are in the sense of Young's integral.
\end{enumerate}
\end{definition}

In the next theorem, we will show that $h(Z_t)$, where $Z_t$ is defined in Section \ref{s.6.1}, is a solution to equation \eqref{rpde}.
\begin{theorem}\label{thmrpde}
Let $(W,\cW)\in \C^{\alpha}([0,T];\cC^{\bbeta_3}(\R^d;\R^d))$, and let $\W$ be given by \eqref{ddw}. Assume Hypothesis \ref{hyp1}. Let $Y$ be the solution to the equation (\ref{rdef}), and let $Z_t=Y^{-1}_t$ for all $t\in [0,T]$. Suppose that $h\in \cC^3_{loc}(\R^d;\R^d)$. Then, $u(t,x)=h(Z_t(x))$ is a solution to \eqref{rpde} in the sense of Definition \ref{soltspt}.
\end{theorem}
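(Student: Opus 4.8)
The plan is to check the three requirements of Definition \ref{soltspt} for $u(t,x):=h(Z_t(x))$.

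Condition (i) is immediate, since $Z_0=\mathrm{id}$ forces $u(0,x)=h(x)$. For condition (ii), recall from the remark following Theorem \ref{trdef} that $DY_t$ and $M_t=(DY_t)^{-1}$ are twice spatially differentiable with locally bounded derivatives, and from \eqref{dzt} and the remark following Lemma \ref{zcr} that $DZ_t(x)=M_t(Z_t(x))$ and $D^2Z_t(x)=DM_t(Z_t(x))M_t(Z_t(x))$. Since $h\in\cC^3_{loc}(\R^d;\R^d)$, the chain rule shows that $u$ is twice spatially differentiable, with
\begin{align*}
Du(t,x)&=Dh(Z_t(x))M_t(Z_t(x)),\\
D^2u(t,x)&=D^2h(Z_t(x))\big(M_t(Z_t(x))\,\cdot\,,M_t(Z_t(x))\,\cdot\,\big)+Dh(Z_t(x))DM_t(Z_t(x))M_t(Z_t(x)).
\end{align*}
By Lemma \ref{zcr}, $Z(x)$ is controlled by $W$ for each fixed $x$; $M_\cdot$ is controlled by $W$ because it solves the linear RDE \eqref{ivrdef} driven by $F(x)$, which is controlled by $W$ (Proposition \ref{ricrp}); hence $Du(\cdot,x)$, obtained by smooth composition and multiplication from $Z(x)$, $M(Z(x))=DZ(x)$ and the $\cC^2_{loc}$ map $Dh$, is controlled by $W$ in the $\cC^{\bbeta_3}(\R^d;\R^d)$-valued sense of Section \ref{s.5.2}, by the composition lemma (Lemma \ref{cmpcr}).

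The substance of the theorem is condition (iii), the identity \eqref{solrpde}. The scheme is to show that the increment $u_{s,t}(x)=h(Z_t(x))-h(Z_s(x))$ agrees, up to an error $O(|t-s|^{3\alpha})$ locally uniformly in $x$, with the compensated local approximation $\Xi_{s,t}$ of the right-hand side of \eqref{solrpde}; then, since $3\alpha>1$, the sewing lemma (Lemma \ref{sew}) identifies $u(t,x)-h(x)$ with that right-hand side for every $x$. To expand the left-hand side, Taylor-expand $h$ to second order around $Z_s(x)$ and insert the refined expansion \eqref{zst2} of $Z_{s,t}(x)$; using the formulas for $Du$ and $D^2u$ above, the second-order terms collapse and one obtains
\begin{align*}
u_{s,t}(x)=&-Du(s,x)W_{s,t}(x)-Du(s,x)\W_{s,t}(x,x)\\
&+Du(s,x)DW_{s,t}(x)W_{s,t}(x)+\tfrac12 D^2u(s,x)W_{s,t}(x)^{\otimes2}+O(|t-s|^{3\alpha}).
\end{align*}
It then remains to match the three order-$2\alpha$ terms above with $\Xi_{s,t}$: the compensated approximation of $-\int_0^tDu(r,x)W(dr,x)$ contributes $-Du(s,x)W_{s,t}(x)$ together with a Gubinelli-derivative term, read off from the Gubinelli derivative of the controlled rough path $Du(\cdot,x)$; and the three Young integrals are approximated, up to the factors $\tfrac12$, by $Du(s,x)\llg DW(x),W(x)\rrg_{s,t}$, $Du(s,x)\llg W(x),DW(x)\rrg_{s,t}$ and $D^2u(s,x)\lgl W(x)\rgl_{s,t}$, whose quadratic compensators are expressed, via Definition \ref{qdcp} and Remark \ref{rqdcp}, through $W_{s,t}(x)^{\otimes2}$, $DW_{s,t}(x)W_{s,t}(x)$ and $\W_{s,t}(x,x)$, the mirror integral terms being replaced by these products modulo $O(|t-s|^{3\alpha})$ by the sewing estimates. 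Assembling these yields $u_{s,t}(x)=\Xi_{s,t}+O(|t-s|^{3\alpha})$.

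The main obstacle is precisely this last matching. One must carry out the bookkeeping so that every order-$2\alpha$ contribution from the Taylor/\eqref{zst2} expansion is absorbed by the compensated approximations of the four integrals in \eqref{solrpde}, with the correct factors $\tfrac12$; in particular one must pin down the compensated local approximation of $\int_0^tDu(r,x)W(dr,x)$ through the definition $\int_0^tDu(r,x)\,dW_r(\xi)|_{\xi=x}$ and the $\cC^{\bbeta_3}(\R^d;\R^d)$-valued rough integral of Section \ref{s.5.2}, and one must compute the local expansions of $\lgl W(x)\rgl$, $\llg DW(x),W(x)\rrg$ and $\llg W(x),DW(x)\rrg$ in the genuinely rough setting, where the elementary integration-by-parts identities acquire correction terms that are exactly what is needed to balance the $\W_{s,t}(x,x)$ and $DW_{s,t}(x)W_{s,t}(x)$ contributions. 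Auxiliary but essential are the twice-differentiability of $DY$ and $M$ with locally bounded derivatives (which licenses the second-order Taylor expansions of $h\circ Z$ and of $Du(\cdot,x)$) and the local uniformity in $x$ of all estimates, so that the sewing lemma applies at each fixed $x$. Everything else is routine.
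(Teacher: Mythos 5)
Your proposal is correct and follows essentially the same route as the paper: verify conditions (i)--(iii) of Definition \ref{soltspt}, establish that $M(Z(x))$ and hence $Du(\cdot,x)$ are controlled by $W$ via Lemma \ref{cmpcr}, and prove \eqref{solrpde} by Taylor-expanding $h(Z_t(x))-h(Z_s(x))$ with the refined increment formula \eqref{zst2}, matching the order-$2\alpha$ terms against the compensated approximations of the rough and Young integrals, and concluding from the $O(|t-s|^{3\alpha})$ bound with $3\alpha>1$. The bookkeeping you flag as the main obstacle is exactly the computation carried out in the paper's displays \eqref{hzst1}--\eqref{hzst2}.
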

\begin{proof}
We prove this theorem by checking every property in Definition \ref{soltspt}.
 By assumption, we know that $u(0,x)=h(Z_0(x))=h(x)$. In addition, since $h\in \cC^3_{loc}(\R^d;\R^d)$ and $Z_t(x)$ is twice spatial differentiable, we can show that
 \[
 D[h(Z_t(x))]=(Dh)(Z_t(x))M_t(x)
 \]
 and
 \[
 D^2[h(Z_t(x))]=(D^2h)(Z_t(x))M_t(x)^2+(Dh)(Z_t(x))DM_t(x),
 \]
where $(Dh)(Z_t(x))DM_t(x)$ is a $d\times d$ matrix with component 
\[
[(Dh)(Z_t(x))DM_t(x)]^{ij}=\sum_{k=1}^d\frac{\partial}{\partial x_k}h(Z_t(x))\frac{\partial}{\partial x_j}M^{ki}(x).
\]
Recall that $M_t(x)$ is the solution to the linear RDE \eqref{ivrdef}. Then we can write
\[
M_t(x)=-M_s(x)F_{s,t}(x)+O(|t-s|^{2\alpha})=-M_s(x)D W_{s,t}(Y_s(x))+O(|t-s|^{2\alpha}).
\]
This implies that
\begin{align*}
M_t(Z_t(x))&-M_s(Z_s(x))=M_{s,t}(Z_s(x))+DM_s(Z_s(x))Z_{s,t}(x)+O(|t-s|^{2\alpha})\\
=&-M_s(x)D W_{s,t}(x)-DM_s(Z_s(x))M_s(Z_s(x))W_{s,t}(x)+O(|t-s|^{2\alpha})
\end{align*}
Let $M_t(Z_t(x)):\cC^{\bbeta_3}(\R^d;\R^d)\to \R^d\otimes \R^d$ be given by
\[
M_t(Z_t(x))'\Phi:=-M_t(x)D \Phi(x)-DM_t(Z_t(x))M_t(Z_t(x))\Phi(x),
\]
where
\[
[DM_t(Z_t(x))M_t(Z_t(x))\Phi(x)]^{ij}=\sum_{k_1,k_2}\frac{\partial}{\partial x_{k_1}}M_t^{k_2,i}(Z_t(x))\Phi^{j}(x)
\]
for any $(t,x)\in [0,T]\times \R^d$. We can show that 
\[
M(Z(x))'\in \cC^{\alpha}([0,T];\cL(\cC^{\bbeta_3}(\R^d;\R^d);\R^d\otimes \R^d)).
\]
Thus $(M(Z(x)),M'(Z(x)))\in \sD_W^{2\alpha}(\cL(\cC^{\bbeta_3}(\R^d;\R^d);\R^d\otimes \R^d))$. On the other hand, we know that $Z(x)$ is controlled by $W$ due to Lemma \ref{zcr}. Note that $h\in \cC^3_{loc}(\R^d;\R^d)$, by Lemma \ref{cmpcr}, we deduce that $(D(h(Z(x))),D(h(Z(x)))')\in \sD_W^{2\alpha}(\R^d\otimes \R^d)$, where the Gubinelli derivative $[Dh(Z(x))]':\cC^{\bbeta_3}(\R^d;\R^d)\to \R^d$ is given by
\begin{align*}
[D(h(Z(x)))]'\Phi=&-(D^2h)(Z_t(x))M_t(Z_t(x))\Phi(x)M_t(Z_t(x))\\
&-(Dh)(Z_t(x))M_t(Z_t(x))D \Phi(x)\\
&-(Dh)(Z_t(x))DM_t(Z_t(x))M_t(Z_t(x))\Phi(x).
\end{align*}
As a consequence the property (i) and (ii) of Definition \ref{soltspt} are satisfied.

In the next step, we will prove equality \eqref{solrpde} by a similar argument as in Theorem \ref{tito}. For any $0\leq s\leq t\leq T$, as a consequence Taylor's theorem, we can write
\begin{align}\label{hzst1}
h(Z_t(x))-h(Z_s(x))=&(Dh)(Z_s(x))Z_{s,t}(x)+\frac{1}{2}(D^2h)(Z_s(x))Z_{s,t}(x)^{\otimes 2}+O(|t-s|^{3\alpha})\nonumber\\
:=&I_1+I_2+O(|t-s|^{3\alpha}).
\end{align}
By \eqref{zst2}, we have
\begin{align}
I_1=&-(Dh)(Z_s(x))M_s(Z_s(x))W_{s,t}(x)-(Dh)(Z_s(x))M_s(Z_s(x))\W(x,x)\\
&+\frac{1}{2}(Dh)(Z_s(x))DM_s(Z_s(x))M_s(Z_s(x))W_{s,t}(x)^{\otimes 2}\nonumber\\
&+(Dh)(Z_s(x))M_s(Z_s(x))DW_{s,t}(x)W_{s,t}(x)+O(|t-s|^{3\alpha}), \nonumber
\end{align}
and
\begin{align}
I_2=&\frac{1}{2}\big[(D^2h)(Z_s(x))M_s(Z_s(x))W_{s,t}(x)\big]\cdot \big[M_s(Z_s(x)) W_{s,t}(x)\big]+O(|t-s|^{3\alpha}),
\end{align}
where
\begin{align*}
(Dh)&(Z_s(x))DM_s(Z_s(x))M_s(Z_s(x))W_{s,t}(x)^{\otimes 2}\\
=&\sum_{k_1,\dots,k_4=1}^d\frac{\partial h}{\partial x_{k_1}}(Z_s(x))\frac{M_s^{k_1k_3}}{\partial k_2}(Z_s(x))W^{k_3}_{s,t}(x)M_s^{k_2k_4}(Z_s)W^{k_4}_{s,t}(x).
\end{align*}
Due to Theorem \ref{tlri}, we can write
\begin{align}\label{hzst2}
\int_s^tDh&[Z_r(x)]W(dr,x)=(Dh)(Z_s(x))M_s(Z_s(x))W_{s,t}(x)\nonumber\\
&-[(D^2h)(Z_s(x))M_s(x)]^LM_s(Z_s(x))^R\cW_{s,t}(x,x)\nonumber\\
&-(Dh)(Z_s(x))M_s(Z_s(x)) \W_{s,t}^*(x,x)\nonumber\\
&-(Dh)(Z_s(x))DM_s(Z_s(x))M_s(Z_s(x))\cW_{s,t}(x,x),
\end{align} 
where
\begin{align*}
[(D^2h)&(Z_s(x))M_s(x)]^LM_s(Z_s(x))^R\cW_{s,t}(x,x)\\
=&\sum_{k_1,\dots,k_4=1}^d\frac{\partial^2 h}{\partial x_{k_1}\partial x_{k_2}}(Z_s(x))M_s^{k_2k_3}(Z_s(x))\cW_{s,t}^{k_3k_4}M_s^{k_1k_4}(Z_s(x))
\end{align*}
and
\begin{align*}
(Dh)&(Z_s(x))DM_s(Z_s(x))M_s(Z_s(x))\cW_{s,t}(x,x)\\
=&\sum_{k_1,\dots,k_4=1}^d\frac{\partial h}{\partial x_{k_1}}(Z_s(x))\frac{\partial M^{k_1k_3}_s}{\partial x_{k_2}}(Z_s(x))M_s^{k_2k_4}(Z_s(x))\cW^{k_3k_4}_{s,t}(x,x)
\end{align*}
Combining \eqref{hzst1} - \eqref{hzst2}, we have
\begin{align*}
h(Z_t(x))&-h(Z_s(x))+\int_s^t D[h(Z_r(x))]W(dr,x)\\
=&\frac{1}{2}(Dh)(Z_s(x))M_s(Z_s(x))\big[\llg DW(x), W(x)\rrg_{s,t}+\llg W(x), DW(x)\rrg_{s,t}\big]\\
&+\frac{1}{2}[(D^2h)(Z_s(x))M_s(x)]^LM_s(Z_s(x))^R\lgl W(x)\rgl_{s,t}\nonumber\\
&+\frac{1}{2}(Dh)(Z_s(x))DM_s(Z_s(x))M_s(Z_s(x))\lgl W(x)\rgl_{s,t}+O(|t-s|^{3\alpha}).
\end{align*}
On the other hand, by the theory of Young's integral, we can show that
\begin{align*}
&\int_0^tD [h(Z_r(x))] d\llg DW(x), W(x) \rrg_r+\int_0^t D[h(Z_r(x))]d\llg W(x), DW(x) \rrg_r\nonumber\\
&+ \int_0^t D^2 [h(Z_r(x))] d\lgl W(x)\rgl_r\\
=&(Dh)(Z_s(x))M_s(Z_s(x))\big[\llg DW(x), W(x)\rrg_{s,t}+\llg W(x), DW(x)\rrg_{s,t}\big]\\
&+[(D^2h)(Z_s(x))M_s(x)]^LM_s(Z_s(x))^R\lgl W(x)\rgl_{s,t}\nonumber\\
&+(Dh)(Z_s(x))DM_s(Z_s(x))M_s(Z_s(x))\lgl W(x)\rgl_{s,t}+O(|t-s|^{3\alpha}).
\end{align*}
It follows that (\ref{solrpde}) holds if $u(t,x)=h(Z_t(x))$ for all $(t,x)\in[0,T]\times \R^d$.
\end{proof}

In the next theorem, we will show that the solution is unique in the space $\cC^{\alpha, 3}_{loc}([0,T]\times \R^d)$ provided that $(W,\cW)\in \cC^{\alpha}([0,T];\cC^{\bbeta_4}(\R^d;\R^d))$ and $h\in \cC^4_{loc}(\R^d;\R)$.
\begin{theorem}
Let $(W,\cW)\in \C^{\alpha}([0,T];\cC^{\bbeta_3}(\R^d;\R^d))$, and let $\W$ be given by \eqref{ddw}. Assume Hypothesis \ref{hyp1}. Let $h\in \cC^4_{loc}(\R^d;\R)$. The solution to the RPDE \eqref{rpde} exists and is unique in the space $\cC^{\alpha, 3}_{loc}([0,T]\times \R^d;\R)$.
\end{theorem}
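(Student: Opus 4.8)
The overall strategy is to treat existence and uniqueness separately: existence with the stated regularity is essentially the content of Theorem \ref{thmrpde} plus a bookkeeping check, whereas uniqueness is proved by a rough ``method of characteristics'', i.e.\ by composing an arbitrary solution with the flow of the RDE \eqref{rdef} and showing that the composition is constant in time.

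\textbf{Existence.} By Theorem \ref{thmrpde}, $u(t,x)=h(Z_t(x))$ solves \eqref{rpde} in the sense of Definition \ref{soltspt}, so it remains only to verify that, under the stronger hypotheses $(W,\cW)\in\C^{\alpha}([0,T];\cC^{\bbeta_4}(\R^d;\R^d))$ and $h\in\cC^4_{loc}(\R^d;\R)$, one has $u\in\cC^{\alpha,3}_{loc}([0,T]\times\R^d;\R)$. Differentiating the linear RDEs \eqref{rdef1} and \eqref{ivrdef} in the spatial variable (as in the Remark after Theorem \ref{trdef}) and using the extra degree of spatial smoothness of $W$ shows that $DY_t$, $M_t=(DY_t)^{-1}$ and hence $Z_t=Y_t^{-1}$ have locally bounded spatial derivatives up to order three; composing with $h\in\cC^4_{loc}$ via Lemma \ref{cmpcr} then gives that $x\mapsto u(t,x)$ is $\cC^3_{loc}$ with bounds uniform in $t$, while the controlled-rough-path regularity in time is inherited from the facts, proved in Lemma \ref{zcr} and in the proof of Theorem \ref{thmrpde}, that $Z(x)$ and $M(Z(x))$ are controlled by $W$.

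\textbf{Uniqueness.} Let $u\in\cC^{\alpha,3}_{loc}([0,T]\times\R^d;\R)$ be any solution, fix $x\in\R^d$, and set $v_t:=u(t,Y_t(x))$ where $Y$ is the flow solving \eqref{rdef}. By Definition \ref{soltspt}(ii)--(iii), for each fixed $\xi$ the path $t\mapsto u(t,\xi)$ is controlled by $W$ with Gubinelli derivative $\phi\mapsto-Du(t,\xi)\phi(\xi)$, so $u$ behaves like a $W$-controlled rough function carrying a spatial parameter, and \eqref{solrpde} is precisely the identity $u(dr,\xi)=-Du(r,\xi)W(dr,\xi)+\tfrac12\,d\llg DW(\xi),W(\xi)\rrg_r+\tfrac12\,d\llg W(\xi),DW(\xi)\rrg_r+\tfrac12 D^2u(r,\xi)\,d\lgl W(\xi)\rgl_r$. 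Following the proof of Theorem \ref{tito} (in the form of the Remark after it, with $W$ replaced by $u$, $Z\equiv 1$ and $Y=Y(x)$), the sewing lemma \ref{sew} gives
\[
v_t-v_s=\int_s^t u(dr,Y_r(x))+\int_s^t Du(r,Y_r(x))\,dY_r(x)+\tfrac12\Big[\int_s^t D^2u\,d\lgl Y\rgl_r+\int_s^t d\llg X,Y\rrg_r+\int_s^t d\llg Y,X\rrg_r\Big],
\]
where $X_t=\int_0^t Du(dr,Y_r(x))$ is defined as in \eqref{dwnri}. Since $Y$ solves \eqref{rdef}, $dY_r(x)=W(dr,Y_r(x))$, so the second term equals $\int_s^t Du(r,Y_r(x))\,W(dr,Y_r(x))$; substituting the RPDE identity for $u(dr,\xi)\big|_{\xi=Y_r(x)}$ into the first term cancels this ``$-Du\,W(d\cdot,\cdot)$'' contribution, leaving only quadratic-compensator terms. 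One then checks that the compensators produced by the RPDE ($\llg DW,W\rrg$, $\llg W,DW\rrg$, $\lgl W\rgl$ evaluated along $Y$) cancel against the It\^o compensators $\lgl Y\rgl$, $\llg X,Y\rrg$, $\llg Y,X\rrg$: this uses \eqref{itoiso}, the fact that $Y$ has Gubinelli derivative equal to $W$ along itself, and the consistency of $X$ with $DW$ along the flow, so that $\lgl Y\rgl$ (resp.\ $\llg X,Y\rrg$) reduces to $\lgl W\rgl$ (resp.\ $\llg DW,W\rrg$) composed with $Y$. Once this is established, $v_t-v_s=0$ for all $s\le t$, hence $v$ is constant and $u(t,Y_t(x))=u(0,Y_0(x))=h(x)$; replacing $x$ by $Z_t(x)$ yields $u(t,x)=h(Z_t(x))$, the solution of Theorem \ref{thmrpde}.

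\textbf{Main obstacle.} The routine parts are the regularity bookkeeping for existence and the verification that every integral appearing in the expansion of $v_t-v_s$ is a legitimate controlled or Young integral --- this is exactly where the extra derivative on $W$ and on $h$ is used, since composing the already-differentiated $Du$ with the flow costs two further derivatives (cf.\ Lemma \ref{cmpcr}). The genuinely delicate point is the exact cancellation of the two families of quadratic compensators; it is the nonlinear-rough-path analogue of the cancellation between the drift-correction terms in the classical transport/Feynman--Kac derivation, and must be carried out componentwise using the explicit forms of $\lgl W\rgl$, $\llg DW,W\rrg$, $\llg W,DW\rrg$ from Definition \ref{qdcp} and Remark \ref{rqdcp} together with the second-order expansion \eqref{zst2} of $Z$.
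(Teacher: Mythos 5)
Your proposal is correct and follows essentially the same route as the paper: existence is Theorem \ref{thmrpde} plus the regularity bookkeeping for $M_t$ and its spatial derivatives (each solving a linear RDE), and uniqueness is obtained by composing an arbitrary solution with the flow $Y$ of \eqref{rdef} and showing the increment of $u(t,Y_t(x))$ is $O(|t-s|^{3\alpha})$, hence zero. The only cosmetic difference is that you phrase the cancellation through the It\^{o}-formula machinery of Theorem \ref{tito}, whereas the paper carries out the same cancellation directly by Taylor-expanding $u(t,Y_t(x))-u(s,Y_s(x))$ and substituting the local form of the RPDE, which avoids having to justify each integral separately; the compensator identifications you describe ($\lgl Y\rgl$ reducing to $\lgl W\rgl$ along $Y$, etc.) are exactly the estimates the paper verifies explicitly.
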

\begin{proof}
Firstly, we show the existence of the equation \eqref{rpde} in the space $\cC^{\alpha,3}_{loc}([0,T]\times \R^d;\R^d)$. Due to Theorem \ref{thmrpde}, it suffice to show that $h(Z)\in\cC^{\alpha,3}_{loc}([0,T]\times \R^d;\R^d)$.

Notice that $DZ_t(x)=M_t(Z_t(x))$, $D^2Z_t(x)=DM_t(Z_t(x))M_t(Z_t(x))$, and 
\begin{align*}
D^3 Z_t(Z_t(x))=&D^2M_t(Z_t(x))M_t(Z_t(x))M_t(Z_t(x))\\
&+DM_t(Z_t(x))DM_t(Z_t(x))M_t(Z_t(x))
\end{align*}
for all $(t,x)\in [0,T]\times \R^d$. Fix $x\in \R^d$, the functions $M_t(x)$, $DM_t(x)$, $D^2M_t(x)$ and $D^3M_t(x)$ are all solutions to corresponding linear RDEs driven by $\alpha$-H\"{o}lder linear rough paths. Thus $M_t(x)$, $DM_t(x)$, $D^2M_t(x)$ and $D^3M_T(x)$ are all $\alpha$-H\"{o}lder in time and locally bounded in space. Recall that $h\in \cC^{4}_{loc}(\R^d;\R)$. As a consequence $h(Z_t(x))$, $D[h(Z_t(x))]$, $D^2[h(Z_t(x))]$ and $D^3[h(Z_t(x))]$ are all $\alpha$-H\"{o}lder in time and locally bounded in space. In other words, we can conclude that $h(Z)\in\cC^{\alpha,3}_{loc}([0,T]\times \R^d;\R^d)$.

In the next step, we will prove the uniqueness of the RPDE \eqref{rpde}. Suppose that $u\in \cC^{\alpha, 3}([0,T]\times \R^d; \R^d)$ is a solution to \eqref{rpde}. Let $Y$ be the solution to the RDE \eqref{rdef}. Then, by Taylor's theorem, we can write
\begin{align}\label{uyst}
u(t,Y_t(x))-u(s,Y_s(x))=&u_{s,t}(Y_s(x))+Du_{s,t}(Y_s(x))Y_{s,t}+Du_s(Y_s(x))Y_{s,t}(x)\nonumber\\
&+\frac{1}{2}D^2u_s(Y_s(x))Y_{s,t}(x)^{\otimes 2}+O(|t-s|^{3\alpha}).
\end{align}
Notice that as a solution to \eqref{rpde}, $u$ satisfies the following equality for all $x\in \R^d$,
\[
u_{s,t}(x)=-Du_s(x) W_{s,t}(x)+O(|t-s|^{2\alpha}).
\]
It follows that fix $x\in\R^d$, $u(x)$ is controlled by $W(x)$. As a consequence, $Du(x)$ is also controlled by $W(x)$ with the Gubinelli derivative $-D^2u_s(x)$. Therefore, the following estimate holds
\begin{align}\label{ustys}
u_{s,t}&(Y_s(x))=-Du(s,Y_s(x))W_{s,t}(Y_s(x))+D^2u(s,Y_s(x))\cW_{s,t}(Y_s(x),Y_s(x))\nonumber\\
&+\frac{1}{2}Du(s,x)\big[\llg DW(Y_s(x)),W(Y_s(x))\rrg_{s,t}+\llg W(Y_s(x)),DW(Y_s(x))\rrg_{s,t}\big]\nonumber\\
&+\frac{1}{2}D^2u(s,x)\lgl DW(Y_s(x))\rgl_{s,t}+O(|t-s|^{3\alpha}).
\end{align}
In addition, recall that $Y$ is the solution to \eqref{rdef}. Then, \eqref{ustys} implies that
\begin{align}
Du_{s,t}(Y_s(x))Y_{s,t}(x)=&-D^2u(s,Y_s(x))W_{s,t}(Y_s(x))W_{s,t}(Y_s(x))\nonumber\\
&-Du(s,Y_s(x))DW_{s,t}(Y_s(x))W_{s,t}(Y_s(x))O(|t-s|^{3\alpha}).
\end{align}
Also, we have the following estimates
\begin{align}
Du_s(Y_s(x))Y_{s,t}(x)=&Du_s(Y_s(x))W_{s,t}(Y_s(x))\nonumber\\
&+Du_s(Y_s(x))\W_{s,t}(Y_s(x),Y_s(x))+O(|t-s|^{3\alpha}),
\end{align}
and
\begin{align}\label{d2usyst}
D^2u_s(Y_s(x))Y_{s,t}(x)^2=D^2u_s(Y_s(x))W_{s,t}(Y_s(x))^{\otimes 2}+O(|t-s|^{3\alpha}).
\end{align}
Combining \eqref{uyst} - \eqref{d2usyst}, we have
\[
u(t,Y_t(x))-u(s,Y_s(x))=O(|t-s|^{3\alpha}).
\]
Because $\alpha\in (\frac{1}{3},\frac{1}{2}]$, it follows that $u(t,Y_t(x))\equiv u(0, Y_0(x))=h(x)$. In other words, $u(t,x)=u(t,Y_t(Z_t(x)))=h(Z_t(x))$ for all $(t,x)\in [0,T]\times \R^d$.
\end{proof}


\begin{thebibliography}{10}

\bibitem{sd-11-besalu-nualart}
{\sc Besal{\'u}, M., and Nualart, D.}
\newblock Estimates for the solution to stochastic differential equations
 driven by a fractional brownian motion with hurst parameter {$H\in(1/3,
 1/2)$}.
\newblock {\em Stochastics and Dynamics}, {\bf 11}, no. 2-3, (2011), 243--263.

\bibitem{amre-08-davie}
{\sc Davie, A.~M.}
\newblock Differential equations driven by rough paths: an approach via
 discrete approximation.
\newblock {\em Applied Mathematics Research eXpress\/} {\bf 2008} (2008).

\bibitem{ejp-06-feyel-delapradelle}
{\sc Feyel, D., and de~La~Pradelle, A.}
\newblock Curvilinear integrals along enriched paths.
\newblock {\em Electron. J. Probab.} {\bf 11}, no. 34 (2006), 860--892.

\bibitem{springer-14-friz-hairer}
{\sc Friz, P., and Hairer, M.}
\newblock {\em A course on rough paths}.
\newblock Springer, Cham, 2014.

\bibitem{jde-08-friz-victoir}
{\sc Friz, P., and Victoir, N.}
\newblock Euler estimates for rough differential equations.
\newblock {\em J. Differential Equations} {\bf 244}, no. 2 (2008), 388--412.

\bibitem{jfa-04-gubinelli}
{\sc Gubinelli, M.}
\newblock Controlling rough paths.
\newblock {\em J. Funct. Anal.} {\bf 216}, no. 1 (2004), 86--140.

\bibitem{tams-17-hu-le}
{\sc Hu, Y., and {L{\^e}}, K.}
\newblock Nonlinear young integrals and differential systems in {H{\"o}lder}
 media.
\newblock {\em Trans. of the Amer. Math. Soc.} {\bf 369}, no. 3 (2017), 1935--2002.

\bibitem{ap-12-hu-lu-nualart}
{\sc Hu, Y., Lu, F., and Nualart, D.}
\newblock {Feynman-Kac} formula for the heat equation driven by fractional
 noise with hurst parameter {$H< \frac{1}{2}$}.
\newblock {\em Ann. Probab.} {\bf 40}, no. 3 (2012), 1041--1068.

\bibitem{tams-09-hu-nualart}
{\sc Hu, Y., and Nualart, D.}
\newblock Rough path analysis via fractional calculus.
\newblock {\em Trans. of the Amer. Math. Soc.} {\bf 361}, no. 5 (2009), 2689--2718.

\bibitem{ap-11-hu-nualart-song}
{\sc Hu, Y., Nualart, D., and Song, J.}
\newblock {Feynman-Kac} formula for heat equation driven by fractional white
 noise.
\newblock {\em Ann. Probab.} {\bf 39}, no. 1 (2011), 291--326.

\bibitem{spa-16-keller-zhang}
{\sc Keller, C., and Zhang, J.}
\newblock Pathwise {It{\^o}} calculus for rough paths and rough pdes with path
 dependent coefficients.
\newblock {\em Stoc. Process. Appl.} {\bf 126}, no. 3 (2016),
 735--766.

\bibitem{ejp-09-lejay}
{\sc Lejay, A.}
\newblock On rough differential equations.
\newblock {\em Electron. J. Probab.} {\bf 14}, no. 12 (2009), 341--364.

\bibitem{sdp-12-lejay}
{\sc Lejay, A.}
\newblock Global solutions to rough differential equations with unbounded
 vector fields.
\newblock In {\em S{\'e}minaire de probabilit{\'e}s XLIV}. Springer, Heidelberg, 2012,
 pp.~215--246.

\bibitem{jde-06-lejay-victoir}
{\sc Lejay, A., and Victoir, N.}
\newblock On $(p, q)$-rough paths.
\newblock {\em J. Differential Equations} {\bf 225}, no. 1 (2006), 103--133.

\bibitem{mrl-94-lyons}
{\sc Lyons, T.}
\newblock Differential equations driven by rough signals (i): An extension of
 an inequality of L.C. Young.
\newblock {\em Math. Res. Lett.} {\bf 1}, no. 4 (1994), 451--464.

\bibitem{oxford-02-lyons-qian}
{\sc Lyons, T., and Qian, Z.}
\newblock {\em System control and rough paths}.
\newblock Oxford University Press, Oxford, 2002.

\bibitem{rmi-98-lyons}
{\sc Lyons, T.~J.}
\newblock Differential equations driven by rough signals.
\newblock {\em Rev. Mat. Iberoamericana} {\bf 14}, no. 2 (1998), 215--310.

\bibitem{lnm-83-stroock}
{\sc Stroock, D.}
\newblock Some applications of stochastic calculus to partial differential
 equations.
\newblock In {\em Ecole d’Et\'{e} de Probabilit\'{e}s de Saint-Flour
 XI-1981}, vol.~976 of {\em Lecture notes in Mathematics}. Springer, Berlin, 1983,
 pp.~267--382.

\bibitem{am-36-young}
{\sc Young, L.~C.}
\newblock An inequality of the {H{\"o}lder} type, connected with {Stieltjes}
 integration.
\newblock {\em Acta Math.} {\bf 67}, no. 1 (1936), 251.

\bibitem{ln-16-zanco}
{\sc Zanco, G.}
\newblock A brief introduction to rough paths.
\newblock
 \url{http://pub.ist.ac.at/~gzanco/A_brief_introduction_to_rough_paths.pdf},
 2016.

\end{thebibliography}
\end{document}